\numberwithin{equation}{section}
\theoremstyle{plain}
\newtheorem{thm}{Theorem}[section]
\newtheorem{lem}[thm]{Lemma}
\newtheorem{corollary}[thm]{Corollary}
\newtheorem{prop}[thm]{Proposition}
\newtheorem{thmx}{Theorem}
\theoremstyle{definition}
\newtheorem{claim}[thm]{Claim}
\theoremstyle{remark}
\newtheorem{rmk}[thm]{Remark}
\newcommand{\bA}{{\mathbf C}}
\newcommand{\C}{{\mathbb C}}
\newcommand{\sC}{{\mathcal C}}
\newcommand{\F}{{\mathbb F}}
\newcommand{\bF}{{\textbf F}}
\newcommand{\sH}{{\mathcal H}}
\newcommand{\I}{{\mathrm I}}
\newcommand{\K}{{\mathbb K}}
\newcommand{\N}{{\mathbb N}}
\newcommand{\sO}{{\mathcal O}}
\newcommand{\sP}{{\mathcal P}}
\newcommand{\rQ}{{\mathrm Q}}
\newcommand{\sT}{{\mathcal T}}
\newcommand{\fU}{{\mathfrak U}}
\newcommand{\sX}{{\mathfrak X}}
\newcommand{\Z}{{\mathbb Z}}
\newcommand{\id}{\mathrm{id}} 
\newcommand{\im}{\mathrm{im}} 
\newcommand{\pr}{\mathrm{pr}}
\newcommand{\xs}{{\ \xrightarrow{\sim} \ }} 
\newcommand{\Hom}{\mathrm{Hom}}
\newcommand{\End}{\mathrm{End}}
\newcommand{\Aut}{\mathrm{Aut}}
\newcommand{\Ext}{\mathrm{Ext}}
\newcommand{\op}{{\mathrm{op}}}
\newcommand{\Frac}{{\mathrm{Frac}}}
\newcommand{\Mod}{\text{-}\mathrm{Mod}}
\renewcommand{\mod}{\text{-}\mathrm{mod}}
\newcommand{\Spec}{\mathrm{Spec}}
\newcommand{\rk}{\text{rk}} 
\newcommand{\pt}{\mathrm{pt}}
\newcommand{\Gm}{\mathbb{G}_{\mathrm{m}}}
\newcommand{\Lie}{\mathrm{Lie}} 
\newcommand{\g}{{\mathfrak{g}}}
\renewcommand{\b}{{\mathfrak{b}}}
\renewcommand{\t}{{\mathfrak{t}}}
\newcommand{\n}{{\mathfrak{n}}}
\newcommand{\Gr}{\mathcal{G}\mathfrak{r}} 
\newcommand{\Fl}{\mathcal{F}{l}}
\newcommand{\hb}{\mathrm{hb}}
\newcommand{\Fun}{\mathrm{Fun}} 
\newcommand{\GKM}{\mathrm{GKM}} 
\newcommand{\sGKM}{\mathrm{sGKM}} 
\newcommand{\af}{\mathrm{af}} 
\newcommand{\re}{\mathrm{re}} 
\renewcommand{\sc}{\mathrm{sc}} 
\newcommand{\fMod}{\text{-}\mathrm{pMod}} 
\newcommand{\Modf}{\mathrm{pMod}\text{-}} 
\newcommand{\rMod}{\mathrm{Mod}\text{-}} 
\newcommand{\ex}{\mathrm{ex}} 
\newcommand{\val}{\mathrm{val}} 
\newcommand{\p}{\mathfrak{p}} 
\newcommand{\bb}{\mathbf{b}}
\newcommand{\rI}{\mathrm{I}} 
\renewcommand{\k}{\C} 
\newcommand{\Par}{\mathrm{Par}} 
\newcommand{\Fr}{\mathrm{Fr}}
\newcommand{\Stab}{\mathrm{Stab}} 
\newcommand{\adj}{\mathrm{adj}} 
\newcommand{\hgt}{\mathrm{ht}} 
\newcommand{\diag}{\mathrm{diag}} 
\newcommand{\ch}{\mathrm{ch}} 
\newcommand{\ev}{\mathrm{ev}} 
\newcommand{\bL}{\mathbf{\Lambda}}
\newcommand{\qbno}[2]{\begin{bmatrix} #1 \\ #2 \end{bmatrix}}
\begin{document}
\title{On the category $\mathcal{O}$ of a hybrid quantum group} 

\makeatletter
\let\MakeUppercase\relax
\makeatother

\author{Quan Situ} 
\address{Yau Mathematical Sciences Center, Department of Mathematical Sciences\\
Tsinghua University\\ 
Beijing 100084, P.~R.~China}
\email{stq19@tsinghua.org.cn, Quan.SITU@uca.fr}
\begin{abstract}
We study the representation theory of a hybrid quantum group at root of unity $\zeta$ introduced by Gaitsgory. 
After discussing some basic properties of its category $\mathcal{O}$, we study deformations of the category $\mathcal{O}$. 
For subgeneric deformations, we construct the endomorphism algebra of big projective object and compute it explicitly. 
Our main result is an algebra isomorphism between the center of deformed category $\mathcal{O}$ and the equivariant cohomology of $\zeta$-fixed locus on the affine Grassmannian attached to the Langlands dual group. 
\end{abstract}

\maketitle
\setcounter{tocdepth}{1} \tableofcontents 

\section{Introduction} 
\subsection{Hybrid quantum group and the category $\sO_q$} 
Let $G$ be a complex connected and simply-connected semisimple algebraic group, and let $B\supset T$ be a Borel subgroup and a Cartan subgroup. 
Bernstein--Gelfand--Gelfand \cite{BGG76} defined the category $\sO$ for $\g=\Lie(G)$, which plays an important role in the study of infinite dimensional representations of $\g$. 
Its analogue for Lusztig's quantum group was studied by Anderson--Mazorchuk \cite{AM15}. 

Recently, a new version of quantum group called the \textit{hybrid quantum group} has been introduced by Gaitsgory \cite{Gai18} (it was called the mixed quantum group in the \textit{loc. cit.}), in view of extending the Kazhdan--Lusztig equivalence. 
It also appeared in \cite{BBASV} for the study of the center of the small quantum group. 
Let $\mathscr{U}_q$ be the quantum group for $G$ over $\C(q)$. 
There are two remarkable $\Z[q^{\pm 1}]$-forms of $\mathscr{U}_q$: 
\textit{De Concini--Kac quantum group} $\fU_q$ generated by Chevalley generators and \textit{Lusztig's quantum group} $U_q$ generated by their divided powers. 
The hybrid quantum group $U^\hb_q$ is a $\Z[q^{\pm 1}]$-form of $\mathscr{U}_q$ with triangular decomposition 
\footnote{In the present paper we will use the definition $U^\hb_q=\fU^-_q\otimes \fU^0_q\otimes U^+_q$, for the convenience of deforming category $\sO$. 
In fact these two definitions lead to the same category $\sO$ (see \cite[Lem 3.1]{BBASV}).} 
$$U^\hb_q=\fU^-_q\otimes U^0_q \otimes U^+_q, $$ 
where only non-negative half contains divided powers. 

The category $\sO$ for $U^\hb_q$ (denoted by $\sO_q$) is the category of finitely generated $U^\hb_q$-modules that are graded by $X^*(T)$ for the action of $U^0_q$ and locally unipotent for the action of $U^+_q$. 
It can be viewed as a quantum analogue of the BGG category $\sO$ for $\g$ (with integral weights). 
Indeed, note that the latter can be interpreted as the category 
$$\g\mod^B$$ 
of $B$-equivariant finitely generated $\g$-modules on which the action from $\b=\Lie(B)$ by restriction of $\g$-action coincides with the one by derivation of $B$-action. 
Similarly, we can interprete $\sO_q$ as the category 
$$\fU_q\mod^{U^{\geq}_q}$$ 
of finitely generated $\fU_q$-modules equipped with an equivariant and integrable $U^{\geq}_q$-module structure that is compatible with the natural map $\fU^{\geq}_q\rightarrow U^{\geq}_q$. 
Hence $\sO_q$ is a reasonable quantum analogue of $\g\mod^B$. 

If the parameter $q$ is specialized to a transcendental element in $\C^\times$, the category $\sO_q$ is equivalent to the classical BGG category $\sO$ (see e.g. \cite{AM15}). 
In this paper we study the category $\sO_\zeta$, where $\zeta\in \C$ is a primitive $l$-th root of unity ($l$ is an odd integer satisfying mild assumptions specified in \textsection\ref{subsect 2.1.2'}). 

\subsection{Basic properties of $\sO_\zeta$} 
We firstly establish some basic properties for $\sO_\zeta$, including its linkage principle, the construction of projective objects and the BGG reciprocity. 
Here are more details. 
Let $M(\lambda)$ be the Verma module of $U^\hb_\zeta$ of highest weight $\lambda\in X^*(T)$, and let $E(\lambda)$ be its unique simple quotient. 
We show that $E(\lambda)$ coincides with the $X^*(T)$-graded simple module of the small quantum group. 
By studying the Jantzen filtration for Verma modules, we obtain the following linkage principle and the block decomposition. 

\begin{prop}[=Proposition \ref{prop LP}] 
The multiplicity 
$$[M(\lambda):E(\mu)]\neq 0 \quad \text{if and only if}\quad \mu \leq s_{\beta_1}\bullet\mu \leq \cdots \leq s_{\beta_k}\cdots s_{\beta_1}\bullet\mu=\lambda$$
for some reflections $s_{\beta_i}$ in the $l$-dilated affine Weyl group $W_{l,\af}$. 
Consequently, we have a block decomposition 
\begin{equation}\label{equ 0.2} 
\sO_{\zeta}= \bigoplus_{\omega\in X^*(T)/(W_{l,\af},\bullet)} \sO^{\omega}_{\zeta}, 
\end{equation}
such that the Verma module $M(\lambda)$ is contained in $\sO^{\omega}_{\zeta}$ if and only if $W_{l,\af}\bullet \lambda=\omega$. 
\end{prop}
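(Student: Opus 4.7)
The plan is to adapt Jantzen's classical argument for $\g$-Verma modules to the hybrid quantum group at $\zeta$. The proof splits into three steps: (i) construct a contravariant Shapovalov form and the Jantzen filtration on a deformation of $M(\lambda)$; (ii) compute the Shapovalov determinants at $\zeta$ to obtain a sum formula whose right hand side is indexed by $W_{l,\af}$-reflections; (iii) deduce the linkage statement by induction on the height of $\lambda-\mu$, and extract the block decomposition as a formal consequence.

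For (i), the Chevalley-type antiautomorphism of $U^\hb_\zeta$ exchanging $\fU^-_\zeta$ and $U^+_\zeta$ and fixing $U^0_\zeta$ equips each Verma $M(\lambda)$ with a contravariant form. To resolve its radical I deform over $R := \C[t]_{(t)}$: let $U^\hb_{\zeta,R}$ be the base change, let $M(\lambda)_R$ be the deformed Verma of highest weight $\lambda + t\chi$ for a sufficiently generic $\chi \in X^*(T)\otimes \C$, and set
$$M(\lambda)^i \;=\; \bigl\{v \in M(\lambda)_R : (v, M(\lambda)_R) \subset t^i R\bigr\}\,/\, tM(\lambda)_R.$$
This is a finite filtration with $M(\lambda)^0 = M(\lambda)$ and $M(\lambda)^1$ its radical. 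The standard manipulation of the Shapovalov determinant then yields a sum formula
$$\sum_{i \geq 1} \mathrm{ch}\, M(\lambda)^i \;=\; \sum_{\beta} \nu_t\!\bigl(\det \mathrm{Sh}_\beta|_{\lambda+t\chi}\bigr)\cdot \mathrm{ch}\, M(s_\beta \bullet \lambda),$$
where the sum runs over reflections $s_\beta$ whose corresponding Shapovalov factor vanishes at $\zeta$ and satisfy $s_\beta\bullet\lambda < \lambda$.

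The main obstacle is step (ii): identifying these surviving reflections with the real reflections of $W_{l,\af}$. Using a PBW basis of $\fU^-_\zeta$ adapted to the triangular decomposition $\fU^-_\zeta \otimes U^0_\zeta \otimes U^+_\zeta$, the weight-space Shapovalov determinant factorizes into terms proportional (up to units of $R$) to quantum integers $[\langle \lambda + t\chi + \rho, \alpha^\vee\rangle - n]_\zeta$ for positive roots $\alpha$ and positive integers $n$. Since $[m]_\zeta$ vanishes precisely when $l \mid m$, the surviving reflections are those $s_{\alpha,n}$ with $\langle \lambda+\rho, \alpha^\vee\rangle \equiv n \pmod{l}$; each such reflection sends $\lambda$ to $\lambda - m\alpha$ with $m \in l\Z$, placing $s_{\alpha,n}$ in the $l$-dilated affine Weyl group $W_{l,\af}$. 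Carrying this computation out in the hybrid setting — and verifying that the asymmetry between $\fU^-_\zeta$ (no divided powers) and $U^+_\zeta$ (with divided powers) does not introduce further Frobenius-type contributions that would enlarge the relevant Weyl group — is where the main technical work lies.

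With the sum formula in hand, step (iii) is routine. If $E(\mu)$ is a composition factor of $M(\lambda)$ with $\mu < \lambda$, then $E(\mu)$ lies in some $M(\lambda)^i$ with $i \geq 1$, so by the sum formula it is a composition factor of some $M(s_\beta \bullet \lambda)$ with $s_\beta \in W_{l,\af}$ and $s_\beta \bullet \lambda < \lambda$; inducting on the height of $\lambda - \mu$ produces the required chain $\mu \leq s_{\beta_1}\bullet\mu \leq \cdots \leq s_{\beta_k}\cdots s_{\beta_1}\bullet\mu = \lambda$. Conversely, whenever $s_\beta \bullet \lambda < \lambda$ for $s_\beta \in W_{l,\af}$, the sum formula supplies an embedding $M(s_\beta\bullet\lambda) \hookrightarrow M(\lambda)$, giving the reverse implication. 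The block decomposition~\eqref{equ 0.2} then follows formally: blocks of $\sO_\zeta$ are the equivalence classes for the relation generated by nontrivial multiplicities $[M(\lambda):E(\mu)]$, which by the linkage principle are exactly the $W_{l,\af}$-dot orbits on $X^*(T)$, and $M(\lambda) \in \sO^\omega_\zeta$ iff $W_{l,\af}\bullet\lambda = \omega$.
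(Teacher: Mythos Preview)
Your overall strategy matches the paper's, but there is a genuine gap in step (i): the Chevalley-type antiautomorphism you invoke does not exist on $U^\hb_\zeta$. The hybrid quantum group has negative part $\fU^-_\zeta$ (De Concini--Kac form, no divided powers) and positive part $U^+_\zeta$ (Lusztig form, with divided powers), and no algebra antiautomorphism of $U^\hb_\zeta$ can exchange them. Consequently there is no contravariant self-pairing on $M(\lambda)$ in the usual sense, and your definition of the Jantzen filtration via $(v,M(\lambda)_R)\subset t^iR$ is not available. The paper circumvents this by working with the \emph{right} Verma module $M^-(\lambda)_R=R_\lambda\otimes_{\fU_q^{\leq}}U^\hb_q$ and the asymmetric Shapovalov pairing $\{\ ,\ \}_{\lambda,R}:M^-(\lambda)_R\otimes M(\lambda)_R\to R$ coming from multiplication followed by projection to $\fU^0_q$. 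This yields a map $\varpi_{\lambda,R}:M(\lambda)_R\to M(\lambda)_R^\vee$ into the coVerma module, and the Jantzen filtration is defined as $M^i(\lambda)_t=\varpi_{\lambda,t}^{-1}(t^iM(\lambda)_t^\vee)$. The asymmetry you worry about in step (ii) is then precisely what makes the determinant computation clean: changing the basis on the $U^+_q$ side from $E^\wp$ to $E^{(\wp)}$ introduces a factor $\prod_{\wp}[\wp]!^{-1}$ which exactly cancels the $[m]_{q_\beta}$ factors in the De Concini--Kac formula, leaving only $\prod_{\beta,m}[K_\beta;\langle\lambda+\rho,\check\beta\rangle-m]_{q_\beta}$ and hence only the $W_{l,\af}$-reflections.

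A smaller point: in step (iii) you claim the sum formula ``supplies an embedding $M(s_\beta\bullet\lambda)\hookrightarrow M(\lambda)$'' for the converse direction. The sum formula is only a character identity and does not produce embeddings. The paper instead runs the induction bidirectionally through the sum formula at the level of composition multiplicities: for $\lambda>\mu$, one has $[M(\lambda):E(\mu)]\neq 0$ iff $[M^1(\lambda):E(\mu)]\neq 0$ iff (by the sum formula in the Grothendieck group) $[M(s_{\beta,kl}\bullet\lambda):E(\mu)]\neq 0$ for some reflection with $s_{\beta,kl}\bullet\lambda<\lambda$, and then the induction hypothesis applies in both directions simultaneously.
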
 

The category $\sO_\zeta$ does not contain enough projective modules. 
In fact, projective objects only exist in the truncated subcategory $\sO^{\leq \nu}_\zeta$ for each $\nu\in X^*(T)$, namely the category of the modules in $\sO_\zeta$ whose $\mu$-weight subspace is nonzero only if $\mu\leq \nu$. 
Let $Q(\lambda)^{\leq \nu}$ be the projective cover of $E(\lambda)$ in $\sO^{\leq \nu}_\zeta$ (for $\lambda\leq \nu$). 
We establish a BGG reciprocity for $\sO^{\leq \nu}_\zeta$. 
\begin{prop}[=Proposition \ref{prop BGG}] 
The module $Q(\lambda)^{\leq \nu}$ admits a finite filtration with sub-quotient factors given by Verma modules, and the multiplicity of $M(\mu)$ in $Q(\lambda)^{\leq \nu}$ is 
\begin{equation}\label{equ 0.3} 
\big(Q(\lambda)^{\leq \nu}:M(\mu)\big)=[M(\mu):E(\lambda)]. 
\end{equation} 
\end{prop}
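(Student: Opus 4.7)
The plan is to adapt the classical BGG reciprocity strategy to the truncated setting, establishing first a Verma filtration on $Q(\lambda)^{\leq \nu}$ and then computing multiplicities via a Hom-duality identity. For the filtration, I would begin by observing that for a weight $\mu$ maximal in the linkage class restricted to weights $\leq \nu$, the Verma module $M(\mu)$ is already projective in $\sO^{\leq \nu}_\zeta$ by the linkage principle (Proposition \ref{prop LP}). More generally, I would build a projective module $P$ with $E(\lambda)$ in its head and a Verma flag by iterated extensions: start from $M(\lambda)$ and successively attach Verma layers $M(\mu)$ (for $\mu$ in the orbit of $\lambda$, $\lambda \leq \mu \leq \nu$) to absorb $\Ext^1$-obstructions to projectivity in $\sO^{\leq \nu}_\zeta$. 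The truncation together with the linkage principle ensures that only finitely many weights arise, so the process terminates. Krull--Schmidt together with the standard fact that modules admitting a Verma filtration are closed under direct summands then identifies $Q(\lambda)^{\leq \nu}$ as a summand with the same type of filtration.

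For the multiplicity computation I would introduce a contravariant duality $\mathbb{D}\colon \sO_\zeta \to \sO_\zeta$ fixing simples and preserving truncated subcategories, and the associated dual Vermas $\tilde M(\mu):=\mathbb{D}\,M(\mu)$. These enjoy $\dim\Hom(M(\tau),\tilde M(\mu))=\delta_{\tau,\mu}$ and $\Ext^1(M(\tau),\tilde M(\mu))=0$, as in the classical theory. Applying $\Hom(-,\tilde M(\mu))$ layer by layer to a Verma flag of $Q(\lambda)^{\leq \nu}$ yields
\begin{equation*}
\bigl(Q(\lambda)^{\leq \nu}:M(\mu)\bigr) \;=\; \dim\Hom\bigl(Q(\lambda)^{\leq \nu},\tilde M(\mu)\bigr),
\end{equation*}
while projectivity of $Q(\lambda)^{\leq \nu}$ as the cover of $E(\lambda)$ gives the right-hand side as the multiplicity of $E(\lambda)$ in $\tilde M(\mu)$, which coincides with the corresponding composition multiplicity in $M(\mu)$ since $\mathbb{D}$ preserves composition series. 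Combining these identities produces the asserted reciprocity formula.

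The hard part is the explicit construction of enough projectives with Verma flags in Step 1. In the classical setting this is traditionally achieved by tensoring antidominant Vermas with finite-dimensional modules (so-called translation functors), but in the hybrid quantum setting the asymmetry between the divided-power $U^+_\zeta$ half and the Chevalley $\fU^-_\zeta$ half requires one to carefully check that such constructions remain in $\sO_\zeta$ and preserve Verma flags. A secondary but related subtlety is verifying the existence of an appropriate contravariant duality $\mathbb{D}$ compatible with this asymmetric triangular decomposition; once it is in place, the computation of multiplicities in Step 2 is largely formal.
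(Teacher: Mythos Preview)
Your overall strategy---use dual Verma modules with the orthogonality $\Hom(M(\tau),\tilde M(\mu))=\delta_{\tau,\mu}$ and $\Ext^1=0$, then count via projectivity---matches the paper's, and the formal deduction in your Step~2 is exactly what the paper does. The difference lies in how the dual Vermas are obtained, and here your ``secondary subtlety'' is in fact the main obstacle.

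A contravariant duality $\mathbb{D}$ on $\sO_\zeta$ fixing simples would require an anti-involution of $U^\hb_\zeta$ exchanging the positive and negative halves; but $U^\hb_\zeta$ has divided powers $E_i^{(n)}$ on the positive side and only Chevalley generators $F_i$ on the negative side, so no such anti-involution is available. The paper sidesteps this entirely: it defines the coVerma module $M(\mu)^\vee_R$ not via a duality on $\sO$, but as the graded $R$-dual of the \emph{right} Verma module $M^-(\mu)_R=R_\mu\otimes_{\fU_q^{\leq}}U^\hb_q$. These coVermas are genuinely not in $\sO_R$ (they are locally finite for $\fU^-_q$ rather than for $U^+_q$), so there is no functor $\mathbb{D}$ in the picture. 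The paper proves the Hom/Ext orthogonality directly from the universal property of $M^-(\mu)_R$ (Lemma~3.3), and replaces your ``$\mathbb{D}$ preserves composition series'' by the observation that $M(\mu)^\vee_\F$ and $M(\mu)_\F$ have the same character, hence the same composition multiplicities.

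On your Step~1: the paper does not build projectives by iterated Verma extensions. The existence of enough projectives with Verma flags in $\sO^{\leq\nu}_R$ is established earlier in the general framework (\S2.3), via the explicit induced modules $Q'^A(\mu)^{\leq\nu}_R=A\otimes_{A^{\geq}}\big((A^{\geq}/\sum_{\lambda\nleq\nu-\mu}A^{\geq}_\lambda)\otimes_{A^0}R_\mu\big)$, so nothing further is needed at this point.
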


\subsection{Deformations of $\sO_\zeta$ and endomorphism of big projective} 
Our next goal is to study the deformation of $\sO_\zeta$. 
For the classical category $\sO$ of $\g$, deformation techniques have been used extensively to study basic structures, e.g. Soergel \cite{Soe90}, Fiebig \cite{Fie03} and Stroppel \cite{Strop09}. 
Let $S=H_{\check{T}}^\bullet(\pt)_{\widehat{0}}$ be the completion of $H_{\check{T}}^\bullet(\pt)$ at the augmentation ideal (where $\check{T}$ is the dual torus of $T$). 
The deformation category for $\sO_\zeta$ is an $S$-linear category $\sO_{\zeta,S}$ whose specialization to the residue field of $S$ is $\sO_\zeta$. 
The category $\sO_{\zeta,S}$ is generically semisimple. 
Subgenerically, namely when localizing at the prime ideal of $S$ generated by a coroot $\check{\alpha}$ (with localized ring $S_\alpha$), the category $\sO_{\zeta,S_\alpha}$ is equivalent to a direct sum of the corresponding category for the rank $1$ subgroup of $G$ associated with $\alpha$. 

We study $\sO_{\zeta,S_\alpha}$ in detail. 
It is a classical result of Soergel \cite{Soe90} that for each block of the BGG categeory $\sO$, the endomorphism algebra of the big projective module (=projective cover of the unique simple module of anti-dominant highest weight) is isomorphic to the center of the block. 
We provide an analogous result for $\sO_{\zeta,S_\alpha}$, namely we construct the ``endomorphism algebra of big projective" for each block of $\sO_{\zeta,S_\alpha}$ and show the isomorphism to the center, by explicit computations. 
Here are some subtleties in our case. 
Firstly, $\sO_{\zeta,S_\alpha}$ does not admit projective object. 
As explained above, this problem can be overcome by considering truncated subcategories. 
Another subtlety is that truncated block does not admit a unique (or even the most) anti-dominiant weight. 
To remedy this, we consider a projective limit involving the endomorphism algebras of infinitely many projective modules. 

To give an example, we assume $G=SL_2$ and let $\alpha$ be the unique positive root (so $S=S_\alpha$). 
We outline our construction for the Steinberg block 
of $\sO_{\zeta,S}$ for $SL_2$, i.e. the block containing Verma modules $M(\bm{n}_\alpha)_S$ of highest weight $\bm{n}_\alpha=(-\frac{1}{2}+ln)\alpha$, $n\in \Z$. 
Let $Q(\bm{n}_\alpha)^{\leq \bm{m}_\alpha}_{S}$ be the projective cover of $M(\bm{n}_\alpha)_S$ in the truncation $\sO^{\leq \bm{m}_\alpha}_{\zeta,S}$. 
One can show that its Verma multiplicity is given by 
$$\big( Q(\bm{n}_\alpha)^{\leq \bm{m}_\alpha}_{S}: M(\bm{j}_\alpha)_S \big) = 
\begin{cases}
	1, & \text{$n\leq j\leq m$} \\ 
	0, & \text{otherwise}. 
\end{cases}$$ 
We construct natural projections 
$$Q(\bm{n}_\alpha)^{\leq \bm{m+1}_\alpha}_{S}\twoheadrightarrow Q(\bm{n}_\alpha)^{\leq \bm{m}_\alpha}_{S}, \quad \forall n\leq m,$$ 
and inclusions 
$$Q(\bm{n}_\alpha)^{\leq \bm{m}_\alpha}_{S}\hookrightarrow Q(\bm{n-1}_\alpha)^{\leq \bm{m}_\alpha}_{S}, \quad \forall n\leq m,$$ 
which are compatible with their filtrations with Verma sub-quotient factors. 
Therefore $Q(\bm{n'}_\alpha)^{\leq \bm{m'}_\alpha}_{S}$ appears as a sub-quotient module of $Q(\bm{n}_\alpha)^{\leq \bm{m}_\alpha}_{S}$, provided that $n\leq n' \leq m' \leq m$. 
We show that each endomorphism of $Q(\bm{n}_\alpha)^{\leq \bm{m}_\alpha}_{S}$ uniquely induces an endomorphism of $Q(\bm{n'}_\alpha)^{\leq \bm{m'}_\alpha}_{S}$, so that we can define the endomorphism algebra of the big projective object as 
$$\End\big(Q(\bm{-\infty}_{\alpha} )^{\leq \bm{\infty}_{\alpha}}_{S}\big):= 
\varprojlim_{n \leq m} \End\big(Q(\bm{n}_{\alpha})^{\leq \bm{m}_{\alpha}}_{S}\big).$$

\subsection{Center of $\sO_{\zeta,S}$} 
As an application, we establish an isomorphism between the center $Z(\sO_{\zeta,S})$ of $\sO_{\zeta,S}$ and equivariant cohomology of $\zeta$-fixed locus on the affine Grassmannian. 
The relation between center and cohomology was firstly discovered and was predicted to be an isomorphism by Bezrukavnikov, Boixeda-Alvarez, Shan and Vasserot \cite{BBASV} in their study of geometric realization of center of small quantum groups. 

Let $\check{G}$ be the Langlands dual group of $G$, and $\Gr=\check{G}\big(\C(\!(t)\!)\big)/\check{G}\big(\C[[t]]\big)$ the affine Grassmannian for $\check{G}$. 
There is a $\Gm$-action on $\Gr$ by loop rotations. 
We consider the $\zeta$-fixed locus $\Gr^\zeta$. 
By \cite{RW22}, there is a decomposition 
\begin{equation}\label{equ 0.-1} 
\Gr^\zeta=\bigsqcup\limits_{\omega\in X^*(T)/W_{l,\af}} \Fl^{\omega,\circ}, 
\end{equation}
where $\Fl^{\omega,\circ}$ is partial affine flag varieties of type $\omega$ of the loop group $\check{G}_\sc\big(\C(\!(t^l)\!)\big)$ for the simply-connected cover $\check{G}_\sc$ for $\check{G}$. 
In \cite{BBASV} the authors constructed an $S$-algebra embedding 
$$\bb:\ H_{\check{T}}^\bullet(\Gr^\zeta)\otimes_{H_{\check{T}}^\bullet(\pt)}S \rightarrow Z(\sO_{\zeta,S}),$$ 
such that under the map $\bb$, the decomposition (\ref{equ 0.-1}) is compatible with the following block decomposition as a lifting of (\ref{equ 0.2}) 
$$\sO_{\zeta,S}=\bigoplus\limits_{\omega\in X^*(T)/(W_{l,\af},\bullet)} \sO^{\omega}_{\zeta,S}.$$ 
We prove that $\bb$ is an isomorphism. 
More precisely, 
\begin{thmx}[=Theorem \ref{thm 3.11}]\label{thm A} 
The map $\bb$ induces an isomorphism of $S$-algebras 
$$\bb:\ H_{\check{T}}^\bullet(\Gr^\zeta)^\wedge_{S} \xs Z(\sO_{\zeta,S}),$$ 
where $H_{\check{T}}^\bullet(\Gr^\zeta)^\wedge_{S}$ is a completion of $H_{\check{T}}^\bullet(\Gr^\zeta)\otimes_{H_{\check{T}}^\bullet(\pt)}S$. 
Block-wisely, it gives an isomorphism 
$$\bb:\ H_{\check{T}}^\bullet(\Fl^{\omega,\circ})^\wedge_{S} \xs Z(\sO^{\omega}_{\zeta,S}),$$ 
for each parahoric/singular type $\omega$. 
\end{thmx}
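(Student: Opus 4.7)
The plan is to prove the block-wise statement for each $\omega\in X^*(T)/W_{l,\af}$ separately; summing over blocks then yields the global isomorphism because $\bb$ is already known from \cite{BBASV} to be compatible with the two decompositions. Fix $\omega$. By \cite{BBASV} the map
$$\bb_\omega:\ H_{\check{T}}^\bullet(\Fl^{\omega,\circ})^\wedge_{S} \longrightarrow Z(\sO^{\omega}_{\zeta,S})$$
is an injective $S$-algebra homomorphism, so the real task is surjectivity.

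First, I would identify $Z(\sO^{\omega}_{\zeta,S})$ with the endomorphism algebra of a big projective in the pro-sense introduced for $SL_2$ in the previous subsection. The construction sketched there — compatible projections $Q(\lambda)^{\leq \nu'}_S \twoheadrightarrow Q(\lambda)^{\leq \nu}_S$ for $\nu\leq\nu'$ and compatible inclusions $Q(\lambda)^{\leq \nu}_S\hookrightarrow Q(\lambda')^{\leq \nu}_S$ as $\lambda'$ runs through the $W_{l,\af}$-orbit below $\lambda$ — extends to arbitrary $G$ using the BGG reciprocity of Proposition \ref{prop BGG}, producing a big pro-projective $Q^{\omega}_{S}$ whose endomorphism algebra is the relevant pro-limit. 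Acting on $Q^{\omega}_{S}$ defines an $S$-algebra map $Z(\sO^{\omega}_{\zeta,S})\to \End(Q^{\omega}_{S})$, and a Soergel-type argument — truncated projective covers generate each truncation, and central elements act coherently on compatible systems — promotes this to an isomorphism.

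Second, I would verify the theorem after localization at each height-one prime $(\check\alpha)$ of $S$. Subgenerically, $\sO_{\zeta,S_\alpha}$ decomposes as a direct sum of categories each equivalent to the corresponding category for the rank-one subgroup attached to $\alpha$, so $\End(Q^{\omega}_{S_\alpha})$ can be computed directly by the $SL_2$ formula outlined in the introduction. On the geometric side, equivariant localization realises $H^\bullet_{\check T}(\Fl^{\omega,\circ})^\wedge_{S_\alpha}$ as a GKM-type subalgebra of functions on the $\check T$-fixed points whose transition relations along each one-dimensional $\check T$-orbit involve only the coroot $\check\alpha$; a component-by-component comparison with the rank-one endomorphism computation shows that $\bb_\omega$ becomes an isomorphism after $\otimes_S S_\alpha$. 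At the generic point $F=\Frac(S)$ both sides become a direct product of copies of $F$ indexed by $\check T$-fixed points, matching the simple Verma modules in the semisimple category $\sO^{\omega}_{\zeta,F}$, and $\bb_\omega$ is visibly an isomorphism there.

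Finally, I would glue these local isomorphisms. Since $S$ is a regular local ring, both $H^\bullet_{\check T}(\Fl^{\omega,\circ})^\wedge_{S}$ and $Z(\sO^{\omega}_{\zeta,S})$ are torsion-free $S$-modules that embed into their base-change to $F$ and, by a standard saturation argument, equal the intersection inside this generic fibre of their localizations at all height-one primes $(\check\alpha)$. Since $\bb_\omega$ induces an isomorphism generically and at each $S_\alpha$, it must be an isomorphism globally. The main obstacle is the subgeneric step: matching the combinatorics of truncated big projectives in the rank-one setting with the GKM-type description of $H^\bullet_{\check T}(\Fl^{\omega,\circ})^\wedge_{S_\alpha}$ requires a careful bookkeeping of reflection hyperplanes in the $W_{l,\af}$-orbit of $\omega$ and their identification with one-dimensional $\check T$-orbits in the partial affine flag variety, together with the compatibility of the dot-action shift on the quantum side with the equivariant weights appearing on the geometric side.
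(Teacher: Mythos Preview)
Your overall strategy—reduce to height-one primes, establish the isomorphism subgenerically, then glue via intersection—is the paper's approach, and your second and third steps are essentially correct. The gluing works because both sides embed in $\Fun(\Lambda,\K)$ (via $\chi_\K$ on the categorical side, via restriction to $\check T$-fixed points on the geometric side), and since $S$ is regular each equals the intersection of its localizations at height-one primes. One caveat: you need \emph{all} height-one primes, not just those of the form $(\check\alpha)$; for the others the category $\sO_{\k_\p}$ is semisimple (Lemmas~\ref{lem 3.6} and~\ref{lem 6.160}) so $Z(\sO_{S_\p})=\Fun(\Lambda,S_\p)$, and their intersection contributes the factor $\Fun(\Lambda,S)$ appearing in Proposition~\ref{prop B.1}(2).

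The genuine gap is your first step. You propose to construct a big pro-projective $Q^\omega_S$ for arbitrary $G$ over $S$ and identify $Z(\sO^\omega_{\zeta,S})$ with its endomorphism ring, claiming the rank-one construction ``extends to arbitrary $G$ using BGG reciprocity.'' This is neither carried out in the paper nor straightforward: the compatible inclusions $Q(\bm{n}_\alpha)^{\leq \bm{m}_\alpha}_{S_\alpha}\hookrightarrow Q(\bm{n-1}_\alpha)^{\leq \bm{m}_\alpha}_{S_\alpha}$ (Lemma~\ref{lem 6.5}) depend on explicit rank-one elements $F_n(\alpha)$, $E(\alpha)$ (Claims~\ref{claim 6.3} and~\ref{claim 6.6}) that have no analogue over $S$ for general $G$, and the commutativity of the endomorphism algebra needed for the projective limit to make sense comes from the embedding~(\ref{equ 4.17}) into a product of copies of $\K$, which is again a subgeneric phenomenon. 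BGG reciprocity gives Verma multiplicities, not a functorial system of inclusions of projective covers.

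The paper bypasses this entirely. It uses the injection $\chi_S\colon Z(\sO_S)\hookrightarrow \Fun(\Lambda,S)$ directly (Lemma~\ref{lem 3.7}), so no global big projective is needed. The big projective enters only \emph{over $S_\alpha$}, where the block structure genuinely collapses to rank one (Proposition~\ref{prop 6.1}). There the argument is that the chain of injections
\[
H^\bullet_{\check T}(\Fl^{\sX}_{SL_2})^\wedge_{S_\alpha}\hookrightarrow Z(\sO^{\sX}_{S_\alpha})\hookrightarrow \End\big(Q(\bm{-\infty})^{\leq\bm{\infty}}_{S_\alpha}\big)
\]
has surjective composite (Propositions~\ref{prop 6.9} and~\ref{prop Chap7-15}), forcing all three to coincide (Corollaries~\ref{cor 6.11} and~\ref{cor 6.14}). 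So drop your first step and replace it by the embedding $\chi_S$; the rest of your outline then matches the paper.
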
 

To prove Theorem \ref{thm A}, we firstly recall that $\sO_{\zeta,\K}$ is semisimple ($\K$ is the fraction field of $S$), and that $X^*(T)$ parametrizes the simple(=Verma) modules in $\sO_{\zeta,\K}$ by the highest weights as well as the $\check{T}$-fixed points $(\Gr^\zeta)^{\check{T}}$. 
It yields natural identifications 
$$Z(\sO_{\zeta,\K})=\prod_{(\Gr^\zeta)^{\check{T}}} \K\ =\quad
\text{a completion of }H^\bullet_{\check{T}}\big((\Gr^\zeta)^{\check{T}}\big)\otimes_{H^\bullet_{\check{T}}(\pt)} \K .$$ 
Then for each positive root $\alpha$, we consider the codimensional $1$ subtorus $\check{T}_\alpha=(\ker\check{\alpha})^\circ\subset \check{T}$ (where $\circ$ represents the neutral component), and consider the $\check{T}_\alpha$-fixed locus on $\Gr^\zeta$. 
Using the construction of the endomorphism algebra of big projective object explained in the previous section, we show an isomorphism 
$$Z(\sO_{\zeta,S_\alpha})\ \simeq\quad \text{a completion of } H^\bullet_{\check{T}}\big((\Gr^\zeta)^{\check{T}_\alpha}\big)\otimes_{H^\bullet_{\check{T}}(\pt)}S_\alpha.$$  
On one hand, the intersection of $Z(\sO_{\zeta,S_\alpha})$ for positive root $\alpha$ inside $Z(\sO_{\zeta,\K})$ recovers $Z(\sO_{\zeta,S})$. 
On the other hand, by applying the GKM(=Goresky--Kottwitz--MacPherson) theory to $\Gr^\zeta$, the intersection of $H^\bullet_{\check{T}}\big((\Gr^\zeta)^{\check{T}_\alpha}\big)$ inside $H^\bullet_{\check{T}}\big((\Gr^\zeta)^{\check{T}}\big)$ gives a description of $H^\bullet_{\check{T}}(\Gr^\zeta)$. 
So we match up $Z(\sO_{\zeta,S})$ and a completion of $H_{\check{T}}^\bullet(\Gr^\zeta)\otimes_{H_{\check{T}}^\bullet(\pt)}S$.

\subsection{Application to study of $\sO_\zeta$} 
In a sequential paper \cite{Situ2}, we prove a non-deformed version of Theorem \ref{thm A}, i.e. an isomorphism between $Z(\sO_{\zeta})$ and a completion of $H^\bullet(\Gr^{\zeta})$. 
It can not be obtained directly from Theorem \ref{thm A} by specialization, and so it requires some new ideas. 
However, the deformed isomorphisms in Theorem \ref{thm A} help to verify several compatibilities used in the proof. 
In another paper \cite{Situ3} we will show that the principal block of $\sO_\zeta$ is derived equivalent to a version of the affine Hecke category, where we make an essential use of the deformation $\sO_{\zeta,S}$ by establishing a deformed version of the equivalence involving $\sO_{\zeta,S}$ and deducing the desired equivalence by specialization.

\subsection{Organization of the paper} 
In Section 2 we recall the definitions and some general facts about quantum groups, the hybrid version and their categories $\sO$. 
In Section 3, we study some basic properties of the category $\sO$ for $U^\hb_q$, e.g. classification of projective modules and simple modules, the BGG reciprocity, the Jantzen filtration and the linkage principle. 
In Section 4, we study the subgeneric deformation category $\sO_{S_\alpha}$ and construct the ``endomorphism algebra of the big projective object". 
In Section 5, we give a reminder of $\Gr^\zeta$, discuss the cohomology of the subtorus-fixed locus, and then prove Theorem \ref{thm A}, including a $\check{T}\times \Gm$-equivariant version of this theorem. 

\subsection{Notations and conventions} 
\subsubsection{Notations} 
For a complex variety $X$ with an action of a complex linear group $G$, we denote by $H^\bullet_G(X)$ the $G$-equivariant cohomology with coefficients in $\k$. 
For a Lie algebra $\g$ over $\k$, we denote by $U\g$ its enveloping algebra. 
\subsubsection{Conventions} 
For a group $K$, a \textit{$K$-set} refers to a set equipped with a $K$-action. 
Categories and functors are additive and $\k$-linear. 
A \textit{block} in a category means an additive full subcategory that is a direct summand (not necessarily indecomposable). 

Let $\sC$ be an $R$-linear category. 
The \textit{center} $Z(\sC)$ of $\sC$ is the ring of $R$-linear endomorphism of the identity functor of $\sC$, i.e. 
$$Z(\sC)=\{\big(z_M\in \End_{\sC}(M)\big)_{M\in \sC}|\ f\circ z_{M_1}=z_{M_2}\circ f,\ \forall M_1,M_2\in \sC, \forall f\in \Hom_{\sC}(M_1,M_2)\}.$$
We may abbreviate $\Hom(M_1,M_2)=\Hom_{\sC}(M_1,M_2)$ if there is no ambiguity. 
For a set $\sX$, we denote by $\Fun(\sX,R)$ the space of $R$-valued functions on $\sX$, which is naturally endowed with an $R$-algebra structure. 

For a smooth $\k$-scheme $X$, we denote by $\sT_{X}$ its tangent sheaf. 

\subsection{Acknowledgments} 
The author sincerely thanks his supervisor Professor Peng Shan for suggesting this problem and her patient guidance. 
Without her help this article could not be finished by the author alone.

\section{General theories for quantum groups} 
\subsection{Root data} 
Let $G$ be a complex connected and simply-connected semisimple algebraic group, with a Borel subgroup $B$ and a maximal torus $T$ contained in $B$. 
Let $B^-$ be the opposite Borel subgroup, and let $N$, $N^-$ be the unipotent radical of $B$, $B^-$. 
Denote their Lie algebras by 
$$\g=\Lie(G),\quad \b=\Lie(B), \quad \b^-=\Lie(B^-), \quad \n=\Lie(N), \quad \n^-=\Lie(N^-),\quad \t=\Lie(T).$$ 
Let $W=N_G(T)/T$ be the Weyl group for $G$. 
Let $h$ be the Coxeter number of $G$. 
Let $\check{G}$ be the Langlands dual group of $G$, with the dual torus $\check{T}$. 

Let $(X^*(T),X_*(T), \Phi, \check{\Phi})$ be the root datum associated with $G$. 
Let $\Phi^+$ (resp. $\check{\Phi}^+$) and $\Sigma=\{\alpha_i\}_{i\in \I}$ (resp. $\check{\Sigma}=\{\check{\alpha}_i\}_{i\in \I}$) be the subsets of $\Phi$ (resp. $\check{\Phi}$) consisting of positive roots (resp. positive coroots) and simple roots (resp. simple coroots). 
We abbreviate $\Lambda:= X^*(T)$ and $\check{\Lambda}:= X_*(T)$, and let $\langle-,-\rangle:\check{\Lambda} \times \Lambda \rightarrow \Z$ be the canonical pairing. 
There is an order $\leq$ on $\Lambda$ defined by $\lambda\leq \mu$ if $\mu-\lambda\in \Z_{\geq}\Sigma$. 
Let $\rQ\subset \Lambda$, $\check{\rQ}\subset \check{\Lambda}$ be the root and coroot lattices. 
Recall that the fundamental group of $\check{G}$ is $\pi_1:=\pi_1(\check{G})=\Lambda/\rQ$.  
Let $a_{ij}:=\langle \check{\alpha}_i, \alpha_j\rangle$ be the $(i,j)$-th entry of the Cartan matrix of $G$. 
Let $(d_i)_{i\in \I}\in \N^\I$ be a tuple of relatively prime positive integers such that $(d_ia_{ij})_{i,j\in \I}$ is symmetry and positive definite. 
It defines a pairing $(-,-):\rQ \times \rQ \rightarrow \Z$ by $(\alpha_i,\alpha_j):=d_ia_{ij}$ and extents to 
$$(-,-):\Lambda \times \Lambda \rightarrow \frac{1}{e} \Z, \quad e:=|\pi_1|.$$ 
For any $\beta=w(\alpha_i)\in \Phi^+$ for some $w\in W$, we set $d_\beta=d_i$. 
Let $\{e_{\alpha}\}_{\alpha\in \Phi^+}$ and $\{f_{\alpha}\}_{\alpha\in \Phi^+}$ be the Chevalley basis for $\n$ and $\n^-$, respectively. 

\subsubsection{Affine root data} 
Let $\t_\af=\t\oplus \k \delta$ be the affine torus, where $\delta$ is the generator of positive imaginary coroots. 
Let $\check{\Phi}_\af$ and $\check{\Phi}_\re=\check{\Phi} \oplus \Z \delta$ be the set of the affine coroots and affine real coroots. 
Let $\check{\Phi}_\af^+$ and $\check{\Sigma}_\af$ be the set of affine positive coroots and affine simple coroots. 
Set $\check{\Phi}_\re^+:=\check{\Phi}_\re\cap \check{\Phi}_\af^+$. 
Let 
$$W_\af:= W\ltimes \rQ, \quad W_\ex:= W\ltimes \Lambda \simeq W_\af \rtimes \pi_1$$ 
be the {affine Weyl group} and the {extended Weyl group}, where $\pi_1$ acts on $W_\af$ by automorphisms of affine Dynkin graph. 
Denote by $l(-):W_\af \rightarrow \Z_{\geq 0}$ the length function. 
Denote by $s_\alpha\in W_\ex$ the reflection associated with $\alpha\in \check{\Phi}_\re$, and denote by $\tau_\mu\in W_\ex$ the translation by $\mu\in \Lambda$. 
For any $\alpha\in \Phi^+$, we set $s_{\alpha,m}=s_{\check{\alpha}} \cdot \tau_{m\alpha}$ and abbreviate $s_{\alpha}=s_{\alpha,0}$. 
For any subset $J\subset \check{\Sigma}_\af$, let $W_J\subset W_\af$ be the parabolic subgroup generated by reflections $s_\alpha$ with $\alpha\in J$. 
We identify 
$$W^J_\af:=W_\af/W_J=\{ x\in W_\af|\ l(x)\leq l(y),\ \forall y\in xW_J\}.$$ 
Let $W^J_\ex=W_\ex/W_J=\pi_1\times W^J_\af$. 

\subsubsection{The $l$-affine Weyl groups}\label{subsect 2.1.2'} 
Let $l\geq h$ be an odd positive integer which is prime to $e$, and to $3$ if $G$ contains a component of type $G_2$. 
We set $\check{\Phi}_{l,\af}:=\check{\Phi}_\af\cap (\check{\rQ}\oplus l\Z\delta)$ and $\check{\Phi}_{l,\re}:=\check{\Phi}_\re\cap (\check{\rQ}\oplus l\Z\delta)$. 
Let $W_{l,\af}:= W\ltimes l\rQ$ and $W_{l,\ex}:= W\ltimes l\Lambda$ be the $l$-affine Weyl group and the $l$-extended Weyl group. 
There is a shifted action of $W_{l,\ex}$ on $\Lambda$, given by $w\bullet \lambda:=w(\lambda+\rho)-\rho$ for any $w\in W_{l,\ex}$ and $\lambda\in \Lambda$, where $\rho=\frac{1}{2}\sum_{\alpha\in \Phi^+}\alpha$. 
Set 
$$\Xi_\sc:= \Lambda/ (W_{l,\af}, \bullet), \quad \Xi:=\Xi_\sc/\pi_1= \Lambda/ (W_{l,\ex}, \bullet) ,$$ 
where $\bullet$ represents the $\bullet$-action above. 
We can identify 
$$\Xi_\sc=\{\omega\in \Lambda|\ 0\leq\langle\omega+\rho, \check{\alpha}\rangle\leq l,\ \forall \alpha\in \Phi^+ \},$$ 
since any coset in $\Xi_\sc$ is uniquely determined by an element in the RHS. 
For $\omega\in \Xi_\sc$, we denote $W_{l,\omega}=\Stab_{(W_{l,\af},\bullet)}(\omega)$, and set $W^\omega_{l,\af}=W_{\l,\af}/W_{l,\omega}$, $W^\omega_{l,\ex}=W_{\l,\ex}/W_{l,\omega}\simeq \pi_1\times W^\omega_{l,\af}$. 

\subsubsection{Rings} 
Let $\zeta_e\in \k$ be a primitive $l$-th root of unity, and let $\zeta=(\zeta_e)^e$. 
Let $q$ be a formal variable and set $q_e=q^{\frac{1}{e}}$. 
We set $\bA=\k[q_e^{\pm1}]$ and $\bF=\k(q_e)$. 
We identify the graded rings $\k[\hbar]=H_{\Gm}^\bullet(\pt)$, where $\hbar$ is of degree $2$. 
We denote by $\bA_{\widehat{\zeta_e}}$ the completion at $q_e=\zeta_e$, and $\k[\hbar]_{\widehat{0}}$ the completion at $\hbar=0$. 
There is an identification $\bA_{\widehat{\zeta_e}}\simeq \k[\hbar]_{\widehat{0}}$ via $\hbar=q_e-\zeta_e$. 

We set $S'=\k[\t]$ and $\hat{S}'=S'[\hbar]$. 
Consider the $W$-invariant isomorphism $\t^*\xs \t$ such that $\beta\mapsto d_\beta \check{\beta}$ for any $\beta\in \Phi^+$. 
It yields an isomorphism $S'\xs \k[\t^*] =H_{\check{T}}^\bullet(\pt)$, and hence we identify $\hat{S}'=H_{\check{T}\times \Gm}^\bullet(\pt)$. 
Let $S=S'_{\widehat{0}}$ be the completion at $0\in \t$, and $\k[T]_{\widehat{1}}$ be the completion at $1\in T$. 
We have $S=\k[T]_{\widehat{1}}$ via the exponential map $\exp:\t \rightarrow T$. 
Let $\hat{S}=\bA[\t]_{\widehat{(\zeta_e,0)}}=\bA[T]_{\widehat{(\zeta_e,1)}}$ be the completion at $q_e=\zeta_e$ and at $0\in \t$ or $1\in T$. 
Then $\hat{S}$ coincides with the completion $\hat{S}'_{\widehat{(0,0)}}$ at $\hbar=0$ and $0\in \t$. 
We also consider an isomorphism 
$$\hat{S}\xs \k[\t_\af^*]_{\widehat{0}}, \quad \text{such that}\quad q^l\mapsto \exp(l\delta),\ \beta\mapsto d_\beta\check{\beta},\ \forall \beta\in \Phi^+.$$ 
We set $\K=\Frac S$ and $\hat{\K}=\Frac \hat{S}$. 

\subsection{Quantum groups} 
\subsubsection{Quantum groups} 
Let $q$ be a formal variable. 
We abbreviate $q_\beta=q^{d_\beta}$, $q_i=q_{\alpha_i}$ for each $\beta\in \Phi^+$, $i\in \rI$. 
 The quantum group $\mathscr{U}_q$ associated with $G$ is the $\bF$-algebra generated by
 $E_i, F_i, K_\lambda \ (i\in \I, \lambda\in \Lambda)$, subject to the quantum Chevalley relations, and 
 \begin{align*}
    &K_0=1, \quad K_i=K_{\alpha_i}, \quad K_{\lambda}K_{\mu}=K_{\lambda+\mu},  \\
    &K_{\lambda}E_jK_{\lambda}^{-1}=q^{(\lambda, \alpha_j)}E_j, \quad K_{\lambda}F_jK_{\lambda}^{-1}=q^{-(\lambda, \alpha_j)}F_j , \\ 
    &E_iF_j-F_jE_i = \delta_{i,j} \frac{K_i-K_i^{-1}}{q_i -q_i^{-1}}, 
 \end{align*} 
for any $i, j \in \I$ and $\lambda,\mu\in \Lambda$. 
Let $\mathscr{U}_q^0$ be the subalgebra generated by $K_\lambda$'s. 
We will identify $\mathscr{U}_q^0=\bF[\Lambda]=\bF[T]$. 

The algebra $\mathscr{U}_q$ admits a Hopf algebra structure with comultiplication $\Delta$, antipode $\mathrm{S}$ and counit $\epsilon$ defined by 
$$\Delta(E_i)=E_i\otimes 1+ K_i\otimes E_i,\quad \Delta(F_i)=F_i\otimes K_i^{-1}+ 1\otimes F_i,\quad \Delta(K_\lambda)=K_\lambda\otimes K_\lambda, $$ 
$$\mathrm{S}(E_i)=-K_i^{-1}E_i,\quad \mathrm{S}(F_i)=-F_iK_i,\quad \mathrm{S}(K_\lambda)=K_\lambda^{-1},$$ 
$$\epsilon(E_i)=\epsilon(F_i)=0, \quad \epsilon(K_\lambda)=1. $$ 

\subsubsection{Integral forms}\label{subsect 2.1.2} 
For any $m\in \Z$ and $n\in \N$, we introduce the following elements in $\mathscr{U}_q$ 
$$E_i^{(n)}:= \frac{E_i^n}{[n]_{q_i}!}, \quad 
F_i^{(n)}:= \frac{F_i^n}{[n]_{q_i}!}, \quad 
\qbno{K_\lambda;m}{n}_{q_i}:= \prod_{r=1}^n \frac{K_\lambda q_i^{m+1-r}-K_\lambda^{-1}q_i^{r-m-1}}{q_i^r-q_i^{-r}} ,$$ 
where $[n]_{q_i}:=\frac{q_i^n-q_i^{-n}}{q_i-q_i^{-1}}$ is the quantum number, and $[n]_{q_i}!:=\prod_{j=1}^n [j]_{q_i}$ is the quantum factorial. 
We abbreviate $[K_\lambda;m]_{q_i}:=\qbno{K_\lambda;m}{1}_{q_i}$. 
There is a chain of $\bA$-subalgebras (in fact Hopf subalgebras) of $\mathscr{U}_q$ 
\begin{equation}\label{equ QGs} 
\fU_q \subset U_q^{\hb} \subset U_q 
\end{equation}
defined as follows: 
\begin{itemize}
	\item $\fU_q$ is generated by $E_i, F_i, K_\lambda $ for ${i\in \I, \lambda\in \Lambda}$; 
	\item $U_q^{\hb}$ is generated by $E_i^{(n)}, F_i, K_\lambda $ for ${i\in \I, \lambda\in \Lambda, n\in \Z_{>0}}$; 
	\item $U_q$ is generated by $E_i^{(n)}, F_i^{(n)}, K_\lambda $ for ${i\in \I, \lambda\in \Lambda, n\in \Z_{>0}}$. 
\end{itemize} 
The integral form $U_q$ was firstly introduced and studied by Lusztig \cite{Lus90}, and $\fU_q$ by De Concini and Kac \cite{DeCK90}. 
Following \cite{BBASV}, we call $U_q^{\hb}$ the \textit{hybrid quantum group}. 
We denote the positive, negative and the zero parts by 
$$\fU_q^+:=\langle E_i \rangle, \quad \fU_q^-:=\langle F_i \rangle, \quad \fU^0_q:=\langle K_\lambda, [K_i;0]_{q_i} \rangle, $$ 
$$U_q^+:=\langle E_i^{(n)} \rangle, \quad U_q^-:=\langle F_i^{(n)} \rangle, \quad U^0_q:=\langle K_\lambda, \qbno{K_i;m}{n}_{q_i} \rangle .$$ 
Then there are triangular decompositions 
$$ \fU_q= \fU_q^-\otimes \fU_q^0 \otimes \fU_q^+, \quad U_q= U_q^-\otimes U_q^0 \otimes U_q^+ , \quad \text{and} \quad U_q^{\hb}=  \fU_q^-\otimes \fU_q^0 \otimes U_q^+ .$$ 

\subsubsection{Specializations} 
For any integral form $A_q$ above, we let the $\k$-algebra $A_\zeta:=A_q\otimes_{\bA} \k$ be the specialization at $q_e= \zeta_e$. 
The specialization yields a chain of maps 
\begin{equation}\label{equ QGspe} 
\fU_\zeta \rightarrow U_\zeta^{\hb} \rightarrow U_\zeta .
\end{equation} 
The \textit{small quantum group} $u_\zeta$ was defined by Lusztig to be the finite dimensional subalgebra in $U_\zeta$ generated by $E_i, F_i, K_\lambda$ for $i\in \I$ and $\lambda\in \Lambda$, which is also the image of $\fU_\zeta\rightarrow U_\zeta$. 
We denote by $u_\zeta^\flat$ the image of $\fU_\zeta^\flat$, for $\flat=-,0,+$. 
Uniformly we define 
\begin{equation}\label{equ midQG} 
u_\zeta:=\im(\fU_\zeta\rightarrow U_\zeta),\quad U_\zeta^{\frac{1}{2}}:=\im (U_\zeta^{\hb}\rightarrow U_\zeta), \quad \fU^b_\zeta:=\im(\fU_\zeta \rightarrow U_\zeta^{\hb}) ,
\end{equation} 
then there are triangular decompositions 
$$u_\zeta=u_\zeta^-\otimes u_\zeta^0 \otimes u_\zeta^+, \quad U_\zeta^{\frac{1}{2}}=u_\zeta^-\otimes u_\zeta^0 \otimes U_\zeta^+ , \quad \fU^b_\zeta= \fU^-_\zeta\otimes \fU^0_\zeta \otimes u_\zeta^+ .$$ 
For $A=A_q$ or $A_\zeta$ above, we abbreviate the subalgebras $A^{\leq}:=A^- A^0$ and $A^{\geq}:=A^+ A^0$. 

\subsubsection{PBW basis}\label{subsect PBW} 
Fix a convex order on $\Phi^+$, Lusztig \cite[\textsection 4]{Lus90} introduced the \textit{root vectors} $E_\beta\in U_q^+$ and $F_\beta\in U_q^-$ for any $\beta\in \Phi^+$. 
By our hypothesis that $l\geq h$, those root vectors are well-defined in the algebras in (\ref{equ QGspe}) and (\ref{equ midQG}). 
We denote the set of \textit{Kostant partitions} by 
$$\Par:= \bigsqcup\limits_{\eta\in \rQ} \Par(\eta), \quad \Par(\eta ):=\{ \wp\in \N^{\Phi^+} |\ \sum_{\beta\in \Phi^+} \wp(\beta)\beta = \eta \}.$$ 
For $\wp\in \N^{\Phi^+}$, we abbreviate 
$$ E^{\wp}:= \prod_{\beta\in \vec{\Phi}^+} E^{\wp(\beta)}_{\beta} ,\quad 
E^{(\wp)}:= \prod_{\beta\in \vec{\Phi}^+} E_\beta^{(\wp(\beta))} , $$ 
and similarly for $F^{\wp}$ and $F^{(\wp)}$, where $\vec{\Phi}^+$ is the set $\Phi^+$ endowed with the convex order and the products go from left to right when the elements in $\vec{\Phi}^+$ increase. 
By \cite[Thm 6.7]{Lus90}, the family $\{E^{(\wp)}\}_{\wp\in \Par}$ (resp. $\{F^{(\wp)}\}_{\wp\in \Par}$) forms a $\bA$-basis for $U^+_q$ (resp. $U^-_q$); by \cite[Prop 1.7]{DeCK90}, the family $\{E^{\wp}\}_{\wp\in \Par}$ (resp. $\{F^{\wp}\}_{\wp\in \Par}$) forms a $\k$-basis for $\fU^+_\zeta$ (resp. $\fU^-_\zeta$). 

Lusztig \cite[\textsection 8]{Lus90} defined the \textit{quantum Frobenius homomorphism} 
\begin{equation}
\Fr: U_\zeta \rightarrow U\g, \quad 
\text{by}\quad E_\beta^{(n)},\ F_\beta^{(n)}\mapsto 
\begin{cases} 
		\frac{e_\beta^{n/l}}{(n/l)!},\ \frac{f_\beta^{n/l}}{(n/l)!} & \text{if}\ l|n , \\ 
		 0 & \text{if else} 
\end{cases}, \quad K_\lambda\mapsto 1. 
\end{equation} 
It restricts to the homomorphisms $\Fr : U_\zeta^+ \rightarrow U\n$ and $\Fr: U_\zeta^-\rightarrow U\n^-$. 

\subsubsection{Frobenius center}\label{subset Frocen} 
The \textit{Frobenius center} of $\fU_\zeta$ is the $\k$-subalgebra 
$$Z_\Fr:=\langle K^l_{\lambda}, F^l_\beta , E^l_\beta \rangle_{\lambda\in \Lambda, \beta\in \Phi^+}. $$ 
We abbreviate $Z_\Fr^{\flat}:= Z_\Fr \cap \fU_\zeta^{\flat}$ for $\flat=-,+,0,\leq$ or $\geq$. 
By \cite[\textsection 0]{DeCKP92}, there are isomorphisms of $\k$-algebras, $Z_\Fr^{-}\xs \k[N^-]$, $Z_\Fr^{+}\xs \k[N]$ and $Z_\Fr^{0}\xs \k[T]$, which give an isomorphism  
\begin{equation}\label{equ 2.7} 
\Spec Z_\Fr \xs G^*, 
\end{equation}
where $G^*=N^-\times T \times N$ is the Poisson dual group of $G$. 
Note that $\fU_\zeta^b$ coincides with the algebra $\fU_\zeta\otimes_{\k[N]} \k$ by evaluating $\k[N]$ at $1\in N$. 
We may view $Z_\Fr^{\leq}$ as a subalgebra in $U_\zeta^\hb$, which is central in $\fU_\zeta^b\subset U_\zeta^\hb$. 

\subsection{Representations}\label{subsect 2.3} 
Following \cite[\textsection 2]{AJS94}, we define some module categories for an algebra with general structures as the quantum groups introduced above. 

There is a group homomorphism $\Lambda\rightarrow \Aut_{\bF\text{-algebra}}(\mathscr{U}_q^0)$, $\mu\mapsto \tau_\mu$, such that 
$${\tau_\mu}(K_\lambda)=q^{(\mu,\lambda)}K_\lambda, \quad \forall \mu,\lambda \in \Lambda.$$ 
The $\Lambda$-action on $\mathscr{U}_q^0$ preserves the integral forms $\fU^0_q$ and $U^0_q$, and specializes to an action on $\fU^0_\zeta$ and $U^0_\zeta$. 

Let $L$ be a commutative ring and $A^0$ be a finitely generated $L$-algebra that are either of the following cases: (1) $L=\k$, $A^0\subset \fU_\zeta^0$ or $A^0\subset U_\zeta^0$; (2) $L=\bA$, $A^0\subset U_q^0$; (3) $L=\bF$, $A^0\subset \mathscr{U}_q^0$, and in each cases, $A^0$ is preserved under the $\Lambda$-action. 

Let $A=\bigoplus_{\lambda\in \rQ}A_\lambda$ be a finitely generated $\rQ$-graded $A^0$-algebra such that $A^0\subset A_0$, and 
$$fm=m\tau_\lambda(f),\quad \forall f\in A^0,\ \forall m\in A_\lambda.$$ 
Suppose that there are subalgebras $A^-$ and $A^+$ of $A$, admitting decompositions of free $L$-modules of finite ranks, 
$$A^-=L\oplus \bigoplus_{\lambda< 0}A^-_{\lambda}, \quad A^+=L\oplus \bigoplus_{\lambda> 0}A^+_{\lambda}, $$ 
such that the multiplication yields an isomorphism of $L$-modules 
$$A^-\otimes A^0\otimes A^+\xs A.$$ 
Here and from now on, $\otimes$ appears without an index $L$. 
We assume $A^-$ is left Noetherian. 
Abbreviate $A^\leq=A^-A^0$ and $A^\geq=A^+A^0$. 

Let $R$ be a commutative and Noetherian $A^0$-algebra, with the structure map $\pi: A^0\rightarrow R$. 
We call $R$ a \textit{deformation ring for $A$}. 
If $A^0\subset \fU_\zeta^0$ (resp. $A^0\subset \fU_q^0$), we will view $S$ (resp. $\hat{S}$) as a deformation ring by the inclusion $\fU_\zeta^0=\k[\Lambda]=\k[T]\subset S$ (resp. $\fU_q^0\subset\bA_{\widehat{\zeta_e}}[\Lambda]=\bA_{\widehat{\zeta_e}}[T]\subset \hat{S}$). 

\subsubsection{Module categories}\label{subsect ModCat} 
We define $A\Mod_R^{\Lambda}$ to be the category consisting of $A\otimes R$-modules $M$ endowed with a decomposition of $R$-modules (called the \textit{weight spaces}) 
$$M=\bigoplus\limits_{\mu\in \Lambda}M_\mu,$$ 
such that $M_\mu$ is killed by the elements in $A\otimes R$ of the form 
$$f\otimes 1-1\otimes \pi({\tau_\mu}(f)), \quad f\in A^0.$$  
Morphisms in $A\Mod_R^{\Lambda}$ are the morphisms of $A\otimes R$-modules that respect the decompositions. 
The category $A\Mod_R^{\Lambda}$ is naturally abelian and $R$-linear. 
Let $A\mod_R^{\Lambda}$ be the full subcategory of $A\Mod_R^{\Lambda}$ consisting of finitely generated $A\otimes R$-modules whose weight spaces are finitely generated $R$-modules. 

\begin{lem}
If $R=\F$ is a field, then the category $A\mod_{\F}^{\Lambda}$ is hom-finite, i.e. the morphism space of any pair of objects is finite dimensional over $\F$. 
In particular, $A\mod_{\F}^{\Lambda}$ is Krull-Schmidt. 
\end{lem}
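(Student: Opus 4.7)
The plan is to first establish hom-finiteness by reducing to weight-homogeneous generators, and then to derive the Krull--Schmidt property by the standard argument that a hom-finite abelian $\F$-linear category with finite dimensional endomorphism rings is Krull--Schmidt.

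For the hom-finiteness part, the key observation I would use is that any $M\in A\mod_\F^\Lambda$ admits a finite set of weight-homogeneous generators. Starting from any finite $A\otimes\F$-generating set $m_1,\ldots,m_k$ of $M$ (guaranteed by the finitely-generated hypothesis), I would decompose each $m_i=\sum_\mu (m_i)_\mu$ using the weight space decomposition $M=\bigoplus_\mu M_\mu$. The resulting finite collection of weight-homogeneous elements still generates $M$ as an $A\otimes\F$-module (it recovers each $m_i$ by summation), so after this replacement I may assume each $m_i$ is homogeneous of some weight $\mu_i$. Since morphisms in $A\mod_\F^\Lambda$ respect the weight decomposition, $f(m_i)\in N_{\mu_i}$ for any $f\in \Hom(M,N)$, and evaluation on the generators yields an $\F$-linear injection
$$\Hom(M,N)\ \hookrightarrow\ \prod_{i=1}^{k}N_{\mu_i}.$$
The right-hand side is finite dimensional over $\F$ by the very definition of $A\mod_\F^\Lambda$, so the left-hand side is finite dimensional.

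For the Krull--Schmidt conclusion, I would observe that for any $M$ the algebra $\End(M)$ is now a finite dimensional $\F$-algebra, hence Artinian and in particular semiperfect. In an abelian category idempotents split, so the decomposition of $1\in\End(M)$ into primitive orthogonal idempotents realises $M$ as a finite direct sum of indecomposables, each of which has an Artinian endomorphism algebra with only trivial idempotents, hence local. The Krull--Schmidt--Azumaya theorem then supplies the uniqueness of this decomposition up to isomorphism and permutation.

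I do not anticipate a deep obstacle. The only step requiring a moment's care is the reduction to weight-homogeneous generators, which relies on $M=\bigoplus_\mu M_\mu$ being an honest direct sum of $R$-submodules rather than a mere character decomposition (the latter could fail to separate weights whenever $\pi\circ\tau_\mu=\pi\circ\tau_\nu$ for some $\mu\neq\nu$); but this decomposition is built into the definition of $A\mod_\F^\Lambda$, so the argument goes through with no further hypotheses on $A^0$ or on the characteristic of $\F$.
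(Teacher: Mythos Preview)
Your argument is correct. The paper states this lemma without proof, so there is nothing to compare against; your approach---replacing a finite generating set by its weight components to embed $\Hom(M,N)$ into a finite product of weight spaces of $N$, and then invoking the standard fact that an additive $\F$-linear category with split idempotents and finite-dimensional endomorphism algebras is Krull--Schmidt---is the natural one.

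One minor remark: you appeal to abelianness for idempotent splitting, but the paper only asserts that the larger category $A\Mod_\F^\Lambda$ is abelian, not $A\mod_\F^\Lambda$ itself. This is harmless: an idempotent $e\in\End(M)$ splits $M=eM\oplus(1-e)M$ inside $A\Mod_\F^\Lambda$, and the summand $eM$ is generated by $e(m_1),\ldots,e(m_k)$ (where the $m_i$ generate $M$) with weight spaces $e(M_\mu)\subseteq M_\mu$ still finite dimensional, so $eM\in A\mod_\F^\Lambda$ and the splitting takes place in the subcategory as required.
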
  

Define the \textit{category $\sO$ for $A$} to be the full subcategory $\sO^A_R$ of $A\mod_R^{\Lambda}$ of modules that are locally unipotent for the action of $A^+$. 
It is an abelian subcategory of $A\Mod_R^{\Lambda}$. 

\subsubsection{Verma and simple modules}\label{subsect Verma} 
Let $R_\lambda$ be the $\Lambda$-graded $A^0\otimes R$-module such that 
$$(R_\lambda)_\mu= \delta_{\lambda,\mu} R, \quad (f\otimes 1 -1\otimes \pi({\tau_\lambda}f)).R_\lambda=0, \quad \forall f\in A^0 .$$ 
Define the \textit{Verma module} 
$$M^A(\lambda)_R:= A \otimes_{A^{\geq}} R_\lambda \ \in \sO^A_R$$ 
where $R_\lambda$ is an $A^{\geq}$-module via the projection $A^{\geq} \twoheadrightarrow A^0$. 
Note that $M^A(\lambda)_R = A^{-} \otimes R_\lambda$ as $A^\leq$-modules. 
In particular, $M^A(\lambda)_R$ is a free $R$-module. 

If $R=\F$ is a field, $M^A(\lambda)_\F$ has a unique simple quotient $L^A(\lambda)_\F$, and every simple object in the category $\sO^A_\F$ arises in this way. 

\subsubsection{Projective modules and truncation} 
For any $\nu\in \Lambda$, denote by $A\Mod_R^{\Lambda,\leq \nu}$ (the \textit{truncated category}) the full subcategory of $A\Mod_R^{\Lambda}$ consisting of the module $M$ such that $M_\mu=0$ unless $\mu \leq \nu$. 
Define the \textit{truncation functor} to be (drop the subscripts $A,R$ in $\tau^{\leq \nu}$ for simplicity) 
$$ \tau^{\leq \nu}: A\Mod_R^{\Lambda} \rightarrow A\Mod_R^{\Lambda,\leq \nu} ,\quad M \mapsto M \big/ A. \big( \bigoplus_{\mu\nleq \nu} M_\mu \big) ,$$ 
which is by taking the maximal quotient in $A\Mod_R^{\Lambda,\leq \nu}$. 
Hence $\tau^{\leq \nu}$ is left adjoint to the natural inclusion $A\Mod_R^{\Lambda,\leq \nu} \hookrightarrow A\Mod_R^{\Lambda}$. 
Denote the counit by $\epsilon^{\leq \nu}:\id \rightarrow \tau^{\leq \nu}$. 

Set $\sO^{A,\leq \nu}_R=\sO^{A}_R\cap A\Mod_R^{\Lambda,\leq \nu}$. 
Any object in $\sO^A_R$ admits a finite decomposition with each direct factors contained in $\sO^{A,\leq \nu}_R$ for some $\nu$. 
The category $\sO^{A}_R$ may not admit enough projective objects, but $\sO^{A,\leq \nu}_R$ always does. 
Indeed, the modules of the form 
\begin{equation}\label{equ 2.9} 
Q'^{A}(\mu)^{\leq \nu}_R:= A\otimes_{A^{\geq}}\big((A^{\geq}/ \sum_{\lambda\nleq \nu-\mu} A^{\geq}_{\lambda}) \otimes_{A^0} R_\mu \big) 
\end{equation} 
provide enough projective objects in $\sO^{A,\leq \nu}_R$. 
Each projective module $Q$ has a \textit{Verma filtration}, i.e. a finite filtration whose composition factors are Verma modules (called the \textit{Verma factors}). 
Denote by $(Q:M^A(\lambda)_R)$ the multiplicity of $M^A(\lambda)_R$ in the Verma factors, which is independent with the choices of Verma filtrations. 
\begin{lem}[{\cite[Prop 2.6]{Fie03}}]\label{lem 2.2} 
Suppose $R$ is a local complete Noetherian domain, with residue field $\F$. 
Then there exists projective cover $Q^A(\mu)^{\leq \nu}_R$ for any $L^A(\mu)_\F$ in $\sO^{A,\leq \nu}_R$, and $\{Q^A(\mu)^{\leq \nu}_R\}_{\mu\leq \nu}$ forms a complete family of indecomposable projective modules in $\sO^{A,\leq \nu}_R$. 
Moreover $Q^A(\mu)^{\leq \nu}_\F=Q^A(\mu)^{\leq \nu}_R\otimes_R \F$, hence $(Q^A(\mu)^{\leq \nu}_R:M^A(\lambda)_R)=(Q^A(\mu)^{\leq \nu}_\F:M^A(\lambda)_\F)$. 
\end{lem}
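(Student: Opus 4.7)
The plan is to realize $Q^A(\mu)^{\leq \nu}_R$ as an indecomposable direct summand of the explicit projective $Q'^A(\mu)^{\leq \nu}_R$ from \eqref{equ 2.9}, via idempotent lifting from the residue field $\F$ to the local complete Noetherian ring $R$. First I would verify that $Q'^A(\mu)^{\leq\nu}_R$ is projective in $\sO^{A,\leq\nu}_R$ with a Verma filtration, by establishing the adjunction
$$\Hom_{\sO^{A,\leq\nu}_R}\bigl(Q'^A(\mu)^{\leq\nu}_R, M\bigr) \simeq M_\mu, \quad \forall M\in \sO^{A,\leq\nu}_R.$$
This comes from the induction--restriction adjunction together with truncation: writing $T := (A^{\geq}/\sum_{\lambda\nleq\nu-\mu}A^{\geq}_\lambda)\otimes_{A^0} R_\mu$, any $A^{\geq}$-map $T\to M|_{A^{\geq}}$ is determined by the image of the weight-$\mu$ cyclic generator, and the higher-weight relations in $T$ are automatically satisfied because $M$ has no weights exceeding $\nu$. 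Exactness of $M\mapsto M_\mu$ then yields projectivity. Filtering $T$ by its finitely many nonzero weight spaces and inducing up produces a Verma filtration. Since $A^-$ is free over $L$, the construction \eqref{equ 2.9} commutes with base change, giving $Q'^A(\mu)^{\leq\nu}_R\otimes_R\F \simeq Q'^A(\mu)^{\leq\nu}_\F$.

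Next I would show that $E := \End_{\sO^{A,\leq\nu}_R}(Q'^A(\mu)^{\leq\nu}_R) \simeq (Q'^A(\mu)^{\leq\nu}_R)_\mu$ is a finitely generated $R$-module. The triangular isomorphism $Q'^A(\mu)^{\leq\nu}_R \simeq A^-\otimes T$ identifies the $\mu$-weight space with a finite direct sum $\bigoplus_\beta A^-_{\mu-\beta}\otimes_L T_\beta$ indexed by the finite set $\{\beta : \mu\leq\beta\leq\nu\}$, and each summand is finitely generated over $R$ by the finite rank of $A^-_{\mu-\beta}$ over $L$. Since $R$ is complete, $E$ is $\m$-adically complete, and $E/\m E \simeq \End_{\sO^{A,\leq\nu}_\F}(Q'^A(\mu)^{\leq\nu}_\F)$ is a finite dimensional $\F$-algebra, hence semiperfect. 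Applying Krull--Schmidt in the Hom-finite $\F$-linear category $\sO^{A,\leq\nu}_\F$, one decomposes $Q'^A(\mu)^{\leq\nu}_\F$ into indecomposable projective covers of simples $L^A(\lambda)_\F$, with $L^A(\mu)_\F$ appearing as the unique simple top (coming from the weight-$\mu$ generator). Lifting the corresponding orthogonal primitive idempotents from $E/\m E$ to $E$ along the complete ideal $\m E$, and using that $\sO^{A,\leq\nu}_R$ is idempotent complete, yields an indecomposable decomposition of $Q'^A(\mu)^{\leq\nu}_R$; define $Q^A(\lambda)^{\leq\nu}_R$ to be the summand whose specialization is the projective cover of $L^A(\lambda)_\F$.

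Varying $\mu$ over $\{\lambda\leq\nu\}$, every simple $L^A(\lambda)_\F$ arises as the top of some $Q'^A(\lambda)^{\leq\nu}_\F$, producing the complete family of indecomposable projective covers. The multiplicity identity $(Q^A(\mu)^{\leq\nu}_R : M^A(\lambda)_R) = (Q^A(\mu)^{\leq\nu}_\F : M^A(\lambda)_\F)$ then follows because each $M^A(\lambda)_R$ is $R$-free with $M^A(\lambda)_R\otimes_R \F \simeq M^A(\lambda)_\F$, so any Verma filtration of $Q^A(\mu)^{\leq\nu}_R$ specializes exactly (the short exact sequences remain exact after $-\otimes_R \F$ since the quotient terms are $R$-free) to a Verma filtration of $Q^A(\mu)^{\leq\nu}_\F$ with matching multiplicities. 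The main technical obstacle I anticipate is the finite generation of $E$ over $R$, which is the single point where the finite rank of $A^-$ over $L$, the Noetherianity of $R$, and the truncation by $\leq\nu$ all conspire; without all three, the $\mu$-weight space could fail to be finitely generated, the idempotent lifting mechanism would break down, and the indecomposable projective covers would not be produced (which indeed reflects why the non-truncated category $\sO^A_R$ generally lacks enough projectives).
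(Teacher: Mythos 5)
Your argument is correct and is essentially the proof the paper relies on: the statement is only cited to \cite[Prop 2.6]{Fie03}, and that proof proceeds exactly as you do, via the adjunction $\Hom(Q'^A(\mu)^{\leq\nu}_R,M)\simeq M_\mu$ to get projectivity and a Verma filtration of the explicit modules (\ref{equ 2.9}), finite generation and $\m$-adic completeness of their endomorphism rings over the complete local $R$, and lifting of primitive idempotents from the residue field. The only points worth tightening are cosmetic: $Q'^A(\mu)^{\leq\nu}_\F$ generally has several simple quotients (any $L^A(\lambda)_\F$ with $L^A(\lambda)_{\F,\mu}\neq 0$), so one should say that the projective cover of $L^A(\mu)_\F$ occurs exactly once among its summands because $\dim\Hom(Q'^A(\mu)^{\leq\nu}_\F,L^A(\mu)_\F)=\dim L^A(\mu)_{\F,\mu}=1$, and the fact that the indecomposable summand $Q^A(\mu)^{\leq\nu}_R$ inherits a Verma filtration from $Q'^A(\mu)^{\leq\nu}_R$ is the standard direct-summand statement already recorded in \textsection 2.3.3.
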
 

\subsubsection{Base change}\label{subsect 2.3.4} 
Let $R'$ be a commutative Noetherian $R$-algebra. 
There is a base change functor $-\otimes_R R': \sO^A_R \rightarrow \sO^A_{R'}$. 
Denote by $\sP^{A,\leq \nu}_R$ the subcategory of projective modules in $\sO^{A,\leq \nu}_R$, then it is generated by the direct factors of the modules $Q'^{A}(\mu)^{\leq \nu}_R$. 
The base change functor yields a natural equivalence, see \cite[Prop 2.4]{Fie03}, 
\begin{equation}\label{equ 2.10} 
\sP^{A,\leq \nu}_R\otimes_R R'\xs \sP^{A,\leq \nu}_{R'}. 
\end{equation} 
It induces a map on the centers 
$$Z(\sO^{A,\leq\nu}_R)=Z(\sP^{A,\leq \nu}_R)\xrightarrow{-\otimes_R R'} Z(\sP^{A,\leq \nu}_{R'})=Z(\sO^{A,\leq\nu}_{R'}),$$ 
which is an inclusion if $R\rightarrow R'$ is injective. 
Since $\sO^{A}_R$ is the direct limit of $\sO^{A,\leq\nu}_R$, it shows that $Z(\sO^{A}_R)=\varprojlim_{\nu} Z(\sO^{A,\leq\nu}_R)$, therefore we have an $R$-algebra homomorphism 
\begin{equation} 
-\otimes_R R':\ Z(\sO^A_R) \rightarrow Z(\sO^A_{R'}), 
\end{equation} 
which is an inclusion if $R\rightarrow R'$ is injective. 

\subsubsection{ } 
Let $A_q$ be one of the algebras in \textsection \ref{subsect 2.1.2}, and $A_\zeta$ be its specialization. 
If the deformation ring $R$ for $A_q$ satisfies $\pi: A^0_q\rightarrow A^0_\zeta\rightarrow R$, then by definition $A_q\Mod^\Lambda_R=A_\zeta\Mod^\Lambda_R$. 
We will not distinguish the categories from now on. 

Let $R$ be a local complete Noetherian domain, with residue field $\F$. 
We write $L^A(\lambda)_\F$ as 
$$E(\lambda)_\F\text{ for }U^\hb_q, \quad L(\lambda)_\F\text{ for }u_\zeta, \quad L^b(\lambda)_\F\text{ for }\fU^b_\zeta.$$ 
We abbreviate $M(\lambda)_R=M^A(\lambda)_R$ and $Q(\mu)^{\leq \nu}_R=Q^A(\mu)^{\leq \nu}_R$ for $A=U^\hb_q$. 
We abbreviate $\sO_{R}=\sO^A_R$ for $A=U^\hb_q$.

\section{Category $\sO$ of hybrid quantum group} 
Let $R$ be a deformation ring for $U^\hb_q$. 
In this section, we study some basic properties for the category $\sO_{R}$. 
\subsection{Projective and simple modules} 
\subsubsection{Simple modules}\label{subsect 3.1.1} 
Denote the set of $l$-restricted dominant weights by 
$$\Lambda_l^+=\{\mu\in \Lambda|\ 0\leq \langle \mu, \check{\alpha}_i \rangle <l,\ \forall i\in \I \}.$$ 
By \cite[Prop 7.1]{Lus89}, for any $\lambda^0\in \Lambda_l^+$, the simple module $L(\lambda^0)_\k$ of $u_\zeta$ can be extend to a $U_\zeta$-module. 
We view $L(\lambda^0)_\k$ as a $U^\hb_\zeta$-module via $U^\hb_\zeta\rightarrow U_\zeta$. 
For any $\nu\in \Lambda$, there is a trivial $U_\zeta^\hb$-module $\k_{l\nu}$ supported on the weight $l\nu$. 
\begin{lem}\label{lem simple} 
We have $E(\lambda)_\k=L(\lambda^0)_\k \otimes \k_{l\lambda^1}$ for any $\lambda\in \Lambda$, where $\lambda=\lambda^0+l\lambda^1$ is the unique decomposition such that $\lambda^0\in \Lambda_l^+$. 
\end{lem}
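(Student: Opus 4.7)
The plan is to identify $E(\lambda)_\k$ with $L(\lambda^0)_\k \otimes \k_{l\lambda^1}$ by showing the tensor product is a simple object of $\sO_\zeta$ of highest weight $\lambda^0 + l\lambda^1 = \lambda$; since Verma modules admit a unique simple quotient, this forces the equality.

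First I would verify that $\k_{l\lambda^1}$ is a well-defined one-dimensional object of $\sO_\zeta$ concentrated in weight $l\lambda^1$, on which $F_i$ and $E_i^{(n)}$ (for $n\geq 1$) act by zero. The Chevalley relation $[E_i,F_i]=[K_i;0]_{q_i}$ is consistent because $(\alpha_i, l\lambda^1) = l d_i \langle \check{\alpha}_i, \lambda^1\rangle \in l\Z$ together with $\zeta^l = 1$ force $K_i - K_i^{-1}$ to specialize to zero on this module; the Serre and divided-power relations hold trivially in dimension one. I would then observe that $-\otimes \k_{l\lambda^1}$ is an autoequivalence of $\sO_\zeta$ shifting weights by $l\lambda^1$: using
$$\Delta(E_i^{(n)}) = \sum_{j=0}^{n} E_i^{(j)} K_i^{n-j} \otimes E_i^{(n-j)}, \qquad \Delta(F_i) = F_i \otimes K_i^{-1} + 1 \otimes F_i,$$
together with the vanishing of $E_i^{(m)}, F_i$ on $\k_{l\lambda^1}$ and the triviality of $K_i^{\pm 1}$ there, the action on $M \otimes \k_{l\lambda^1}$ agrees with the action on $M$ for $\fU^-_\zeta$ and $U^+_\zeta$ under $v \leftrightarrow v \otimes 1$, while the $\fU^0_\zeta$-action is twisted by $\tau_{l\lambda^1}$; the inverse functor is $-\otimes \k_{-l\lambda^1}$.

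Next I would show that for $\lambda^0 \in \Lambda_l^+$ the $u_\zeta$-simple $L(\lambda^0)_\k$ is also simple as a $U^\hb_\zeta$-module and lies in $\sO_\zeta$ with highest weight $\lambda^0$, so that $L(\lambda^0)_\k = E(\lambda^0)_\k$. The standard extension of $L(\lambda^0)_\k$ to a $U_\zeta$-module restricts along $U^\hb_\zeta \to U_\zeta^{\frac{1}{2}} \hookrightarrow U_\zeta$ to a $U^\hb_\zeta$-module, which is finite-dimensional and hence locally $U^+_\zeta$-unipotent. Any $U^\hb_\zeta$-submodule is in particular stable under $\fU_\zeta$ (via $\fU_\zeta \to U^\hb_\zeta$), and hence a $u_\zeta$-submodule (via $\fU_\zeta \twoheadrightarrow u_\zeta$); so simplicity over $u_\zeta$ transfers to simplicity over $U^\hb_\zeta$.

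Applying the autoequivalence of the first step to $E(\lambda^0)_\k = L(\lambda^0)_\k$ then produces a simple object of $\sO_\zeta$ of highest weight $\lambda$, which must be $E(\lambda)_\k$. The main technical obstacle is the first step: confirming consistency of the defining relations of $U^\hb_\zeta$ on the putative module $\k_{l\lambda^1}$, particularly those arising from divided powers $E_i^{(n)}$ with $n \geq l$ and the integral elements $\qbno{K_i;m}{n}_{q_i}$. These checks, however, reduce to direct finite computations on explicit weight eigenvalues, since one-dimensionality of $\k_{l\lambda^1}$ forces most terms to vanish.
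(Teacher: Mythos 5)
Your proposal is correct and follows essentially the same route as the paper: the key step in both is that the $U^\hb_\zeta$-action factors through $U_\zeta^{\frac{1}{2}}\supset u_\zeta$, so simplicity over the graded small quantum group transfers to $U^\hb_\zeta$. The paper applies this directly to $L(\lambda^0)_\k\otimes\k_{l\lambda^1}$ (noting $E(\lambda)_\k$ occurs as a simple factor of it), whereas you first identify $E(\lambda^0)_\k=L(\lambda^0)_\k$ and then transport by the autoequivalence $-\otimes\k_{l\lambda^1}$ — a cosmetic reorganization of the same argument.
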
 
\begin{proof} 
Note that $E(\lambda)_\k$ appears as a simple factor of $L(\lambda^0)_\k \otimes \k_{l\lambda^1}$. 
So it is enough to show that $L(\lambda^0)_\k \otimes \k_{l\lambda^1}$ is simple. 
The $U^\hb_\zeta$-action on $L(\lambda^0)_\k \otimes \k_{l\lambda^1}$ factors through $U^\hb_\zeta\twoheadrightarrow U_\zeta^{\frac{1}{2}}$, and $U_\zeta^{\frac{1}{2}}$ contains $u_\zeta$ as a subalgebra. 
Note that $L(\lambda^0)_\k \otimes \k_{l\lambda^1}$ is simple as a $u_\zeta$-module, so is it as a $U^\hb_\zeta$-module. 
\end{proof} 

\subsubsection{Projective modules}\label{subsect 3.1.2} 
Suppose $R$ is an $S$-algebra that is a local Noetherian domain with residue field $\F$. 
In \cite[\textsection 3.3.9]{BBASV}, the authors define a module $Q(\lambda)_R$ in $U^\hb_{\zeta}\Mod^{\Lambda}_R$ by 
\begin{equation}\label{equ 3.1} 
Q(\lambda)_R:= U^\hb_\zeta\otimes_{\fU^b_\zeta} P^b(\lambda)_R , \quad \lambda\in \Lambda, 
\end{equation} 
where $P^b(\lambda)_R$ is the projective cover for $L^b(\lambda)_\F$ in $\fU^b_\zeta\mod^{\Lambda}_R$. 
If $\lambda\in -\rho+l\Lambda$, we have $P^b(\lambda)_R=M(\lambda)_R$ as $\fU^b_\zeta$-modules, hence 
\begin{equation}\label{equ 3.20} 
\begin{aligned} 
Q(\lambda)_R&=U^\hb_\zeta\otimes_{\fU^b_\zeta} M(\lambda)_R
=U^\hb_\zeta \otimes_{U^{\hb,\geq}_\zeta} (U^{\hb,\geq}_\zeta\otimes_{\fU^{b,\geq}_\zeta} R_{\lambda})\\ 
&=U^\hb_\zeta \otimes_{U^{\hb,\geq}_\zeta} (U\n\otimes R_{\lambda}) , 
\end{aligned} 
\end{equation} 
where $U\n\otimes R_{\lambda}$ is viewed as a $U^{\hb,\geq}_\zeta$-module by $U^{\hb,\geq}_\zeta= U_\zeta^+\rtimes \fU_\zeta^0\xrightarrow{\Fr\rtimes (\pi\circ \tau_{\lambda})} U\n \otimes R_\lambda$. 
\begin{lem}{\cite[Lem 3.7]{BBASV}}\label{lem 3.2} 
\begin{enumerate} 
\item The functor $\Hom_{U^\hb_{\zeta}\Mod^{\Lambda}_R}(Q(\lambda)_R,-)$ on $\sO_{R}$ is exact; 
\item For any $\nu \geq \lambda$, the truncation $\tau^{\leq \nu}Q(\lambda)_R$ is projective  in $\sO^{\leq \nu}_{R}$. 
	It surjects to $E(\lambda)_\F$ and admits a Verma filtration. 
\item For $R=S$, $\tau^{\leq \nu}Q(\lambda)_S$ is the projective cover $Q(\lambda)_S^{\leq \nu}$ for $E(\lambda)_\k$ in $\sO^{\leq \nu}_{S}$. 
\end{enumerate} 
\end{lem}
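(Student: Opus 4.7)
The plan is to extract all three statements from the induction formula $Q(\lambda)_R=U^\hb_\zeta\otimes_{\fU^b_\zeta} P^b(\lambda)_R$ of \eqref{equ 3.1}, and treat the parts in turn.

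For (1), I would first invoke the Hom-tensor adjunction
$$\Hom_{U^\hb_\zeta\Mod^{\Lambda}_R}\big(Q(\lambda)_R,M\big)\ \simeq\ \Hom_{\fU^b_\zeta\Mod^{\Lambda}_R}\big(P^b(\lambda)_R,M\big),$$
the right-hand side viewing $M\in \sO_R$ via restriction along the subalgebra inclusion $\fU^b_\zeta\hookrightarrow U^\hb_\zeta$. Restriction is exact, and $P^b(\lambda)_R$ is projective in $\fU^b_\zeta\mod^\Lambda_R$ by definition, so the composite functor is exact on $\sO_R$.

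For (2), projectivity of $\tau^{\leq \nu}Q(\lambda)_R$ in $\sO^{\leq \nu}_R$ follows from (1) together with the adjunction between $\tau^{\leq \nu}$ and the inclusion $\sO^{\leq \nu}_R\hookrightarrow \sO_R$ reviewed in \textsection\ref{subsect ModCat}. The surjection $P^b(\lambda)_R\twoheadrightarrow L^b(\lambda)_\F$ from the projective cover induces a surjection $Q(\lambda)_R \twoheadrightarrow U^\hb_\zeta\otimes_{\fU^b_\zeta}L^b(\lambda)_\F$, and I would identify the right-hand side with $E(\lambda)_\F$ using Lemma \ref{lem simple} (plus a direct check that inducing a simple $\fU^b_\zeta$-module of highest weight $\lambda$ recovers $L(\lambda^0)_\F\otimes\k_{l\lambda^1}$), and then truncate. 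For the Verma filtration I would pass to the alternative presentation \eqref{equ 3.20}: the PBW total-degree filtration on $U\n$ identifies the graded pieces of $U\n\otimes R_\lambda$ as $U^{\hb,\geq}_\zeta$-submodules whose graded quotients are $R_{\lambda+l\eta}$, $\eta\in\N\Phi^+$, on which $U^+_\zeta$ acts trivially modulo lower degrees. Inducing this filtration along $U^{\hb,\geq}_\zeta\hookrightarrow U^\hb_\zeta$ yields a filtration of $Q(\lambda)_R$ with graded pieces $M(\lambda+l\eta)_R$; truncating to weights $\leq\nu$ leaves only finitely many such pieces.

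For (3), take $R=S$. By Lemma \ref{lem 2.2} and the base-change equivalence \eqref{equ 2.10}, a projective in $\sO^{\leq\nu}_S$ is the projective cover of $E(\lambda)_\k$ precisely when its specialization $-\otimes_S\k$ is; since truncation and induction both commute with base change, this reduces the claim to showing that $\tau^{\leq\nu}Q(\lambda)_\k$ is indecomposable with simple top $E(\lambda)_\k$. Reapplying the adjunction of (1) gives
$$\Hom_{\sO^{\leq\nu}_\k}\big(\tau^{\leq\nu}Q(\lambda)_\k,E(\mu)_\k\big)\ \simeq\ \Hom_{\fU^b_\zeta\mod^\Lambda_\k}\big(P^b(\lambda)_\k,E(\mu)_\k\big),$$
and since $P^b(\lambda)_\k$ is the projective cover of $L^b(\lambda)_\k$, the right-hand side is nonzero iff $E(\mu)_\k$ has $L^b(\lambda)_\k$ as an $\fU^b_\zeta$-quotient, which by comparison of central characters happens only when $\mu=\lambda$, and then with multiplicity one.

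The main obstacle is the identification $U^\hb_\zeta\otimes_{\fU^b_\zeta}L^b(\lambda)_\F=E(\lambda)_\F$ together with the precise compatibility between the descriptions \eqref{equ 3.1} and \eqref{equ 3.20} of $Q(\lambda)_R$ that makes the Verma filtration transparent; both rest on controlling how $U^+_\zeta$ decomposes over $u^+_\zeta$ through the PBW basis and quantum Frobenius. Once this bookkeeping is set up, the adjunction arguments in (1), (2), and (3) are essentially formal.
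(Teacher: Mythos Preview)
Your treatment of (1) and (3) is essentially the paper's approach: the paper defers (1) and (2) to \cite{BBASV} and writes out only (3), and your adjunction chain
\[
\Hom\big(\tau^{\leq\nu}Q(\lambda)_S,E(\mu)_\k\big)\simeq\Hom_{\fU^b_\zeta}\big(P^b(\lambda)_S,E(\mu)_\k\big)=\delta_{\lambda,\mu}\k
\]
is exactly what the paper does. One correction: the last equality is not ``comparison of central characters'' but the fact (Lemma~\ref{lem simple} together with $L^b(\lambda)_\k=L(\lambda^0)_\k\otimes\k_{l\lambda^1}$) that $E(\mu)_\k$ restricted to $\fU^b_\zeta$ is the simple module $L^b(\mu)_\k$, so the Hom against the projective cover $P^b(\lambda)$ is $\delta_{\lambda,\mu}\k$.

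There are two gaps in (2). First, the identification $U^\hb_\zeta\otimes_{\fU^b_\zeta}L^b(\lambda)_\F\simeq E(\lambda)_\F$ is false in general: the induced module acquires nonzero vectors of weight $\lambda+l\eta$ for $\eta>0$ coming from $E_\beta^{(l)}\otimes 1$, so it is strictly larger than $E(\lambda)_\F$. This does not kill the argument, since $E(\lambda)_\F$ is still a quotient of the induced module (by Frobenius reciprocity and the restriction identity above), hence a quotient of $Q(\lambda)_R$; but the statement as written needs adjusting. Second, and more substantially, you invoke \eqref{equ 3.20} for the Verma filtration, but that formula is stated only under the hypothesis $\lambda\in -\rho+l\Lambda$, which is precisely when $P^b(\lambda)_R$ equals the $\fU^b_\zeta$-Verma module $M^{\fU^b}(\lambda)_R$. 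For general $\lambda$ the module $P^b(\lambda)_R$ is not a Verma module, so $Q(\lambda)_R$ is not $U^\hb_\zeta\otimes_{U^{\hb,\geq}_\zeta}(U\n\otimes R_\lambda)$. The fix is to first use that $P^b(\lambda)_R$ admits a filtration by $\fU^b_\zeta$-Verma modules $M^{\fU^b}(\mu)_R$, and then apply the computation behind \eqref{equ 3.20} to each factor: the identity $U^\hb_\zeta\otimes_{\fU^b_\zeta}M^{\fU^b}(\mu)_R=U^\hb_\zeta\otimes_{U^{\hb,\geq}_\zeta}(U\n\otimes R_\mu)$ holds for every $\mu$ (the restriction $\lambda\in-\rho+l\Lambda$ in the paper concerns only the equality $P^b=M^{\fU^b}$, not this induction identity), and your PBW/Frobenius filtration argument then produces the Verma factors $M(\mu+l\eta)_R$.
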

\begin{proof}
Part (1) and (2) are in the \textit{loc. cit}. 
(3) For any $\mu\leq \nu$, we have isomorphisms 
\begin{align*} 
\Hom_{U^\hb_\zeta\Mod^{\Lambda}_S}(\tau^{\leq \nu}Q(\lambda)_S,E(\mu)_\k)
&=\Hom_{U^\hb_\zeta\Mod^{\Lambda}_S}(Q(\lambda)_S,E(\mu)_\k)\\ 
&=\Hom_{\fU^b_\zeta\Mod^{\Lambda}_S}(P^b(\lambda)_S,E(\mu)_\k)=\delta_{\lambda,\mu}\k,
\end{align*}
where the last equality is by Lemma \ref{lem simple} and the fact that $L^b(\lambda)_\k=L(\lambda^0)_\k \otimes \k_{l\lambda^1}$. 
\end{proof} 
As a consequence, for any $\lambda\leq \nu$, we have 
$$Q(-\rho+l\lambda)^{\leq -\rho+l\nu}_S= U^\hb_\zeta \otimes_{U^{\hb,\geq}_\zeta} \big((U\n/ \bigoplus_{\mu\nleq \nu-\lambda} (U\n)_{\mu}) \otimes S_{-\rho+l\lambda}\big).$$ 

\subsubsection{BGG reciprocity} 
Denote by $U^\hb_q\fMod^{\Lambda}_R$ the full subcategory of $U^\hb_q\Mod^{\Lambda}_R$ of the module $M$ such that $M_\mu$ is finitely generated projective $R$-module for each $\mu\in \Lambda$. 
We define ${}^{\Lambda}_R\rMod U^\hb_q$ to be the category of $\Lambda$-graded right $U^\hb_q\otimes R$-modules as in \textsection \ref{subsect ModCat}, and define its full subcategory ${}^{\Lambda}_R\Modf U^\hb_q$ similarly. 
Consider the functors 
$$-^{\circledast}:\ {}^{\Lambda}_R\Modf U^\hb_q \xs (U^\hb_q\fMod^{\Lambda}_R)^\op, \quad -^{\circledast}:\ U^\hb_q\fMod^{\Lambda}_R\xs ({}^{\Lambda}_R\Modf U^\hb_q)^\op,$$ 
given by $M^{\circledast}=\bigoplus_{\mu\in \Lambda} M^{\circledast}_\mu =\bigoplus_{\mu\in \Lambda} \Hom_R(M_\mu,R)$. 

We consider the right $U^\hb_q$-module $M^-(\lambda)_R:=R_\lambda \otimes_{\fU_q^{\leq}} U^\hb_q$, and define the \textit{coVerma module} by $M(\lambda)^\vee_R=M^-(\lambda)^{\circledast}_R$. 
It may not be contained in $\sO_{R}$. 
\begin{lem}\label{lem 3.3} 
We have 
$$\Ext^i_{U^\hb_q\Mod^{\Lambda}_R}(M(\lambda)_R, M(\mu)_R^\vee)= 
\begin{cases}
	\delta_{\lambda,\mu}R &  i=0 \\ 
	0 & i=1.
\end{cases}$$ 
If $R=\F$ is a field, then the unique (up to scalar) nonzero homomorphism $M(\lambda)_\F\rightarrow M(\lambda)_\F^\vee$ is factorized into 
$$M(\lambda)_\F\twoheadrightarrow E(\lambda)_\F \hookrightarrow M(\lambda)_\F^\vee.$$
\end{lem}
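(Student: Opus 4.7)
The plan is to reduce to computing derived $U^+_{>0}$-invariants by adjunction through $U^{\hb,\geq}_q$, then exploit the triangular decomposition to describe $M^-(\mu)_R$ as a free rank-one right $U^+_q$-module, which handles both $\Hom$ and $\Ext^1$ in one stroke. Since $U^\hb_q \cong \fU^-_q \otimes U^{\hb,\geq}_q$ is free as a right $U^{\hb,\geq}_q$-module, the induction functor $U^\hb_q \otimes_{U^{\hb,\geq}_q} -$ that produces $M(\lambda)_R = U^\hb_q \otimes_{U^{\hb,\geq}_q} R_\lambda$ is exact; restriction is always exact, so derived adjunction yields
\[ \Ext^i_{U^\hb_q\Mod^\Lambda_R}\bigl(M(\lambda)_R, M(\mu)^\vee_R\bigr) \simeq \Ext^i_{U^{\hb,\geq}_q\Mod^\Lambda_R}\bigl(R_\lambda, M(\mu)^\vee_R\bigr), \]
reducing the problem to the derived $U^+_{>0}$-invariants of $M(\mu)^\vee_R$ in weight $\lambda$ (the $\fU^0_q$-factor being absorbed by the $\Lambda$-grading).

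Another application of the triangular decomposition gives $M^-(\mu)_R = R_\mu \otimes_{\fU^\leq_q} U^\hb_q \cong R_\mu \otimes_R U^+_q$ as a right $U^+_q$-module, free of rank one. By tensor-Hom adjunction applied weight-by-weight, for any $N \in U^+_q\Mod^\Lambda_R$ one has
\[ \Hom_{U^+_q\Mod^\Lambda_R}\bigl(N, M(\mu)^\vee_R\bigr) \cong \Hom_R\bigl(M^-(\mu)_R \otimes_{U^+_q} N, R\bigr) \cong \Hom_R\bigl(R_\mu \otimes_R N, R\bigr), \]
up to a weight shift by $\mu$. Taking $N = R_\lambda$ in degree $i=0$ recovers $\delta_{\lambda,\mu} R$ as claimed; for $i \geq 1$, since $R_\lambda$ is $R$-projective and the composite functor is exact in $N$, one deduces $\Ext^{\geq 1}_{U^+_q\Mod^\Lambda_R}(R_\lambda, M(\mu)^\vee_R) = 0$, giving the required $\Ext^1$ vanishing. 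An equivalent way to phrase this is that $M(\mu)^\vee_R$ is cofree as a $U^+_q$-module, in a sense dual to the freeness of $M^-(\mu)_R$.

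When $R = \F$ is a field, the above shows that the weight-$\lambda$ space of $M(\lambda)^\vee_\F$ is one-dimensional and spans the $U^+_{>0}$-killed vectors, so by the dual description it generates the simple socle of $M(\lambda)^\vee_\F$, which is $E(\lambda)_\F$. Any nonzero $\phi: M(\lambda)_\F \to M(\lambda)^\vee_\F$ must therefore have image inside this socle; since $\phi$ kills the unique maximal submodule of $M(\lambda)_\F$, it factors through the simple quotient $E(\lambda)_\F$, and the induced map $E(\lambda)_\F \to M(\lambda)^\vee_\F$ is nonzero, hence injective by simplicity. The main obstacle will be carefully tracking the $\Lambda$-grading and $R$-linearity in the tensor-Hom adjunction: the freeness of $M^-(\mu)_R$ over $U^+_q$ is immediate from triangular decomposition, but one must verify that the adjunction, restricted to $\Lambda$-graded $R$-linear categories, delivers the claimed exactness without imposing extra flatness hypotheses on $R$.
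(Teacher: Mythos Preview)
Your strategy---reduce via Frobenius reciprocity through $U^{\hb,\geq}_q$, then use freeness of $M^-(\mu)_R$ over $U^+_q$---is a legitimate alternative to the paper's route. The paper instead proves the universal property $\Hom(M, M(\mu)^\vee_R) \cong \Hom_R\big((M/(\fU^-_q)_+ M)_\mu, R\big)$ for \emph{all} $M$, using that $M^-(\mu)_R$ is induced from $\fU^\leq_q$; this buys a one-line proof of the factorization by plugging in $M = E(\lambda)_\F$, whereas your socle argument is a small detour.

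However, your tensor--Hom step is misstated and the $\Ext^1$ argument has a gap. In the $\Lambda$-graded category the correct identity is $\Hom_{U^{\hb,\geq}_q\Mod^\Lambda_R}(N, M(\mu)^\vee_R) \cong \Hom_R(N_\mu, R)$: since $\circledast$ is the \emph{graded} dual, a balanced pairing with $M^-(\mu)_R \cong R_\mu \otimes U^+_q$ is determined by evaluation at the generator $1_\mu$, which sees only $N_\mu$. Your ungraded formula $\Hom_R(M^-(\mu)_R \otimes_{U^+_q} N, R)$ would give $\Hom_R(N, R)$, losing the $\delta_{\lambda,\mu}$. Moreover the functor $N \mapsto \Hom_R(N_\mu, R)$ is not exact unless $R$ is self-injective, so ``the composite functor is exact'' is false as stated. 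What does work: the exact functor $(-)_\mu$ sends a projective resolution of $R_\lambda$ (by sums of shifted copies of $U^+_q$) to a free $R$-resolution of $(R_\lambda)_\mu = \delta_{\lambda,\mu} R$; since the latter is $R$-projective, $\Ext^{\geq 1}_R$ vanishes. You anticipated the grading subtlety in your final remark---this is precisely where it bites.
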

\begin{proof} 
We study the universal property for $M(\mu)_R^\vee$: for any $M \in U^\hb_q\fMod^{\Lambda}_R$, there are natural isomorphisms 
\begin{equation}\label{equ 3.-1} 
\begin{aligned}
\Hom(M,M(\mu)_R^\vee)&= \Hom(M^-(\mu)_R, M^\circledast) \\ 
&= \{f \in \Hom_R(M_\mu,R)\ |\ f((\fU_q^-)_+M)=0 \} = \Hom_R\big((M/(\fU_q^-)_+M)_\mu, R\big), 
\end{aligned} 
\end{equation}
where $(\fU_q^-)_+$ is the augmentation ideal for $\fU_q^-$. 
Note that $M(\lambda)_R/(\fU_q^-)_+M(\lambda)_R=R_\lambda$, hence $\Hom(M(\lambda)_R, M(\mu)_R^\vee)=\delta_{\lambda,\mu}R$. 
If $R=\F$ is a field, the surjection $M(\lambda)_\F\twoheadrightarrow E(\lambda)_\F$ induces an isomorphism 
$$M(\lambda)_\F/(\fU_q^-)_+M(\lambda)_\F\xs E(\lambda)_\F/(\fU_q^-)_+E(\lambda)_\F=\F_\lambda.$$ 
Applying the natural isomorphisms (\ref{equ 3.-1}), we get the desired factorization. 
The vanishing for $\Ext^1$ is proved similarly as in \cite[Thm 3.3(d)]{Hum08}. 
\end{proof}

\begin{prop}\label{prop BGG} 
Suppose $R$ is a local complete Noetherian domain, with residue field $\F$. 
For any $\nu \geq \mu \geq \lambda$, we have an equality 
$$(Q(\lambda)^{\leq \nu}_R: M(\mu)_R)=[M(\mu)_\F: E(\lambda)_\F].$$ 
\end{prop}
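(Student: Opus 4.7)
The plan is to adapt the classical BGG-reciprocity argument: first reduce to the residue-field case, then compute $\dim_\F \Hom\bigl(Q(\lambda)^{\leq \nu}_\F, M(\mu)^\vee_\F\bigr)$ in two independent ways, yielding the two sides of the identity. The reduction is immediate from Lemma \ref{lem 2.2}: base change along $R \to \F$ sends $Q(\lambda)^{\leq \nu}_R$ to $Q(\lambda)^{\leq \nu}_\F$ and is compatible with Verma filtrations, so $(Q(\lambda)^{\leq \nu}_R : M(\mu)_R) = (Q(\lambda)^{\leq \nu}_\F : M(\mu)_\F)$ and we may assume $R = \F$.

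For the first way, I would fix a Verma filtration of $Q(\lambda)^{\leq \nu}_\F$ guaranteed by Lemma \ref{lem 3.2}(2) and apply $\Hom(-,M(\mu)^\vee_\F)$ termwise. The $\Ext^1$-vanishing in Lemma \ref{lem 3.3} forces each long exact sequence to collapse to a short exact one, so induction on the length of the filtration gives
\[
\dim_\F \Hom\bigl(Q(\lambda)^{\leq \nu}_\F, M(\mu)^\vee_\F\bigr) = \sum_{\lambda'} \bigl(Q(\lambda)^{\leq \nu}_\F : M(\lambda')_\F\bigr) \dim_\F \Hom\bigl(M(\lambda')_\F, M(\mu)^\vee_\F\bigr) = \bigl(Q(\lambda)^{\leq \nu}_\F : M(\mu)_\F\bigr),
\]
where the last equality uses the $\delta_{\lambda',\mu}\,\F$-identity of Lemma \ref{lem 3.3}.

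For the second way, since $\mu \leq \nu$ all weights of $M(\mu)^\vee_\F$ lie $\leq \mu \leq \nu$ and its weight spaces are finite-dimensional, so $M(\mu)^\vee_\F$ lies in $\sO^{\leq \nu}_\F$ (local $U_q^+$-unipotence follows from the bound on weights). By Lemma \ref{lem 3.2}(3) applied over $\F$, $Q(\lambda)^{\leq \nu}_\F$ is the projective cover of $E(\lambda)_\F$ in $\sO^{\leq \nu}_\F$, so $\Hom\bigl(Q(\lambda)^{\leq \nu}_\F, -\bigr)$ is exact on $\sO^{\leq \nu}_\F$ and counts $E(\lambda)_\F$-composition multiplicities; thus $\dim_\F \Hom\bigl(Q(\lambda)^{\leq \nu}_\F, M(\mu)^\vee_\F\bigr) = [M(\mu)^\vee_\F : E(\lambda)_\F]$. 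The contravariant duality $(-)^\vee$ is $\F$-linear and exact, and sends every simple $E(\lambda')_\F$ to a simple with the same character, hence to $E(\lambda')_\F$ itself (simples are classified by their highest weight via \textsection\ref{subsect Verma}); consequently $[M(\mu)^\vee_\F : E(\lambda)_\F] = [M(\mu)_\F : E(\lambda)_\F]$. Combining the two computations yields the proposition.

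The main obstacle I anticipate is the ``$\dim\Hom(P,N) = [N:\mathrm{top}(P)]$'' step, because Verma modules in $\sO_\F$ need not have finite composition length here (the linkage is governed by the infinite group $W_{l,\af}$, as will be shown in Proposition \ref{prop LP}). To bypass this I would observe that the multiplicity $[M(\mu)_\F : E(\lambda)_\F]$ is in any case finite, being bounded by $\dim_\F M(\mu)_{\lambda,\F} < \infty$, and replace $M(\mu)^\vee_\F$ by its finite-length subquotients supported on a large but finite interval of weights; on each such truncation the projective Hom-formula is unambiguous, and a straightforward weight-limit argument then recovers the identity for $M(\mu)^\vee_\F$ itself.
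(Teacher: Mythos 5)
Your proposal is correct and follows essentially the same route as the paper: reduce to the residue field via Lemma \ref{lem 2.2}, compute $\dim_\F\Hom(Q(\lambda)^{\leq\nu}_\F, M(\mu)^\vee_\F)$ once through the Verma filtration using the $\Hom$/$\Ext^1$ computation of Lemma \ref{lem 3.3}, and once through projectivity as $[M(\mu)^\vee_\F:E(\lambda)_\F]$, then pass to $[M(\mu)_\F:E(\lambda)_\F]$ by equality of characters. Your anticipated fix for the infinite-length issue (exhausting $M(\mu)^\vee_\F$ by finite-length pieces in $\sO_\F$) is exactly the paper's remark that $M(\mu)^\vee_\F$ is a union of submodules contained in $\sO_\F$.
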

\begin{proof}
Note that $M(\mu)^\vee_\F$ is a union of the submodules contained in $\sO_{\F}$. 
By Lemmas \ref{lem 2.2} and \ref{lem 3.3}, we have equalities 
\begin{align*}
(Q(\lambda)^{\leq \mu}_R: M(\mu)_R)&=(Q(\lambda)^{\leq \mu}_\F: M(\mu)_\F)\\ 
&=\dim_\F \Hom(Q(\lambda)^{\leq \mu}_\F, M(\mu)_\F^\vee)=[M(\mu)^\vee_\F: E(\lambda)_\F]. 
\end{align*} 
Since $M(\mu)^\vee_\F$ and $M(\mu)_\F$ have the same characters, it implies that $[M(\mu)^\vee_\F: E(\lambda)_\F]=[M(\mu)_\F: E(\lambda)_\F]$. 
\end{proof} 

\subsection{Jantzen filtration} 
In this subsection, we define the Jantzen filtration for Verma modules in $\sO_\k$ and prove the Jantzen sum formula. 
A crucial point is to compute the Shapovalov's determinant formula for $U^\hb_q$. 
As an application, we deduce a semisimple criterion for $\sO_R$ when $R$ is the residue field of a prime ideal in $\hat{S}$. 
\subsubsection{Shapovalov's formula}\label{subsect 3.3.1} 
Consider the bilinear pairing on $U^\hb_q$ given by 
$$\{\ ,\ \}:\ U^\hb_q\otimes U^\hb_q\xrightarrow{\text{mult}} 
U^\hb_q =\mathfrak{U}^-_q \otimes \mathfrak{U}^0_q \otimes U^+_q \xrightarrow{\epsilon \otimes 1\otimes \epsilon} \mathfrak{U}^0_q.$$ 
It clearly satisfies 
$$\{u_1u,u_2\}=\{u_1,uu_2\}, \quad \forall u,u_1,u_2\in U^\hb_q.$$ 
Let $R$ be a deformation ring for $U^\hb_q$ with structure map $\pi$. 
Let $\lambda\in \Lambda$. 
Consider the composition 
$$U^\hb_q \otimes U^\hb_q\xrightarrow{\{\ ,\ \}} \mathfrak{U}^0_q \xrightarrow{\tau_\lambda} \mathfrak{U}^0_q \xrightarrow{\pi} R,$$ 
which extends $R$-linearly to a self-pairing $\{\ ,\ \}_{\lambda,R}$ on $U^\hb_q\otimes R$. 
One can show that it descends to a pairing 
$$\{\ ,\ \}_{\lambda,R}: M^-(\lambda)_R\otimes M(\lambda)_R \rightarrow R,$$ 
satisfying $\{m_1.u,m_2\}=\{m_1,u.m_2\}$, for any $u\in U^\hb_q, m_1\in M^-(\lambda)_R$ and $m_2\in M(\lambda)_R$. 
The restriction 
\begin{equation}\label{equ 3.7} 
\{\ ,\ \}_{\lambda,R}: M^-(\lambda)_{R,\lambda-\eta'}\otimes M(\lambda)_{R,\lambda-\eta} \rightarrow R 
\end{equation}
is nonzero only if $\eta'=\eta$. 
Therefore, the pairing $\{\ ,\ \}_{\lambda,R}$ yields a morphism in $U^\hb_q\Mod^\Lambda_R$, 
\begin{equation}\label{equ 3.100} 
\varpi_{\lambda,R}: M(\lambda)_R \rightarrow M(\lambda)_R^\vee. 
\end{equation} 

We choose $R$-basis $\{m^-_s\}_s$ and $\{m_s\}_s$ in $M^-(\lambda)_{R,\lambda-\eta}$ and $M(\lambda)_{R,\lambda-\eta}$, respectively. 
Define the determinant of the pairing (\ref{equ 3.7}) (for $\eta'=\eta$) by 
$${\det}_\eta(\lambda)_R:=\big| ( \{m^-_s,m_t\}_{\lambda,R})_{s,t} \big| \ \in R,$$ 
which is independent on the choice of $R$-basis up to an invertible scalar in $R$. 
We abbreviate $\det_\eta(\lambda)=\det_\eta(\lambda)_R$ if $R=\fU^0_q$, and $\det_\eta=\det_\eta(\lambda)$ if moreover $\lambda=0$. 
In general, we have $\det_\eta(\lambda)_R=(\pi\circ \tau_\lambda)(\det_\eta)$. 

We can consider similar constructions above when $U^\hb_q$ is replaced by De Concini--Kac quantum group $\fU_q$. 
In this case, we write $\det^{DK}_\eta$ for the determinant $\det_\eta$. 
According to \cite[Prop. 1.9]{DeCK90}, we have 
$${\det}^{DK}_\eta=\prod_{\beta \in \Phi^+}\prod_{m\geq 1} ( [m]_{q_\beta} 
[K_{\beta}; \langle\rho, \check{\beta}\rangle- m]_{q_\beta})^{|\Par(\eta-m\beta)|}.$$ 
Since $\{E^{(\wp)}\}_{\wp\in \Par(\eta)}$ forms a $\bA$-basis for $(U^+_{q})_{\eta}$ while $\{E^{\wp}\}_{\wp\in \Par(\eta)}$ forms a $\bA$-basis for $(\fU^+_q)_{\eta}$ (see \textsection\ref{subsect PBW}), 
we deduce that ${\det}_\eta$ is given by the \textit{Shapovalov's formula}, 
\begin{align*}
{\det}_\eta &= \big({\prod_{\wp\in \Par(\eta)} [\wp]!\big)^{-1}} 
\prod_{\beta \in \Phi^+}\prod_{m\geq 1} ( [m]_{q_\beta} 
[K_{\beta}; \langle\rho, \check{\beta}\rangle- m]_{q_\beta})^{|\Par(\eta-m\beta)|} \\ 
&= \prod_{\beta \in \Phi^+}\prod_{m\geq 1} ( 
[K_{\beta}; \langle\rho, \check{\beta}\rangle- m]_{q_\beta})^{|\Par(\eta-m\beta)|} , 
\end{align*}
where $[\wp]!:= \prod\limits_{\beta\in \Phi^+} [\wp(\beta)]_{q_\beta}!$. 
Hence $\det_\eta(\lambda)_R$ is the image of the following element in $R$, 
\begin{equation}\label{equ ShapFor} 
{\det}_\eta(\lambda)= {\tau_\lambda}({\det}_\eta)= \prod_{\beta \in \Phi^+}\prod_{m\geq 1} ([K_{\beta}; \langle \lambda+ \rho, \check{\beta}\rangle- m]_{q_\beta} )^{|\Par(\eta-m\beta)|} .
\end{equation} 

\subsubsection{Generic deformation} 
For a prime ideal $\p$ in $\hat{S}$, we denote its residue field by $\Bbbk_\p:=\hat{S}_\p/\p \hat{S}_\p$. 
We view $\check{\Phi}_{l,\af}$ as a subset of $\hat{S}\simeq \k[\t_\af^*]_{\widehat{0}}$. 
\begin{lem}\label{lem 3.6} 
Suppose $\p\cap \check{\Phi}_{l,\re}=\emptyset$, then the category $\sO_{\Bbbk_\p}$ is semisimple. 
In particular, the categories $\sO_{\K}$ and $\sO_{\hat{\K}}$ are semisimple. 
\end{lem}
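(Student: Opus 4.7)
The plan is to prove that every Verma module in $\sO_{\k_\p}$ is simple and self-dual, to deduce via Lemma \ref{lem 3.3} that $\Ext^1$ between Vermas vanishes, and then to combine this with the existence of projective covers in truncations to split every object of $\sO_{\k_\p}$ into a direct sum of Vermas.

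The main step is to show that the Shapovalov map $\varpi_{\lambda,\k_\p}: M(\lambda)_{\k_\p}\to M(\lambda)^\vee_{\k_\p}$ of (\ref{equ 3.100}) is an isomorphism for every $\lambda\in\Lambda$. This is equivalent to the non-vanishing in $\k_\p$ of each weight-graded determinant $\det_\eta(\lambda)_{\k_\p}$, i.e.\ to $\det_\eta(\lambda)\notin\p$ inside $\hat S$. The Shapovalov formula (\ref{equ ShapFor}) expresses $\det_\eta(\lambda)$ as a product of terms $[K_\beta;\langle\lambda+\rho,\check\beta\rangle-m]_{q_\beta}$, and an explicit computation under the identification $\hat S\simeq \k[\t_\af^*]_{\widehat 0}$ (with $q^l\mapsto \exp(l\delta)$ and $\beta\mapsto d_\beta\check\beta$) shows that each such factor is a unit in $\hat S$ unless $l$ divides $N:=\langle\lambda+\rho,\check\beta\rangle-m$; in the latter case, writing $N=lk$, the factor equals a unit in $\hat S$ times the element $\check\beta+lk\delta\in\check\Phi_{l,\re}$ viewed as a degree-one element of $\hat S$. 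The coprimality of $l$ with $2$ and with each $d_\beta$ is what prevents accidental vanishing when $l\nmid N$. The hypothesis $\p\cap\check\Phi_{l,\re}=\emptyset$ then rules out $\det_\eta(\lambda)\in\p$, so $\varpi_{\lambda,\k_\p}$ is an isomorphism; together with Lemma \ref{lem 3.3} this yields $M(\lambda)_{\k_\p}=E(\lambda)_{\k_\p}=M(\lambda)^\vee_{\k_\p}$ for every $\lambda$.

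Next I would apply Lemma \ref{lem 3.3} with $R=\k_\p$ to obtain $\Ext^1(M(\lambda)_{\k_\p},M(\mu)^\vee_{\k_\p})=0$ in $U^\hb_q\Mod^\Lambda_{\k_\p}$; combined with the identification $M(\mu)^\vee_{\k_\p}=M(\mu)_{\k_\p}$ above and the fact that $\sO_{\k_\p}$ is closed under extensions in the ambient category, this gives $\Ext^1_{\sO_{\k_\p}}(E(\lambda)_{\k_\p},E(\mu)_{\k_\p})=0$ for all pairs $\lambda,\mu$. To conclude semisimplicity, given $M\in \sO^{\leq\nu}_{\k_\p}$ I would surject a finite direct sum of projective covers $Q(\lambda_i)^{\leq\nu}_{\k_\p}$ of the simple quotients of $M$ onto $M$; by Lemma \ref{lem 3.2} each $Q(\lambda_i)^{\leq\nu}_{\k_\p}$ admits a finite Verma filtration, and since Vermas are now simple this shows $M$ has finite length. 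Finite length plus $\Ext^1=0$ on simples forces $M$ to split as a direct sum of Vermas, and passing to the union over $\nu$ yields semisimplicity of $\sO_{\k_\p}$. The two special cases $\sO_\K$ and $\sO_{\hat{\K}}$ correspond respectively to the primes $\p=(\hbar)$ and $\p=(0)$ in $\hat S$, both of which trivially avoid $\check\Phi_{l,\re}$ since the latter consists of real (not imaginary) affine coroots.

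The main obstacle is the first step: establishing the dichotomy for the Shapovalov factor $[K_\beta;N]_{q_\beta}\in \hat S$ between being a unit and being a unit multiple of a specific element of $\check\Phi_{l,\re}$ requires carefully unwinding the identifications $K_\beta\leftrightarrow \exp(d_\beta\check\beta)$ and $q\leftrightarrow \zeta\exp(\delta)$ inside $\hat S$, and uses the oddness of $l$ together with its coprimality to the $d_\beta$ in an essential way to rule out spurious vanishings away from the affine real coroot locus.
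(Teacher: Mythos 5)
Your proposal is correct and follows essentially the same route as the paper: the heart of both arguments is that each Shapovalov factor $[K_\beta;\langle\lambda+\rho,\check\beta\rangle-m]_{q_\beta}$ is a unit in $\hat S$ unless $l$ divides $\langle\lambda+\rho,\check\beta\rangle-m$, in which case it is a unit times an element of $\check\Phi_{l,\re}$, so the hypothesis $\p\cap\check\Phi_{l,\re}=\emptyset$ forces $\varpi_{\lambda,\k_\p}$ to be an isomorphism. The only difference is that you spell out the final passage from ``all Verma modules are simple and self-dual'' to semisimplicity (via Verma filtrations of projectives and $\Ext^1$-vanishing from Lemma \ref{lem 3.3}), which the paper leaves implicit.
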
 
\begin{proof} 
By Lemma \ref{lem 3.3}, it is enough to show that $\varpi_{\lambda,\Bbbk_\p}:M(\lambda)_{\Bbbk_\p}\rightarrow M(\lambda)_{\Bbbk_\p}^\vee$ is an isomorphism, for any $\lambda\in \Lambda$. 
To that end, we show $\det_\eta(\lambda)$ is nonzero in $\Bbbk_\p$ for any $\lambda$ and $\eta$. 
Indeed, the element $[K_{\beta}; \langle \lambda+ \rho, \check{\beta}\rangle- m]_{q_\beta}$ is not invertible in $\hat{S}$ if and only if $l|\langle \lambda+ \rho, \check{\beta}\rangle- m$. 
In this case, we have 
\begin{equation}
[K_{\beta}; \langle \lambda+ \rho, \check{\beta}\rangle- m]_{q_\beta}= \check{\beta}+\frac{\langle \lambda+ \rho, \check{\beta}\rangle- m}{l} l\delta \ \in \check{\Phi}_{l,\re}, 
\end{equation}
up to an invertible scalar in $\hat{S}$. 
By our hypothesis that $\p\cap \check{\Phi}_{l,\re}=\emptyset$, the element $[K_{\beta}; \langle \lambda+ \rho, \check{\beta}\rangle- m]_{q_\beta}$ does not belong to $\p$. 
It finishes the proof. 
\end{proof} 

\subsubsection{Jantzen filtration} 
Consider the deformation ring $\k[[t]]$ for $U^\hb_\zeta$ with the structure map 
$$\pi:\ \fU^0_\zeta=\k[T]\xrightarrow{\check{\rho}} \k[\Gm]\hookrightarrow \k[\Gm]_{\hat{1}}=\k[[t]],$$ 
where $\check{\rho}$ is a dominant coweight for $T$. 
We abbreviate all the subscript $\k[[t]]$ by $t$, and set 
$$M^i(\lambda)_{t}:=\varpi_{\lambda,t}^{-1}(t^i M(\lambda)_{t}^\vee).$$ 
It gives an exhausting filtration $M(\lambda)_t=M^0(\lambda)_t\supset M^1(\lambda)_t \supset \cdots$, and we define the \textit{Jantzen filtration} for $M(\lambda)_\k$ by taking the images at $t=0$, 
$$M(\lambda)_\k=M^0(\lambda)_\k\supset M^1(\lambda)_\k \supset \cdots.$$ 
Note that $M^1(\lambda)_\k=\ker (\varpi_{\lambda,\k})$, which coincides with the maximal proper submodule of $M(\lambda)_\k$. 

For any $M\in U^\hb_q\fMod^\Lambda_R$, we define its \textit{character} by 
$$\ch M=\sum_{\lambda\in \Lambda} \rk_R(M_{\lambda})\cdot \exp(\lambda),$$ where $\exp(\lambda)$ is viewed as a formal variable labeled by $\lambda$. 
For example, the character for the Verma module $M(\lambda)_R$ is $\ch M(\lambda)_R=\sum_{\eta}|\Par(\eta)|\cdot \exp({\lambda-\eta})$. 

\begin{prop}[Jantzen sum formula]\label{prop JSF} 
There is an equality 
\begin{equation}\label{equ 3.9} 
\sum_{i>0}\ch M^i(\lambda)_\k=
\sum_{{\beta}\in \Phi^+}\sum_{\langle \lambda+ \rho, \check{\beta}\rangle+kl\geq 1}
\ch M(s_{\beta,kl}\bullet \lambda)_\k. 
\end{equation}
\end{prop}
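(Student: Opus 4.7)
The plan is to adapt Jantzen's standard argument, applying a Smith normal form lemma weight-space by weight-space and identifying the $t$-adic valuations of the Shapovalov determinants with the contributions on the right-hand side. The key linear-algebra input is: for free $\k[[t]]$-modules $N,M$ of equal finite rank with a bilinear pairing $\varpi:N\otimes M\to \k[[t]]$ of nonzero determinant, if $M^i:=\{x\in M:\varpi(N,x)\subset t^i\k[[t]]\}$ and $\bar{M}^i$ denotes its image in $M\otimes_{\k[[t]]}\k$, then $\sum_{i\geq 1}\dim_\k \bar{M}^i = v_t(\det\varpi)$, where $v_t$ is the $t$-adic valuation; this is immediate from Smith normal form.

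I would apply this on each weight space: $M(\lambda)_{t,\lambda-\eta}$ is free of rank $|\Par(\eta)|$, paired with $M^-(\lambda)_{t,\lambda-\eta}$ via $\{\ ,\ \}_{\lambda,t}$ with determinant $\det_\eta(\lambda)_t=(\pi\circ\tau_\lambda)(\det_\eta)$. The determinant is nonzero in $\k[[t]]$: the generic point of $\k[[t]]$ corresponds to a prime of $\hat{S}$ disjoint from $\check{\Phi}_{l,\re}$, so Lemma \ref{lem 3.6} yields nondegeneracy of $\varpi_{\lambda,t}$ over $\k((t))$. Hence the linear-algebra lemma gives $\sum_{i\geq 1}\dim_\k M^i(\lambda)_{\k,\lambda-\eta}=v_t(\det_\eta(\lambda)_t)$. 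Using the Shapovalov formula (\ref{equ ShapFor}), this reduces to computing $v_t$ on individual factors: under $\pi$ one has $K_\beta\mapsto (1+t)^{\langle\check{\rho},\beta\rangle}$ and $q\mapsto \zeta$, so with $n:=\langle\lambda+\rho,\check{\beta}\rangle-m$ the factor $[K_\beta;n]_{q_\beta}$ becomes
\[
\frac{(1+t)^{\langle\check{\rho},\beta\rangle}\zeta_\beta^{n}-(1+t)^{-\langle\check{\rho},\beta\rangle}\zeta_\beta^{-n}}{\zeta_\beta-\zeta_\beta^{-1}}.
\]
Because $\zeta_\beta$ is a primitive $l$-th root of unity (by the coprimality hypotheses on $l$) and $\langle\check{\rho},\beta\rangle\geq 1$ for $\beta\in\Phi^+$, this has $t$-valuation $1$ if $l\mid n$ and $0$ otherwise. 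Consequently
\[
v_t\bigl(\det_\eta(\lambda)_t\bigr)=\sum_{\beta\in\Phi^+}\ \sum_{\substack{m\geq 1\\ l\,\mid\,\langle\lambda+\rho,\check{\beta}\rangle-m}}|\Par(\eta-m\beta)|.
\]

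For the last step I would reparametrize the inner sum via $m=\langle\lambda+\rho,\check{\beta}\rangle+kl$ with $k\in\Z$, under which the condition $m\geq 1$ becomes $\langle\lambda+\rho,\check{\beta}\rangle+kl\geq 1$. A direct computation from $s_{\beta,kl}=s_{\check{\beta}}\tau_{kl\beta}$ shows
\[
s_{\beta,kl}\bullet\lambda=\lambda-\bigl(\langle\lambda+\rho,\check{\beta}\rangle+kl\bigr)\beta=\lambda-m\beta,
\]
so $\dim_\k M(s_{\beta,kl}\bullet\lambda)_{\k,\lambda-\eta}=|\Par(\eta-m\beta)|$. Multiplying the valuation identity by $\exp(\lambda-\eta)$ and summing over $\eta$ yields exactly (\ref{equ 3.9}). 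The main obstacle is technical rather than conceptual: one must verify the valuation formula for $[K_\beta;n]_{q_\beta}$ carefully (especially that $\zeta_\beta$ has order exactly $l$ using the coprimality hypotheses on $l$), confirm non-degeneracy of the pairing on the generic fiber via Lemma \ref{lem 3.6}, and align the reparametrization $m\leftrightarrow kl$ with the shifted affine reflection $s_{\beta,kl}\bullet\lambda$.
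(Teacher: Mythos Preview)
Your proposal is correct and follows essentially the same route as the paper's proof: both reduce to the weight-space identity $\sum_{i>0}\dim M^i(\lambda)_{\k,\lambda-\eta}=\val(\pi(\det_\eta(\lambda)))$, evaluate the $t$-adic valuation of each Shapovalov factor $[K_\beta;\langle\lambda+\rho,\check{\beta}\rangle-m]_{q_\beta}$ (nonzero valuation exactly when $l\mid\langle\lambda+\rho,\check{\beta}\rangle-m$, and then equal to $1$), and reindex via $m=\langle\lambda+\rho,\check{\beta}\rangle+kl$ to recognize $\lambda-m\beta=s_{\beta,kl}\bullet\lambda$. Your additional explicit justifications (Smith normal form, the order of $\zeta_\beta$, nondegeneracy via Lemma~\ref{lem 3.6}) are exactly the points the paper leaves implicit.
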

\begin{proof}
Denote by $\val$ the valuation on $\k[[t]]$.
For any $\eta>0$, we have 
\begin{equation}\label{equ 3.10} 
\sum_{i>0} \dim M^i(\lambda)_{\k,\lambda-\eta}= \val \big(\pi({\det}_\eta(\lambda))\big). 
\end{equation}
Note that the element $\pi([K_{\beta}; \langle \lambda+ \rho, \check{\beta}\rangle- m]_{q_\beta})$ is not invertible in $\k[[t]]$ if and only if $\langle \lambda+ \rho, \check{\beta}\rangle- m=kl$ for some $k\in \Z$, and in this case its valuation is $1$. 
Hence by (\ref{equ 3.10}) and the formula (\ref{equ ShapFor}), we have 
\begin{align*}
\sum_{i>0}\ch M^i(\lambda)_\k &=\sum_{\eta} \sum_{\beta} \sum_{\langle \lambda+ \rho, \check{\beta}\rangle-kl\geq 1} |\Par\big(\eta-(\langle \lambda+ \rho, \check{\beta}\rangle-kl)\beta\big)|\cdot \exp({\lambda-\eta})\\ 
&=\sum_{\beta} \sum_{\langle \lambda+ \rho, \check{\beta}\rangle-kl\geq 1} \sum_{\eta} 
|\Par(\eta)|\cdot \exp({\lambda-(\langle \lambda+ \rho, \check{\beta}\rangle-kl)\beta -\eta})\\ 
&=\sum_{\beta} \sum_{\langle \lambda+ \rho, \check{\beta}\rangle-kl\geq 1} \ch M\big({\lambda-(\langle \lambda+ \rho, \check{\beta}\rangle-kl)\beta}\big)_\k. \qedhere 
\end{align*} 
\end{proof} 

\subsection{Linkage principle and block decomposition} 
Consider the partial order $\uparrow$ in $\Lambda$ generated by (see \cite[II \textsection 6]{Jan03}) 
$$s_{\alpha}\bullet \lambda \uparrow \lambda \quad \text{if}\ s_{\alpha}\bullet \lambda\leq \lambda ,\quad \lambda\in \Lambda,\ \alpha\in \check{\Phi}_{l,\re}.$$ 
\begin{prop}[Linkage principle]\label{prop LP}  
\begin{enumerate} 
\item We have 
\begin{equation}\label{equ 3.3} 
[M(\lambda)_\k:E(\mu)_\k]\neq 0 \quad \text{if and only if}\quad \mu \uparrow \lambda.
\end{equation} 
\item For any commutative Noetherian $\hat{S}$-algebra $R$, there is a block decomposition of the category $\sO_{R}$, 
\begin{equation}\label{equ 3.5} 
\sO_{R}= \bigoplus_{\omega\in \Xi_\sc} \sO^{\omega}_{R}, 
\end{equation} 
such that the Verma module $M(\lambda)_R$ is contained in $\sO^{\omega}_{R}$ if and only if $\lambda\in W_{l,\af}\bullet \omega$. 
When $R=\k$, the blocks $\sO^{\omega}_{\k}$ in (\ref{equ 3.5}) are indecomposable. 
\end{enumerate} 
\end{prop}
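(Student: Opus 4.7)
My plan is to derive Part~(1) from the Jantzen sum formula (Proposition~\ref{prop JSF}) and bootstrap Part~(2) from it via the BGG reciprocity of Proposition~\ref{prop BGG} together with truncated categories. For the ``only if'' direction in (1), I would proceed by downward induction on $\lambda$ in a fixed $W_{l,\af}\bullet$-orbit. The case $\mu = \lambda$ being trivial, assume $\mu \neq \lambda$, so $E(\mu)_\k$ appears in the maximal proper submodule $M^1(\lambda)_\k = \ker(\varpi_{\lambda,\k})$. By non-negativity of characters and (\ref{equ 3.9}), $E(\mu)_\k$ must occur as a composition factor of some $M(s_{\beta,kl}\bullet\lambda)_\k$ with $s_{\beta,kl}\bullet\lambda \leq \lambda$. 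The induction hypothesis applied at $s_{\beta,kl}\bullet\lambda$ then yields a chain $\mu \uparrow s_{\beta,kl}\bullet\lambda$, and appending the reflection $s_{\check{\beta}+kl\delta} \in W_{l,\af}$ extends it to $\mu \uparrow \lambda$.

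For the converse direction, the one-step case is immediate from the Jantzen sum: if $\mu = s\bullet\lambda \leq \lambda$ with $s \in W_{l,\af}$ a real affine reflection, then $\ch M(\mu)_\k$ sits inside $\sum_{i>0}\ch M^i(\lambda)_\k$, forcing $E(\mu)_\k$ to appear as a composition factor of $M^1(\lambda)_\k \subseteq M(\lambda)_\k$. For a multi-step chain one needs an embedding $M(\mu)_\k \hookrightarrow M(\lambda)_\k$, i.e.\ an analogue of Verma's theorem. I would deduce this by a rank-one reduction using the parabolic structure, where the Shapovalov pairing (\ref{equ 3.100}) reduces to an explicit rank-one calculation whose radical is controlled by the corresponding Jantzen filtration; composition factors of $M(\mu)_\k$ then transfer to $M(\lambda)_\k$.

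For Part~(2) over $R = \k$, the ``only if'' direction of (1) shows that composition factors of $M(\lambda)_\k$ stay inside $W_{l,\af}\bullet\lambda$, so the categories $\sO^\omega_\k$ are well-defined direct summands. For an arbitrary Noetherian $\hat{S}$-algebra $R$, I would pass to the truncated subcategories $\sO^{\leq\nu}_R$, which admit projective covers $Q(\lambda)^{\leq\nu}_R$ by Lemma~\ref{lem 3.2}. BGG reciprocity (Proposition~\ref{prop BGG}) expresses the Verma multiplicities of $Q(\lambda)^{\leq\nu}_R$ in terms of the composition multiplicities of $M(\mu)_\F$ at the residue field $\F$, which by (1) all lie in the single orbit $W_{l,\af}\bullet\lambda$. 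Hence $Q(\lambda)^{\leq\nu}_R$ admits a Verma filtration supported in one orbit, forcing $\Hom(Q(\lambda)^{\leq\nu}_R, Q(\lambda')^{\leq\nu}_R) = 0$ whenever $\lambda, \lambda'$ lie in distinct orbits. This gives the block decomposition of $\sO^{\leq\nu}_R$, and passing to the colimit over $\nu$ produces the decomposition (\ref{equ 3.5}) of $\sO_R$.

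Finally, for indecomposability of each $\sO^\omega_\k$, I would join any two simple objects $E(\lambda_1)_\k, E(\lambda_2)_\k$ in $\omega$ by a sequence of single $W_{l,\af}\bullet$-reflections; for each adjacent pair, one simple appears as a composition factor of the Verma module of the other (by the one-step case of (1)), placing both into the same indecomposable summand. I expect the main obstacle to be the ``if'' direction of~(1), specifically establishing Verma embeddings in the hybrid setting, since the divided-power positive half $U^+_q$ and the De~Concini--Kac negative half $\fU^-_q$ interact asymmetrically and a direct transposition of the classical Bernstein--Gelfand--Gelfand rank-one argument is not automatic; fortunately the block decomposition itself and the indecomposability only require the one-step version, which falls out cleanly from Proposition~\ref{prop JSF}.
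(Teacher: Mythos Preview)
Your approach to the ``only if'' direction of (1), to the block decomposition in (2), and to indecomposability is essentially the paper's. The divergence is in the ``if'' direction of (1): you anticipate needing Verma embeddings $M(\mu)_\k \hookrightarrow M(\lambda)_\k$ for multi-step chains and flag this as the chief obstacle. In fact no such obstacle exists. The paper runs the \emph{same} induction on the height of $\lambda-\mu$ in both directions simultaneously. For the ``if'' direction: given $\mu \uparrow \lambda$ with $\mu \neq \lambda$, take the last link of a chain, say $\mu \uparrow s_{\beta,kl}\bullet\lambda \uparrow \lambda$ with $s_{\beta,kl}\bullet\lambda < \lambda$. By induction $[M(s_{\beta,kl}\bullet\lambda)_\k : E(\mu)_\k] \neq 0$, and the Jantzen sum formula (\ref{equ 3.9}) then forces
\[
\sum_{i>0}[M^i(\lambda)_\k : E(\mu)_\k] \;\geq\; [M(s_{\beta,kl}\bullet\lambda)_\k : E(\mu)_\k] \;>\; 0,
\]
whence $[M^1(\lambda)_\k : E(\mu)_\k] \neq 0$. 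No embedding of Verma modules is needed, and the asymmetry between $U^+_q$ and $\fU^-_q$ never enters. Your one-step argument is already the full argument once you feed it back into the induction.

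A minor technical point on (2): you invoke Proposition~\ref{prop BGG} directly for an arbitrary Noetherian $\hat S$-algebra $R$, but that proposition is stated for $R$ a local complete Noetherian domain (in particular with a well-defined residue field). The paper instead first establishes the decomposition over $\k$, lifts to $\hat S$ using locality (via Lemma~\ref{lem 2.2}), and then passes to general $R$ by the base-change equivalence (\ref{equ 2.10}) on projectives. Your argument works verbatim once you route it through $\hat S$ in this way.
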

\begin{proof} 
Our argument is adopted from \cite[Proof of Thm 2]{KK79}. 

(1) Note that both $[M(\lambda)_\k:E(\mu)_\k]\neq 0$ and $\mu \uparrow \lambda$ implies $\lambda \geq \mu$. 
We do increase induction on the height of $\lambda-\mu$. 
The case $\lambda=\mu$ is trivial. 
Suppose $\lambda>\mu$, then $[M(\lambda)_\k:E(\mu)_\k]\neq 0$ if and only if $[M^1(\lambda)_\k: E(\mu)_\k]\neq 0$. 
By the formula (\ref{equ 3.9}), it is equivalent to $[M(s_{\beta,kl}\bullet \lambda)_\k:E(\mu)_\k]\neq 0$ for some $\beta\in \Phi^+$ and $k\in \Z$ such that $\langle \lambda+ \rho, \check{\beta}\rangle+kl\geq 1$. 
Since $s_{\beta,kl}\bullet \lambda\uparrow \lambda$ and $s_{\beta,kl}\bullet \lambda< \lambda$, by induction hypothesis, it is equivalent to $\mu \uparrow s_{\beta,kl}\bullet \lambda$ for some such $\beta$ and $k$, namely, $\mu\uparrow \lambda$. 

(2) By (1) and Proposition \ref{prop BGG}, it follows that the Verma factors of $Q(\lambda)^{\leq \nu}_{\k}$ are of the form $M(\mu)_{\k}$ for some $\mu\in W_{l,\af}\bullet\lambda$. 
Hence by (1) again, $\Hom(Q(\lambda)^{\leq \nu}_{\k},Q(\lambda')^{\leq \nu'}_{\k})\neq 0$ only if $\lambda'\in W_{l,\af}\bullet\lambda$. 
It yields a block decomposition 
\begin{equation}\label{equ 3.2} 
\sO_{\k}= \bigoplus_{\omega\in \Xi_\sc} \sO^{\omega}_{\k}, 
\end{equation} 
such that $Q(\lambda)^{\leq \nu}_\k$ is contained in $\sO^{\omega}_{\k}$ if and only if $\lambda\in W_{l,\af}\bullet \omega$. 
Note that the $W_{\l,\af}$-orbits of $\Lambda$ are the connected components of $\Lambda$ given by the partial order $\uparrow$. 
By (\ref{equ 3.3}), we deduce that $\sO^{\omega}_{\k}$ is indecomposable. 
Since $\hat{S}$ is local, (\ref{equ 3.2}) lifts to a block decomposition for $\sO_{\hat{S}}$ and then extends to (\ref{equ 3.5}). 
\end{proof} 

We will abbreviate $\sO^{\omega,\leq \nu}_{R}=\sO^{\omega}_{R}\cap \sO^{\leq \nu}_{R}$.

\subsection{Translation functors} 
Let $R$ be a commutative Noetherian $\hat{S}$-algebra. 
For $\omega_1,\omega_2 \in \Xi_\sc$, there is a unique dominant element $\nu$ in $W(\omega_2-\omega_1)$. 
Denote by $V(\nu)_q$ the Weyl module for $U_q$ of highest weight $\nu$. 
We define the \textit{translation functors} by 
$$T_{\omega_1}^{\omega_2} : \sO^{ \omega_1}_{R} \rightarrow \sO^{ \omega_2}_{R}, \quad M \mapsto \pr_{\omega_2} (M \otimes V(\nu)_q),$$ 
$$T^{\omega_1}_{\omega_2} : \sO^{ \omega_2}_{R} \rightarrow \sO^{ \omega_1}_{R}, \quad M \mapsto \pr_{\omega_1} (M \otimes V(\nu)^*_q),$$ 
where $\pr_{\omega_i}$ is the natural projection to the block $\sO^{ \omega_i}_{R}$. 
\begin{lem}\cite[II \textsection 7.8]{Jan03} \label{lem 5.0} 
\begin{enumerate}
\item $T_{\omega_1}^{\omega_2}$ and $T^{\omega_1}_{\omega_2}$ are exact and biadjoint to each other. 
\item For any $x\in W_{l,\af}$, the module $T_{\omega_1}^{\omega_2}M(x\bullet \omega_1)_R$ admits Verma factors $M(xy\bullet \omega_2)_R$, where $y$ runs through a system of representatives for 
$$W_{l,\omega_1}/W_{l,\omega_1}\cap W_{l,\omega_2},$$ 
each of which appears once. 
\end{enumerate}
\end{lem}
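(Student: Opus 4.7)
The plan is to adapt the classical argument for translation functors on the BGG category $\sO$ (see \cite[II \S 7.6--7.9]{Jan03}) to the hybrid quantum setting. The ingredients we have already established---the block decomposition (Proposition \ref{prop LP}) and the existence of Verma filtrations on projectives (Lemma \ref{lem 3.2})---are the precise analogues needed to carry the classical argument through.

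For part (1), exactness of $T^{\omega_2}_{\omega_1}$ is immediate: tensoring with $V(\nu)_q$ is exact because $V(\nu)_q$ is a finite-rank free $\bA$-module, and the block projection $\pr_{\omega_2}$ is exact by Proposition \ref{prop LP}. Biadjointness follows from the standard adjunction isomorphism $\Hom(M\otimes V(\nu)_q, N)\cong \Hom(M, N\otimes V(\nu)_q^*)$ in the rigid monoidal category of finite-dimensional $U_q$-modules, together with the analogous isomorphism obtained by interchanging $V(\nu)_q$ and $V(\nu)_q^*$. Restricting to the relevant blocks yields biadjointness of $T^{\omega_2}_{\omega_1}$ and $T^{\omega_1}_{\omega_2}$ on both sides.

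For part (2), the first step is the standard Verma filtration of tensor products: for any Verma module $M(\mu)_R$ and any finite-dimensional $U_q$-module $V$ with weight basis, if $\chi_1,\ldots,\chi_n$ are the weights of $V$ (counted with multiplicity) enumerated so that $\chi_i>\chi_j$ implies $i<j$, then $M(\mu)_R\otimes V$ admits a filtration $0=F_0\subset F_1\subset\cdots\subset F_n = M(\mu)_R\otimes V$ with subquotients $F_k/F_{k-1}\cong M(\mu+\chi_k)_R$; this is proved using the triangular decomposition of $U^\hb_q$ and a weight-basis argument. Applied to $M(x\bullet\omega_1)_R\otimes V(\nu)_q$, the Verma factors are the modules $M(x\bullet\omega_1+\chi)_R$ indexed by the weights $\chi$ of $V(\nu)_q$.

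The main obstacle is identifying which of these factors survive after applying $\pr_{\omega_2}$, and parameterizing them as claimed. By Proposition \ref{prop LP}, a factor $M(x\bullet\omega_1+\chi)_R$ survives iff $x\bullet\omega_1+\chi \in W_{l,\af}\bullet\omega_2$. Using that both $\omega_1+\rho$ and $\omega_2+\rho$ lie in the closure of the fundamental $l$-alcove, the alcove-geometry argument of \cite[II \S 7.7]{Jan03} forces any surviving $\chi$ to be an extremal weight of $V(\nu)_q$, hence of the form $\chi = w(\omega_2-\omega_1)$ for some $w\in W$ (each appearing with multiplicity one in the relevant cosets). A direct calculation then shows that the equation $x\bullet\omega_1 + w(\omega_2-\omega_1) = xy\bullet\omega_2$ with $y\in W_{l,\af}$ forces $y\in W_{l,\omega_1}$, and that two such $y, y'$ yield the same Verma factor iff $y^{-1}y'\in W_{l,\omega_2}$. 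This yields the claimed parameterization of the Verma factors by $W_{l,\omega_1}/(W_{l,\omega_1}\cap W_{l,\omega_2})$.
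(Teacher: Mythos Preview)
Your proposal is correct and follows essentially the same approach as the paper. For part (1), the paper makes the pivotal structure explicit by noting that the isomorphism $V(\nu)_q\xs V(\nu)_q^{**}$ given by $K_{2\rho}$ is what supplies the second adjunction (rigidity alone only gives one-sided adjointness, so your phrase ``rigid monoidal category'' slightly understates what is needed); for part (2), the paper simply defers to \cite[II \S 7.8]{Jan03}, which is precisely the tensor-filtration and alcove-geometry argument you have written out.
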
 
\begin{proof}
(1) The isomorphism $V(\nu)_q\xs V(\nu)^{**}_q$ given by $K_{2\rho}$ yields a biadjunction for $-\otimes V(\nu)_q$ and $-\otimes V(\nu)^*_q$, which induces a biadjunction for $T_{\omega_1}^{\omega_2}$ and $T^{\omega_1}_{\omega_2}$. 

(2) It is proved by the same arguments in \cite[II \textsection 7.8]{Jan03}. 
\end{proof}

Suppose that $\omega_2$ is contained in the closure of the $\omega_1$-facet, i.e. $W_{l,\omega_1}\subseteq W_{l,\omega_2}$. 
Then by Lemma \ref{lem 5.0}(2) we have $T_{\omega_1}^{\omega_2}T^{\omega_1}_{\omega_2} M(\lambda)_R=M(\lambda)_R^{\oplus |W_{l,\omega_2}/W_{l,\omega_1}|}$ for any $\lambda\in W_{l,\af}\bullet \omega_2$, since $M(\lambda)_R$ only has trivial self-extension. 
It shows that $T_{\omega_1}^{\omega_2}T^{\omega_1}_{\omega_2}$ is by multiplying $|W_{l,\omega_2}/W_{l,\omega_1}|$ on the Grothendieck group of $\sO^{\omega_2}_{R}$. 
In fact, we have the following proposition, whose proof will be given in Appendix \ref{App C}. 
\begin{prop}\label{prop 5.0} 
For any $\omega_1,\omega_2\in \Xi_\sc$ such that $W_{l,\omega_1}\subseteq W_{l,\omega_2}$, there is a natural isomorphism 
$$\Upsilon_{\omega_2}^{\omega_1}:\ \id^{\oplus |W_{l,\omega_2}/W_{l,\omega_1}|} \xs T_{\omega_1}^{\omega_2}T^{\omega_1}_{\omega_2}$$ 
of functors on $\sO^{\omega_2}_{R}$. 
\end{prop}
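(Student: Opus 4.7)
The plan is to reduce to $R = \hat{S}$ via base change, construct $\Upsilon_{\omega_2}^{\omega_1}$ by lifting idempotents from the generic fiber $\hat{\K}$, and identify each summand with $\id$ using the biadjunction.

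Since both translation functors are defined by tensoring with the fixed Weyl module $V(\nu)_q$ or its dual followed by projection to a block, they are $\hat{S}$-linear and commute with arbitrary base change. Combined with the equivalence $\sP^{\leq \nu}_{\hat{S}} \otimes_{\hat{S}} R \simeq \sP^{\leq \nu}_R$ from Section \ref{subsect 2.3.4}, this reduces the problem to producing $\Upsilon_{\omega_2}^{\omega_1}$ over $R = \hat{S}$; the statement for general $R$ then follows by applying $-\otimes_{\hat{S}} R$.

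Set $F := T_{\omega_1}^{\omega_2} T^{\omega_1}_{\omega_2}$ and pass to $\hat{\K} = \Frac \hat{S}$. By Lemma \ref{lem 3.6}, $\sO^{\omega_2}_{\hat{\K}}$ is semisimple with simple objects $\{M(\lambda)_{\hat{\K}}\}_{\lambda \in W_{l,\af}\bullet \omega_2}$. For $\lambda = x \bullet \omega_2$, two applications of Lemma \ref{lem 5.0}(2), together with the hypothesis $W_{l,\omega_1} \subseteq W_{l,\omega_2}$ (so that the inner coset $W_{l,\omega_2}/W_{l,\omega_2}\cap W_{l,\omega_1}$ has order $m := |W_{l,\omega_2}/W_{l,\omega_1}|$ while the outer coset $W_{l,\omega_1}/W_{l,\omega_1}\cap W_{l,\omega_2}$ is trivial), show that $F(M(\lambda)_{\hat{\K}})$ admits a Verma filtration with $m$ factors all equal to $M(\lambda)_{\hat{\K}}$. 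By semisimplicity, $F(M(\lambda)_{\hat{\K}}) \cong M(\lambda)_{\hat{\K}}^{\oplus m}$, and Schur's lemma gives $\End(F|_{\sO^{\omega_2}_{\hat{\K}}}) \cong \prod_\lambda \Mat_m(\hat{\K})$. Any complete family $\{e_1,\ldots,e_m\}$ of orthogonal primitive idempotents splits $F|_{\hat{\K}}$ into $m$ subfunctors each isomorphic to $\id$.

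To descend the decomposition to $\hat{S}$, I would work on truncated blocks $\sO^{\omega_2, \leq \nu}_{\hat{S}}$, where by Lemma \ref{lem 3.2} there is a projective generator $Q^{\leq \nu}_{\hat{S}} := \bigoplus_{\mu \leq \nu} Q(\mu)^{\leq \nu}_{\hat{S}}$. The algebra $\End(F|_{\sO^{\omega_2, \leq \nu}_{\hat{S}}})$ embeds into $\End_{\sO_{\hat{S}}}(F(Q^{\leq \nu}_{\hat{S}}))$ and is therefore a finite $\hat{S}$-module; since $\hat{S}$ is complete local Noetherian, standard idempotent lifting promotes the $e_i$ to pairwise orthogonal idempotents $\tilde{e}_i \in \End(F|_{\sO^{\omega_2, \leq \nu}_{\hat{S}}})$ summing to the identity. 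Each summand $\tilde{e}_i F$ has the same Grothendieck class as $\id$; combined with the adjunction counit $F \to \id$ provided by Lemma \ref{lem 5.0}(1), one identifies each $\tilde{e}_i F$ with $\id$ by checking the map is an isomorphism on every Verma $M(\lambda)_{\hat{S}}$ (first over $\hat{\K}$, then over $\hat{S}$ by Nakayama, using that $M(\lambda)_{\hat{S}}$ is $\hat{S}$-free on each weight space). Passing to the inverse limit over $\nu$ assembles the global natural isomorphism $\Upsilon_{\omega_2}^{\omega_1}: \id^{\oplus m} \xs F$.

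The main obstacle is the descent step: showing that the natural map $\End(F|_{\sO^{\omega_2,\leq\nu}_{\hat{S}}}) \otimes_{\hat{S}} \hat{\K} \to \End(F|_{\sO^{\omega_2,\leq\nu}_{\hat{\K}}})$ is surjective (so the generic idempotents $e_i$ are integral). This reduces to showing that the Verma-filtered structure of $F(Q(\mu)^{\leq \nu}_{\hat{S}})$ is compatible with base change, which follows from the explicit construction (\ref{equ 3.20}) and BGG reciprocity (Proposition \ref{prop BGG}), together with the vanishing of self-extensions of Verma modules over $\hat{S}$ deduced from Lemma \ref{lem 3.3}.
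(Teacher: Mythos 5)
Your reduction to $R=\hat{S}$ by base change and your analysis over $\hat{\K}$ match the paper's starting point, but the descent from $\hat{\K}$ to $\hat{S}$ — which is the entire content of the proposition — contains two errors that the paper's Appendix C is specifically designed to circumvent. First, ``standard idempotent lifting'' over a complete local ring lifts idempotents from the quotient by the radical (or by $\m$), not from the generic fiber; and the surjectivity of $\End(F|_{\hat{S}})\otimes_{\hat{S}}\hat{\K}\to \End(F|_{\hat{\K}})$ does not make the generic idempotents integral. In fact the relevant endomorphism rings have exactly the shape $\{(a,b)\in \hat{S}_\alpha\times\hat{S}_\alpha \mid a\equiv b \bmod \alpha\}$ (Lemma \ref{lem C.5}), whose generic idempotents $(1,0),(0,1)$ are \emph{not} integral. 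Concretely, for $\alpha\in\check{\Phi}^+_{l,\omega_2}\setminus\check{\Phi}^+_{l,\omega_1}$ the naive splitting of $\sT\sT'M(\mu)_{\hat{\K}}$ coming from the adjunction is represented over $\hat{S}_\alpha$ by a matrix of the form $\begin{pmatrix}1&1\\0&\alpha\end{pmatrix}$ (Lemma \ref{lem 1-1.8}), which is not invertible; the paper corrects this by precomposing with the GKM matrix $\sH_z$ built from the Schubert classes $\xi^x$, whose entries carry the needed poles $\alpha^{-1}$, and then verifies integrality at every height-one prime via $\hat{S}=\bigcap_{\hgt(\p)=1}\hat{S}_\p$. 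Your proposal contains no mechanism playing the role of $\sH_z$.

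Second, the identification of each summand with $\id$ via the counit plus ``Nakayama'' is backwards: a map of free $\hat{S}$-modules that becomes an isomorphism over the fraction field $\hat{\K}$ need not be surjective (multiplication by $\alpha$ is a counterexample), so Nakayama must be fed the residue field, not the generic fiber. Moreover the counit is a poor candidate for this identification: the paper computes (\textsection\ref{subsect B.4}) that $\epsilon\circ\Upsilon$ is the cohomological pushforward $\pi_*$, which vanishes identically on the summand corresponding to $\xi^e$, so the counit cannot be an isomorphism on each summand of any reasonable decomposition. A correct proof along the paper's lines must construct the isomorphism explicitly over $\hat{\K}$ (as $F_z\circ\sH_z$), check it preserves the $\hat{S}_\p$-lattices at all height-one primes (Proposition \ref{prop C.4}, using Lemma \ref{lem C.1} at generic primes and the Jantzen/AJS analysis of $Q(\lambda)_{\hat{S}_\alpha}$ at subgeneric ones), and only then base change to general $R$.
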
 

\subsubsection{The $l\Lambda$-symmetry} 
For any $\lambda\in \Lambda$, we denote by $[\lambda]$ its class in $\Xi=\Lambda/(W_{l,\ex},\bullet)$. 
There is a decomposition 
\begin{equation}\label{equ 3.13} 
\sO_{R}=\bigoplus_{\omega\in \Xi}\sO^{[\omega]}_{R}, \quad \text{where}\quad \sO^{[\omega]}_{R}=\bigoplus\limits_{[\omega']=[\omega]}\sO^{\omega'}_{R}. 
\end{equation} 
Now we let $R$ be a commutative Noetherian $S$-algebra. 
The trivial $U_\zeta^\hb$-module $\k_{l\nu}$ yields an auto-equivalence of $\sO_{R}$ by 
$$-\otimes \k_{l\nu}: \sO_{R}\xs \sO_{R}.$$ 
If $\omega_1 ,\omega_2 \in \Xi_\sc$ satisfy $[\omega_1]=[\omega_2]$, then there are $\lambda_i\in W_{l,\af}\bullet \omega_i$ ($i=1,2$) such that $\lambda_1-\lambda_2\in l\Lambda$. 
It gives an equivalence $$-\otimes \k_{\lambda_2-\lambda_1}: \sO^{\omega_1}_{R}\xs \sO^{\omega_2}_{R}.$$ 
Therefore $\sO^{[\omega]}_{R}\simeq (\sO^{\omega}_{R})^{\oplus \pi_1}$.

\section{Deformed category $\sO$}\label{sect 6} 
Throughout the section, we fix $\alpha\in {\Phi}^+$. 
Let $S_{\alpha} :=S_{(\alpha)}$ be the localization by the prime ideal $(\alpha)$, with residue field ${\Bbbk_\alpha}:= S_\alpha/\alpha S_\alpha$. 
We study the category $\sO_{S_\alpha}$, and construct the endomorphism algebra of the ``big projective module" in $\sO_{S_\alpha}$, which will help us to compare the center and the cohomology ring. 

\subsection{Block decomposition} 
Let $W_l(\alpha)$ be the subgroup of $W_{l,\af}$ generated by $s_{\alpha,nl}$, $n\in \Z$. 
Define a partial order $\uparrow_\alpha$ on $\Lambda$ generated by  
$$s_{\alpha,nl}\bullet \lambda \uparrow_\alpha \lambda \quad \text{if}\ s_{\alpha,nl}\bullet \lambda\leq \lambda ,\quad \lambda\in \Lambda.$$ 
Recall the morphism $\varpi_{\lambda,S_\alpha}:M(\lambda)_{S_\alpha}\rightarrow M(\lambda)_{S_\alpha}^\vee$ defined in (\ref{equ 3.100}). 
We define 
$$M^i(\lambda)_{S_\alpha}:=\varpi_{\lambda,S_\alpha}^{-1}(\alpha^i M(\lambda)_{S_\alpha}^\vee), \quad i\geq 0,$$ 
which form an exhausting filtration $M(\lambda)_{S_\alpha}=M^0(\lambda)_{S_\alpha}\supset M^1(\lambda)_{S_\alpha}\supset \cdots$. 
We define the \textit{Jantzen filtration} of $M(\lambda)_{{\Bbbk_\alpha}}$ by taking the image over ${\Bbbk_\alpha}$, 
\begin{equation}\label{equ 6.1} 
M(\lambda)_{{\Bbbk_\alpha}}=M^0(\lambda)_{{\Bbbk_\alpha}}\supset M^1(\lambda)_{{\Bbbk_\alpha}}\supset \cdots. 
\end{equation} 

\begin{prop}\label{prop 6.1} 
\begin{enumerate}
\item The following \textit{Jantzen sum formula} holds 
\begin{equation}\label{equ 6.2} 
\sum_{i>0}\ch M^i(\lambda)_{{\Bbbk_\alpha}}= 
\sum_{\langle \lambda+ \rho, \check{\alpha}\rangle+kl\geq 1}
\ch M(s_{\alpha,kl}\bullet \lambda)_{{\Bbbk_\alpha}}. 
\end{equation} 
\item We have linkage principle: 
\begin{equation}\label{equ 6.0} 
[M(\lambda)_{{\Bbbk_\alpha}}:E(\mu)_{{\Bbbk_\alpha}}]\neq 0 \quad \text{if and only if}\quad \mu \uparrow_\alpha \lambda.
\end{equation} 
\item There is a block decomposition 
\begin{equation}\label{equ 6.3}
\sO_{S_\alpha} = \bigoplus_{\sX\in (W_{l}(\alpha),\bullet) \backslash \Lambda} \sO^{\sX}_{S_\alpha}, 
\end{equation} 
such that the Verma module $M(\lambda)_{S_\alpha}\in \sO^{\sX}_{S_\alpha}$ if and only if $\lambda\in \sX$. 
\end{enumerate}
\end{prop}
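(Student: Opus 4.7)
The plan is to dispatch the three parts of Proposition \ref{prop 6.1} in order, each mirroring the corresponding statement from Section 3, with $\k[[t]]$ replaced by the DVR $S_\alpha$ whose uniformizer is $\alpha$ (equivalently $\check{\alpha}$).

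\textbf{Part (1).} I would run the proof of Proposition \ref{prop JSF} verbatim over $S_\alpha$. The Jantzen machinery applies weight-by-weight: each weight space of $M(\lambda)_{S_\alpha}$ is a finitely generated free $S_\alpha$-module, and by Lemma \ref{lem 3.6} (applied to the prime $(0)\subset S$, which contains no affine real coroot) the map $\varpi_{\lambda,S_\alpha}$ becomes an isomorphism over $\K=\Frac S$, so $\bigcap_i M^i(\lambda)_{S_\alpha}=0$ and $\sum_{i>0}\dim_{\k_\alpha}M^i(\lambda)_{\k_\alpha,\mu}$ equals the $(\alpha)$-adic valuation of Shapovalov's determinant $\det_{\lambda-\mu}(\lambda)$ via (\ref{equ ShapFor}). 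Exactly as in the proof of Lemma \ref{lem 3.6}, the factor $[K_\beta;\langle\lambda+\rho,\check{\beta}\rangle-m]_{q_\beta}$ is a unit in $\hat{S}$ unless $l\mid\langle\lambda+\rho,\check{\beta}\rangle-m$, in which case it equals $\check{\beta}+\frac{\langle\lambda+\rho,\check{\beta}\rangle-m}{l}\,l\delta$ up to a unit; the projection $\hat{S}\twoheadrightarrow S$ kills $l\delta$, leaving $\check{\beta}$. Since the root system is reduced, $\check{\beta}\in(\check{\alpha})S_\alpha$ if and only if $\beta=\alpha$, so only the $\beta=\alpha$ factors contribute, each of valuation $1$. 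Reindexing $m=\langle\lambda+\rho,\check{\alpha}\rangle+kl$ yields (\ref{equ 6.2}).

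\textbf{Part (2).} This is an adaptation of Proposition \ref{prop LP}(1), with $W_{l,\af}$ replaced by $W_l(\alpha)$ and (\ref{equ 3.9}) replaced by (\ref{equ 6.2}). Induct on $\hgt(\lambda-\mu)$: the case $\lambda=\mu$ is trivial. For $\lambda>\mu$, Lemma \ref{lem 3.3} gives $M(\lambda)_{\k_\alpha}/M^1(\lambda)_{\k_\alpha}\cong E(\lambda)_{\k_\alpha}$, so $[M(\lambda)_{\k_\alpha}:E(\mu)_{\k_\alpha}]\neq 0$ is equivalent to $[M^1(\lambda)_{\k_\alpha}:E(\mu)_{\k_\alpha}]\neq 0$, and since the $M^i$ form a descending chain this is in turn equivalent to $\sum_{i>0}[M^i(\lambda)_{\k_\alpha}:E(\mu)_{\k_\alpha}]\neq 0$. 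By (\ref{equ 6.2}) the latter holds iff $[M(s_{\alpha,kl}\bullet\lambda)_{\k_\alpha}:E(\mu)_{\k_\alpha}]\neq 0$ for some $k$ with $\langle\lambda+\rho,\check{\alpha}\rangle+kl\geq 1$ (so $s_{\alpha,kl}\bullet\lambda<\lambda$); the induction hypothesis and transitivity of $\uparrow_\alpha$ close the loop.

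\textbf{Part (3).} Combining (2) with the BGG reciprocity of Proposition \ref{prop BGG}, every Verma sub-quotient of $Q(\lambda)^{\leq\nu}_{\k_\alpha}$ has highest weight in $W_l(\alpha)\bullet\lambda$, hence $\Hom(Q(\lambda)^{\leq\nu}_{\k_\alpha},Q(\lambda')^{\leq\nu'}_{\k_\alpha})=0$ whenever $W_l(\alpha)\bullet\lambda\neq W_l(\alpha)\bullet\lambda'$. This gives a block decomposition of $\sO_{\k_\alpha}$ indexed by $(W_l(\alpha),\bullet)\backslash\Lambda$. To lift it to $\sO_{S_\alpha}$, I use that $S_\alpha$ is local: for any two projectives $Q,Q'\in\sP^{\leq\nu}_{S_\alpha}$ whose $\k_\alpha$-reductions lie in distinct blocks, $\Hom(Q,Q')$ is a finitely generated $S_\alpha$-module (bounded by a weight space of $Q'$) which vanishes modulo $(\alpha)$ by the base-change equivalence (\ref{equ 2.10}), so Nakayama forces $\Hom(Q,Q')=0$. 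Taking colimits over truncations assembles (\ref{equ 6.3}).

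I expect the valuation computation in Part (1) to be the main obstacle: one has to carefully track how non-invertible factors of Shapovalov's determinant pass from the affine picture in $\hat{S}$ (where they are affine real coroots $\check{\beta}+kl\delta$) down to $S_\alpha$ (where only $\beta=\alpha$ survives), and verify that Jantzen's formalism remains valid over the non-complete DVR $S_\alpha$, which is done by applying it separately on each finitely generated free weight space.
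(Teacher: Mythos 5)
Your proposal is correct and follows exactly the route the paper takes: the paper's proof of Proposition \ref{prop 6.1} consists of the single line ``It is proved similarly as in Propositions \ref{prop JSF} and \ref{prop LP}'', and your argument is precisely that adaptation — the Jantzen/Shapovalov valuation computation over the DVR $S_\alpha$ (where only the $\beta=\alpha$ factors of $\det_\eta(\lambda)$ become non-units), the height induction for the linkage principle, and the lift of the block decomposition from $\k_\alpha$ to the local ring $S_\alpha$ via (\ref{equ 2.10}) and Nakayama. The details you supply (including the passage from $\check\beta+kl\delta$ in $\hat S$ to $\check\beta$ in $S$, and the weight-space-by-weight-space application of Jantzen's lemma over the non-complete DVR) are all sound.
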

\begin{proof}
It is proved similarly as in Propositions \ref{prop JSF} and \ref{prop LP}. 
\end{proof} 

Let $\omega\in \Lambda$. 
The equivalence class of the block $\sO^{\sX}_{S_\alpha}$ containing $M(\omega)_{S_\alpha}$ depends on whether 
\begin{itemize} 
	\item \textit{$\omega$ is $\alpha$-singular}, that is, $l\ |\ \langle \omega+\rho, \check{\alpha} \rangle$; 
	\item \textit{$\omega$ is $\alpha$-regular}, that is, $l \nmid \langle \omega+\rho , \check{\alpha} \rangle$. 
\end{itemize} 
We set $\sX^\omega_\alpha:=W_{l}(\alpha)\bullet \omega$, then 
$$\sX^\omega_\alpha = 
\begin{cases}
	(\omega +l\Z \alpha)\bigsqcup (s_\alpha\bullet \omega +l\Z \alpha) & \text{if $\omega$ is $\alpha$-regular} \\ 
	\omega +l\Z \alpha & \text{if $\omega$ is $\alpha$-singular}. 
\end{cases}$$ 
We denote by $\pr_{\sX^\omega_\alpha}$ the natural projection to the block $\sO^{\sX^{\omega}_\alpha}_{S_\alpha}$. 

\subsection{The $\alpha$-singular case}\label{subsect 6.1.1} 
In this subsection, we study the blocks $\sO^{\sX^{\omega}_\alpha}_{S_\alpha}$ for $\alpha$-singular weights $\omega$. 
We only consider the case $\omega=-\rho$, without loss of generality. 
For simplicity, we abbreviate 
$$\bm{n}_\alpha=-\rho + ln\alpha \in \Lambda, \quad n\in \Z.$$ 
\subsubsection{Projective modules} 
\begin{lem}\label{lem 6.2} 
The Jantzen filtration (\ref{equ 6.1}) for $M(\bm{n}_\alpha)_{{\Bbbk_\alpha}}$ coincides with 
$$M(\bm{n}_\alpha)_{{\Bbbk_\alpha}}\supset M(\bm{n-1}_\alpha)_{{\Bbbk_\alpha}}\supset M(\bm{n-2}_\alpha)_{{\Bbbk_\alpha}}\supset \cdots.$$ 
In particular, there is a short exact sequence 
$$0\rightarrow M(\bm{n-1}_\alpha)_{{\Bbbk_\alpha}} \rightarrow M(\bm{n}_\alpha)_{{\Bbbk_\alpha}} \rightarrow E(\bm{n}_\alpha)_{{\Bbbk_\alpha}} \rightarrow 0. $$ 
\end{lem}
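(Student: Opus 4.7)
The plan is to prove $M^i(\bm{n}_\alpha)_{\k_\alpha}\simeq M(\bm{n-i}_\alpha)_{\k_\alpha}$ for all $i\geq 0$, combining the Jantzen sum formula (Proposition \ref{prop 6.1}(1)) with explicit submodule embeddings built from the Frobenius center. Applying (\ref{equ 6.2}) to $\lambda=\bm{n}_\alpha$ with $\langle\bm{n}_\alpha+\rho,\check\alpha\rangle=2ln$ gives $s_{\alpha,kl}\bullet\bm{n}_\alpha=\bm{-n-k}_\alpha$; re-indexing $j=-n-k$ (so the condition $k\geq 1-2n$ becomes $j\leq n-1$) converts (\ref{equ 6.2}) into the character identity
$$\sum_{i>0}\ch M^i(\bm{n}_\alpha)_{\k_\alpha}\ =\ \sum_{j\leq n-1}\ch M(\bm{j}_\alpha)_{\k_\alpha}.$$

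Next I would construct injective maps $\iota_m\colon M(\bm{m-1}_\alpha)_R\hookrightarrow M(\bm{m}_\alpha)_R$ over any deformation ring $R$ by sending $v_{\bm{m-1}_\alpha}\mapsto F_\alpha^l\cdot v_{\bm{m}_\alpha}$. Since $F_\alpha^l$ lies in the Frobenius center $Z_{\mathrm{Fr}}^-$ (\textsection\ref{subset Frocen}) and is central in $\fU_\zeta^b$, and since the vanishing of $[l]_{q_\beta}$ at $q=\zeta$ causes the obstruction terms in the Lusztig-type commutation formulas for $E_i^{(n)}F_\alpha^l$ to annihilate the highest weight vector, one checks $E_i^{(n)}(F_\alpha^l v_{\bm{m}_\alpha})=0$ for every $i$ and $n\geq 1$. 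Thus $F_\alpha^l v_{\bm{m}_\alpha}$ is a highest weight vector of weight $\bm{m-1}_\alpha$, nonzero by the PBW basis; centrality of $F_\alpha^l$ in $\fU^-_\zeta$ yields injectivity. Write $\iota^i:=\iota_n\circ\iota_{n-1}\circ\cdots\circ\iota_{n-i+1}$.

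I would then prove $M^i(\bm{n}_\alpha)_{\k_\alpha}=\iota^i(M(\bm{n-i}_\alpha)_{\k_\alpha})$ by induction on $i$. For $i=1$: the standard factorization $\varpi_{\k_\alpha}\colon M(\bm{n}_\alpha)\twoheadrightarrow E(\bm{n}_\alpha)\hookrightarrow M(\bm{n}_\alpha)^\vee$ through the simple quotient and the socle of the coVerma gives $M^0/M^1=E(\bm{n}_\alpha)_{\k_\alpha}$, so $M^1$ is the maximal proper submodule. By Proposition \ref{prop 6.1}(2) applied to the subregular block, $\rad M(\bm{n}_\alpha)$ has simple head $E(\bm{n-1}_\alpha)$ and is generated by its (up-to-scalar unique) highest weight vector of weight $\bm{n-1}_\alpha$, which must coincide with $F_\alpha^l v_{\bm{n}_\alpha}$; hence $M^1=\iota_n(M(\bm{n-1}_\alpha))$. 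For the inductive step, the key computation is that the Shapovalov form on $M(\bm{n}_\alpha)_{S_\alpha}$ restricts along $\iota_n$ to $\alpha$ times the intrinsic Shapovalov form on $M(\bm{n-1}_\alpha)_{S_\alpha}$, the extra factor of $\alpha$ coming from the single new non-invertible term $[K_\alpha;2ln-l]_{q_\alpha}$ in the Shapovalov product for $\bm{n}_\alpha$. This yields $M^{j+1}(\bm{n}_\alpha)_{S_\alpha}=\iota_n(M^j(\bm{n-1}_\alpha)_{S_\alpha})$, and iterating gives $M^i(\bm{n}_\alpha)=\iota^i(M(\bm{n-i}_\alpha))\simeq M(\bm{n-i}_\alpha)$. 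The short exact sequence in the statement is the case $i=1$.

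The main obstacle will be the rank-one restriction identity for the Shapovalov form used in the inductive step; while it reduces to an $\mathfrak{sl}_2$-level calculation for the root $\alpha$, careful bookkeeping of the bilinear pairing under the embedding $\iota_n$ is required.
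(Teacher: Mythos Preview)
Your outline has a genuine gap at the second step, and it is the reason the paper takes the longer route through Claim~\ref{claim 6.3}.

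The assertion that $F_\alpha^l\cdot v_{\bm{m}_\alpha}$ is a highest weight vector in $M(\bm{m}_\alpha)$ for the \emph{hybrid} quantum group is false when $\alpha$ is not simple. Centrality of $F_\alpha^l$ in $\fU_\zeta$ only gives $E_i\,F_\alpha^l v=0$; the positive half of $U_\zeta^\hb$ is Lusztig's $U_\zeta^+$, so you must also kill the divided powers $E_i^{(l)}$. By Lemma~\ref{lem 4.2}(2), $-K_i^l[E_i^{(l)},F_\alpha^l]$ corresponds under $Z_\Fr^{\leq}\simeq\k[B^-\times_T T]$ to the coadjoint action $f_{\alpha_i}\cdot\varphi_\alpha$, which equals a nonzero multiple of $\varphi_{\alpha-\alpha_i}$ whenever $\alpha-\alpha_i\in\Phi^+$. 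Hence for any simple $\alpha_i$ in the support of a non-simple $\alpha$,
\[
E_i^{(l)}\big(F_\alpha^l v_{\bm{m}_\alpha}\big)=[E_i^{(l)},F_\alpha^l]\,v_{\bm{m}_\alpha}\ \propto\ F_{\alpha-\alpha_i}^l\,v_{\bm{m}_\alpha}\neq 0,
\]
so $\iota_m$ is not a morphism of $U_\zeta^\hb$-modules. The subsequent inductive identity $M^{j+1}(\bm{n}_\alpha)=\iota_n\big(M^j(\bm{n-1}_\alpha)\big)$ therefore has no foundation, and the rank-one Shapovalov heuristic you invoke does not control the pairing against divided powers of the $E_i$ for $i$ with $\alpha_i\neq\alpha$.

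The paper circumvents this by proving the weaker statement that actually suffices: Claim~\ref{claim 6.3} produces elements $F_n(\alpha)\in Z_\Fr^-\otimes S_\alpha$ (generally \emph{not} $F_\alpha^{nl}$) with $\{U_\zeta^\hb\otimes S_\alpha,\,F_n(\alpha)\}_{\lambda,S_\alpha}\subset\alpha^n S_\alpha$. This places $F_i(\alpha)\cdot 1_{\bm{n}_\alpha}$ inside $M^i(\bm{n}_\alpha)_{S_\alpha}$ without claiming it is a highest weight vector; torsion-freeness of $M(\bm{n}_\alpha)_{\k_\alpha}$ over $\fU_\zeta^-\otimes\k_\alpha$ then gives the character lower bound $\ch M^i\geq \ch M(\bm{n-i}_\alpha)$, and the Jantzen sum formula forces equality. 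If you want to salvage your approach, you would need to replace $F_\alpha^l$ by an analogue of the paper's $F_1(\alpha)$; but constructing it is exactly the content of Claim~\ref{claim 6.3} and its proof via the De Concini--Kac--Procesi description of the Frobenius center.
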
 
\noindent 
To show this lemma, we need the following claim, whose proof is technical and is postponed to \textsection \ref{subsect 4.1.0}. 

\begin{claim}\label{claim 6.3} 
For any $n\geq 1$, there is a homogeneous element $F_n(\alpha)\in Z_\Fr^-\otimes S_\alpha\backslash \alpha (Z_\Fr^-\otimes S_\alpha)$ of degree $-nl\alpha$, such that 
\begin{equation}\label{equ 6.5} 
\{U^\hb_\zeta\otimes S_\alpha, F_n(\alpha)\}_{\lambda,S_\alpha}\subset \alpha^nS_\alpha, \quad \forall \lambda\in \Lambda. 
\end{equation} 
\end{claim}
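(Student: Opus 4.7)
The plan is to construct $F_n(\alpha)$ as a Shapovalov-type element, starting from the natural candidate $F_\alpha^{ln}\in Z_\Fr^-$ and adding corrections living in $\alpha\cdot (Z_\Fr^-\otimes S_\alpha)$ when necessary.

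\emph{Reduction step.} By the associativity $\{u_1u,u_2\}=\{u_1,uu_2\}$ of the pairing together with the fact that $\epsilon\otimes 1\otimes \epsilon$ kills any term whose triangular decomposition has a non-trivial $\fU_q^-$ or $U_q^+$-component, we can absorb the $\fU_q^-$-part of the left entry into the right and reduce to verifying
$$\{K_\mu E^{(\wp)},F_n(\alpha)\}_{\lambda,S_\alpha}\in \alpha^n S_\alpha,\quad \wp\in \Par(nl\alpha),\ \mu\in \Lambda.$$
Since $K_\mu$ acts by an element of $\fU_q^0$ after applying $\tau_\lambda$, we may further assume $\mu=0$.

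\emph{Main candidate and rank-one calculation.} Set as starting candidate $F_\alpha^{ln}\in Z_\Fr^-$; it has the required weight $-nl\alpha$, and its image in $Z_\Fr^-\otimes \k_\alpha$ is $F_\alpha^{ln}\otimes 1\neq 0$, so it is not in $\alpha\cdot(Z_\Fr^-\otimes S_\alpha)$. For the ``pure'' monomial $\wp=nl\cdot\delta_\alpha$, one iterates Lusztig's commutation formula (schematically $E_\alpha^{(m)}F_\alpha=F_\alpha E_\alpha^{(m)}+[K_\alpha;1-m]_{q_\alpha}E_\alpha^{(m-1)}$) to push $E_\alpha^{(nl)}$ across $F_\alpha^{ln}$. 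After $n$ passes (one per $F_\alpha^l$-block) the resulting $\fU_\zeta^0$-component is a product of $n$ factors of the form $[K_\alpha;c]_{q_\alpha}$ with $l|c$. Applying $\tau_\lambda$ and the embedding $\fU_\zeta^0\hookrightarrow S$ (via $\fU_\zeta^0=\k[T]\subset S$), each such factor becomes a unit multiple of $\check\alpha + l\cdot(\text{integer})\delta$, which lies in the maximal ideal $(\alpha)$ of $S_\alpha$; taking the product gives the required $\alpha^n$.

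\emph{Mixed monomials and correction step.} For general $\wp$ involving non-$\alpha$ root vectors, commutation of $F_\alpha^{ln}$ past $E_\beta^{(m)}$ with $\beta\neq\alpha$ produces, through the quantum Serre/braid relations, obstruction terms that are not automatically in $\alpha^n S_\alpha$. We therefore modify the candidate to
$$F_n(\alpha):=F_\alpha^{ln}+\sum_{\wp\in\Par(n\alpha),\, \wp\neq nl\cdot\delta_\alpha} c_\wp(\alpha)\,F^{l\wp},\qquad c_\wp(\alpha)\in \alpha S_\alpha,$$
where the sum ranges over PBW monomials in $Z_\Fr^-\otimes S_\alpha$ of weight $-nl\alpha$. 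The coefficients $c_\wp(\alpha)$ are chosen inductively on a PBW-type ordering of $\wp$ so as to kill the obstruction terms modulo $\alpha^n S_\alpha$. Their existence follows from the fact that, after inverting $\alpha$, the Shapovalov pairing on the $-nl\alpha$ weight space becomes non-degenerate over $S_\alpha[\alpha^{-1}]$ (since all Shapovalov factors $[K_\beta;c]_{q_\beta}$ with $\beta\neq\alpha$ remain invertible in $S_\alpha$), so the relevant linear system is solvable. Because all corrections lie in $\alpha\cdot(Z_\Fr^-\otimes S_\alpha)$, the resulting $F_n(\alpha)$ still has the same image as $F_\alpha^{ln}$ modulo $\alpha$, hence is not divisible by $\alpha$.

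\emph{Main obstacle.} The key difficulty is the third step: one must track the iterated Serre and braid commutations uniformly in $\lambda$, ensuring the correction $c_\wp(\alpha)\in \alpha S_\alpha$ can be chosen independently of $\lambda$. This is made possible by the $\lambda$-independence of the underlying $\fU^0_q$-valued pairing (the dependence on $\lambda$ enters only through the final $\tau_\lambda$), so the solvability of the linear system in Step 3 can be done once and for all at the universal level. When $\alpha$ is non-simple, an extra complication is that $F_\alpha$ is defined via Lusztig's braid automorphisms, and its commutators with simple $E_i$'s must be handled with the standard Levi-type reduction; this bookkeeping, while technical, is purely internal to the rank-one or rank-two subalgebras generated by $\alpha$ and an adjacent simple root.
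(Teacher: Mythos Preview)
Your correction step has a genuine gap. You assert that coefficients $c_\wp(\alpha)\in\alpha S_\alpha$ can be found so that $F_\alpha^{ln}+\sum c_\wp F^{l\wp}$ pairs into $\alpha^n S_\alpha$ with every $E^{(\wp')}$, and you justify this by saying the Shapovalov form is non-degenerate after inverting~$\alpha$. But non-degeneracy over $S_\alpha[\alpha^{-1}]$ only lets you solve the linear system over the fraction field; it gives no control over the $\alpha$-adic valuation of the solution, so there is no reason the $c_\wp$ you find lie in $\alpha S_\alpha$ (or even in $S_\alpha$). Concretely, what you need is that the largest invariant factor of the pairing matrix between $(U_\zeta^+)_{nl\alpha}$ and $(Z_\Fr^-)_{-nl\alpha}$ over $S_\alpha$ is divisible by $\alpha^n$. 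Your rank-one calculation only computes a single matrix entry $\{E_\alpha^{(nl)},F_\alpha^{ln}\}$, not an entire row or column, so it does not establish this. Already for $n=2$ in type $A_2$ with $\alpha=\alpha_1+\alpha_2$ one checks (on the classical side) that $\{f_{\alpha_1}f_{\alpha_2}f_\alpha,\varphi_\alpha^2\}'$ has $\alpha$-valuation exactly~$1$, so $F_\alpha^{2l}$ itself does not satisfy the claim and a non-trivial correction is genuinely required---but your argument does not produce one.

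The paper avoids this difficulty entirely by two moves you are missing. First, it invokes the De Concini--Kac--Procesi theorem (Lemma~\ref{lem 4.2}) to identify $Z_\Fr^{\leq}$ with $\k[B^-\times_T T]$ in a way that intertwines the adjoint action of $E_i^{(l)}$ with the classical Killing vector field $f_{\alpha_i}$; together with the fact that undivided $E_i$'s commute with $Z_\Fr^{\leq}$, this reduces the quantum pairing $\{U_\zeta^+,Z_\Fr^-\}$ to a purely classical pairing $\{U\n^-,\k[N^-]\}'$. Second, and crucially, it proves the \emph{transposed} statement: the classical computation $[f_\alpha,\varphi_\alpha]\in\alpha S_\alpha$ and $[f_\alpha,\varphi_\beta]\in\k\varphi_{\beta-\alpha}$ for $\beta\neq\alpha$ shows directly that $\{f_\alpha^n,\k[N^-]_{n\alpha}\}'\subset\alpha^n S_\alpha$, i.e.\ an entire \emph{row} of the pairing matrix lies in $\alpha^n S_\alpha$. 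Since $f_\alpha^n\notin\alpha(U\n^-\otimes S_\alpha)$ and the Smith normal form over the DVR $S_\alpha$ is symmetric under transposition, this forces the existence of a column vector $\varphi_n(\alpha)\notin\alpha(\k[N^-]\otimes S_\alpha)$ with the desired property. The element $F_n(\alpha)$ is then the preimage of $\varphi_n(\alpha)$ under the Frobenius isomorphism; no explicit formula for it is needed or given.
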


\begin{proof}[Proof of the Lemma \ref{lem 6.2}] 
Choose a highest weight generator $1_{\bm{n}_\alpha}$ for $M(\bm{n}_\alpha)_{S_\alpha}$. 
By Claim \ref{claim 6.3}, for any $i\geq 1$, we have an element $F_i(\alpha).1_{\bm{n}_\alpha}\in M(\bm{n}_\alpha)_{S_\alpha}\backslash \alpha M(\bm{n}_\alpha)_{S_\alpha}$ such that 
$$\{M^-(\bm{n}_\alpha)_{S_\alpha},F_i(\alpha).1_{\bm{n}_\alpha}\}_{\bm{n}_\alpha,S_\alpha}\subset \alpha^iS_\alpha.$$ 
Hence $F_i(\alpha).1_{\bm{n}_\alpha}\in M^i(\bm{n}_\alpha)_{S_\alpha}\backslash \alpha M(\bm{n}_\alpha)_{S_\alpha}$ and its image in $M^i(\bm{n}_\alpha)_{{\Bbbk_\alpha}}$ is nonzero, which shows that $\dim M^i(\bm{n}_\alpha)_{{\Bbbk_\alpha},\bm{n-i}_\alpha}\geq 1$. 
Since $M(\bm{n}_\alpha)_{{\Bbbk_\alpha}}$ is a torsion free $\fU^-_\zeta\otimes {\Bbbk_\alpha}$-module, we have 
$$M(\bm{n-i}_\alpha)_{{\Bbbk_\alpha}}\cong (\fU^-_\zeta\otimes {\Bbbk_\alpha}) F_i(\alpha).1_{\bm{n}_\alpha} \subset M^i(\bm{n}_\alpha)_{{\Bbbk_\alpha}}$$ 
as modules in $\fU^-_\zeta\Mod^\Lambda_{{\Bbbk_\alpha}}$. 
Therefore, we have 
\begin{equation}\label{equ 6.7}
\ch M^i(\bm{n}_\alpha)_{{\Bbbk_\alpha}}-\ch M(\bm{n-i}_\alpha)_{{\Bbbk_\alpha}}\in \prod_{\lambda\in \Lambda} \Z_{\geq 0}\exp(\lambda), \quad \forall i\geq 1. 
\end{equation}
Applying the Jantzen sum formula (\ref{equ 6.2}) for $M(\bm{n}_\alpha)_{{\Bbbk_\alpha}}$, we get 
$$\sum_{i>0} \ch M^i(\bm{n}_\alpha)_{{\Bbbk_\alpha}}=\sum_{i>0} \ch M(\bm{n-i}_\alpha)_{{\Bbbk_\alpha}},$$ 
which forces the LHS of (\ref{equ 6.7}) to vanish. 
Again, since $M^i(\bm{n}_\alpha)_{{\Bbbk_\alpha}}$ is a torsion free $\fU^-_\zeta\otimes {\Bbbk_\alpha}$-module, it follows that $M^i(\bm{n}_\alpha)_{{\Bbbk_\alpha}}= M(\bm{n-i}_\alpha)_{{\Bbbk_\alpha}}$ as modules in $U^\hb_\zeta\Mod_{{\Bbbk_\alpha}}^\Lambda$. 
The short exact sequence follows from the fact that $M^1(\bm{n}_\alpha)_{{\Bbbk_\alpha}}$ coincides with the maximal proper submodule of $M(\bm{n}_\alpha)_{{\Bbbk_\alpha}}$. 
\end{proof}

\begin{corollary}\label{cor 6.4} 
The module $Q(\bm{n}_\alpha)^{\leq \bm{m}_\alpha}_{S_\alpha}$ admits a Verma flag with composition factors 
\begin{equation}\label{equ 6.8} 
M(\bm{n}_\alpha)_{S_\alpha}, M(\bm{n+1}_\alpha)_{S_\alpha}, \cdots, M(\bm{m}_\alpha)_{S_\alpha}, 
\end{equation}
each of which appears once. 
\end{corollary}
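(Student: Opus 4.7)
The plan is to combine the BGG reciprocity (Proposition \ref{prop BGG}) with the iterated short exact sequence of Lemma \ref{lem 6.2} to read off every Verma multiplicity. The block decomposition (\ref{equ 6.3}) together with the truncation $\leq \bm{m}_\alpha$ forces every Verma factor of $Q(\bm{n}_\alpha)^{\leq \bm{m}_\alpha}_{S_\alpha}$ to be of the form $M(\bm{j}_\alpha)_{S_\alpha}$ with $j \leq m$, while $Q(\bm{n}_\alpha)^{\leq \bm{m}_\alpha}_{S_\alpha}$ being the projective cover of $E(\bm{n}_\alpha)_{\k_\alpha}$ further forces $j \geq n$. So the multiplicity $(Q(\bm{n}_\alpha)^{\leq \bm{m}_\alpha}_{S_\alpha} : M(\bm{j}_\alpha)_{S_\alpha})$ vanishes outside the range $n \leq j \leq m$.

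Inside that range, I apply BGG reciprocity over the residue field $\k_\alpha$ (a field, hence trivially a local complete Noetherian domain) and combine with the base-change compatibility of Lemma \ref{lem 2.2} to obtain
\[(Q(\bm{n}_\alpha)^{\leq \bm{m}_\alpha}_{S_\alpha} : M(\bm{j}_\alpha)_{S_\alpha}) = [M(\bm{j}_\alpha)_{\k_\alpha} : E(\bm{n}_\alpha)_{\k_\alpha}].\]
If the possible non-completeness of $S_\alpha$ is an issue, I first pass to the $\alpha$-adic completion, invoke Lemma \ref{lem 2.2} there, and descend by the faithful flatness built into the base-change equivalence (\ref{equ 2.10}). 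It then remains to evaluate the Jordan--H\"older multiplicity on the right. Iterating the short exact sequence of Lemma \ref{lem 6.2} produces a descending chain $M(\bm{j}_\alpha)_{\k_\alpha} \supset M(\bm{j-1}_\alpha)_{\k_\alpha} \supset \cdots$ with successive quotients $E(\bm{j}_\alpha)_{\k_\alpha}, E(\bm{j-1}_\alpha)_{\k_\alpha}, \ldots$, each appearing exactly once. This chain is exhaustive on each fixed weight space, since the $\lambda$-weight space of $M(\bm{j-i}_\alpha)_{\k_\alpha}$ is zero once $\bm{j-i}_\alpha \not\geq \lambda$, which holds for all sufficiently large $i$. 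Hence $[M(\bm{j}_\alpha)_{\k_\alpha} : E(\bm{n}_\alpha)_{\k_\alpha}] = 1$ for every $n \leq j$.

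Assembling the pieces, $(Q(\bm{n}_\alpha)^{\leq \bm{m}_\alpha}_{S_\alpha} : M(\bm{j}_\alpha)_{S_\alpha})$ equals $1$ precisely for $n \leq j \leq m$ and $0$ otherwise, which is exactly the claim. The real mathematical content has been absorbed into Lemma \ref{lem 6.2}, which in turn relies on the explicit construction of the elements $F_n(\alpha)$ in Claim \ref{claim 6.3}; granted that input, the present corollary is a structural consequence of BGG reciprocity and specialization. The only genuine nuisance at this step is ensuring that the BGG-reciprocity machinery applies over $S_\alpha$ despite its possible non-completeness, which is a routine flat-descent argument rather than a serious obstacle.
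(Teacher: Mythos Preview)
Your proposal is correct and follows essentially the same route as the paper: deduce from Lemma~\ref{lem 6.2} that $[M(\bm{j}_\alpha)_{\k_\alpha}:E(\bm{n}_\alpha)_{\k_\alpha}]=1$ for all $n\leq j$, then invoke the BGG reciprocity of Proposition~\ref{prop BGG}. You add welcome detail that the paper leaves implicit---the restriction $n\leq j\leq m$ via the block decomposition and truncation, the exhaustiveness of the Verma chain on weight spaces, and the handling of possible non-completeness of $S_\alpha$ via passage to the completion and flat descent---but the core argument is identical.
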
 
\begin{proof}
From Lemma \ref{lem 6.2}, we deduce that $[M(\bm{n}_\alpha)_{{\Bbbk_\alpha}}:E(\bm{n'}_\alpha)_{{\Bbbk_\alpha}}]=1$ for any $n'\leq n$. 
The assertion follows from Proposition \ref{prop BGG}. 
\end{proof}

\subsubsection{Proof of Claim \ref{claim 6.3}}\label{subsect 4.1.0} 
We begin with a lemma. 
\begin{lem}\label{lem 4.2} 
\begin{enumerate}
\item For any $i\in \I$, the adjoint operator $[E_{i}^{(l)}, -]$ on $U^\hb_\zeta$ preserves the subalgebra $Z_\Fr^{\leq}$. 
\item It induces a $U\n^-$-action on $Z_\Fr^{\leq}$ such that $f_{\alpha_i}$ acts via $-K_i^l [E_{i}^{(l)}, -]$ for any $i\in \I$. 
There is a $U\n^-$-isomorphism 
\begin{equation}\label{equ 4.1} 
Z_\Fr^{\leq} \xs \k[B^-\times_T T], 
\end{equation} 
compatible with (\ref{equ 2.7}), where the base change $T\rightarrow T$ is by $t\mapsto t^{2}$ for any $t\in T$, and the $U\n^-$-action on $\k[B^-\times_T T]$ is induced from the $N^-$-conjugation on $B^-$. 
\end{enumerate} 
\end{lem}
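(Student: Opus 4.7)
The plan is to verify both parts by reducing to explicit commutator computations on a generating set of $Z_\Fr^{\leq}$, and then match the resulting action with the classical geometry on $B^-\times_T T$.

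For part (1), the operator $[E_i^{(l)},-]$ is a $\k$-linear derivation of $U^\hb_\zeta$, so it suffices to verify that it maps a generating set of $Z_\Fr^{\leq}$ into $Z_\Fr^{\leq}$; a natural choice is $\{K_\lambda^l\mid \lambda\in \Lambda\}\cup\{F_\beta^l\mid \beta\in \Phi^+\}$. Iterating the commutation $K_\lambda E_i K_\lambda^{-1}=q^{(\lambda,\alpha_i)}E_i$ yields $K_\lambda^l E_i^{(l)}K_\lambda^{-l}=q_e^{el^2(\lambda,\alpha_i)}E_i^{(l)}$; since $e(\lambda,\alpha_i)\in \Z$ and $\zeta_e^l=1$, this scalar equals $1$, so $[E_i^{(l)},K_\lambda^l]=0$. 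For the $F_\beta^l$'s, the derivation property together with the fact that the non-simple $F_\beta^l$'s are built from the simple ones by iterated brackets reduces us to the case $\beta=\alpha_j$ simple; then $[E_i^{(l)},F_{\alpha_j}^l]=0$ for $i\ne j$, while for $i=j$ Lusztig's formula for divided-power commutators at the root of unity produces an element of $Z_\Fr^0\subset Z_\Fr^{\leq}$ of the form $K_i^l\cdot c_i$ with $c_i\in \k^\times$.

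For part (2), I define $\phi_i:=-K_i^l[E_i^{(l)},-]$ on $Z_\Fr^{\leq}$. To see that the assignment $f_{\alpha_i}\mapsto \phi_i$ extends to a $U\n^-$-action, I verify the Serre relations $\mathrm{ad}(\phi_i)^{1-a_{ij}}(\phi_j)=0$ acting on $Z_\Fr^{\leq}$. By part (1) the $K_i^l$ factors are annihilated by each $[E_j^{(l)},-]$ on $Z_\Fr^{\leq}$, so they pull out of iterated brackets and the required identity reduces to the Serre relation satisfied by the divided powers $E_i^{(l)}$ inside $U_\zeta^+$; the latter follows from the compatibility of $E_i^{(l)}\mapsto e_i$ under Frobenius with the Chevalley generators of $U\n$.

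For the geometric identification, recall from \textsection\ref{subset Frocen} that $Z_\Fr^{\leq}=Z_\Fr^-\otimes Z_\Fr^0\simeq \k[N^-]\otimes\k[T]$. Realizing the $T$-factor via $K_\lambda^l\mapsto(t\mapsto t^{2l\lambda})$, the $\rho$-shifted Harish-Chandra normalization appropriate at roots of unity, identifies $Z_\Fr^{\leq}$ with $\k[B^-\times_T T]$ for the base change $T\to T$, $t\mapsto t^2$, in a way compatible with (\ref{equ 2.7}). Equivariance with the $U\n^-$-actions is then checked on generators: on $Z_\Fr^-$, the commutator computation above matches $\phi_i(F_{\alpha_j}^l)$ with the classical $\mathrm{ad}(f_i)(f_j)$ under $F_\beta^l\leftrightarrow f_\beta$; on $Z_\Fr^0$, both actions vanish because $N^-$-conjugation fixes $T\subset B^-$. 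The main technical obstacle will be the careful scalar bookkeeping, especially in the computation of $[E_i^{(l)},F_i^l]$ at $q=\zeta$ and in the identification of $Z_\Fr^0$ with $\k[T]$: those scalars are precisely what force the specific normalization $\phi_i=-K_i^l[E_i^{(l)},-]$ and produce the doubling $t\mapsto t^2$ on the torus factor of $B^-\times_T T$.
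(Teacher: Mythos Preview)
Your argument for part (1) has a genuine gap in the reduction to simple roots. You claim that ``the non-simple $F_\beta^l$'s are built from the simple ones by iterated brackets,'' but this is false: the elements $\{F_\beta^l\}_{\beta\in\Phi^+}$ are algebraically independent polynomial generators of $Z_\Fr^-\simeq \k[N^-]$, so for non-simple $\beta$ the element $F_\beta^l$ is \emph{not} expressible in terms of the $F_{\alpha_j}^l$'s. (The root vector $F_\beta\in\fU_\zeta^-$ is defined via Lusztig's braid group action, and its $l$-th power is a new generator of the Frobenius center.) Thus the derivation property alone does not reduce the computation of $[E_i^{(l)},F_\beta^l]$ to the simple case, and your argument stops short of treating general $\beta$. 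The paper handles this instead by invoking \cite[\S3.4]{DeCK90}, where it is shown that the operator $\underline{X}_i=[E_i^{(l)},-]$ on $U_q$ preserves $\fU_q$ and, after specialization, preserves $Z_\Fr\subset\fU_\zeta$; one then checks that inside $U_\zeta^\hb$ the result must already lie in $Z_\Fr^{\leq}$.

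For part (2), your verification of the Serre relations is also incomplete: after pulling out the central factors $K_i^l$, you need $\operatorname{ad}(E_i^{(l)})^{1-a_{ij}}(E_j^{(l)})=0$ to hold in $U_\zeta^+$, but the fact that the Frobenius images $e_i\in U\n$ satisfy the Serre relations does not imply this, since $\Fr$ has a large kernel. More seriously, the geometric identification you sketch --- matching $\phi_i(F_\beta^l)$ with $\operatorname{ad}(f_{\alpha_i})$ on $\k[B^-\times_TT]$ ``on generators'' --- would require precisely the general-$\beta$ computation you have not done. The paper bypasses all of this by invoking the theorem of De Concini--Kac--Procesi \cite[\S5]{DeCKP92}: under the unramified covering $\kappa:G^*\to G$, $(n_-,t,n_+)\mapsto n_-t^2n_+^{-1}$, the pullback of the Killing vector field $f_{\alpha_i}$ equals $-K_i^l\underline{X}_i$. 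Restricting to $Z_\Fr^{\leq}=Z_\Fr/Z_\Fr(Z_\Fr^+)_+$ then gives both the $U\n^-$-action and the equivariant isomorphism at once, with the doubling $t\mapsto t^2$ appearing transparently from the definition of $\kappa$. Your more direct route would be attractive if it worked, but the gaps above are exactly where the nontrivial content of the cited results lies.
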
 
\begin{proof} 
(1) Since $[E_{i}^{(l)},-]$ is trivial on $Z^0_\Fr$, 
it remains to show that $[E_{i}^{(l)}, F_\beta^l]\in Z_\Fr^{\leq}$ for any $\beta\in \Phi^+$. 
By \cite[\textsection 3.4]{DeCK90}, the operator $\underline{X}_{i}:=[E_{i}^{(l)},-]$ on $U_q$ preserves the subspace $\fU_q$, and its specialization on $\fU_\zeta$ preserves $Z_\Fr$. 
It follows that in the integral form $U_q^\hb$, we have 
	$$ [E_{i}^{(l)}, F_\beta^l] \in \sum_{\wp, \wp'\in (l\N)^{\Phi^+}, \lambda\in \Lambda} \k \ F^\wp K^l_\lambda E^{\wp'} \quad \mathrm{mod} \ (q_e-\zeta_e)\fU_q .$$ 
Hence in $U^\hb_\zeta$, we have $[E_{i}^{(l)}, F_\beta^l]\in Z_\Fr^{\leq}$. 
	
(2) The derivation $\underline{X}_{i}$ on $Z_\Fr$ yields a tangent field $\underline{X}_{i} \in \Gamma(G^*,\sT_{G^*})$ via the isomorphism (\ref{equ 2.7}). 
Consider the unramified covering from $G^*$ onto the big open cell $N^-TN$, 
$$\kappa: G^* \rightarrow G, \quad (n_-,t ,n_+)\mapsto n_- t^2 n_+^{-1}, \quad \forall n_{-}\in N^-,\ n_+\in N,\ t\in T,$$ 
which gives a pullback of tangent fields $\kappa^*: \Gamma(G,\sT_{G})\rightarrow \Gamma(G^*,\sT_{G^*})$. 
A remarkable theorem given by De Concini--Kac--Procesi in {\cite[\textsection 5]{DeCKP92}} shows that 
\begin{equation}\label{equ 4.9re} 
\kappa^*(f_{\alpha_i})= -K_i^l \underline{X}_{i},
\end{equation}
where $f_{\alpha_i}$ is viewed as a Killing vector field on $G$ (i.e. the vector fields induced by the conjugate action). 
Denote by $(Z_\Fr^+)_+$ the augmentation ideal of $Z_\Fr^+$. 
Since $\underline{X}_{i}$ on $Z_\Fr$ preserves $(Z_\Fr^+)_+$, it descents to an operator on $Z_\Fr^{\leq}= Z_\Fr/ Z_\Fr(Z_\Fr^+)_+$, which coincides with the action $[E_{i}^{(l)},-]$ on $Z_\Fr^{\leq}$ in the algebra $U^\hb_\zeta$. 

Consider the Cartesian diagram 
$$\begin{tikzcd}
\Spec Z_\Fr^{\leq} \arrow[r,two heads, "\kappa'"] \arrow[d,hook] & B^- \arrow[d,hook] \\ 
\Spec Z_\Fr \arrow[r,two heads,"\kappa"] & N^- T N.
\end{tikzcd}$$ 
By the definition of $\kappa$ and (\ref{equ 2.7}), $\kappa$ is compatible with the isomorphism $\Spec Z_\Fr\simeq (N^- T N) \times_T T$, where $T\rightarrow T$ is by $t\mapsto t^{2}$ for any $t\in T$. 
Hence $\kappa'$ is compatible with the isomorphism 
\begin{equation}\label{equ 4.10re}
Z_\Fr^{\leq} \xs \k[B^-\times_T T]. 
\end{equation}
Note that the Killing vector field $f_{\alpha_i}$ on $G$ is tangent to $B^-$. 
By (\ref{equ 4.9re}), the map $f_{\alpha_i}\mapsto -K_i^l [E_{i}^{(l)}, -]$ defines a $U\n^{-}$-action on $Z_\Fr^{\leq}$ such that (\ref{equ 4.10re}) is an isomorphism of $U\n^{-}$-algebras. 
\end{proof} 

Following Lemma \ref{lem 4.2}(2), we consider the algebra $A=\k[B^-\times_T T]\rtimes U\n^-$. 
As in \textsection \ref{subsect 3.3.1}, there is a bilinear pairing 
$$\{\ ,\ \}':\ A\otimes A\xrightarrow{\text{mult}} A=\k[N^-]\otimes \k[T] \otimes U\n^- \xrightarrow{\mathrm{ev_1} \otimes 1\otimes \epsilon_1} \k[T],$$ 
where $\mathrm{ev_1}$ is by evaluation at $1\in N^-$, and $\epsilon_1$ is the counit of $U\n^-$. 
The pairing kills the following subspace of $A\otimes A$, 
$$\big(\ker(\ev_1)\otimes U\n^-\big)\otimes A+A\otimes \big(\k[B^-\times_T T]\otimes \ker\epsilon_1\big),$$ 
hence it descents to a pairing 
$$\{\ ,\ \}':\ (\k[T] \otimes U\n^-)\otimes \k[B^-\times_T T]\rightarrow \k[T].$$ 
The restriction on the weight space $(U\n^-)_{\eta'} \otimes \k[N^-]_{\eta}$ is nonzero only if $\eta'=-\eta$. 
By the exponential map $\b^-\rightarrow B^-$, we can identify the algebras endowed with $B^-$-actions 
$$\k[N^-]\otimes \k[T]_{\widehat{1}} \xs \k[\n^-]\otimes \k[\t]_{\widehat{0}}.$$ 
Let $\{\varphi_\beta\}_{\beta}$ be the dual basis of $\{f_\beta\}_{\beta}$ in $\n^-$, then $\k[\n^-]$ is a polynomial ring generated by $\{\varphi_\beta\}_{\beta}$. 

\begin{proof}[Proof of Claim \ref{claim 6.3}] 
The pairing $\{\ ,\ \}'$ extends $S_\alpha$-linearly to a pairing $\{\ ,\ \}'_\alpha$ on $A\otimes_{\k[T]}S_\alpha$. 
To prove the assertion, we firstly show a similar one ------ there exists a homogeneous element $\varphi_n(\alpha)$ in $(\k[N^-]\otimes S_\alpha) \backslash \alpha(\k[N^-]\otimes S_\alpha)$ of degree $n\alpha$, such that 
$$\{A\otimes S_\alpha, \varphi_n(\alpha)\}'_\alpha \subset \alpha^n S_\alpha.$$ 
By degree consideration, it is enough to consider the restriction of $\{\ ,\ \}'_\alpha$ on $(U\n^-)_{-n\alpha}\otimes S_\alpha$ and $\k[N^-]_{n\alpha}\otimes S_\alpha$. 
Note that $S_\alpha$ is a discrete valuation ring, and the Smith normal form of the matrix associated with this $S_\alpha$-linear pairing is equal to the one of the transposed matrix. 
Hence if we show that 
\begin{equation}\label{equ 4.10} 
\{f^n_\alpha,\k[N^-]_{n\alpha}\otimes S_\alpha\}'_\alpha\subset \alpha^n S_\alpha, 
\end{equation}
and note that $f^n_\alpha\notin \alpha(U\n^-\otimes S_\alpha)$, then the existence of $\varphi_n(\alpha)$ follows. 
To that end, we compute $\{f^n_\alpha,\prod\limits_{\beta\in \Phi^+}\varphi^{\wp(\beta)}_{\beta}\}'_\alpha$ for any $\wp\in \Par(n\alpha)$. 
Indeed, by definition we have 
\begin{equation}\label{equ 6.4} 
\begin{aligned} 
\{f^n_\alpha,\prod\varphi^{\wp(\beta)}_{\beta}\}'_\alpha
&=(\mathrm{ev_1} \otimes 1\otimes \epsilon_1)(f^n_\alpha\prod\varphi^{\wp(\beta)}_{\beta})\\ 
&=\sum_{\wp(\gamma)\geq 1} (\mathrm{ev_1} \otimes 1\otimes \epsilon_1)
\big(f^{n-1}_\alpha[f_\alpha,\varphi_{\gamma}]\prod\varphi^{(\wp\backslash \gamma)(\beta)}_{\beta}
+f^{n-1}_\alpha\prod\varphi^{\wp(\beta)}_{\beta}f_\alpha\big) \\ 
&=\sum_{\wp(\gamma)\geq 1} \{f^{n-1}_\alpha,[f_\alpha,\varphi_{\gamma}]\prod\varphi^{(\wp\backslash \gamma)(\beta)}_{\beta}\}'_\alpha, 
\end{aligned}
\end{equation} 
where $\wp\backslash \gamma$ is the Kostant partition obtain from $\wp$ by decreasing $\wp(\gamma)$ by $1$. 
By computation, we have $[f_\alpha,\varphi_\alpha]\in \alpha S_\alpha$, and for $\beta\neq \alpha$, $[f_\alpha,\varphi_{\beta}]\in \k \varphi_{\beta-\alpha}$ where $\varphi_{\beta-\alpha}:=0$ if $\beta-\alpha\notin \Phi^+$               . 
One can show by increase induction on $n$ that (\ref{equ 6.4}) vanishes unless $\wp(\alpha)=n$, and in this case $\{f^n_\alpha,\varphi^n_\alpha\}_\alpha\in \alpha^n S_\alpha$, which implies (\ref{equ 4.10}). 

Let $F_n(\alpha)\in Z_\Fr^{-}\otimes S_\alpha\backslash \alpha(Z_\Fr^{-}\otimes S_\alpha)$ be the element corresponding to $\varphi_n(\alpha)$ under the isomorphism (\ref{equ 4.1}). 
We show that $F_n(\alpha)$ has the desired property. 
Consider the elements $\prod_{j}E^{(m_{i_j})}_{{i_j}}$, which generate $U^{\hb,\geq}_\zeta$ as a $\fU^0_\zeta$-module. 
Since $E_i$ commutes with $Z^{\leq}_\Fr$ for any $i\in \rI$, we have 
\begin{equation}\label{equ 4.11} 
\big\{\prod_{j}E^{(m_{i_j})}_{i_j}, F_n(\alpha)\big\}_{\lambda, S_\alpha}
\end{equation}
is nonzero only if $l|m_{i_j}$ for any $j$.  
In this case, by Lemma \ref{lem 4.2} and since $\tau_\lambda$ acts trivially on $Z_\Fr^0$, the element (\ref{equ 4.11}) is equal to 
$$\prod_j K^{m_{i_j}}_{i_j}\cdot 
\big\{\prod_{j}\frac{-f^{m_{i_j}/l}_{\alpha_{i_j}}}{(m_{i_j}/l)!} ,\varphi_n(\alpha) \big\}'_\alpha,$$ 
which belongs to $\alpha^nS_\alpha$. 
\end{proof} 

\subsubsection{Endomorphism algebra of the ``big projective object"} 

\begin{claim}\label{claim 4.6*} 
There is a homogeneous element $E(\alpha)\in (U^+_{\zeta}\otimes S_\alpha)\backslash \alpha(U^+_{\zeta}\otimes S_\alpha)$ of degree $l\alpha$, such that 
$$\{E(\alpha),\fU^-_{\zeta} \otimes S_\alpha\}_{\bm{0}_\alpha,S_\alpha}\subset \alpha S_\alpha. $$ 
\end{claim}
\begin{proof}
Consider the $S_\alpha$-linear pairing 
$$\{\ ,\ \}_{\bm{0}_\alpha,S_\alpha}: (U^+_{\zeta,l\alpha}\otimes S_\alpha)\otimes_{S_\alpha} (\fU^-_{\zeta,-l\alpha} \otimes S_\alpha) \rightarrow S_\alpha.$$ 
Since the Smith normal form of the matrix associated with this $S_\alpha$-linear pairing is equal to the one of the transposed matrix, the existence of $E(\alpha)$ follows from Claim \ref{claim 6.3}. 
\end{proof}

By Claim \ref{claim 4.6*}, we have $E(\alpha).M(\bm{0}_\alpha)_{S_\alpha,\bm{-1}_\alpha}\subset \alpha M(\bm{0}_\alpha)_{S_\alpha,\bm{0}_\alpha}$, and hence $E(\alpha).M(\bm{0}_\alpha)_{{\Bbbk_\alpha},\bm{-1}_\alpha}$ $=0$. 
In general, since $M(\bm{n}_\alpha)_{{\Bbbk_\alpha}}=M(\bm{0}_\alpha)_{{\Bbbk_\alpha}}\otimes \k_{ln\alpha}$, we have 
\begin{equation}\label{equ 6.9} 
E(\alpha).M(\bm{n}_\alpha)_{{\Bbbk_\alpha},\bm{n-1}_\alpha}=0, \quad \forall n\in \Z. 
\end{equation} 

For any $S_\alpha$-algebra $R$, we define the following projective module in $\sO^{\leq \bm{m}_\alpha}_{R}$ as in (\ref{equ 2.9}), 
$$Q'(\bm{n}_\alpha)^{\leq \bm{m}_\alpha}_{R}:= 
U^\hb_\zeta \otimes_{U^{\hb,\geq}_\zeta}\big((U^+_\zeta/\bigoplus_{\nu\nleq (m-n)l\alpha}U^+_{\zeta,\nu} ) \otimes R_{\bm{n}_\alpha}\big).$$ 
It is a cyclic $U^\hb_\zeta\otimes R$-module generated by the element $1_\zeta \otimes 1_+\otimes 1_{\bm{n}_\alpha}$, where $1_\zeta, 1_+, 1_{\bm{n}_\alpha}$ are the identities of $U^\hb_\zeta$, $U^+_\zeta$ and $R_{\bm{n}_\alpha}$. 
Note that $Q'(\bm{n}_\alpha)^{\leq \bm{m}_\alpha}_{S_\alpha}$ admits Verma factors of the form $M(\bm{n}_\alpha+\eta)_{S_\alpha}$ with $0\leq \eta\leq (m-n)l\alpha$, among which the factor $M(\bm{n}_\alpha)_{S_\alpha}$ appears once. 
We decompose 
\begin{equation}\label{equ 4.15re}
Q'(\bm{n}_\alpha)^{\leq \bm{m}_\alpha}_{S_\alpha}
=\pr_{\sX^{-\rho}_{\alpha}}(Q'(\bm{n}_\alpha)^{\leq \bm{m}_\alpha}_{S_\alpha}) \oplus Q''_{\alpha, n,m},
\end{equation}
where $Q''_{\alpha, n,m}$ is a module outside the block $\sO^{\sX^{-\rho}_{\alpha}}_{S_\alpha}$. 
By the multiplicity of Verma factors of projective modules in $\sO^{\sX^{-\rho}_{\alpha},\leq \bm{m}_\alpha}_{S_\alpha}$ shown in Corollary \ref{cor 6.4}, it follows a decomposition 
\begin{equation}\label{equ 6.10} 
\pr_{\sX^{-\rho}_{\alpha}}(Q'(\bm{n}_\alpha)^{\leq \bm{m}_\alpha}_{S_\alpha}) = Q(\bm{n}_\alpha)^{\leq \bm{m}_\alpha}_{S_\alpha} \oplus Q'_{\alpha, n,m} ,
\end{equation} 
where $Q'_{\alpha, n,m}$ is a direct sum of $Q(\bm{n_1}{}_\alpha)^{\leq \bm{m}_\alpha}_{S_\alpha}$'s for some $n_1> n$. 

For any $n\leq n'\leq m$, there is a morphism 
$$\iota^{m}_{n',n,S_\alpha}: 
Q'(\bm{n'}_\alpha)^{\leq \bm{m}_\alpha}_{S_\alpha} \rightarrow  
Q'(\bm{n}_\alpha)^{\leq \bm{m}_\alpha}_{S_\alpha} ,\quad 
1_\zeta \otimes 1_+\otimes 1_{\bm{n'}_\alpha} \mapsto 
	1_\zeta \otimes E(\alpha)^{n'-n}.$$ 
Consider the composition 
$$\iota^{m}_{n',n,\alpha}: 
Q(\bm{n'}_\alpha)^{\leq \bm{m}_\alpha}_{S_\alpha} \hookrightarrow 
Q'(\bm{n'}_\alpha)^{\leq \bm{m}_\alpha}_{S_\alpha} 
\xrightarrow{\iota^{m}_{n',n,S_\alpha}} 
Q'(\bm{n}_\alpha)^{\leq \bm{m}_\alpha}_{S_\alpha} \twoheadrightarrow  
Q(\bm{n}_\alpha)^{\leq \bm{m}_\alpha}_{S_\alpha}.$$ 
Note that $\iota^{m}_{n',n,\alpha}$ depends on the decomposition (\ref{equ 6.10}). 
However the lemma below is stated for any choice of $\iota^{m}_{n',n,\alpha}$. 
\begin{lem}\label{lem 6.5} 
The morphism $\iota^{m}_{n',n,\alpha}$ is an embedding, and fits into a short exact sequence 
\begin{equation}\label{equ 6.12} 
0\rightarrow Q(\bm{n'}_\alpha)^{\leq \bm{m}_\alpha}_{S_\alpha} 
\xrightarrow{\iota^{m}_{n',n,\alpha}} Q(\bm{n}_\alpha)^{\leq \bm{m}_\alpha}_{S_\alpha} 
\xrightarrow{\epsilon^{\leq \bm{n'-1}_{\alpha}}} Q( \bm{n}_\alpha )^{\leq \bm{n'-1}_{\alpha}}_{S_\alpha} \rightarrow 0 . 
\end{equation} 
In particular, there is a short exact sequence 
\begin{equation}
0\rightarrow Q(\bm{n+1}_{\alpha})^{\leq \bm{m}_\alpha}_{S_\alpha} \rightarrow Q(\bm{n}_\alpha)^{\leq \bm{m}_\alpha}_{S_\alpha} \rightarrow M( \bm{n}_\alpha )_{S_\alpha} \rightarrow 0 . 
\end{equation}
\end{lem}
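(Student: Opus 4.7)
The plan is to prove the short exact sequence in three steps: (i) show the composition vanishes by a weight-space argument, (ii) compare total characters using Corollary \ref{cor 6.4}, and (iii) upgrade to an isomorphism onto the kernel by specialising to $\k_\alpha$ and invoking Nakayama's lemma.

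For (i), for any choice of splitting in (\ref{equ 6.10}), the image of $\iota^m_{n',n,\alpha}$ is the $U^\hb_\zeta\otimes S_\alpha$-submodule generated by the image of the canonical weight-$\bm{n'}_\alpha$ generator of $Q(\bm{n'}_\alpha)^{\leq\bm{m}_\alpha}_{S_\alpha}$, which by construction via $E(\alpha)^{n'-n}$ lands in the weight $\bm{n'}_\alpha$ subspace of $Q(\bm{n}_\alpha)^{\leq\bm{m}_\alpha}_{S_\alpha}$. Since $\bm{n'}_\alpha=-\rho+n'l\alpha\not\leq\bm{n'-1}_\alpha$, the universal property of $\tau^{\leq\bm{n'-1}_\alpha}$ forces this submodule to lie in $K:=\ker\epsilon^{\leq\bm{n'-1}_\alpha}$, while surjectivity of $\epsilon^{\leq\bm{n'-1}_\alpha}$ is immediate from the definition of the truncation functor. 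For (ii), Corollary \ref{cor 6.4} gives
$$\ch Q(\bm{n}_\alpha)^{\leq\bm{m}_\alpha}_{S_\alpha}-\ch Q(\bm{n}_\alpha)^{\leq\bm{n'-1}_\alpha}_{S_\alpha}=\sum_{k=n'}^{m}\ch M(\bm{k}_\alpha)_{S_\alpha}=\ch Q(\bm{n'}_\alpha)^{\leq\bm{m}_\alpha}_{S_\alpha},$$
so $\ch K=\ch Q(\bm{n'}_\alpha)^{\leq\bm{m}_\alpha}_{S_\alpha}$.

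For (iii), write $\tilde\iota\colon Q(\bm{n'}_\alpha)^{\leq\bm{m}_\alpha}_{S_\alpha}\to K$ for the restriction of $\iota^m_{n',n,\alpha}$ and reduce modulo $\alpha$ using Lemma \ref{lem 2.2} and (\ref{equ 2.10}). Lemma \ref{lem 6.2} identifies the simple head of $Q(\bm{n'}_\alpha)^{\leq\bm{m}_\alpha}_{\k_\alpha}$ as $E(\bm{n'}_\alpha)_{\k_\alpha}$, so by the projective-cover property it suffices to check that $\tilde\iota\otimes\k_\alpha$ does not vanish modulo the Jacobson radical at weight $\bm{n'}_\alpha$. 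The image of the generator is essentially $E(\alpha)^{n'-n}.1_{\bm{n}_\alpha}$; Claim \ref{claim 6.3} ensures this element is nonzero modulo $\alpha$, and combining (\ref{equ 6.9}) with Lemma \ref{lem 6.2} rules out its absorption into the lower Verma factors. Hence $\tilde\iota\otimes\k_\alpha$ is surjective, and by the character match of step (ii) it is an isomorphism; Nakayama applied weight-by-weight over the DVR $S_\alpha$ then lifts this to an isomorphism of $\tilde\iota$ itself, completing the exact sequence. The particular case $n'=n+1$ is immediate since $Q(\bm{n}_\alpha)^{\leq\bm{n}_\alpha}_{S_\alpha}=M(\bm{n}_\alpha)_{S_\alpha}$.

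The main obstacle is the non-vanishing check in step (iii): although Claim \ref{claim 6.3} guarantees that $E(\alpha)^{n'-n}.1_{\bm{n}_\alpha}$ is nonzero modulo $\alpha$, showing that it genuinely generates the Verma factor $M(\bm{n'}_\alpha)_{\k_\alpha}$ rather than collapsing into the lower Verma factors requires careful use of the Jantzen-type filtration of Lemma \ref{lem 6.2} together with the compatibility of the $E(\alpha)$-action across that filtration; the remaining steps are essentially bookkeeping with characters and Nakayama.
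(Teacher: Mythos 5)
Your steps (i) and (ii) are sound: the image of $\iota^{m}_{n',n,\alpha}$ is generated by a vector of weight $\bm{n'}_\alpha\not\leq\bm{n'-1}_\alpha$, hence lies in $K:=\ker\epsilon^{\leq\bm{n'-1}_\alpha}$, and Corollary \ref{cor 6.4} gives $\ch K=\ch Q(\bm{n'}_\alpha)^{\leq\bm{m}_\alpha}_{S_\alpha}$. The gap is in step (iii). Writing $P=Q(\bm{n'}_\alpha)^{\leq\bm{m}_\alpha}_{\k_\alpha}$ and $N=K\otimes_{S_\alpha}\k_\alpha$, any map $P\to N/\mathrm{rad}\,N$ factors through the simple head $P/\mathrm{rad}\,P=E(\bm{n'}_\alpha)_{\k_\alpha}$, so its image is either $0$ or a single copy of $E(\bm{n'}_\alpha)_{\k_\alpha}$. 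Hence "$\tilde\iota\otimes\k_\alpha$ nonzero modulo the radical" implies surjectivity \emph{only if} $N$ itself has simple head $E(\bm{n'}_\alpha)_{\k_\alpha}$. Nothing you have proved gives this: $K$ is generated by \emph{all} weight spaces $Q(\bm{n}_\alpha)^{\leq\bm{m}_\alpha}_{\mu}$ with $\mu\not\leq\bm{n'-1}_\alpha$ (of which there are many besides $\bm{n'}_\alpha$), and the character identity of step (ii) does not constrain the head. Knowing that $N$ has simple head is essentially equivalent to the conclusion $K\cong Q(\bm{n'}_\alpha)^{\leq\bm{m}_\alpha}_{S_\alpha}$, so the argument is circular at precisely the point you yourself flag as "the main obstacle"; Lemma \ref{lem 6.2} concerns the Jantzen filtration of Verma modules and says nothing about the head of $K$.

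A second, related problem: you cite Claim \ref{claim 6.3} for the non-vanishing of $E(\alpha)^{n'-n}.1_{\bm{n}_\alpha}$ modulo $\alpha$. That claim (via the Smith-normal-form argument) only produces $E(\alpha)\notin\alpha(U^+_{\zeta,l\alpha}\otimes S_\alpha)$; the non-vanishing of the \emph{power} $E(\alpha)^{n'-n}$ in $U^+_\zeta\otimes\k_\alpha$ is the non-nilpotency statement of Claim \ref{claim 6.6}, which requires its own nontrivial proof. The paper circumvents both issues by inducting on $m-n'$: in the base case $n'=m$ one only has to check that $1_{\bm{m}_\alpha}\mapsto 1_\zeta\otimes E(\alpha)^{m-n}$ embeds $M(\bm{m}_\alpha)_{S_\alpha}$ onto the top Verma submodule of $Q(\bm{n}_\alpha)^{\leq\bm{m}_\alpha}_{S_\alpha}$ (using Claim \ref{claim 6.6} and (\ref{equ 6.9}) to rule out the other summands of $Q'$), and the inductive step is the Snake Lemma applied to $\epsilon^{\leq\bm{m-1}_\alpha}$. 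To repair your direct approach you would need an independent proof that $K\otimes_{S_\alpha}\k_\alpha$ has simple head; otherwise adopt the paper's induction.
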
 
We need the following claim. 

\begin{claim}\label{claim 6.6} 
The element $E(\alpha)$ is not nilpotent in $U^+_\zeta\otimes {\Bbbk_\alpha}$. 
\end{claim}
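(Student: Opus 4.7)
The strategy is to apply the quantum Frobenius homomorphism, reducing non-nilpotency of $E(\alpha)$ to a non-vanishing statement in a polynomial ring, hence an integral domain. Concretely, extend the quantum Frobenius $\Fr: U^+_\zeta \to U\n$ by $\k_\alpha$-linearity to a ring homomorphism $\Fr_{\k_\alpha}: U^+_\zeta\otimes\k_\alpha \to U\n\otimes\k_\alpha$. Since $U\n$ is a polynomial algebra by PBW, its base change $U\n\otimes\k_\alpha$ is an integral domain, so any nilpotent element on the left must be sent to $0$. It therefore suffices to prove $\Fr_{\k_\alpha}(E(\alpha))\neq 0$ in $U\n\otimes\k_\alpha$.

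Expanding $E(\alpha)=\sum_{\wp\in\Par(l\alpha)} c_\wp E^{(\wp)}$ with $c_\wp\in S_\alpha$, and using that $\Fr(E_\beta^{(n)})=e_\beta^{n/l}/(n/l)!$ when $l\mid n$ and vanishes otherwise, only partitions of the form $\wp=lm$ with $m\in\Par(\alpha)$ contribute, so
\[
\Fr_{\k_\alpha}(E(\alpha))=\sum_{m\in\Par(\alpha)}\overline{c_{lm}}\prod_\beta\frac{e_\beta^{m(\beta)}}{m(\beta)!}\in(U\n)_\alpha\otimes\k_\alpha,
\]
where $\overline{c_{lm}}$ is the class in $\k_\alpha$. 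Since the PBW monomials $\prod_\beta e_\beta^{m(\beta)}/m(\beta)!$ for $m\in\Par(\alpha)$ form a basis of $(U\n)_\alpha$, I need some $\overline{c_{lm}}\neq 0$; equivalently, $E(\alpha)\bmod\alpha$ must not lie in $\ker\Fr\cap(U^+_{\zeta,l\alpha}\otimes\k_\alpha)$, which in the PBW basis is spanned by those $E^{(\wp)}$ with some component $\wp(\beta)$ not divisible by $l$. By the construction of $E(\alpha)$ (the Smith-normal-form argument preceding the Claim) together with the $\alpha$-valuation analysis of the Shapovalov determinant already carried out in the proof of Lemma \ref{lem 3.6} (applied at $\lambda=\bm{0}_\alpha=-\rho$ and weight $-l\alpha$), the class $E(\alpha)\bmod\alpha$ spans the one-dimensional left kernel of the mod-$\alpha$ Shapovalov pairing
\[
\{\cdot,\cdot\}_{\bm{0}_\alpha,\k_\alpha}:\ U^+_{\zeta,l\alpha}\otimes\k_\alpha\times\fU^-_{\zeta,-l\alpha}\otimes\k_\alpha\to\k_\alpha.
\]
Hence the task reduces to: this one-dimensional left kernel is not contained in $\ker\Fr$.

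For $\alpha$ simple this is immediate, as $\Par(l\alpha)=\{le_\alpha\}$ and hence $U^+_{\zeta,l\alpha}=\k\cdot E_\alpha^{(l)}$; then $E(\alpha)$ is a unit multiple of $E_\alpha^{(l)}$, and $\Fr_{\k_\alpha}(E(\alpha))=(\text{unit})\cdot e_\alpha\neq 0$. For a general positive root $\alpha$, I would extend this by a direct Shapovalov-matrix computation parallel to the proof of Claim \ref{claim 6.3}: combining the quantum divided-power identities with the Shapovalov formula \eqref{equ ShapFor} and Lusztig's commutation relations for root vectors, one shows that the pairing $\{E_\alpha^{(l)},F_\alpha^l\}_{\bm{0}_\alpha,S_\alpha}$ has $\alpha$-valuation exactly $1$, while the pairing restricted to $u_\zeta^+\otimes\k_\alpha$ (i.e.\ PBW elements $E^{(\wp)}$ with $\wp(\beta)<l$) is nondegenerate modulo $\alpha$ against a suitable dual subspace of $\fU^-_{\zeta,-l\alpha}$; together these force the nonzero kernel direction to have a nontrivial divided-power component. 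The main obstacle is this last rank/non-degeneracy verification for non-simple $\alpha$, which requires carefully tracking the pairing through commutators of non-simple root vectors. This can alternatively be handled by transporting the simple-root case through the Lusztig braid automorphism $T_w$ with $w(\alpha_i)=\alpha$, which sends $E_{\alpha_i}^{(l)}$ to (a scalar multiple of) $E_\alpha^{(l)}$ and preserves non-nilpotency as a ring automorphism.
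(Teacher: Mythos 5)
Your opening reduction coincides with the paper's: both arguments establish the claim by showing that the image $\Fr(E(\alpha))$ in $U\n\otimes\k_\alpha$ is nonzero, an integral domain by PBW. After that the routes diverge, and yours has a genuine gap. You reduce the non-vanishing to the assertion that the one-dimensional left kernel of the mod-$\alpha$ Shapovalov pairing on $U^+_{\zeta,l\alpha}\otimes\k_\alpha$ (which $E(\alpha)$ indeed spans, by the valuation count $\val_\alpha(\det_{l\alpha}(-\rho))=1$) is not contained in $\ker\Fr$; you verify this only for $\alpha$ simple, where $U^+_{\zeta,l\alpha}=\k\, E_\alpha^{(l)}$ and there is nothing to check, and for general $\alpha$ you explicitly defer the ``rank/non-degeneracy verification.'' That verification is not a routine supplement --- it is the entire content of the claim. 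The braid-group fallback does not close it: $T_w$ is an automorphism of the full quantum group, not of $U^+_\zeta$ equipped with its pairing against $\fU^-_\zeta$, and even granting that $E_\alpha^{(l)}$ is not nilpotent, the statement concerns $E(\alpha)$, whose relation to $E_\alpha^{(l)}$ is exactly the unproved point. Concretely, you would need $E(\alpha)\equiv c\,E_\alpha^{(l)}$ modulo $\alpha(U^+_{\zeta,l\alpha}\otimes S_\alpha)+\ker\Fr$ with $c$ a unit, and nothing in the proposal establishes this.

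For comparison, the paper sidesteps all pairing computations at this stage by a module-theoretic argument: since $\tau^{\leq\bm{1}_\alpha}Q(\bm{0}_\alpha)_{\k_\alpha}=U^\hb_\zeta\otimes_{U^{\hb,\geq}_\zeta}\bigl((U\n/\cdots)\otimes(\k_\alpha)_{-\rho}\bigr)$ is projective (Lemma \ref{lem 3.2}(2)), the surjection $\pi$ from $Q'(\bm{0}_\alpha)^{\leq\bm{1}_\alpha}_{\k_\alpha}$ splits, and the vanishing of $\pi(E(\alpha).v_{-\rho})$ is shown to contradict the identity (\ref{equ 6.9}), $E(\alpha).M(\bm{1}_\alpha)_{\k_\alpha,\bm{0}_\alpha}=0$, via a weight-space analysis of the complementary summands. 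If you wish to keep a computational proof, you must actually carry out the non-degeneracy of the restricted pairing for non-simple roots; as written, the argument is complete only in rank one.
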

\begin{proof} 
We prove the claim by showing that the image of $E(\alpha)$ in $U\n\otimes {\Bbbk_\alpha}$ is nonzero. 
Recall that in (\ref{equ 3.1}) we define $Q(\omega)_{{\Bbbk_\alpha}}=U^\hb_\zeta \otimes_{\fU^{b}_\zeta} P^b(\omega)_{{\Bbbk_\alpha}}$, for any $\omega\in \Lambda$. 
When $\omega$ is an $\alpha$-singular weight, we have $P^b(\omega)_{{\Bbbk_\alpha}}=M(\omega)_{{\Bbbk_\alpha}}$ as $\fU^{b}_\zeta\otimes {\Bbbk_\alpha}$-modules, and in this case (c.f. (\ref{equ 3.20}))   
$$Q(\omega)_{{\Bbbk_\alpha}}=U^\hb_\zeta \otimes_{\fU^{b}_\zeta}M(\omega)_{{\Bbbk_\alpha}}=U^\hb_\zeta \otimes_{U^{\hb,\geq}_\zeta}(U\n\otimes ({\Bbbk_\alpha})_{\omega}).$$ 
By Lemma \ref{lem 3.2}(2), $\tau^{\leq \bm{1}_\alpha}Q(\bm{0}_\alpha)_{{\Bbbk_\alpha}}$ is projective in $\sO^{\leq \bm{1}_\alpha}_{{\Bbbk_\alpha}}$, therefore the natural surjection 
$$\pi:Q'(\bm{0}_\alpha)^{\leq \bm{1}_\alpha}_{{\Bbbk_\alpha}}\twoheadrightarrow 
\tau^{\leq \bm{1}_\alpha}Q(\bm{0}_\alpha)_{{\Bbbk_\alpha}}=U^\hb_\zeta \otimes_{U^{\hb,\geq}_\zeta}\big((U\n/\bigoplus_{\nu\nleq \alpha}(U\n)_\nu) \otimes ({\Bbbk_\alpha})_{-\rho}\big)$$ 
splits. 
Abbreviate $v_{-\rho}:=1_\zeta\otimes 1_+\otimes 1_{-\rho}\in Q'(\bm{0}_\alpha)^{\leq \bm{1}_\alpha}_{{\Bbbk_\alpha}}$. 
To show our assertion, it is enough to show that $\pi(E(\alpha).v_{-\rho})$ is nonzero. 

By (\ref{equ 4.15re}) and (\ref{equ 6.10}), we have a decomposition 
\begin{equation}\label{equ 4.18re}
Q'(\bm{0}_\alpha)^{\leq \bm{1}_\alpha}_{{\Bbbk_\alpha}}= Q(\bm{0}_\alpha)^{\leq \bm{1}_\alpha}_{{\Bbbk_\alpha}} \oplus M(\bm{1}_\alpha)^{\oplus j}_{{\Bbbk_\alpha}} \oplus  (Q''_{\alpha,0,1}\otimes_{S_\alpha}{\Bbbk_\alpha}).
\end{equation}
Note that $\tau^{\leq \bm{1}_\alpha}Q(\bm{0}_\alpha)_{{\Bbbk_\alpha}}$ admits Verma factors of the form $M(\bm{0}_\alpha+l\eta)_{{\Bbbk_\alpha}}$ with $0\leq \eta \leq \alpha$, among which the factor $M(\bm{0}_\alpha)_{{\Bbbk_\alpha}}$ appears once. 
As a direct summand of  $Q'(\bm{0}_\alpha)^{\leq \bm{1}_\alpha}_{{\Bbbk_\alpha}}$, the module $\tau^{\leq \bm{1}_\alpha}Q(\bm{0}_\alpha)_{{\Bbbk_\alpha}}$ admits a decomposition 
$$\tau^{\leq \bm{1}_\alpha}Q(\bm{0}_\alpha)_{{\Bbbk_\alpha}}= Q(\bm{0}_\alpha)^{\leq \bm{1}_\alpha}_{{\Bbbk_\alpha}} \oplus M(\bm{1}_\alpha)^{\oplus j'}_{{\Bbbk_\alpha}} \oplus  Q'',$$ 
where $j'\leq j$ and $Q''$ is a direct summand of $Q''_{\alpha,0,1}\otimes_{S_\alpha}{\Bbbk_\alpha}$. 
We can adjust the decomposition (\ref{equ 4.18re}) such that $\pi$ is by identity on $Q(\bm{0}_\alpha)^{\leq \bm{1}_\alpha}_{{\Bbbk_\alpha}}$, by killing $(j-j')$ factors for the middle term, and by a projection on the last term. 

If $\pi(E(\alpha).v_{-\rho})=0$, then $E(\alpha).v_{-\rho}\in M(\bm{1}_\alpha)^{\oplus j}_{{\Bbbk_\alpha}} \oplus (Q''_{\alpha,0,1}\otimes_{S_\alpha}{\Bbbk_\alpha})$. 
Note that $(Q''_{\alpha,0,1})_{\bm{1}_\alpha}=0$, it shows that $E(\alpha).v_{-\rho}$ is a nonzero element in $M(\bm{1}_\alpha)^{\oplus j}_{{\Bbbk_\alpha}}$. 
By (\ref{equ 6.9}), we have $E(\alpha).v_{-\rho}\in E(\alpha). M(\bm{1}_\alpha)^{\oplus j}_{{\Bbbk_\alpha},\bm{0}_\alpha}=0$, which is absurd. 
\end{proof} 

\ 

\begin{proof}[Proof of Lemma \ref{lem 6.5}] 
We do increase induction on $m-n'$. 
We firstly suppose $n'=m$. 
Then $Q'(\bm{m}_\alpha)^{\leq \bm{m}_\alpha}_{S_\alpha}=Q(\bm{m}_\alpha)^{\leq \bm{m}_\alpha}_{S_\alpha}=M(\bm{m}_\alpha)_{S_\alpha}$, and 
$$\iota^{m}_{n',n,S_\alpha}: M(\bm{m}_\alpha)_{S_\alpha} \rightarrow Q'(\bm{n}_\alpha)^{\leq \bm{m}_\alpha}_{S_\alpha} ,\quad 
1_{\bm{m}_\alpha} \mapsto 1_\zeta \otimes E(\alpha)^{m-n},$$ 
which is an inclusion and so is its specialization at ${\Bbbk_\alpha}$, thanks to Claim \ref{claim 6.6}. 
The element $\iota^{m}_{n',n,S_\alpha}(1_{\bm{m}_\alpha})$ is contained in the direct summand 
	$$\pr_{\sX^{-\rho}_{\alpha}}(Q'(\bm{n}_\alpha)^{\leq \bm{m}_\alpha}_{S_\alpha})= Q(\bm{n}_\alpha)^{\leq \bm{m}_\alpha}_{S_\alpha} \oplus Q'_{\alpha, n,m}.$$ 
From (\ref{equ 6.9}) and Corollary \ref{cor 6.4}, we deduce that $E(\alpha)^{m-n}.Q(\bm{n_1}{}_\alpha)^{\leq \bm{m}_\alpha}_{{\Bbbk_\alpha},\bm{n}_\alpha}=0$ if $n_1>n$. 
Since $\iota^{m}_{n',n,S_\alpha}(1_{\bm{m}_\alpha})$ lies in $E(\alpha)^{m-n}.Q'(\bm{n}_\alpha)^{\leq \bm{m}_\alpha}_{S_\alpha,\bm{n}_\alpha}$, it follows that the image of $1_{\bm{m}_\alpha}$ under the map 
$$\iota^{m}_{n',n,\alpha}\otimes_{S_\alpha}{\Bbbk_\alpha}: M(\bm{m}_\alpha)_{{\Bbbk_\alpha}} \rightarrow Q(\bm{n}_\alpha)^{\leq \bm{m}_\alpha}_{{\Bbbk_\alpha}}$$ 
	is nonzero. 
Thus $\iota^{m}_{n',n,\alpha}\otimes_{S_\alpha}{\Bbbk_\alpha}$ is an injection, and $\iota^{m}_{n',n,\alpha}$ identifies $M(\bm{m}_\alpha)_{S_\alpha}$ with the Verma factor of $Q(\bm{n}_\alpha)^{\leq \bm{m}_\alpha}_{S_\alpha}$ in (\ref{equ 6.8}). 
	
In general, suppose $n'<m$. 
One can choose direct inclusions and projections  
$$ Q(\bm{n}_\alpha)^{\leq \bm{m'}_\alpha}_{S_\alpha} \hookrightarrow 
Q'(\bm{n}_\alpha)^{\leq \bm{m'}_\alpha}_{S_\alpha} , \quad 
Q'(\bm{n}_\alpha)^{\leq \bm{m'}_\alpha}_{S_\alpha} \twoheadrightarrow  
Q(\bm{n}_\alpha)^{\leq \bm{m'}_\alpha}_{S_\alpha}, \quad m'=m,m-1,$$ 
that are compatible with the truncations 
$$\epsilon^{\leq \bm{m-1}_\alpha}:Q'(\bm{n}_\alpha)^{\leq \bm{m}_\alpha}_{S_\alpha}\rightarrow Q'(\bm{n}_\alpha)^{\leq \bm{m-1}_\alpha}_{S_\alpha}, \quad 
\epsilon^{\leq \bm{m-1}_\alpha}:Q(\bm{n}_\alpha)^{\leq \bm{m}_\alpha}_{S_\alpha}\rightarrow Q(\bm{n}_\alpha)^{\leq \bm{m-1}_\alpha}_{S_\alpha}.$$ 
Then there is a commutative diagram 
$$\begin{tikzcd} 
0 \arrow[r] & 
M(\bm{m}_\alpha)_{S_\alpha} \arrow[r]\arrow[d,equal] & 
 Q(\bm{n'}_\alpha)^{\leq \bm{m}_\alpha}_{S_\alpha} \arrow[r,"\epsilon^{\leq \bm{m-1}_{\alpha}}"] \arrow[d,"\iota^{m}_{n',n,\alpha}"] &  
 Q(\bm{n'}_\alpha)^{\leq \bm{m-1}_{\alpha}}_{S_\alpha} \arrow[r] \arrow[d,"\iota^{m-1}_{n',n,\alpha}"] & 0 \\ 
 0 \arrow[r] & M(\bm{m}_\alpha)_{S_\alpha} \arrow[r] 
 & Q(\bm{n}_\alpha)^{\leq \bm{m}_\alpha}_{S_\alpha} \arrow[r,"\epsilon^{\leq \bm{m-1}_{\alpha}}"] \arrow[d,"\epsilon^{\leq \bm{n'-1}_{\alpha}}"] &  Q(\bm{n}_\alpha)^{\leq \bm{m-1}_{\alpha}}_{S_\alpha} \arrow[r] \arrow[d,"\epsilon^{\leq \bm{n'-1}_{\alpha}}"] & 0 \\
 & & Q(\bm{n}_\alpha)^{\leq \bm{n'-1}_{\alpha}}_{S_\alpha} \arrow[r,equal] 
 & Q(\bm{n}_\alpha)^{\leq \bm{n'-1}_{\alpha}}_{S_\alpha} ,
 \end{tikzcd}$$ 
where the upper two rows and the right column are short exact sequences. 
By induction hypothesis that $\iota^{m-1}_{n',n,\alpha}$ is an inclusion, the map $\iota^{m}_{n',n,\alpha}$ is injective by Snake Lemma and the middle column is a short exact sequence. 
\end{proof}

For any $n\leq n' \leq m' \leq m$, the lemma above realizes the module $Q(\bm{n'}_\alpha)^{\leq \bm{m'}_\alpha}_{S_\alpha}$ as a sub-quotient of $Q(\bm{n}_\alpha)^{\leq \bm{m}_\alpha}_{S_\alpha}$ that is extended by the Verma factors $\{M(\bm{j}_\alpha)_{S_\alpha}\}_{j=n'}^{m'}$, and such realization is unique up to an automorphism of $Q(\bm{n'}_\alpha)^{\leq \bm{m'}_\alpha}_{S_\alpha}$. 
Since $\Ext^i(M(\lambda)_R,M(\mu)_R)\neq 0$ only if $\lambda\leq \mu$, for $i=0,1$, any endomorphism of $Q(\bm{n}_\alpha)^{\leq \bm{m}_\alpha}_{S_\alpha}$ induces a one for the sub-quotient $Q(\bm{n'}_\alpha)^{\leq \bm{m'}_\alpha}_{S_\alpha}$. 
It gives an algebra homomorphism 
\begin{equation}\label{equ 6.14+1} 
\End(Q(\bm{n}_\alpha)^{\leq \bm{m}_\alpha}_{S_\alpha})\rightarrow \End(Q(\bm{n'}_\alpha)^{\leq \bm{m'}_\alpha}_{S_\alpha}), 
\end{equation} 
which is unique up to conjugations by $\Aut(Q(\bm{n'}_\alpha)^{\leq \bm{m'}_\alpha}_{S_\alpha})$. 
By the algebra embedding 
\begin{equation}\label{equ 4.17} 
\End(Q(\bm{n}_\alpha)^{\leq \bm{m}_\alpha}_{S_\alpha})\hookrightarrow \End(Q(\bm{n}_\alpha)^{\leq \bm{m}_\alpha}_{S_\alpha}\otimes_{S_\alpha}\K) =\prod_{j=n}^m \End(M(\bm{j}_\alpha)_\K),
\end{equation}
it follows that $\End(Q(\bm{n}_\alpha)^{\leq \bm{m}_\alpha}_{S_\alpha})$ is a commutative algebra. 
Hence (\ref{equ 6.14+1}) is unique, and it yields a projective limit 
\begin{equation}\label{equ 6.-1} 
\End(Q(\bm{-\infty}_\alpha )^{\leq \bm{\infty}_\alpha}_{S_\alpha})
:= \varprojlim_{n \leq m} \End(Q( \bm{n}_\alpha )^{\leq \bm{m}_\alpha}_{S_\alpha}), 
\end{equation}
where the LHS is regarded as a single symbol. 
We regard it as as the ``endomorphism algebra of big projective object". 

\subsection{The $\alpha$-regular case}\label{subsect 6.1.2} 
In this subsection, we study the blocks $\sO^{\sX^{\omega}_\alpha}_{S_\alpha}$ for $\alpha$-regular weights $\omega$. 
We may assume $0<\langle\omega+\rho,\check{\alpha}\rangle<l$, without loss of generality.  
For simplicity, we abbreviate 
$$\bm{2n}_{\alpha,\omega}=\omega + ln\alpha, \quad \bm{2n-1}_{\alpha,\omega}= s_\alpha\bullet \omega + ln\alpha , \quad n\in \Z.$$ 

Let $V$ be the Weyl module for $U_\zeta$ with extreme weight $\omega+\rho$. 
We define the \textit{translation functors} by 
$$T^{\omega}_{-\rho,\alpha}: \sO^{\sX^{-\rho}_\alpha}_{S_\alpha} \rightarrow \sO^{\sX^\omega_\alpha}_{S_\alpha}, \quad M \mapsto \pr_{\sX^\omega_\alpha}(M \otimes V) ,$$ 
$$T_{\omega,\alpha}^{-\rho} : \sO^{\sX^\omega_\alpha}_{S_\alpha} \rightarrow \sO^{\sX^{-\rho}_\alpha}_{S_\alpha}, \quad M \mapsto \pr_{\sX^{-\rho}_\alpha} (M \otimes V^*).$$ 
Note that $T^{\omega}_{-\rho,\alpha}$ and $T_{\omega,\alpha}^{-\rho}$ are exact and biadjoint to each other. 
Till the end of the section, we will abbreviate $\sT=T_{\omega,\alpha}^{-\rho}$ and $\sT'=T^{\omega}_{-\rho,\alpha}$. 

\begin{lem}\label{lem 6.7} 
Let $n\in \Z$. 
\begin{enumerate}
\item $\sT M(\bm{2n}_{\alpha,\omega})_{S_\alpha}=\sT M(\bm{2n-1}_{\alpha,\omega})_{S_\alpha}= M(\bm{n}_{\alpha})_{S_\alpha}$. 
\item $\sT' M(\bm{n}_{\alpha})_{S_\alpha}$ fits into a short exact sequence 
$$0\rightarrow M(\bm{2n}_{\alpha,\omega})_{S_\alpha} \rightarrow 
\sT' M(\bm{n}_{\alpha})_{S_\alpha} \rightarrow M(\bm{2n-1}_{\alpha,\omega})_{S_\alpha} \rightarrow 0 .$$ 
\end{enumerate} 
\end{lem}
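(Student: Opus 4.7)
The plan is to derive both parts from the tensor-identity description of translation functors in the spirit of Lemma~\ref{lem 5.0}(2), applied in the $S_\alpha$-localized setting where blocks are parametrized by $(W_l(\alpha), \bullet)$-orbits on $\Lambda$. Denote $W_{l,-\rho}(\alpha) := \Stab_{(W_l(\alpha),\bullet)}(-\rho)$ and $W_{l,\omega}(\alpha) := \Stab_{(W_l(\alpha),\bullet)}(\omega)$. A direct computation inside $W_l(\alpha) = \langle s_\alpha, \tau_{l\alpha}\rangle$ yields $W_{l,-\rho}(\alpha) = \{1, s_\alpha\}$ (since $s_\alpha\bullet(-\rho) = -\rho$ while no nontrivial $\tau_{lk\alpha}$ fixes $-\rho$) and $W_{l,\omega}(\alpha) = \{1\}$ (the $\alpha$-regularity $0 < \langle\omega+\rho,\check\alpha\rangle < l$ rules out both $s_\alpha$ and any nontrivial translation from fixing $\omega$).

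For part (1), write $\bm{2n}_{\alpha,\omega} = \tau_{ln\alpha}\bullet\omega$ and $\bm{2n-1}_{\alpha,\omega} = \tau_{ln\alpha}s_\alpha\bullet\omega$. The $S_\alpha$-localized analog of Lemma~\ref{lem 5.0}(2) says that the Verma factors of $\sT M(\tau_{ln\alpha}y\bullet\omega)_{S_\alpha}$ are indexed by $W_{l,\omega}(\alpha)/(W_{l,\omega}(\alpha)\cap W_{l,-\rho}(\alpha)) = \{1\}$, yielding the single factor $M(\tau_{ln\alpha}y\bullet(-\rho))_{S_\alpha} = M(\bm{n}_\alpha)_{S_\alpha}$ for both $y \in \{1, s_\alpha\}$ (using $s_\alpha\bullet(-\rho) = -\rho$). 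For part (2), the Verma factors of $\sT' M(\bm{n}_\alpha)_{S_\alpha}$ are indexed by $W_{l,-\rho}(\alpha)/(W_{l,-\rho}(\alpha)\cap W_{l,\omega}(\alpha)) = \{1, s_\alpha\}$, giving $M(\bm{2n}_{\alpha,\omega})_{S_\alpha}$ and $M(\bm{2n-1}_{\alpha,\omega})_{S_\alpha}$, each appearing once.

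To pin down the direction of the short exact sequence in (2), I would use the biadjunction of Lemma~\ref{lem 5.0}(1) together with part (1) to compute
\begin{equation*}
\Hom(M(\bm{2n}_{\alpha,\omega})_{S_\alpha}, \sT' M(\bm{n}_\alpha)_{S_\alpha}) \simeq \Hom(\sT M(\bm{2n}_{\alpha,\omega})_{S_\alpha}, M(\bm{n}_\alpha)_{S_\alpha}) \simeq \End(M(\bm{n}_\alpha)_{S_\alpha}) \simeq S_\alpha,
\end{equation*}
and let $\phi$ correspond to the identity. Specializing to $\K = \Frac S_\alpha$, semisimplicity of $\sO_\K$ (Lemma~\ref{lem 3.6}) gives $\sT' M(\bm{n}_\alpha)_\K \simeq M(\bm{2n}_{\alpha,\omega})_\K \oplus M(\bm{2n-1}_{\alpha,\omega})_\K$, and $\phi \otimes_{S_\alpha}\K$ is the inclusion of the first summand; since Verma modules are $S_\alpha$-free and hence torsion-free, $\phi$ is itself injective. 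The cokernel has character $\ch M(\bm{2n-1}_{\alpha,\omega})$ and lies in $\sO^{\sX^\omega_\alpha}_{S_\alpha}$, so it is generated by a highest-weight vector of weight $\bm{2n-1}_{\alpha,\omega}$ sitting in a rank-one weight space over $S_\alpha$. This yields a surjection $M(\bm{2n-1}_{\alpha,\omega})_{S_\alpha} \twoheadrightarrow \mathrm{coker}(\phi)$, which is an isomorphism by character count.

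The main obstacle I anticipate is establishing the $S_\alpha$-localized form of Lemma~\ref{lem 5.0}(2). Starting from the tensor identity that $M(\bm{n}_\alpha)_{S_\alpha} \otimes V$ has a Verma filtration indexed by weights of $V$, one must verify that the only weights $\mu$ of $V$ producing highest weights in $\sX^\omega_\alpha = (\omega + l\Z\alpha) \sqcup (s_\alpha\bullet\omega + l\Z\alpha)$ are $\mu = \omega + \rho$ and $\mu = s_\alpha(\omega+\rho)$, each with multiplicity one. This reduces to the key combinatorial fact that the $\alpha$-string of weights of $V$ through its highest weight has length $\langle\omega+\rho, \check\alpha\rangle + 1 < l + 1$, which rules out weights of the form $\omega+\rho - lk\alpha$ with $k \geq 1$.
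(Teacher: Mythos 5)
Your proposal is correct and takes essentially the same route as the paper: the paper likewise reads off the Verma factors of $M(\bm{2n}_{\alpha,\omega})_{S_\alpha}\otimes V^*$ (resp.\ of $M(\bm{n}_{\alpha})_{S_\alpha}\otimes V$) whose highest weights lie on the line $-\rho+\Z\alpha$, observes that they form the string $-\rho+j\alpha$ with $ln\leq j\leq ln+\langle\omega+\rho,\check{\alpha}\rangle$, and uses $0<\langle\omega+\rho,\check{\alpha}\rangle<l$ to conclude that only the two endpoints survive the block projection --- exactly your string-length observation (stated for the extreme weight $\omega+\rho$ rather than the highest weight of $V$, a harmless slip). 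Your stabilizer-coset packaging and the adjunction/torsion-freeness argument for the direction of the short exact sequence are just more detailed versions of what the paper dismisses as ``proved similarly.''
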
 
\begin{proof} 
We consider the Verma factors of $M(\bm{2n}_{\alpha,\omega})_{S_\alpha}\otimes V^*$, whose highest weights belonging to $-\rho+ \Z \alpha$ are of the form $M(-\rho+j\alpha)_{S_\alpha}$ for $ln\leq j\leq ln+\langle \omega+\rho, \check{\alpha}\rangle$. 
By our assumption $0<\langle \omega+\rho, \check{\alpha}\rangle<l$, it follows that $\sT M(\bm{2n}_{\alpha,\omega})_{S_\alpha}=M(\bm{n}_{\alpha})_{S_\alpha}$. 
The case for $\sT M(\bm{2n-1}_{\alpha,\omega})_{S_\alpha}$ and Part (2) are proved similarly. 
\end{proof}

\begin{lem}\label{lem 6.8} 
Let $n\leq n' \leq m$. 
\begin{enumerate} 
\item We have $Hom(M(\bm{n-1}_{\alpha,\omega})_{{\Bbbk_\alpha}},M(\bm{n}_{\alpha,\omega})_{{\Bbbk_\alpha}})={\Bbbk_\alpha}$, and any nonzero element gives an embedding $M(\bm{n-1}_{\alpha,\omega})_{{\Bbbk_\alpha}} \hookrightarrow M(\bm{n}_{\alpha,\omega})_{{\Bbbk_\alpha}}$.  
\item There is an isomorphism $\sT'Q(\bm{n}_\alpha)^{\leq \bm{m}_\alpha}_{S_\alpha}= 
	Q(\bm{2n-1}_{\alpha,\omega})^{\leq \bm{2m}_{\alpha,\omega}}_{S_\alpha}$, and the module admits Verma factors 
\begin{equation}\label{equ 6.13} 
M(\bm{2n-1}_{\alpha,\omega})_{S_\alpha}, M(\bm{2n}_{\alpha,\omega})_{S_\alpha}, \cdots ,M(\bm{2m}_{\alpha,\omega})_{S_\alpha},
\end{equation}
each of which appears once. 
\item There is a short exact sequence 
$$ 0\rightarrow 
Q(\bm{2n'-1}_{\alpha,\omega})^{\leq \bm{2m}_{\alpha,\omega}}_{S_\alpha} \rightarrow 
Q(\bm{2n-1}_{\alpha,\omega})^{\leq \bm{2m}_{\alpha,\omega}}_{S_\alpha} \rightarrow 
Q(\bm{2n-1}_{\alpha,\omega})^{\leq \bm{2(n'-1)}_{\alpha,\omega}}_{S_\alpha} \rightarrow 0 , $$ 
where the last map is by $\epsilon^{\leq \bm{2(n'-1)}_{\alpha,\omega}}$. 
\end{enumerate} 
\end{lem}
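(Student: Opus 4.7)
My plan is to transfer the structural results established for the $\alpha$-singular block (Lemma \ref{lem 6.2}, Corollary \ref{cor 6.4}, Lemma \ref{lem 6.5}) to the $\alpha$-regular block via the translation functors $\sT$ and $\sT'$, which are exact and biadjoint. The short exact sequence for $\sT' M(\bm{n}_\alpha)_{S_\alpha}$ of Lemma \ref{lem 6.7}(2) should be the key bridge between the two blocks.

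For part (1), I would invoke the Jantzen sum formula from Proposition \ref{prop 6.1}(1) applied to $\lambda = \bm{n}_{\alpha,\omega}$. A direct computation, parallel to the one in Lemma \ref{lem 6.2}, should show that the leading contribution to $\sum_{i>0}\ch M^i(\bm{n}_{\alpha,\omega})_{\k_\alpha}$, coming from the affine reflection $s_{\alpha,kl}$ that minimally lowers $\bm{n}_{\alpha,\omega}$, is precisely $\ch M(\bm{n-1}_{\alpha,\omega})_{\k_\alpha}$. I would then imitate the torsion-free argument in the proof of Lemma \ref{lem 6.2} --- exploiting torsion-freeness of $M(\bm{n}_{\alpha,\omega})_{S_\alpha}$ over $\fU^-_\zeta\otimes S_\alpha$ and an explicit highest-weight vector of weight $\bm{n-1}_{\alpha,\omega}$ obtained either from an analogue of Claim \ref{claim 6.3} or by applying $\sT'$ to the embedding provided by Lemma \ref{lem 6.2} --- to upgrade this character contribution to an actual submodule $M(\bm{n-1}_{\alpha,\omega})_{\k_\alpha}\hookrightarrow M(\bm{n}_{\alpha,\omega})_{\k_\alpha}$. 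One-dimensionality of the Hom space then follows because any non-zero map from a Verma is determined by the image of its highest-weight generator, and that image must lie in the one-dimensional weight space of weight $\bm{n-1}_{\alpha,\omega}$ inside $M(\bm{n}_{\alpha,\omega})_{\k_\alpha}$.

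For part (2), note that $\sT'$ preserves projectives (being biadjoint to the exact functor $\sT$), and Lemma \ref{lem 6.7}(2) combined with Corollary \ref{cor 6.4} will show that $\sT' Q(\bm{n}_\alpha)^{\leq\bm{m}_\alpha}_{S_\alpha}$ lies in $\sO^{\sX^\omega_\alpha,\leq \bm{2m}_{\alpha,\omega}}_{S_\alpha}$ with a Verma filtration whose factors are precisely those listed in (\ref{equ 6.13}), each with multiplicity one. To identify this projective as $Q(\bm{2n-1}_{\alpha,\omega})^{\leq\bm{2m}_{\alpha,\omega}}_{S_\alpha}$, I would apply $\sT'$ to the short exact sequence of Lemma \ref{lem 6.5} in the case $n'=n+1$:
$$0\to Q(\bm{n+1}_\alpha)^{\leq\bm{m}_\alpha}_{S_\alpha}\to Q(\bm{n}_\alpha)^{\leq\bm{m}_\alpha}_{S_\alpha}\to M(\bm{n}_\alpha)_{S_\alpha}\to 0;$$
the top Verma quotient of $\sT' Q(\bm{n}_\alpha)^{\leq\bm{m}_\alpha}_{S_\alpha}$ is then $\sT' M(\bm{n}_\alpha)_{S_\alpha}$, which in turn surjects onto $M(\bm{2n-1}_{\alpha,\omega})_{S_\alpha}$ by Lemma \ref{lem 6.7}(2). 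Hence $\sT' Q(\bm{n}_\alpha)^{\leq\bm{m}_\alpha}_{S_\alpha}$ has $E(\bm{2n-1}_{\alpha,\omega})_{\k_\alpha}$ as a simple quotient, which forces the claimed identification provided we also confirm indecomposability; this should be verified via the adjunction $\Hom(\sT' Q, E(\mu)_{\k_\alpha})=\Hom(Q, \sT E(\mu)_{\k_\alpha})$ together with a vanishing analysis of $\sT E(\mu)_{\k_\alpha}$. Part (3) is then obtained by applying the exact functor $\sT'$ to the general short exact sequence of Lemma \ref{lem 6.5} and identifying each term via part (2), keeping track of the truncation level (which matches $\bm{2(n'-1)}_{\alpha,\omega}$ after applying $\sT'$).

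The main technical obstacle is part (1): the Jantzen sum formula yields only character-level information, and promoting this to an honest Verma embedding requires reproducing the delicate torsion-free argument of Lemma \ref{lem 6.2} in a regular setting where the successive layers of the Jantzen filtration are less transparent than in the singular case. A secondary concern is verifying the indecomposability claim in part (2), which reduces via adjunction to showing that $\sT E(\bm{2k}_{\alpha,\omega})_{\k_\alpha}$ vanishes while $\sT E(\bm{2k-1}_{\alpha,\omega})_{\k_\alpha}$ is nonzero for each $k$ in the block --- a standard translation-principle computation, but one that must be carried out carefully in our deformed, root-of-unity setting.
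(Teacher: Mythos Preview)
Your overall strategy---transfer from the singular block via $\sT'$, using Lemma~\ref{lem 6.7} and Lemma~\ref{lem 6.5}---matches the paper's, and part~(3) is handled identically. For parts~(1) and~(2), however, the paper is more economical, and in each case avoids precisely the difficulty you flag.

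For part~(1), you propose to construct an explicit highest-weight vector (via an analogue of Claim~\ref{claim 6.3} or via $\sT'$), then run the torsion-free argument of Lemma~\ref{lem 6.2}. The paper does neither. It applies the sum formula~(\ref{equ 6.2}) to $M(\bm{n}_{\alpha,\omega})_{\k_\alpha}$ and compares the coefficient of $\exp(\nu)$ for those $\nu$ with $\nu \nless \bm{n-1}_{\alpha,\omega}$; this forces $M^i(\bm{n}_{\alpha,\omega})_{\k_\alpha,\nu}=0$ for all such $\nu$ except $i=1$, $\nu=\bm{n-1}_{\alpha,\omega}$, where the dimension is~$1$. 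Since $M^1$ has nothing in weights strictly above $\bm{n-1}_{\alpha,\omega}$, every nonzero vector there is automatically a highest-weight vector, so $\Hom(M(\bm{n-1}_{\alpha,\omega})_{\k_\alpha},M(\bm{n}_{\alpha,\omega})_{\k_\alpha})=\Hom(M(\bm{n-1}_{\alpha,\omega})_{\k_\alpha},M^1(\bm{n}_{\alpha,\omega})_{\k_\alpha})=\k_\alpha$. The embedding then follows from torsion-freeness of $\fU_\zeta^-$. No explicit construction is required, and your proposed route via $\sT'$ applied to the embedding of Lemma~\ref{lem 6.2} would in any case give a map between objects with two Verma factors, not a Verma-to-Verma map.

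For part~(2), rather than analyze $\sT E(\mu)_{\k_\alpha}$, the paper argues by multiplicity counting: once you know $\sT' Q(\bm{n}_\alpha)^{\leq \bm{m}_\alpha}_{S_\alpha}$ is projective in $\sO^{\sX^\omega_\alpha,\leq \bm{2m}_{\alpha,\omega}}_{S_\alpha}$ with the Verma factors~(\ref{equ 6.13}), it must contain $Q(\bm{2n-1}_{\alpha,\omega})^{\leq \bm{2m}_{\alpha,\omega}}_{S_\alpha}$ as a direct summand. But by the linkage principle~(\ref{equ 6.0}) and BGG reciprocity, that summand already accounts for each Verma in~(\ref{equ 6.13}) at least once, so there is no room for any other summand. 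This sidesteps the translation-on-simples computation entirely.
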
 
\begin{proof} 
(1) Applying the Jantzen sum formula (\ref{equ 6.2}) to $M(\bm{n}_{\alpha,\omega})_{{\Bbbk_\alpha}}$, we get 
$$\sum_{i>0}\ch M^i(\bm{n}_{\alpha,\omega})_{{\Bbbk_\alpha}}=\sum_{i>0}\ch M(\bm{(n+1-2i)}_{\alpha,\omega})_{{\Bbbk_\alpha}}.$$ 
Let $\nu\nless \bm{n-1}_{\alpha,\omega}$. 
Consider the coefficients of $\exp(\nu)$ on both sides, we obtain that 
$$\sum_{i>0} \dim M^i(\bm{n}_{\alpha,\omega})_{{\Bbbk_\alpha},\nu} = \dim M(\bm{(n-1)}_{\alpha,\omega})_{{\Bbbk_\alpha},\nu}=\delta_{\bm{(n-1)}_{\alpha,\omega},\nu}.$$ 
Hence $M^i(\bm{n}_{\alpha,\omega})_{{\Bbbk_\alpha},\nu}\neq 0$ only if $i=1$ and $\nu=\bm{n-1}_{\alpha,\omega}$, and in this case it has dimension $1$. 
Thus 
$$\Hom(M(\bm{n-1}_{\alpha,\omega})_{{\Bbbk_\alpha}},M(\bm{n}_{\alpha,\omega})_{{\Bbbk_\alpha}})=\Hom(M(\bm{n-1}_{\alpha,\omega})_{{\Bbbk_\alpha}},M^1(\bm{n}_{\alpha,\omega})_{{\Bbbk_\alpha}})={\Bbbk_\alpha}.$$ 
The second assertion follows from that $\fU^-_\zeta$ is a torsion free ring. 

(2) By Lemma \ref{lem 6.7}, the translation functors $\sT$ and $\sT'$ restrict on the truncated categories 
$$\sT: \sO^{\sX^\omega_\alpha, \leq \bm{2m}_{\alpha,\omega}}_{S_\alpha} \rightarrow \sO^{\sX^{-\rho}_\alpha, \leq \bm{m}_{\alpha}}_{S_\alpha}, 
    \quad 
\sT': \sO^{\sX^{-\rho}_\alpha, \leq \bm{m}_{\alpha}}_{S_\alpha} \rightarrow \sO^{\sX^\omega_\alpha, \leq \bm{2m}_{\alpha,\omega}}_{S_\alpha}.$$ 
Since $\sT$ and $\sT'$ are biajoint, they send projective objects to projective objects. 
In particular, $\sT'Q(\bm{n}_\alpha)^{\leq \bm{m}_\alpha}_{S_\alpha}$ is projective in $\sO^{\sX^\omega_\alpha, \leq \bm{2m}_{\alpha,\omega}}_{S_\alpha}$. 
By Corollary \ref{cor 6.4} and Lemma \ref{lem 6.7}(2), it admits Verma factors as in (\ref{equ 6.13}). 
It shows that $\sT'Q(\bm{n}_\alpha)^{\leq \bm{m}_\alpha}_{S_\alpha}$ contains $Q(\bm{2n-1}_{\alpha,\omega})^{\leq \bm{2m}_{\alpha,\omega}}_{S_\alpha}$ as a direct factor. 
From the linkage principle (\ref{equ 6.0}) and Proposition \ref{prop BGG}, one deduces that the Verma factors in (\ref{equ 6.13}) appear in $Q(\bm{2n-1}_{\alpha,\omega})^{\leq \bm{2m}_{\alpha,\omega}}_{S_\alpha}$ at least once. 
Hence $\sT'Q(\bm{n}_\alpha)^{\leq \bm{m}_\alpha}_{S_\alpha}= 
	Q(\bm{2n-1}_{\alpha,\omega})^{\leq \bm{2m}_{\alpha,\omega}}_{S_\alpha}$. 

Part (3) follows from applying $\sT'$ on (\ref{equ 6.12}).  
\end{proof} 

As in the last subsection, we define the ``endomorphism algebra of big projective object" via the projective limit 
\begin{equation}\label{equ 6.-2} 
\End(Q(\bm{-\infty}_{\alpha,\omega} )^{\leq \bm{\infty}_{\alpha,\omega}}_{S_\alpha}):= \varprojlim_{n \leq m} \End(Q(\bm{2n-1}_{\alpha,\omega})^{\leq \bm{2m}_{\alpha,\omega}}_{S_\alpha}). 
\end{equation}

\section{Center of category $\sO_S$} 
In this section, we relate the center of $\sO_S$ to the equivariant cohomology of $\zeta$-fixed locus of the affine Grassmannian $\Gr$. 

\subsection{Affine flag variety and its cohomology} 
\subsubsection{Affine flag varieties}\label{nsubsect 5.1.1} 
Let $\check{G}((t))=\check{G}\big(\C(\!(t)\!)\big)$ and $\check{G}[[t]]=\check{G}\big(\C[[t]]\big)$ be the \textit{loop group} and the \textit{arc group} of $\check{G}$. 
For any subset $J\subset \check{\Sigma}_\af$, the parabolic subgroup $W_J\subset W_\ex$ corresponds to a parahoric subgroup $P^J$ of $\check{G}((t))$. 
The \textit{partial affine flag variety} is the fpqc quotient $\Fl^J=\check{G}((t))/P^J$. 
For any $x\in W_\ex$, we fix its lifting $\dot{x}$ in $\check{G}((t))$, then there is an identification 
\begin{equation}\label{nequ 5.1}
(\Fl^J)^{\check{T}} = \{ \dot{x}P^J/P^J\}_{x\in W^J_\ex} =W^J_\ex. 
\end{equation}
Let $\Fl^{J,\circ}$ be the connected component of $\Fl^J$ containing the base point $P^J/P^J$. 
Then (\ref{nequ 5.1}) identifies $(\Fl^{J,\circ})^{\check{T}}=W_\af^J$. 
We have an isomorphism $\Fl^J=\pi_1\times \Fl^{J,\circ}$, which at the level of $\check{T}$-fixed points recovers the identification $W^J_\ex=\pi_1\times W_\af^J$. 

If $J=\check{\Sigma}$, we have $P^{\check{\Sigma}}=\check{G}[[t]]$. 
The \textit{affine Grassmannian} of $\check{G}$ is the fpqc quotient $\Gr=\check{G}((t))/\check{G}[[t]]$. 
For $\lambda \in \Lambda$, we denote by $t^\lambda$ the image of $t$ under the following morphism 
$$ \lambda\big(\C(\!(t)\!)\big):\ \Gm\big(\C(\!(t)\!)\big) =\C(\!(t)\!)^\times \rightarrow \check{T}\big(\C(\!(t)\!)\big), $$
and set $\delta_\lambda=t^\lambda \check{G}[[t]]/\check{G}[[t]]$. 
In the case of affine Grassmannian, (\ref{nequ 5.1}) is the equality 
\begin{equation}\label{nequ 5.2} 
\Gr^{\check{T}}=\{\delta_\lambda |\ \lambda \in \Lambda\}=\Lambda.
\end{equation}

\subsubsection{$\zeta$-fixed locus of affine Grassmannian} 
We can substitute $t^l$ for $t$ in \textsection\ref{nsubsect 5.1.1} and replace $W_\ex$ by $W_{l,\ex}$. 
For $\omega\in \Xi_\sc$, let $P^\omega$ be the parahoric subgroup of $\check{G}((t^l))$ associated with the parabolic subgroup $W_{l,\omega}\subset W_{l,\ex}$, and let $\Fl^{\omega}=\check{G}((t^l))/P^\omega$ be the associated partial affine flag variety. 
Let $\Fl^{\omega,\circ}$ be the connected component of $P^\omega/P^\omega$. 
By (\ref{nequ 5.1}), we have $(\Fl^{\omega})^{\check{T}}=W^\omega_{l,\ex}$, $(\Fl^{\omega,\circ})^{\check{T}}=W^\omega_{l,\af}$. 

There is a $\Gm$-action on $\C(\!(t)\!)$ by rotating $t$, which induces a $\Gm$-action on $\Gr$. 
Let $\Gr^\zeta$ be the fixed locus of $\zeta\in \Gm$ on $\Gr$. 
By \cite[\textsection 4]{RW22}, there is an isomorphism 
\begin{equation}\label{equ 1.1} 
\bigsqcup_{\omega\in \Xi} \Fl^{\omega}=\Gr^\zeta, \quad gP^\omega/P^\omega \mapsto g\delta_{\omega+\rho}, \quad \forall g\in \check{G}((t^l)), 
\end{equation} 
where for each $\omega\in \Xi$ we fix a lifting $\omega\in \Xi_\sc$. 
At the level of $\check{T}$-fixed points, (\ref{equ 1.1}) recovers the identity $\bigsqcup_{\omega\in \Xi} W^\omega_{l,\ex} =\Lambda$, where $x \in W^\omega_{l,\ex}$ is mapped to $x \bullet \omega +\rho$. 

\subsubsection{Cohomology of affine flag varieties} 
Let $T'=\check{T}\times \Gm$, $\check{T}$, $\Gm$ or the trivial group. 
The cohomology group $H^\bullet_{T'}(\Fl^J)$ is freely generated over $H^\bullet_{T'}(\pt)$ by the fundamental classes $[\Fl^{J,x}]_{T'}$ of the finite codimensional Schubert varieties $\Fl^{J,x}$ labelled by $x\in W^{J}_{\ex}$. 
For any $H^\bullet_{T'}(\pt)$-algebra $R$, we denote by 
$$H^\bullet_{T'}(\Fl^J)^\wedge_R$$ 
the space of formal series of $[\Fl^{J,x}]_{T'}$ with coefficients in $R$. 
We will drop the subscript $R$ if $R=H^\bullet_{T'}(\pt)$. 

The restriction to $\check{T}$-fixed points induces embeddings 
$$H^\bullet_{\check{T}}(\Fl^J) \hookrightarrow \Fun(W^J_\ex,S'), \quad H^\bullet_{\check{T}\times \Gm}(\Fl^J) \hookrightarrow \Fun(W^J_\ex,\hat{S}'). $$ 
For any finite subset $\sX\subset W^J_\ex$, there are finitely many $y\in W^J_\ex$ such that the restriction of $[\Fl^{J,y}]_{\check{T}}$, resp. $[\Fl^{J,y}]_{\check{T}\times \Gm}$, at $\dot{x}P^J/P^J$ is nonzero for some $x\in \sX$. 
Hence we have embeddings 
$$H^\bullet_{\check{T}}(\Fl^J)^\wedge\hookrightarrow \Fun(W^J_\ex,S'), \quad H^\bullet_{\check{T}\times \Gm}(\Fl^J)^\wedge\hookrightarrow \Fun(W^J_\ex,\hat{S}').$$ 

The discussion above is immediately adopted to $\Fl^\omega$ ($\omega\in \Xi_\sc$), after replacing the superscripts $J$ by $\omega\in \Xi_\sc$ and replacing $W^{J}_{\ex}$ by $W^\omega_{l,\ex}$. 
For any $H^\bullet_{T'}(\pt)$-algebra $R$, we set 
$$H^\bullet_{T'}(\Gr^\zeta)^\wedge_R :=\prod_{\omega\in \Xi}H^\bullet_{T'}(\Fl^\omega)^\wedge_R.$$

\subsection{The map from cohomology to center} 
Consider the evaluation map 
$$\chi_R:\ Z(\sO_{R})\rightarrow \prod_{\lambda\in \Lambda}\End_{\sO_{R}} (M(\lambda)_R)=\Fun(\Lambda,R).$$ 
\begin{prop}[{\cite[Prop 4.8]{BBASV}}]\label{prop 3.7} 
There is a commutative diagram of algebra homomorphisms 
$$\begin{tikzcd}
H_{\check{T}\times\Gm}^\bullet(\Gr^\zeta) \arrow[r,"\hat{\bb}"] \arrow[d]
& Z(\sO_{\hat{S}}) \arrow[r,"\chi_{\hat{S}}"] \arrow[d] 
& \Fun(\Lambda,\hat{S}) \arrow[d] \\ 
H_{\check{T}}^\bullet(\Gr^\zeta) \arrow[r,"\bb"] & Z(\sO_{S}) \arrow[r,"\chi_{S}"] & \Fun(\Lambda,S),
\end{tikzcd}$$ 
such that the compositions $\chi_{\hat{S}}\circ \hat{\bb}$ and $\chi_{S}\circ {\bb}$ coincide with the restrictions on the $\check{T}$-fixed points $\{\delta_{\lambda+\rho}\}_{\lambda\in \Lambda}$. 
In particular, the map $\bb$ and $\hat{\bb}$ are embeddings and are compatible with the decompositions (\ref{equ 1.1}) and (\ref{equ 3.13}), yielding homomorphisms 
$$\bb:H_{\check{T}}^\bullet(\Fl^{\omega})\rightarrow Z(\sO^{[\omega]}_{S}), \quad \hat{\bb}:H_{\check{T}\times\Gm}^\bullet(\Fl^{\omega})\rightarrow Z(\sO^{[\omega]}_{\hat{S}}),$$ 
for each $\omega\in \Xi$. 
\end{prop}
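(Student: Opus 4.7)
The plan is to import the construction of $\hat{\bb}$ from \cite[Prop 4.7]{BBASV} and verify that it is compatible with specialization from $\hat S$ to $S$. The first step is to check that the vertical maps $\chi_R$ are well-defined algebra homomorphisms. For a local complete Noetherian $\hat{S}$-algebra $R$ with residue field $\F$, the Verma module $M(\lambda)_R$ is free as an $R$-module in each weight space and has simple fiber $E(\lambda)_\F$, so Nakayama gives $\End_{\sO_R}(M(\lambda)_R) = R$. Therefore any $z \in Z(\sO_R)$ restricts to a scalar on each $M(\lambda)_R$, yielding the algebra map $\chi_R: Z(\sO_R) \to \Fun(\Lambda, R)$. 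Naturality of $\chi$ with respect to base change $\hat S \to S$ gives commutativity of the right-hand square of the diagram.

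Next I would recall the construction of $\hat{\bb}$. The $\check T \times \Gm$-fixed points of $\Gr^\zeta$ are $\{\delta_{\lambda+\rho}\}_{\lambda \in \Lambda}$; via the identification $\hat S' = H^\bullet_{\check T \times \Gm}(\pt)$, restriction to fixed points gives a map $H^\bullet_{\check T \times \Gm}(\Gr^\zeta) \to \Fun(\Lambda, \hat S')$, which becomes a map into $\Fun(\Lambda, \hat S)$ after completion at $\hbar = 0$ and $0 \in \t$. On the representation-theoretic side, central elements of $\sO_{\hat S}$ are determined by the scalars $(z_\lambda)_{\lambda \in \Lambda}$ since $\sO_{\hat \K}$ is semisimple (Lemma 3.6) and $Z(\sO_{\hat S})$ is torsion-free over $\hat S$; the image of $\chi_{\hat S}$ is cut out by compatibility relations coming from nontrivial $\Hom$'s and $\Ext^1$'s between Vermas (equivalently, from Verma-filtered endomorphism algebras of projective covers). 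Dually, by the GKM description applied to the $\check T \times \Gm$-action on $\Gr^\zeta$ (whose one-dimensional orbits are controlled by the affine real coroots $\check{\Phi}_{l,\re}$), the image of the fixed-point restriction in $\Fun(\Lambda, \hat S)$ is cut out by exactly the same relations at each affine wall. The map $\hat{\bb}$ is then defined as the composite of fixed-point restriction with the inverse of $\chi_{\hat S}$ on its image.

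The map $\bb$ is obtained by specialization $\hbar = 0$: base change along $\hat S \twoheadrightarrow S$ takes the GKM description on the geometric side to the analogous one for $H^\bullet_{\check T}(\Gr^\zeta)$, and the $S$-valued center identifies compatibly. Both $\bb$ and $\hat{\bb}$ are then embeddings because the fixed-point restriction is injective (use that $\Gr^\zeta$ carries Schubert stratifications from its decomposition (1.1), so its equivariant cohomology is a free module over $\hat S'$ and $S'$ respectively, and apply equivariant localization). The compatibility with the block decompositions (1.1) and (3.13) is automatic: a class in $H^\bullet_{\check T \times \Gm}(\Fl^\omega)$ has support on fixed points $\delta_{\lambda+\rho}$ with $\lambda \in W_{l,\ex}\bullet \omega$, which by Proposition 3.4 are precisely the highest weights of Vermas in $\sO^{[\omega]}_{\hat S}$.

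The main technical obstacle, as in \cite{BBASV}, is matching the two sets of compatibility conditions: the Jantzen-style relations on scalars $(z_\lambda)$ arising from Verma-filtered endomorphism algebras with the moment-graph conditions along one-dimensional $\check T \times \Gm$-orbits on $\Gr^\zeta$. This matching is the heart of the construction of $\hat{\bb}$ and is carried out in detail in \textit{loc.\ cit.}; once granted, the remaining verifications (commutativity of the diagram, compatibility with specialization, block compatibility, and injectivity) reduce to naturality and the localization theorem.
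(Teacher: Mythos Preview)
The paper does not give its own proof of this proposition: it is stated as a citation of \cite[Prop~4.7]{BBASV}, and the subsequent lemmas (Lemma~\ref{lem 3.7}, Lemma~\ref{lem 3.11}) and Theorem~\ref{thm 3.11} take the existence of $\bb$ and $\hat\bb$ as input. So there is no in-paper argument to compare your proposal against.

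That said, your sketch has a logical ordering problem relative to the paper. You propose to \emph{define} $\hat\bb$ as ``fixed-point restriction composed with $\chi_{\hat S}^{-1}$ on its image'', which requires knowing in advance that the GKM image of $H^\bullet_{\check T\times\Gm}(\Gr^\zeta)$ inside $\Fun(\Lambda,\hat S)$ is contained in the image of $\chi_{\hat S}$. But in this paper that containment is exactly (one direction of) the isomorphism in Theorem~\ref{thm 3.11}, whose proof for $\hat S$ relies on the subgeneric analysis (Lemma~\ref{lem 6.17}, Lemma~\ref{lem 6.19}) carried out \emph{after} Proposition~\ref{prop 3.7} is invoked. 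So as written your construction would be circular within the paper's development. The construction in \cite{BBASV} presumably produces central elements directly (e.g.\ via an explicit action of cohomology classes on deformed modules, or a Harish--Chandra-type mechanism) rather than by first characterizing both sides inside $\Fun(\Lambda,\hat S)$ and then matching; that is what allows Proposition~\ref{prop 3.7} to serve as a starting point here rather than a consequence.

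Your remaining remarks (well-definedness of $\chi_R$, injectivity of fixed-point restriction via freeness over $\hat S'$, block compatibility via matching of index sets) are fine and are indeed used elsewhere in the paper, but they do not by themselves yield a construction of $\hat\bb$ independent of the later GKM matching.
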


In the following two lemmas, we suppose $R$ satisfies $S\subset R \subset \K$, resp. $\hat{S}\subset R\subset \hat{\K}$. 
\begin{lem}\label{lem 3.7} 
The map $\chi_R$ is an inclusion, and the image of $Z(\sO_{R})$ is a complete subspace of $\Fun(\Lambda, R)$ with respect to the product topology. 
\end{lem}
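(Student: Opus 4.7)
The plan has two parts: injectivity via base change to the fraction field, and completeness via the inverse-limit structure of the center.

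\emph{Injectivity.} By \textsection\ref{subsect 2.3.4}, the injection $R \hookrightarrow \K$ (resp.\ $R \hookrightarrow \hat{\K}$) induces an $R$-algebra inclusion $Z(\sO_{R}) \hookrightarrow Z(\sO_{\K})$. Lemma \ref{lem 3.6} says $\sO_\K$ is semisimple with simple objects $\{M(\lambda)_\K\}_{\lambda\in \Lambda}$, each with $\End = \K$; hence canonically
\[ Z(\sO_{\K}) \;=\; \prod_{\lambda\in \Lambda} \End_{\sO_{\K}}\!\big(M(\lambda)_{\K}\big) \;=\; \Fun(\Lambda,\K), \]
and $\chi_\K$ is the identity under this identification. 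Since $M(\lambda)_R$ is cyclic at its highest weight, any central scalar on $M(\lambda)_R$ lies in $R$; thus $\chi_R$ is the restriction of $\chi_\K$ along $Z(\sO_R) \subset Z(\sO_\K)$, and hence injective.

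\emph{Completeness.} By \textsection\ref{subsect 2.3.4}, $Z(\sO_{R}) = \Lim{\nu} Z(\sO_{R}^{\leq \nu})$, the transitions being the restrictions to truncated subcategories. The injectivity argument applied to each $\sO_R^{\leq \nu}$ produces compatible injections $\chi_R^{\leq \nu}: Z(\sO_R^{\leq \nu}) \hookrightarrow \Fun(\{\lambda\leq \nu\},R)$. I claim each image is closed in the product topology. Each indecomposable projective $Q(\mu)^{\leq \nu}_R$ has only \emph{finitely many} Verma factors, because the set $\{\lambda\geq 0 : \lambda\leq \nu-\mu\}$ is a finite box in $\Lambda$ (cf.\ \textsection\ref{subsect Verma}); so $\End(Q(\mu)^{\leq \nu}_R)$ embeds as an $R$-lattice in the finite product of scalars on these factors (the embedding being injective because after base change to $\K$ the module splits as a direct sum of Vermas), and this lattice is a closed $R$-submodule. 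A tuple $(z_\lambda)_{\lambda\leq\nu}$ lies in the image of $\chi_R^{\leq \nu}$ iff its restrictions to the Verma factors of each $Q(\mu)^{\leq\nu}_R$ extend to an endomorphism in this lattice, and the resulting endomorphisms are compatible across morphisms between projectives --- a conjunction of closed $R$-linear conditions in the product topology. Since $\Fun(\Lambda,R)=\Lim{\nu}\Fun(\{\lambda\leq\nu\},R)$ topologically, we obtain
\[ \chi_R(Z(\sO_R)) \;=\; \Lim{\nu}\chi_R^{\leq\nu}(Z(\sO_R^{\leq\nu})), \]
an inverse limit of closed subspaces, hence closed, hence complete.

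\emph{Main obstacle.} The subtle step is the closedness claim for each truncated image $\chi_R^{\leq\nu}(Z(\sO_R^{\leq\nu}))$: a single block of $\sO_R^{\leq\nu}$ may contain countably many indecomposable projectives, so finite generation of $Z(\sO_R^{\leq\nu})$ as an $R$-module is not available. Instead one has to argue projective-by-projective, using that each individual $\End(Q(\mu)^{\leq\nu}_R)$ embeds as a closed $R$-lattice in a finite product of copies of $R$ via the scalars on Verma factors, and that the mutual compatibility across $\Hom$'s of projectives is itself a closed condition. Everything else is formal bookkeeping with inverse limits.
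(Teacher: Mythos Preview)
Your proof is correct and follows the same approach as the paper: injectivity via the base-change inclusion $Z(\sO_R)\subset Z(\sO_\K)$ and semisimplicity of $\sO_\K$ (Lemma~\ref{lem 3.6}), and completeness via the fact that projectives in truncated categories have only finitely many Verma factors. The paper's completeness argument is more direct---it simply observes that membership in the image is cut out by conditions each involving only finitely many coordinates of $\Fun(\Lambda,R)$, so your detour through $\Lim{\nu}$ and your ``main obstacle'' about closedness of $R$-lattices are unnecessary once $R$ carries the discrete topology (any condition on a finite set of coordinates is then automatically closed).
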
 
\begin{proof} 
We only prove the assertion for $S\subset R \subset \K$. 
Since $\sO_{\K}$ is semisimple by Lemma \ref{lem 3.6} and the Verma modules form a complete set of simple objects, we have $\chi_\K:Z(\sO_{\K})\xs \Fun(\Lambda, \K)$. 
Since $\chi_R$ is compatible with $\chi_\K$ with respect to the inclusion $Z(\sO_{R})\subset Z(\sO_{\K})$ by base change, it shows that $\chi_R$ is an injection. 
For the second assertion, note that elements in $Z(\sO_{R})$ are determined by their restrictions on projective modules in truncated categories, and projective modules admit finite Verma factors. 
Hence the subspace $Z(\sO_{R})$ is closed under the pro-finite completion.  
\end{proof} 

\begin{lem}\label{lem 3.11} 
The map $\bb$, resp. $\hat{\bb}$, extends to an algebra homomorphism 
\begin{equation}\label{equ 3.16} 
\bb: H_{\check{T}}^\bullet(\Gr^\zeta)^{\wedge}_{R} \rightarrow Z(\sO_{R}), \quad \text{resp.}\quad 
\hat{\bb}: H_{\check{T}\times \Gm}^\bullet(\Gr^\zeta)^{\wedge}_{R} \rightarrow Z(\sO_{R}). 
\end{equation} 
For $R=S$, it induces a compatible algebra homomorphism 
\begin{equation}\label{equ 3.18} 
\overline{\bb}: H^\bullet(\Gr^\zeta)^{\wedge} \rightarrow Z(\sO_{\k}). 
\end{equation} 
\end{lem}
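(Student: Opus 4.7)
The plan is to first base change the maps $\bb$ and $\hat{\bb}$ from $S$ (resp.\ $\hat S$) to the intermediate ring $R$, and then extend by pointwise continuity on $\check T$-fixed points to the Schubert completion, using the completeness of $Z(\sO_{R})\subset\Fun(\Lambda,R)$ supplied by Lemma~\ref{lem 3.7}.

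For the base change step, the inclusion $S\hookrightarrow R$ (resp.\ $\hat S\hookrightarrow R$) induces an injective algebra map $Z(\sO_{S})\otimes_{S} R\hookrightarrow Z(\sO_{R})$ (resp.\ $Z(\sO_{\hat S})\otimes_{\hat S} R\hookrightarrow Z(\sO_{R})$) by \textsection\ref{subsect 2.3.4}, and composing with $\bb\otimes_{S} R$ (resp.\ $\hat\bb\otimes_{\hat S} R$) yields the polynomial version
$$\bb_{R}:\ H^\bullet_{\check T}(\Gr^\zeta)\otimes_{H^\bullet_{\check T}(\pt)}R\to Z(\sO_{R}),$$
and similarly $\hat\bb_{R}$; by Proposition~\ref{prop 3.7} the composition with $\chi_{R}$ is still restriction to the fixed points $\{\delta_{\lambda+\rho}\}_{\lambda\in\Lambda}$. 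To extend to the completion, I would define $\bb$ on a formal series
$$c\ =\ \sum_{\omega\in\Xi}\sum_{x\in W^{\omega}_{l,\ex}} a_{\omega,x}\,[\Fl^{\omega,x}]_{\check T}\ \in\ H^\bullet_{\check T}(\Gr^\zeta)^\wedge_{R}$$
by declaring
$$\chi_{R}(\bb(c))(\lambda)\ :=\ \sum_{\omega,x}\ a_{\omega,x}\cdot[\Fl^{\omega,x}]_{\check T}\big|_{\delta_{\lambda+\rho}}\ \in\ R,\qquad \lambda\in\Lambda.$$
For each fixed $\lambda$, the decomposition (\ref{equ 1.1}) selects a unique component $\Fl^{\omega}$ containing $\delta_{\lambda+\rho}$, and within it $\delta_{\lambda+\rho}$ lies in only finitely many of the finite codimensional Schubert varieties $\Fl^{\omega,x}$: labelling so that $\delta_{y}\in\Fl^{\omega,x}$ amounts to $x\leq y$ in the Bruhat order of $W^{\omega}_{l,\ex}$, the condition cuts out the finite Bruhat interval below $\lambda+\rho$. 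Hence the displayed sum is finite in $R$ and defines a function $f_{c}\in\Fun(\Lambda,R)$. Since $f_{c}$ is the pointwise limit of the functions associated to finite partial sums of $c$, each of which lies in $\chi_{R}(Z(\sO_{R}))$, Lemma~\ref{lem 3.7} ensures $f_{c}\in\chi_{R}(Z(\sO_{R}))$, so I set $\bb(c):=\chi_{R}^{-1}(f_{c})$. Multiplicativity of $c\mapsto\bb(c)$ follows from multiplicativity of $\bb_{R}$ together with the continuity of multiplication in $\Fun(\Lambda,R)$. The $\check T\times\Gm$-case is identical.

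For (\ref{equ 3.18}), I would specialise with $R=S$ at the augmentation $S\twoheadrightarrow\k$: since the Schubert basis is $H^\bullet_{\check T}(\pt)$-free, the completion $H^\bullet_{\check T}(\Gr^\zeta)^\wedge_{S}$ specialises to $H^\bullet(\Gr^\zeta)^\wedge$ upon tensoring over $S$ with $\k$, and combining with the base-change map $Z(\sO_{S})\to Z(\sO_{\k})$ of \textsection\ref{subsect 2.3.4} produces $\overline\bb$. I expect the main obstacle to be the column-finiteness assertion used in the extension step, i.e.\ ensuring that each $\check T$-fixed point meets only finitely many of the finite codimensional Schubert varieties in $\Fl^{\omega}$. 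If a direct Bruhat-order argument is inconvenient, a natural substitute is to use the inverse-limit presentation $Z(\sO_{R})=\Lim{\nu}Z(\sO^{\leq\nu}_{R})$ and to verify instead that each truncated center receives nonzero contributions from only finitely many Schubert classes, which amounts to essentially the same combinatorial input.
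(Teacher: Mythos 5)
Your argument for the extension \eqref{equ 3.16} is essentially the paper's own: one base changes $\bb$ to $R$, observes that $[\Fl^{\omega,x}]_{\check T}$ restricts to zero at $\delta_{\lambda+\rho}$ unless $x$ lies in the finite Bruhat interval below the corresponding element of $W^{\omega}_{l,\ex}$ (so formal series give well-defined elements of $\Fun(\Lambda,R)$), and then invokes the closedness of $\chi_R(Z(\sO_R))$ in the product topology from Lemma \ref{lem 3.7}. The ``column-finiteness'' you flag as the main obstacle is resolved exactly as you suspect, by the support property of Schubert classes; this is the only input the paper uses.

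For \eqref{equ 3.18} you take a different, purely formal route, and the one step that needs repair is the assertion that $H^\bullet_{\check T}(\Gr^\zeta)^\wedge_S$ ``specialises to'' $H^\bullet(\Gr^\zeta)^\wedge$ upon applying $-\otimes_S\k$ \emph{because the Schubert basis is free}. The completion is a direct \emph{product} $\prod_x S$, not a direct sum, and freeness of an infinite-rank module does not by itself give $\bigl(\prod_x S\bigr)\otimes_S\k\simeq\prod_x\k$; the kernel of $\prod_x S\to\prod_x\k$ is $\prod_x\m$, which must be compared with $\m\cdot\prod_x S$. The identification does hold here, but for a different reason: $S$ is Noetherian local, so $\m$ is finitely generated and hence $\prod_x\m=\m\cdot\prod_x S$. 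With that fact supplied, your factorization of $H^\bullet_{\check T}(\Gr^\zeta)^\wedge_S\to Z(\sO_S)\to Z(\sO_\k)$ through $-\otimes_S\k$ does produce $\overline\bb$. The paper instead argues module-theoretically: every projective object of a truncated subcategory of $\sO_\k$ lifts to $\sO_S$ with finitely many Verma factors, so the action of the completed ring on any such object factors through a finite partial sum of Schubert classes, and the induced map on $\k$-coefficients is then immediate. That route avoids any commutative algebra of infinite products and, as a bonus, makes explicit that $\overline\bb(c)$ acts on each object through a finite truncation, which is how the map is actually used later. Either argument is acceptable once your tensor identity is properly justified.
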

\begin{proof} 
We only show the extension for $\bb$. 
Recall that $H_{\check{T}}^\bullet(\Gr^\zeta)$ is a free $S'$-module generated by the fundamental classes $\{[\Fl^{\omega,x}]_{\check{T}}\}_{x\in W^\omega_{l,\ex}}$. 
Recall again that for any finite subset $\sX\subset W^\omega_{l,\ex}$, there are finitely many $y\in W^\omega_{l,\ex}$ such that the restriction of $[\Fl^{\omega,y}]_{\check{T}}$ on $\dot{x}P^\omega/P^\omega$ is nonzero for some $x\in \sX$. 
Hence the pro-finite completion of $H_{\check{T}}^\bullet(\Gr^\zeta)\otimes_{S'}R$ in $\Fun(\Lambda,R)$ is the subspace of formal sums of $\{[\Fl^{\omega,x}]_{\check{T}}\}_{x}$. 
By Lemma \ref{lem 3.7}, $Z(\sO_{R})$ is complete, so the map $\bb$ extends to the one in (\ref{equ 3.16}). 

For the second assertion, recall that any projective module $Q$ in some truncated category of $\sO_{\k}$ has a lifting in $\sO_{S}$ that admits finitely many Verma factors. 
Hence the action of $H_{\check{T}}^\bullet(\Gr^\zeta)^{\wedge}_{S}$ on $Q$ factorizes through a truncation of finite sum. 
It induces a homomorphism (\ref{equ 3.18}) that is compatible with (\ref{equ 3.16}). 
\end{proof} 

\ 

The rest of the section is devoted to proving the following main theorem. 
\begin{thm}\label{thm 3.11} 
There are algebra isomorphisms 
$$\bb: H_{\check{T}}^\bullet(\Gr^\zeta)^{\wedge}_{S} \xs Z(\sO_{S}) \quad \text{and} \quad  \hat{\bb}: H_{\check{T}\times \Gm}^\bullet(\Gr^\zeta)^{\wedge}_{\hat{S}} \xs Z(\sO_{\hat{S}}).$$ 
\end{thm}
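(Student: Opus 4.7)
The plan is to exploit the commutative diagram of Proposition \ref{prop 3.7}, which embeds both $Z(\sO_S)$ and the image of $\bb$ inside $\Fun(\Lambda,S)$, and then to identify these two subspaces ring-by-ring: first on the generic fiber $\K$, then on each subgeneric localization $S_\alpha$ for $\alpha \in \Phi^+$, and finally on $S$ via intersection. On the generic fiber, Lemma \ref{lem 3.6} gives that $\sO_\K$ is semisimple with Verma modules a complete system of simples, hence $\chi_\K$ realizes $Z(\sO_\K) \cong \Fun(\Lambda,\K)$. On the geometric side, after passing to $\K$ the Schubert basis of $H_{\check{T}}^\bullet(\Gr^\zeta)$ combines with equivariant localization to yield $H_{\check{T}}^\bullet(\Gr^\zeta)^\wedge_\K \cong \Fun(\Lambda,\K)$ under $\lambda \mapsto \delta_{\lambda+\rho}$, and $\bb_\K$ matches these two identifications.

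The heart of the proof is the subgeneric statement: for each $\alpha \in \Phi^+$, the map $\bb$ induces an isomorphism $H_{\check{T}}^\bullet(\Gr^\zeta)^\wedge_{S_\alpha} \xs Z(\sO_{S_\alpha})$, which I would prove block by block using the decomposition (\ref{equ 6.3}). In an $\alpha$-singular block, the projective limit $\End\big(Q(\bm{-\infty}_\alpha)^{\leq \bm{\infty}_\alpha}_{S_\alpha}\big)$ of (\ref{equ 6.-1}) is commutative by the embedding (\ref{equ 4.17}) and governs the center of the block: by Lemma \ref{lem 6.5} any endomorphism of $Q(\bm{n}_\alpha)^{\leq \bm{m}_\alpha}_{S_\alpha}$ restricts compatibly to every sub-quotient $Q(\bm{n'}_\alpha)^{\leq \bm{m'}_\alpha}_{S_\alpha}$, and by the base-change formalism of \textsection \ref{subsect 2.3.4} the center of the block is the compatible family of such endomorphisms. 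An explicit computation identifies this family with a completion of $H_{\check{T}}^\bullet\big((\Fl^{\omega})^{\check{T}_\alpha}\big) \otimes_{S'} S_\alpha$, where $\check{T}_\alpha = (\ker \check{\alpha})^\circ$ and $\omega$ is the appropriate singular parahoric type. The $\alpha$-regular case reduces to the singular one via the translation functors of \textsection \ref{subsect 6.1.2}, using Lemmas \ref{lem 6.7} and \ref{lem 6.8} to match Verma sub-quotients. Matching the explicit generators then shows these identifications agree with $\bb_{S_\alpha}$.

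Next I would glue by intersection. The base-change compatibility of \textsection \ref{subsect 2.3.4} gives, inside $Z(\sO_\K) = \Fun(\Lambda,\K)$, that
$$Z(\sO_S) = \bigcap_{\alpha \in \Phi^+} Z(\sO_{S_\alpha}).$$
On the cohomological side, GKM theory for the $\check{T}$-variety $\Gr^\zeta$ (formulated Schubert-cell by Schubert-cell to accommodate the ind-scheme structure) identifies $H_{\check{T}}^\bullet(\Gr^\zeta)$ with $\bigcap_\alpha H_{\check{T}}^\bullet\big((\Gr^\zeta)^{\check{T}_\alpha}\big)$ inside $H_{\check{T}}^\bullet\big((\Gr^\zeta)^{\check{T}}\big)$, and this description persists after completed base change to $S$. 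Intersecting the subgeneric isomorphisms then delivers $\bb: H_{\check{T}}^\bullet(\Gr^\zeta)^\wedge_S \xs Z(\sO_S)$. The $\check{T} \times \Gm$-equivariant assertion for $\hat{\bb}$ follows by the same scheme over $\hat{S}$, incorporating the loop-rotation parameter through the identification $\hat{S} \simeq \k[\t_\af^*]_{\widehat{0}}$ recorded in \textsection \ref{subsect 2.1.2'}; concretely, all objects and maps used in the subgeneric step have natural $\Gm$-equivariant lifts.

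I expect the main obstacle to be the subgeneric step, specifically the explicit description of $\End\big(Q(\bm{-\infty}_\alpha)^{\leq \bm{\infty}_\alpha}_{S_\alpha}\big)$ and its $\alpha$-regular analogue (\ref{equ 6.-2}), together with the verification that the resulting identification with completed $\check{T}_\alpha$-equivariant cohomology of the appropriate partial affine flag variety coincides with $\bb$ on Schubert generators; this requires tracking how each endomorphism acts on the Verma sub-quotients through the short exact sequences of Lemmas \ref{lem 6.5} and \ref{lem 6.8}(3). A secondary difficulty is formulating GKM in this infinite-dimensional, completed setting, where $\check{T}_\alpha$-fixed loci on $\Gr^\zeta$ are typically non-discrete, so the standard finite-type GKM arguments must be adapted to be compatible with the Schubert filtration.
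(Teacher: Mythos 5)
Your proposal follows essentially the same route as the paper: reduce to the subgeneric localizations $S_\alpha$ via the endomorphism algebras of the big projectives (Corollaries \ref{cor 6.11} and \ref{cor 6.14}), then glue by intersecting inside $Z(\sO_\K)=\Fun(\Lambda,\K)$ and match with the GKM description of Proposition \ref{prop B.1}. One correction to the gluing step: the equality $Z(\sO_S)=\bigcap_{\alpha\in\Phi^+}Z(\sO_{S_\alpha})$ is not true as stated, since intersecting only finitely many height-one localizations of $S$ does not recover $S$; the paper instead uses $Z(\sO_S)=\bigcap_{\hgt(\p)=1}Z(\sO_{S_\p})$ over \emph{all} height-one primes, with the primes satisfying $\p\cap\Phi^+=\emptyset$ contributing $\Fun(\Lambda,S_\p)$ by semisimplicity (Lemma \ref{lem 3.6}), so that the correct identity is $Z(\sO_S)=\Fun(\Lambda,S)\cap\bigcap_{\alpha\in\Phi^+}Z(\sO_{S_\alpha})$ — exactly matching the extra $\Fun(\Lambda,S)$ term in Proposition \ref{prop B.1}(2). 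With that repair your argument closes as in the paper, including the $\hat{\bb}$ case over $\hat{S}$, where the subgeneric description is the Fiebig-style one of Lemma \ref{lem 6.19} and the GKM conditions run over $\check{\Phi}^+_{l,\re}$.
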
 

\subsection{Cohomology of subtorus-fixed locus} 
In this subsection, we reduce the cohomology of $\Gr^\zeta$ to the $SL_2$-case by taking the fixed points of $\Gr^\zeta$ of codimension $1$ subtorus of $\check{T}$, and introduce some precise cohomology classes. 

\subsubsection{Subtorus-fixed locus} 
The root $\alpha$ defines a codimension $1$ subtorus $\check{T}_\alpha=(\ker \check{\alpha})^\circ$ of $\check{T}$ (where $\circ$ denotes the neutral component). 
The $\check{T}_\alpha$-fixed locus of $\Gr^\zeta$ decomposes as  
\begin{equation}\label{equ 6.B16} 
(\Gr^\zeta)^{\check{T}_\alpha}=\bigsqcup_{\sX\in W_l(\alpha) \backslash \Lambda} \Fl^{\sX}_{_{SL_2}}, 
\end{equation} 
where $Fl^{\sX}_{_{SL_2}}$ is the connected component with $(\Fl^{\sX}_{_{SL_2}})^{\check{T}}=\sX$. 
We view $\check{\alpha}$ as the positive root of $SL_2$, and let $T_2\subset SL_2$ be the standard torus. 
Let $\Fl_{_{SL_2}}$ and $\Gr_{_{SL_2}}$ be the affine flag variety and the affine Grassmannian associated with the loop group $SL_2((t^l))$, and identify their fix points 
$$(\Fl_{_{SL_2}})^{T_2}=W_l(\alpha) \quad \text{and} \quad (\Gr_{_{SL_2}})^{T_2}=W_l(\alpha)/\langle s_\alpha\rangle=l\Z\alpha,$$ 
respectively. 
There are isomorphisms of $\check{T}$-varieties 
$$\Fl^{\sX}_{_{SL_2}}\simeq 
\begin{cases}
\Fl_{_{SL_2}} & \text{for any bijection $\sX=W_l(\alpha)$ of $W_l(\alpha)$-sets,} \\ 
\Gr_{_{SL_2}} & \text{for any bijection $\sX=l\Z\alpha$ of $W_l(\alpha)$-sets,} 
\end{cases}$$ 
via suitable isomorphism $\check{T}/\ker \check{\alpha}=T_2/\{\pm I_2\}$, such that the identifications of $W_l(\alpha)$-sets coincide with the isomorphisms restricted at the $\check{T}$-fixed points. 

\begin{prop}\label{prop B.1}
\begin{enumerate} 
\item The restriction induces an isomorphism 
$$H^\bullet_{\check{T}}(\Gr^\zeta)^\wedge_{S_\alpha} \xs H^\bullet_{\check{T}}\big((\Gr^\zeta)^{\check{T}_\alpha}\big)^\wedge_{S_\alpha}. $$
\item As subspaces in $\Fun(\Lambda,\K)$, there is an equality 
$$H^\bullet_{\check{T}}(\Gr^\zeta)^\wedge_{S} =\Fun(\Lambda,S)\cap 
\bigcap_{\alpha\in \Phi^+} H^\bullet_{\check{T}}\big((\Gr^\zeta)^{\check{T}_\alpha}\big)^\wedge_{S_\alpha}.$$ 
\end{enumerate} 
\end{prop}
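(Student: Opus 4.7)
The strategy is to combine two classical techniques in equivariant cohomology, adapted to the ind-scheme setting: the Atiyah--Bott/Borel localization theorem for Part (1), and a Chang--Skjelbred (GKM) characterization for Part (2).

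For Part (1), I will apply localization to the $\check{T}_\alpha$-action on $\Gr^\zeta$. The key observation is that the $\check{T}$-weights on the normal directions to $(\Gr^\zeta)^{\check{T}_\alpha}$ inside $\Gr^\zeta$ are characters of $\check{T}$ whose restriction to $\check{T}_\alpha$ is nontrivial; equivalently they are elements of $\check{\Lambda}\subset S'$ not proportional to $\check{\alpha}$, hence not contained in the prime ideal $(\alpha)$ and therefore invertible in $S_\alpha$. The restriction map on equivariant cohomology is triangular with respect to the Schubert filtration, and its diagonal entries are precisely these equivariant Euler classes, now units in $S_\alpha$. It follows that the restriction induces an isomorphism of completed $S_\alpha$-modules, which is the content of Part (1).

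For Part (2), the inclusion of the left side into the right side is immediate from Part (1): the fixed-point restriction map on $H^\bullet_{\check{T}}(\Gr^\zeta)^\wedge_{S}$ takes values in $\Fun(\Lambda, S)$ and factors through $(\Gr^\zeta)^{\check{T}_\alpha}$ for every $\alpha$, so composing with the isomorphism of Part (1) places it inside every $H^\bullet_{\check{T}}((\Gr^\zeta)^{\check{T}_\alpha})^\wedge_{S_\alpha}$. For the reverse inclusion I will use the classical GKM characterization: an element $f\in \Fun(\Lambda,\K)$ that already lies in $\Fun(\Lambda,S)$ belongs to $H^\bullet_{\check{T}}(\Gr^\zeta)^\wedge_{S}$ precisely when, for every pair of $\check{T}$-fixed points $\lambda_1, \lambda_2$ connected by a $1$-dimensional $\check{T}$-orbit of weight $\beta$, the difference $f(\lambda_1) - f(\lambda_2)$ is divisible by $\check{\beta}$ in $S'$. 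Every $1$-dimensional orbit of weight $\alpha$ lies inside $(\Gr^\zeta)^{\check{T}_\alpha}$, so the GKM conditions attached to $\alpha$ are exactly those cutting out $H^\bullet_{\check{T}}((\Gr^\zeta)^{\check{T}_\alpha})^\wedge_{S_\alpha}$ inside $\Fun(\Lambda,\K)$ --- by Part (1) applied componentwise to the $SL_2$-type components $\Fl^{\sX}_{_{SL_2}}$, where the GKM description is classical (the relevant ambient space is an ordinary affine flag variety or affine Grassmannian). Intersecting over all $\alpha\in \Phi^+$ recovers the full set of GKM conditions, giving the desired equality.

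The main technical obstacle is to adapt the localization theorem and Chang--Skjelbred lemma rigorously to the infinite-dimensional ind-scheme $\Gr^\zeta$, whose components $\Fl^\omega$ are unions of finite-codimensional Schubert subvarieties. The arguments are compatible with the completions in the statement because those completions are precisely induced by the Schubert filtration; but one must verify carefully that the equivariant Euler classes on normal bundles to $\check{T}_\alpha$-fixed Schubert cells are units in $S_\alpha$ (straightforward from the affine root system on $\Fl^\omega$), and that the GKM characterization extends to the setting of formal series with coefficients in the completed rings. Once these ind-scheme adaptations are in place, the proof reduces to standard equivariant cohomology arguments.
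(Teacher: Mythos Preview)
Your overall architecture (localization for Part~(1), Chang--Skjelbred for Part~(2)) matches the paper's in spirit, but there is a genuine error in Part~(2) that would make the argument fail as written.

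You assert that $H^\bullet_{\check{T}}(\Gr^\zeta)^\wedge_S$ is cut out inside $\Fun(\Lambda,S)$ by the \emph{classical} GKM conditions: $f(\lambda_1)-f(\lambda_2)$ divisible by $\check\beta$ whenever $\lambda_1,\lambda_2$ are joined by a one-dimensional $\check T$-orbit of weight $\beta$. This is false for the \emph{small} torus $\check T$. The loop rotation has been specialized to $\zeta$, so the $\check T$-action on each component $\Fl^{\omega,\circ}$ factors through a finite-rank torus with no loop parameter; infinitely many affine real roots $\check\alpha+nl\delta$ collapse to the same $\check T$-weight $\check\alpha$, and the one-skeleton is not a union of isolated $\mathbb P^1$'s. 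Likewise, on the $SL_2$-components $\Fl^{\sX}_{_{SL_2}}$ the $\check T$-action factors through a \emph{one-dimensional} torus, so these are certainly not classical GKM spaces. The correct replacement is Shimozono's \emph{small torus GKM description} (the conditions in the paper's display~(\ref{equ B.0})), which involve higher-order divisibilities such as $f\big((1-\tau_\alpha)^d x\big)\in\alpha^d R$, not merely pairwise first-order ones.

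The paper handles this by working directly with the explicit Schubert basis $\{\overline\xi^{J,x}\}$ and the $\sGKM$ functor, proving (Lemma~\ref{lem B.5}) that over a UFD in which the relevant roots are coprime, the completed cohomology agrees with the space of formal series in this basis; the equalities in both parts then become transparent identities among $\sGKM$ spaces. Your localization sketch for Part~(1) is salvageable, but for Part~(2) you would either need to invoke an abstract Chang--Skjelbred exact sequence for equivariantly formal ind-schemes (which you flag but do not supply, and which is not entirely standard in this setting) \emph{and} then still identify the image of $H^\bullet_{\check T}\big((\Gr^\zeta)^{\check T_\alpha}\big)$ in $\Fun(\Lambda,S_\alpha)$ --- which again requires the small-torus description --- or follow the paper's explicit route through $\sGKM$.
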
 
\begin{proof} 
By decomposition (\ref{equ 1.1}), it can be reduced to the corresponding assertion for partial affine flag varieties, which is shown in Proposition \ref{prop B.0}. 
\end{proof} 

By Lemma \ref{lem 3.11}, the map $\bb: H^\bullet_{\check{T}}(\Gr^\zeta)\rightarrow Z(\sO_{S_\alpha})$ extends to an algebra homomorphism 
\begin{equation}\label{equ 6.B15} 
\bb: H^\bullet_{\check{T}}(\Gr^\zeta)^\wedge_{S_\alpha}\rightarrow Z(\sO_{S_\alpha}). 
\end{equation} 
It is compatible with the decomposition (\ref{equ 6.3}) and the following one via (\ref{equ 6.B16}), 
$$H^\bullet_{\check{T}}(\Gr^\zeta)^\wedge_{S_\alpha}=\prod_{\sX\in W_l(\alpha) \backslash \Lambda} 
H^\bullet_{\check{T}}(\Fl^{\sX}_{_{SL_2}})^\wedge_{S_\alpha},$$ 
which yields an algebra homomorphism for each $\sX\in W_l(\alpha)\backslash \Lambda$, 
\begin{equation}\label{equ 6.B17} 
\bb: H^\bullet_{\check{T}}(\Fl^{\sX}_{_{SL_2}})^\wedge_{S_\alpha} \rightarrow Z(\sO^{\sX-\rho}_{S_\alpha}). 
\end{equation} 

\subsubsection{Cohomology in $SL_2$ case}\label{subsect 6.2.2} 
The $l$-affine Weyl group $W_l(\alpha)$ for $PGL_2$ is generated by the $l$-simple reflections $s_1=s_{\alpha}$ and $s_0=s_{\alpha,l}$. 
Under the restriction to $T_2$-fixed points, 
$$H^\bullet_{T_2}(\Fl_{_{SL_2}})\rightarrow \Fun(W_l(\alpha),\k[\alpha]),\quad H^\bullet_{T_2}(\Gr_{_{SL_2}})\rightarrow \Fun(l\Z\alpha ,\k[\alpha]),$$ 
the Schubert classes $[\Fl_{_{SL_2}}^{s_1}]_{\check{T}}$, $[\Fl_{_{SL_2}}^{s_0}]_{\check{T}}$ and $[\Gr_{_{SL_2}}^{s_0}]_{\check{T}}$ correspond to the functions 
$$[\Fl_{_{SL_2}}^{s_1}]_{\check{T}}:\ \tau_{nl\alpha} \mapsto -n\alpha,\ \tau_{nl\alpha}s_{\alpha} \mapsto -(n-1)\alpha, $$ 
$$[\Fl^{s_0}_{_{SL_2}}]_{\check{T}}:\ \tau_{nl\alpha} \mapsto -n\alpha,\ \tau_{nl\alpha}s_{\alpha} \mapsto -n\alpha, \quad [\Gr_{_{SL_2}}^{s_0}]_{\check{T}}:\ \tau_{nl\alpha} \mapsto -n\alpha, \quad \forall n\in \Z,$$ 
see \cite[\textsection 4.B.1]{Shim14}. 
We abbreviate 
$$\psi_\alpha=-[\Fl_{_{SL_2}}^{s_1}]_{\check{T}},\quad 
\phi'_\alpha=-[\Fl_{_{SL_2}}^{s_0}]_{\check{T}}, \quad \text{and}\quad 
\phi_\alpha=-[\Gr_{_{SL_2}}^{s_0}]_{\check{T}}.$$ 

Any $\omega\in \Lambda$ gives an isomorphism of $W_l(\alpha)$-sets 
$$\sX^\omega_\alpha+\rho=
\begin{cases} 
W_l(\alpha) \quad \text{by}\quad \omega+\rho \mapsto \text{unit}, & \text{if $\omega$ is $\alpha$-regular,} \\ 
l\Z\alpha \quad \text{by}\quad \omega+\rho \mapsto 0, & \text{if $\omega$ is $\alpha$-singular,} 
\end{cases}$$ 
and a compatible isomorphism of $\check{T}$-varieties 
$$\Fl^{\sX^\omega_\alpha+\rho}_{_{SL_2}}\simeq 
\begin{cases}
\Fl_{_{SL_2}} & \text{if $\omega$ is $\alpha$-regular,} \\ 
\Gr_{_{SL_2}} & \text{if $\omega$ is $\alpha$-singular.} 
\end{cases}$$ 
Consider the composition 
$$H^\bullet_{\check{T}}(\Fl^{\sX^\omega_\alpha+\rho}_{_{SL_2}})^\wedge_{S_\alpha} \xrightarrow{\bb} Z(\sO^{\sX^\omega_\alpha}_{S_\alpha})\xrightarrow{\chi_{S_\alpha}} \Fun(\sX^\omega_\alpha,S_\alpha).$$ 
When $\omega$ is $\alpha$-regular, the elements $\psi_\alpha$, $\phi'_\alpha$ correspond to the functions given by  
$$\psi_\alpha(\omega+ln\alpha)
=\psi_\alpha(s_\alpha\bullet \omega +ln\alpha)+\alpha=n\alpha, \quad \phi'_\alpha(\omega+ln\alpha)=\phi'_\alpha(s_\alpha\bullet \omega +ln\alpha)=n\alpha, \quad \forall n\in \Z.$$ 
When $\omega$ is $\alpha$-singular, the element $\phi_\alpha$ corresponds to the function given by $\phi_\alpha(\omega+ln\alpha)=n\alpha$, for any $n\in \Z$.

\subsection{Endomorphism of projective modules} 
The goal of this subsection is to show that (\ref{equ 6.B17}) is an isomorphism. 
To that end, we show that the cohomology rings map surjectively onto the ``endomorphism algebras of big projective object" defined in (\ref{equ 6.-1}) and (\ref{equ 6.-2}), by using the elements $\psi_\alpha$, $\phi'_\alpha$ and $\phi_\alpha$ introduced in \textsection \ref{subsect 6.2.2}. 

\subsubsection{The $\alpha$-singular case}\label{subsect 6.2.1} 
As in \textsection \ref{subsect 6.1.1}, we only consider the $\alpha$-singular weight $-\rho$. 
\begin{prop}\label{prop 6.9} 
For any $n\leq m$, the following composition is surjective, 
$$H^\bullet_{\check{T}}(\Gr_{_{SL_2}})^\wedge_{S_\alpha} \rightarrow 
Z(\sO^{ \sX^{-\rho}_\alpha}_{S_\alpha}) \rightarrow 
\End(Q(\bm{n}_\alpha)^{\leq \bm{m}_\alpha}_{S_\alpha}).$$ 
\end{prop}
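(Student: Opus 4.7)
The strategy is to show that $\phi_\alpha$ alone generates the endomorphism algebra as an $S_\alpha$-algebra, after a rank computation and a reduction modulo the uniformizer $\alpha$. The argument has three parts.

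First, compute ranks. By Corollary \ref{cor 6.4}, the Verma flag of $Q(\bm{n}_\alpha)^{\leq \bm{m}_\alpha}_{\k_\alpha}$ consists of $M(\bm{n}_\alpha)_{\k_\alpha},\ldots,M(\bm{m}_\alpha)_{\k_\alpha}$, each appearing once, and by Lemma \ref{lem 6.2} each $M(\bm{j}_\alpha)_{\k_\alpha}$ has $E(\bm{n}_\alpha)_{\k_\alpha}$ as a composition factor with multiplicity one (since $n\leq j$). As $Q(\bm{n}_\alpha)^{\leq \bm{m}_\alpha}_{\k_\alpha}$ is the projective cover of $E(\bm{n}_\alpha)_{\k_\alpha}$ in the truncated category, we conclude $\dim_{\k_\alpha}\End(Q(\bm{n}_\alpha)^{\leq \bm{m}_\alpha}_{\k_\alpha})=m-n+1$. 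Via the base-change equivalence (\ref{equ 2.10}), $\End(Q(\bm{n}_\alpha)^{\leq \bm{m}_\alpha}_{S_\alpha})$ is a free $S_\alpha$-module of the same rank $m-n+1$.

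Second, identify the image of $\phi_\alpha$. Proposition \ref{prop 3.7} together with the explicit restriction formulas of \textsection\ref{subsect 6.2.2} gives that $\phi_\alpha$ acts on $M(\bm{j}_\alpha)_{S_\alpha}$ by the scalar $j\alpha$. Hence, under the embedding (\ref{equ 4.17}), the image of $\phi_\alpha$ is the tuple $(n\alpha,(n+1)\alpha,\ldots,m\alpha)$, with pairwise distinct entries in $\K$. Consequently $\phi_\alpha$ satisfies $\prod_{j=n}^{m}(X-j\alpha)=0$ but no polynomial of smaller degree, so the $S_\alpha$-subalgebra $S_\alpha[\phi_\alpha]\subseteq\End(Q(\bm{n}_\alpha)^{\leq \bm{m}_\alpha}_{S_\alpha})$ is itself free of rank $m-n+1$ and isomorphic to $S_\alpha[X]/\prod_{j=n}^{m}(X-j\alpha)$.

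Third, finish via Nakayama. Since $S_\alpha[\phi_\alpha]\subseteq\End(Q(\bm{n}_\alpha)^{\leq \bm{m}_\alpha}_{S_\alpha})$ is an inclusion of free $S_\alpha$-modules of the same rank, it is an equality iff it remains so after tensoring with $\k_\alpha$. This reduces the problem to showing that $\bar\phi_\alpha\in\End(Q(\bm{n}_\alpha)^{\leq \bm{m}_\alpha}_{\k_\alpha})$ has minimal polynomial of degree $m-n+1$, i.e.\ $\bar\phi_\alpha^{m-n}\neq 0$. This non-vanishing is the main technical obstacle. Since $\bar\phi_\alpha$ acts as $j\alpha=0$ on each Verma subquotient $M(\bm{j}_\alpha)_{\k_\alpha}$, it lowers the Verma filtration by one, so it factors through the submodule $Q(\bm{n+1}_\alpha)^{\leq \bm{m}_\alpha}_{\k_\alpha}\hookrightarrow Q(\bm{n}_\alpha)^{\leq \bm{m}_\alpha}_{\k_\alpha}$ of Lemma \ref{lem 6.5}. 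I proceed by induction on $m-n$: the inductive hypothesis yields $\bar\phi_\alpha^{m-n-1}\neq 0$ on the submodule $Q(\bm{n+1}_\alpha)^{\leq \bm{m}_\alpha}_{\k_\alpha}$, and composing with the initial shift and using the explicit realization of the inclusion in Lemma \ref{lem 6.5} via $E(\alpha)$ together with Claim \ref{claim 6.6} and (\ref{equ 6.9}) shows that the resulting iterated map from the top Verma $M(\bm{n}_\alpha)_{\k_\alpha}$ to the bottom Verma $M(\bm{m}_\alpha)_{\k_\alpha}$ does not vanish, which completes the proof.
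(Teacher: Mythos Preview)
Your strategy coincides with the paper's: prove that $\phi_\alpha$ alone generates $\End(Q(\bm{n}_\alpha)^{\leq \bm{m}_\alpha}_{S_\alpha})$ as an $S_\alpha$-algebra, by a rank count plus Nakayama reducing to $\bar\phi_\alpha^{\,m-n}\neq 0$ over $\k_\alpha$. Steps~1--4 are correct and match the paper's reasoning.

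The gap is in step~5. Your induction needs, at each stage, that the factored map $g:Q(\bm{i}_\alpha)^{\leq \bm{m}_\alpha}_{\k_\alpha}\to Q(\bm{i+1}_\alpha)^{\leq \bm{m}_\alpha}_{\k_\alpha}$ coming from $\bar\phi_\alpha$ has image not contained in $\ker(\bar\phi_\alpha^{\,m-i-1})$. Unwinding, this comes down to the rank-two case: one must know that $\bar\phi_\alpha$ on $Q(\bm{i}_\alpha)^{\leq \bm{i+1}_\alpha}_{\k_\alpha}$ is the nonzero nilpotent (equivalently, that its image in the top Verma quotient $M(\bm{i+1}_\alpha)_{\k_\alpha}$ is nonzero). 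You invoke Claim~\ref{claim 6.6} and (\ref{equ 6.9}), but those concern the quantum-group element $E(\alpha)$ used to build the inclusions $\iota^m_{n',n,\alpha}$ in Lemma~\ref{lem 6.5}; they say nothing about how the \emph{central} element $\phi_\alpha$ acts relative to the Verma filtration. There is no a priori link between $E(\alpha)$ and $\phi_\alpha$.

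The paper closes this gap by an independent argument (its Claim~\ref{claim 6.10}): compute $\dim_{\k_\alpha}\End(Q(\bm{0}_\alpha)^{\leq \bm{1}_\alpha}_{\k_\alpha})=2$, so $\End$ over $S_\alpha$ has an $S_\alpha$-basis $1,\tilde{x}$ with $\tilde{x}$ of coordinates $(0,\alpha^c)$ for some $c\geq 1$; since $\phi_\alpha$ already lies in $\End$ with coordinates $(0,\alpha)$, necessarily $c=1$ and $\phi_\alpha$ reduces to the nonzero nilpotent. Once you insert this rank-two computation (which uses only your steps~1--2 specialized to $m-n=1$), your induction in step~5 becomes valid and the proof is complete, essentially along the paper's lines.
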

\begin{proof} 
We prove the proposition by showing that the image of $\phi_\alpha$ generates $\End(Q( \bm{n}_\alpha )^{\leq \bm{m}_\alpha}_{S_\alpha})$ as an $S_\alpha$-algebra. 
In the paragraphs below, we will not distinguish $\phi_\alpha$ with its image under any map. 
Without loss of generality, we may assume $n=0$. 
Let us start by the following claim. 
\begin{claim}\label{claim 6.10} 
There is an algebra isomorphism 
$\End(Q(\bm{0}_\alpha)^{\leq \bm{1}_\alpha}_{{\Bbbk_\alpha}})= {\Bbbk_\alpha}[x]/x^2$, where $x$ represents the composition 
$$Q(\bm{0}_\alpha)^{\leq \bm{1}_\alpha}_{{\Bbbk_\alpha}} \twoheadrightarrow M(\bm{0}_\alpha)_{{\Bbbk_\alpha}} \hookrightarrow M(\bm{1}_\alpha)_{{\Bbbk_\alpha}} \hookrightarrow Q(\bm{0}_\alpha)^{\leq \bm{1}_\alpha}_{{\Bbbk_\alpha}}.$$
The image of $\phi_\alpha$ in $\End(Q(\bm{0}_\alpha)^{\leq \bm{1}_\alpha}_{S_\alpha})$ provides a lifting of $x$ (up to scalar). 
In particular, the proposition holds when $m-n=1$. 
\end{claim} 
\begin{proof}[Proof of the Claim] 
As in (\ref{equ 6.13}), $Q(\bm{0}_\alpha)^{\leq \bm{1}_\alpha}_{S_\alpha}$ fits into a short exact sequence 
$$0\rightarrow M( \bm{1}_\alpha )_{S_\alpha} \rightarrow Q(\bm{0}_\alpha)^{\leq \bm{1}_\alpha}_{S_\alpha} \xrightarrow{p} M( \bm{0}_\alpha )_{S_\alpha} \rightarrow 0.$$ 
So by Proposition \ref{prop BGG}, we have 
\begin{align*}
\dim \End(Q(\bm{0}_\alpha)^{\leq \bm{1}_\alpha}_{{\Bbbk_\alpha}}) &= 
[Q(\bm{0}_\alpha)^{\leq \bm{1}_\alpha}_{{\Bbbk_\alpha}}: L(\bm{0}_\alpha)_{{\Bbbk_\alpha}}] \\ 
&= [M(\bm{0}_\alpha)_{{\Bbbk_\alpha}} :L(\bm{0}_\alpha)_{{\Bbbk_\alpha}}]+ [M(\bm{1}_\alpha)_{{\Bbbk_\alpha}} :L(\bm{0}_\alpha)_{{\Bbbk_\alpha}}] =2. 
\end{align*} 
Hence the map ${\Bbbk_\alpha}[x]/x^2\rightarrow \End(Q(\bm{0}_\alpha)^{\leq \bm{1}_\alpha}_{{\Bbbk_\alpha}})$ is an isomorphism of algebras. 
Let $\tilde{x}$ be a lifting of $x$ in $\End(Q(\bm{0}_\alpha)^{\leq \bm{1}_\alpha}_{S_\alpha})$ such that $p\circ \tilde{x}=0$. 
By Nakayama's Lemma, $\End(Q(\bm{0}_\alpha)^{\leq \bm{1}_\alpha}_{S_\alpha})$ is a free $S_\alpha$-module generated by $1$ and $\tilde{x}$. 
Under the embedding 
$$\End(Q(\bm{0}_\alpha)^{\leq \bm{1}_\alpha}_{S_\alpha}) \hookrightarrow \End (M(\bm{0}_\alpha)_\K) \times \End (M(\bm{1}_\alpha)_\K)=\K \times \K,$$ 
the coordinate of $\tilde{x}$ is $(0,\alpha^c)$ (up to scalar) for some integer $c\geq 1$. 
Since the image of $\phi_\alpha$ is $(0, \alpha)$, it follows that $c=1$ and $\phi_\alpha$ provides a lifting of $x$ (up to scalar). 
\end{proof} 

Back to the general case. 
We choose a lifting $\tilde{x}_j$ for $0\leq j\leq m$ as the following diagram, 
\begin{equation}\label{equ 5.10} 
\begin{tikzcd}
Q(\bm{0}_\alpha)^{\leq \bm{m}_\alpha}_{S_\alpha} \arrow[r,dashrightarrow,"\tilde{x}_j"] \arrow[d,two heads]
& Q(\bm{j}_\alpha)^{\leq \bm{m}_\alpha}_{S_\alpha} \arrow[d,two heads]\\ 
M(\bm{0}_\alpha)_{{\Bbbk_\alpha}} \arrow[r,hook] & M(\bm{j}_\alpha)_{{\Bbbk_\alpha}}, 
\end{tikzcd}
\end{equation} 
and abuse $\tilde{x}_j$ as an endomorphism of $Q(\bm{0}_\alpha)^{\leq \bm{m}_\alpha}_{S_\alpha}$ via the embedding $\iota^{m}_{j,0,\alpha}:Q(\bm{j}_\alpha)^{\leq \bm{m}_\alpha}_{S_\alpha} \hookrightarrow Q(\bm{0}_\alpha)^{\leq \bm{m}_\alpha}_{S_\alpha}$ in (\ref{equ 6.12}). 
Since $Q(\bm{0}_\alpha)^{\leq \bm{m}_\alpha}_{{\Bbbk_\alpha}}$ is projective in $\sO_{{\Bbbk_\alpha}}^{\leq\bm{m}_\alpha}$ and it admits the Verma factors in (\ref{equ 6.8}), there is an (non-canonical) isomorphism of ${\Bbbk_\alpha}$-vector spaces 
\begin{equation}\label{equ 5.9} 
\End(Q(\bm{0}_\alpha)^{\leq \bm{m}_\alpha}_{{\Bbbk_\alpha}})=\bigoplus_{j=0}^m \Hom(Q(\bm{0}_\alpha)^{\leq \bm{m}_\alpha}_{{\Bbbk_\alpha}},M(\bm{j}_\alpha)_{{\Bbbk_\alpha}})={\Bbbk_\alpha}^{\oplus (m+1)}. 
\end{equation} 
By the commutative diagram (\ref{equ 5.10}), one can choose an isomorphism (\ref{equ 5.9}) such that the specializations of $\{\tilde{x}_j\}_{j=0}^m$ correspond to the standard basis of the RHS. 
By Nakayama's Lemma, $\{\tilde{x}_j\}_{j=0}^m$ generates $\End(Q( \bm{n}_\alpha )^{\leq \bm{m}_\alpha}_{S_\alpha})$ as a free $S_\alpha$-module. 

We show that $\tilde{x}_j$ can be generated by $\phi_\alpha$, then the proposition follows. 
Indeed, as a function in $\Fun(\sX^{-\rho}_\alpha,S_\alpha)$, we have $\phi_\alpha(\bm{i}_\alpha)=i\alpha$, so $\phi_\alpha-i\alpha$ kills $M(\bm{i}_\alpha)_{S_\alpha}$. 
Hence $\phi_\alpha-i\alpha$ maps $Q(\bm{i}_\alpha )^{\leq \bm{m}_\alpha}_{S_\alpha}$ to its submodule $Q(\bm{i+1}_\alpha )^{\leq \bm{m}_\alpha}_{S_\alpha}$. 
There is a commutative diagram for any $0\leq i< m$, 
$$\begin{tikzcd} 
Q(\bm{i}_\alpha)^{\leq\bm{m}_\alpha}_{S_\alpha} 
\arrow[r,two heads,"\epsilon^{\leq \bm{i+1}_\alpha}"] \arrow[d,"\phi_\alpha-i\alpha"'] & 
Q(\bm{i}_\alpha)^{\leq\bm{i+1}_\alpha}_{S_\alpha} 
\arrow[r,two heads] \arrow[d,"\phi_\alpha-i\alpha"'] & 
M(\bm{i}_\alpha)_{{\Bbbk_\alpha}} \arrow[d,hook] \\ 
Q( \bm{i+1}_\alpha )^{\leq \bm{m}_\alpha}_{S_\alpha} 
\arrow[r,two heads,"\epsilon^{\leq \bm{i+1}_\alpha}"] & M(\bm{i+1}_\alpha)_{S_\alpha} \arrow[r,two heads] & M(\bm{i+1}_\alpha)_{{\Bbbk_\alpha}}, 
\end{tikzcd}$$ 
where the left square commutes as in Claim \ref{claim 6.10}. 
It yields a commutative diagram 
\begin{equation}\label{equ 6.15} 
\begin{tikzcd} 
Q(\bm{0}_\alpha)^{\leq \bm{m}_\alpha}_{S_\alpha} 
    \arrow[r,dashrightarrow,"\phi_\alpha"'] \arrow[d,two heads] \arrow[rrr,dashrightarrow,bend left=10,"\tilde{x}_j"] & 
Q( \bm{1}_\alpha )^{\leq \bm{m}_\alpha}_{S_\alpha} 
\arrow[r,dashrightarrow,"\phi_\alpha-\alpha"']\arrow[d,two heads] & 
\cdots \arrow[r,dashrightarrow,"\phi_\alpha-(j-1)\alpha"'] & 
 Q( \bm{j}_\alpha )^{\leq \bm{m}_\alpha}_{S_\alpha} \arrow[d,two heads] \\ 
 M(\bm{0}_\alpha)_{{\Bbbk_\alpha}} \arrow[r,hook] & 
 M(\bm{1}_\alpha)_{{\Bbbk_\alpha}} \arrow[r,hook] &  \cdots \arrow[r,hook] & 
 M(\bm{j}_\alpha)_{{\Bbbk_\alpha}} ,\end{tikzcd}	
\end{equation}
hence $\tilde{x}_j$ can be chosen as $\tilde{x}_j = (\phi_\alpha-(j-1)\alpha) \circ \cdots \circ (\phi_\alpha-\alpha) \circ \phi_\alpha$.  
\end{proof} 

By the embedding (\ref{equ 4.17}), any endomorphism of $Q(\bm{n}_\alpha)^{\leq \bm{m}_\alpha}_{S_\alpha}$ is determined by its restrictions on each $M(\bm{j}_\alpha)_{S_\alpha}$. 
Hence the natural projection 
$$\End(Q(\bm{-\infty}_\alpha)^{\leq \bm{\infty}_\alpha}_{S_\alpha})\rightarrow \prod_n \End (M(\bm{n}_\alpha)_{S_\alpha})= \Fun(\sX^{-\rho}_\alpha, S_\alpha)$$ 
is injective. 
Consider the following chain of inclusions 
$$H^\bullet_{\check{T}}(\Gr_{_{SL_2}})^\wedge_{S_\alpha} \xrightarrow{\bb} 
Z(\sO^{\sX^{-\rho}_\alpha }_{S_\alpha})\rightarrow \End(Q(\bm{-\infty}_\alpha )^{\leq \bm{\infty}_\alpha}_{S_\alpha}) 
\rightarrow \Fun(\sX^{-\rho}_\alpha, S_\alpha),$$ 
where middle arrow is induced by the maps $Z(\sO^{\sX^{-\rho}_\alpha }_{S_\alpha})\rightarrow \End(Q(\bm{n}_\alpha)^{\leq \bm{m}_\alpha}_{S_\alpha})$ for any $n\leq m$, and the composition of the last two maps coincides with $\chi_{S_\alpha}$. 
\begin{corollary}\label{cor 6.11} 
The map  
$\bb: H^\bullet_{\check{T}}(\Gr_{_{SL_2}})^\wedge_{S_\alpha} \rightarrow Z(\sO^{\sX^{-\rho}_\alpha }_{S_\alpha})$ 
is an isomorphism. 
\end{corollary}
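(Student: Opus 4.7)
The plan is to exploit the chain of inclusions
\begin{equation*}
\bb(H^\bullet_{\check{T}}(\Gr_{_{SL_2}})^\wedge_{S_\alpha}) \subseteq Z(\sO^{\sX^{-\rho}_\alpha}_{S_\alpha}) \subseteq \End(Q(\bm{-\infty}_\alpha)^{\leq \bm{\infty}_\alpha}_{S_\alpha}) \hookrightarrow \Fun(\sX^{-\rho}_\alpha, S_\alpha)
\end{equation*}
displayed just above the corollary, whose composition is $\chi_{S_\alpha}$, an injection by Lemma \ref{lem 3.7} combined with Lemma \ref{lem 3.11}. In particular $\bb$ is injective. It is then enough to prove that the leftmost and rightmost subspaces of $\Fun(\sX^{-\rho}_\alpha, S_\alpha)$ coincide; the center $Z(\sO^{\sX^{-\rho}_\alpha}_{S_\alpha})$, squeezed between them, will be forced to equal both.

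Unpacking the projective limit
\begin{equation*}
\End(Q(\bm{-\infty}_\alpha)^{\leq \bm{\infty}_\alpha}_{S_\alpha}) = \Lim{n\leq m} \End(Q(\bm{n}_\alpha)^{\leq \bm{m}_\alpha}_{S_\alpha}),
\end{equation*}
Proposition \ref{prop 6.9} already provides, for each finite pair $n\leq m$, a surjection of $H^\bullet_{\check{T}}(\Gr_{_{SL_2}})^\wedge_{S_\alpha}$ onto $\End(Q(\bm{n}_\alpha)^{\leq \bm{m}_\alpha}_{S_\alpha})$, effected explicitly by the basis $\{\tilde{x}_j\}$ of polynomials in $\phi_\alpha = -[\Gr_{_{SL_2}}^{s_0}]_{\check{T}}$ built in its proof. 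Because these polynomials are globally defined elements of $H^\bullet_{\check{T}}(\Gr_{_{SL_2}})$ and commute with the transition maps of the projective system (the truncations $\epsilon^{\leq \bm{m}_\alpha}$ and the inclusions $\iota^m_{n',n,\alpha}$ of Lemma \ref{lem 6.5}, both $\phi_\alpha$-equivariant), the stage-wise surjections glue into a single map
\begin{equation*}
H^\bullet_{\check{T}}(\Gr_{_{SL_2}})^\wedge_{S_\alpha} \longrightarrow \End(Q(\bm{-\infty}_\alpha)^{\leq \bm{\infty}_\alpha}_{S_\alpha}).
\end{equation*}
Given any compatible system $(f_{n,m})$ in the inverse limit, decomposing each $f_{n,m}$ in the basis $\{\tilde{x}_j\}$ yields coefficients in $S_\alpha$ which, by compatibility across stages, stabilize into a $\Z$-indexed family $(a_j)$; the corresponding formal series $\sum_j a_j \tilde{x}_j$ then defines an element of the Schubert-topology completion $H^\bullet_{\check{T}}(\Gr_{_{SL_2}})^\wedge_{S_\alpha}$ that maps to $(f_{n,m})$.

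The main obstacle is matching the two pro-finite topologies at play: the Schubert-class topology used to define $H^\bullet_{\check{T}}(\Gr_{_{SL_2}})^\wedge_{S_\alpha}$ versus the truncation topology on $\Lim{n\leq m}\End(Q(\bm{n}_\alpha)^{\leq \bm{m}_\alpha}_{S_\alpha})$. This reduces to the observation that each generator $\tilde{x}_j$ is a polynomial in $\phi_\alpha$ of bounded degree, while the Schubert basis $\{[\Gr_{_{SL_2}}^x]_{\check{T}}\}$ and the monomial basis $\{\phi_\alpha^k\}_{k\geq 0}$ are related by an upper-triangular base change in each finite cohomology degree; hence formal series in the $\tilde{x}_j$'s indeed lie in the Schubert completion. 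A secondary technical point is the two-sided range of the index $j\in \sX^{-\rho}_\alpha \cong \Z$: one must confirm that compatibility in both directions $m\to +\infty$ and $n\to -\infty$ can be simultaneously realized by a single element of $H^\bullet_{\check{T}}(\Gr_{_{SL_2}})^\wedge_{S_\alpha}$, which follows from the global nature of $\phi_\alpha$ and the explicit polynomial expressions for $\tilde{x}_j$.
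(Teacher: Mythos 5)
Your argument follows the paper's proof essentially verbatim: reduce to surjectivity of the inclusion $H^\bullet_{\check{T}}(\Gr_{_{SL_2}})^\wedge_{S_\alpha}\hookrightarrow \End(Q(\bm{-\infty}_\alpha)^{\leq \bm{\infty}_\alpha}_{S_\alpha})$, invoke Proposition \ref{prop 6.9} for each finite stage, and conclude from the pro-finite completeness of the Schubert completion inside $\Fun(\sX^{-\rho}_\alpha,S_\alpha)$ together with the definition of the right-hand side as a projective limit. The only caveat is that your ``stabilizing coefficients'' phrasing is slightly off --- the bases $\{\tilde{x}_j\}$ at different stages $(n,m)$ are not compatible under the transition maps without a change of basis, since $\tilde{x}_j$ depends on the lower truncation $n$ --- but the abstract completeness argument you also invoke (and which the paper uses) renders that explicit bookkeeping unnecessary.
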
 
\begin{proof} 
It is enough to show that the inclusion 
\begin{equation}\label{equ 6.16} 
H^\bullet_{\check{T}}(\Gr_{_{SL_2}})^\wedge_{S_\alpha} \hookrightarrow \End(Q(\bm{-\infty}_\alpha )^{\leq \bm{\infty}_\alpha}_{S_\alpha})
\end{equation}
is surjective. 
By Proposition \ref{prop 6.9}, the composition 
$$H^\bullet_{\check{T}}(\Gr_{_{SL_2}})^\wedge_{S_\alpha} \hookrightarrow \End(Q(\bm{-\infty}_\alpha )^{\leq \bm{\infty}_\alpha}_{S_\alpha}) \rightarrow \End (Q( \bm{n}_\alpha )^{\leq \bm{m}_\alpha}_{S_\alpha})$$ 
is surjective for any $n\leq m$. 
Since $H^\bullet_{\check{T}}(\Gr_{_{SL_2}})^\wedge_{S_\alpha}$ is pro-finitely complete in $\Fun(\sX^{-\rho}_\alpha, S_\alpha)$, and $\End(Q(\bm{-\infty}_\alpha )^{\leq \bm{\infty}_\alpha}_{S_\alpha})$ is by definition the projective limit of $\End(Q(\bm{n}_\alpha )^{\leq \bm{m}_\alpha}_{S_\alpha})$, it shows the surjectivity of (\ref{equ 6.16}). 
\end{proof}

\ 

\subsubsection{The $\alpha$-regular case} 
As in \textsection \ref{subsect 6.1.2}, we consider the $\alpha$-regular weight $\omega$ satisfying $0<\langle \omega+\rho, \check{\alpha} \rangle <l$.  

\begin{prop}\label{prop Chap7-15} 
For any $n\leq m$, the following composition is surjective, 
$$H^\bullet_{\check{T}}(\Fl_{_{SL_2}})^\wedge_{S_\alpha}  
\rightarrow Z(\sO^{\sX^\omega_\alpha}_{S_\alpha}) \rightarrow 
	\End(Q(\bm{2n-1}_{\alpha,\omega})^{\leq \bm{2m}_{\alpha,\omega}}_{S_\alpha}).$$ 
\end{prop}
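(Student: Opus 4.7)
The plan is to adapt the proof of Proposition \ref{prop 6.9} by replacing the single generator $\phi_\alpha$ with two generators $\psi_\alpha$ and $\phi'_\alpha$. The central observation, read off from \textsection\ref{subsect 6.2.2}, is that $\psi_\alpha-(j-1)\alpha$ acts by zero on $M(\bm{2j-1}_{\alpha,\omega})_{\k_\alpha}$ and by $\alpha$ on $M(\bm{2j}_{\alpha,\omega})_{\k_\alpha}$, while $\phi'_\alpha-j\alpha$ acts by zero on $M(\bm{2j}_{\alpha,\omega})_{\k_\alpha}$ and by $\alpha$ on $M(\bm{2j+1}_{\alpha,\omega})_{\k_\alpha}$. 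Combined with Lemma \ref{lem 6.8}(3), these produce ``shift'' endomorphisms $Q(\bm{k}_{\alpha,\omega})^{\leq\bm{2m}_{\alpha,\omega}}_{S_\alpha}\to Q(\bm{k+1}_{\alpha,\omega})^{\leq\bm{2m}_{\alpha,\omega}}_{S_\alpha}$ by alternating between the two classes as $k$ increases.

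First, I would pin down the $S_\alpha$-rank of the endomorphism algebra. Since $W_{l,\omega}=\{1\}$ is properly contained in $W_{l,-\rho}=\langle s_\alpha\rangle$, Proposition \ref{prop 5.0} (applied in the localized setting, valid since $S_\alpha$ is an $\hat S$-algebra via $\hat S\twoheadrightarrow S\hookrightarrow S_\alpha$) gives $\sT\sT' \cong \id^{\oplus 2}$ on $\sO^{\sX^{-\rho}_\alpha}_{S_\alpha}$. Combined with Lemma \ref{lem 6.8}(2) and the $(\sT,\sT')$ adjunction, this yields an isomorphism of $S_\alpha$-modules $\End\bigl(Q(\bm{2n-1}_{\alpha,\omega})^{\leq\bm{2m}_{\alpha,\omega}}_{S_\alpha}\bigr) \cong \End\bigl(Q(\bm{n}_\alpha)^{\leq\bm{m}_\alpha}_{S_\alpha}\bigr)^{\oplus 2}$, free of rank $2(m-n+1)$ by the analysis of the singular case (Proposition \ref{prop 6.9}, Corollary \ref{cor 6.11}).

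Second, for each $k\in\{2n-1,\ldots,2m\}$, I would define $\tilde x_k:=\sigma_{k-1}\circ\cdots\circ\sigma_{2n-1}$ (with $\tilde x_{2n-1}=\id$), where $\sigma_{2j-1}=\psi_\alpha-(j-1)\alpha$ and $\sigma_{2j}=\phi'_\alpha-j\alpha$, viewed as endomorphisms of $Q(\bm{2n-1}_{\alpha,\omega})^{\leq\bm{2m}_{\alpha,\omega}}_{S_\alpha}$. Iterating the argument of diagram (\ref{equ 6.15}) together with Lemma \ref{lem 6.8}(1)(3), one verifies inductively that $\tilde x_k$ factors through $Q(\bm{k}_{\alpha,\omega})^{\leq\bm{2m}_{\alpha,\omega}}_{S_\alpha}\hookrightarrow Q(\bm{2n-1}_{\alpha,\omega})^{\leq\bm{2m}_{\alpha,\omega}}_{S_\alpha}$, and that its reduction modulo $\alpha$ realizes, up to a nonzero scalar, the composition $Q_{\k_\alpha}\twoheadrightarrow M(\bm{2n-1}_{\alpha,\omega})_{\k_\alpha}\hookrightarrow M(\bm{k}_{\alpha,\omega})_{\k_\alpha}\hookrightarrow Q_{\k_\alpha}$ built from Lemma \ref{lem 6.8}(1). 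Under the analogue of decomposition (\ref{equ 5.9}) attached to the Verma filtration, the specializations $\{\tilde x_k\bmod\alpha\}_{k=2n-1}^{2m}$ correspond to distinct basis vectors, hence are $\k_\alpha$-linearly independent; Nakayama's lemma then promotes $\{\tilde x_k\}$ to an $S_\alpha$-basis of the endomorphism algebra. Since every $\tilde x_k$ is a polynomial in $\psi_\alpha,\phi'_\alpha$ over $S_\alpha$, surjectivity follows.

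The main obstacle will be the careful commutative-diagram bookkeeping for the alternating shift operators (the analogue of Claim \ref{claim 6.10} and diagram (\ref{equ 6.15})): one must verify through the short exact sequences of Lemma \ref{lem 6.8}(3) that at each stage $\sigma_j$ kills exactly the current top Verma factor and lifts along the embedding into the next projective cover, so that the composed $\tilde x_k$ has the claimed behavior after specialization. Once this bookkeeping is established, the rest of the argument parallels the proof of Proposition \ref{prop 6.9} verbatim.
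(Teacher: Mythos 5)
Your overall strategy (show that $\psi_\alpha$ and $\phi'_\alpha$ generate the endomorphism ring, exhibit explicit polynomial expressions whose reductions mod $\alpha$ give a basis, then apply Nakayama) is the paper's strategy, but the specific elements you propose do not work, and the failure is exactly at the step you flag as "bookkeeping". Your $\tilde x_k=\sigma_{k-1}\circ\cdots\circ\sigma_{2n-1}$ is a product of $k-2n+1$ central elements, each of which has all of its eigenvalues on the Verma factors lying in $\Z\alpha$; hence every coordinate of $\tilde x_k$ in $\prod_j\End(M(\bm{j}_{\alpha,\omega})_\K)$ is divisible by $\alpha^{k-2n+1}$. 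This is too divisible. Take $(n,m)=(0,1)$: the four Verma eigenvalue vectors of $(\id,\psi_\alpha,\phi'_\alpha,\psi_\alpha\phi'_\alpha)$ on $(M(\bm{-1}),M(\bm{0}),M(\bm{1}),M(\bm{2}))$ are $(1,1,1,1)$, $(-\alpha,0,0,\alpha)$, $(0,0,\alpha,\alpha)$, $(0,0,0,\alpha^2)$, and these span the lattice $\End(Q(\bm{-1})^{\leq\bm{2}}_{S_\alpha})$ (discriminant $\alpha^4$). Your elements are $\tilde x_{-1}=\id$, $\tilde x_0=\psi_\alpha+\alpha=(0,\alpha,\alpha,2\alpha)$, $\tilde x_1=\phi'_\alpha(\psi_\alpha+\alpha)=(0,0,\alpha^2,2\alpha^2)$, $\tilde x_2=\psi_\alpha\phi'_\alpha(\psi_\alpha+\alpha)=(0,0,0,2\alpha^3)=2\alpha\cdot\psi_\alpha\phi'_\alpha$, with determinant $2\alpha^6$: they span a proper sublattice, $\overline{\tilde x_2}=0$ in $\End(Q(\bm{-1})^{\leq\bm{2}}_{\k_\alpha})$, and $\overline{\phi'_\alpha}$ is not in the span of the $\overline{\tilde x_k}$. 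So the reductions are not linearly independent, Nakayama does not apply, and the proof collapses. (In general your family has leading valuations $0,1,\dots,2(m-n)+1$ against a lattice of discriminant valuation $(m-n+1)^2$.) The source of the error is the claim that $\sigma_j$ "kills exactly the current top Verma factor": in fact $\psi_\alpha-(j-1)\alpha$ kills \emph{both} $M(\bm{2j-2})$ and $M(\bm{2j-1})$ and acts by $\alpha$ on \emph{both} $M(\bm{2j})$ and $M(\bm{2j+1})$, and $\phi'_\alpha-j\alpha$ kills both $M(\bm{2j-1})$ and $M(\bm{2j})$ and acts by $\alpha$ on both $M(\bm{2j+1})$ and $M(\bm{2j+2})$. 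Each is intrinsically a two-position shift costing one $\alpha$; composing two of them to advance two positions costs $\alpha^2$ where the extension structure of $Q$ only supports $\alpha^1$.

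The paper's proof repairs exactly this: the elements are $\sT'\tilde x_j=\prod_{i=0}^{j-1}(\phi'_\alpha-i\alpha)$ (obtained by applying the translation functor $\sT'$ to the chain (\ref{equ 6.15}) from the singular case, using $\sT'\phi_\alpha=\phi'_\alpha$), which reach position $\bm{2j-1}$ with valuation $j$ by jumping two Verma positions per factor, together with $\psi_\alpha\circ\sT'\tilde x_j$, where $\psi_\alpha$ is applied \emph{once} at the end to descend the single remaining step from $\bm{2j-1}$ to $\bm{2j}$ (via the factorization (\ref{equ 6.19}) on the rank-two projective $Q(\bm{2j-1})^{\leq\bm{2j}}$, plus Claim \ref{claim 6.15}). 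Two further points in your write-up would also need repair even if the valuations worked out: Lemma \ref{lem 6.8}(3) only provides the embeddings $Q(\bm{2j-1})^{\leq\bm{2m}}\hookrightarrow Q(\bm{2n-1})^{\leq\bm{2m}}$ for odd indices, so your intermediate objects at even levels $\bm{2j}$ are not identified with projective covers by anything in the paper; and Proposition \ref{prop 5.0} as stated concerns the $\hat S$-linear translation functors between $\Xi_\sc$-blocks, so invoking it for the $S_\alpha$-local functors $T^\omega_{-\rho,\alpha}$, $T^{-\rho}_{\omega,\alpha}$ requires a separate argument — the paper instead reads off $\dim_{\k_\alpha}\End(Q(\bm{-1})^{\leq\bm{2m}}_{\k_\alpha})=2(m+1)$ directly from the Verma factors (\ref{equ 6.13}) as in (\ref{equ 5.16}).
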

\begin{proof} 
We omit the subscripts ``$\alpha,\omega$". 
Without loss of generality, we may assume $n=0$. 
We prove the proposition by showing that as an $S_\alpha$-algebra, 
$\End(Q(\bm{-1})^{\leq \bm{2m}}_{S_\alpha})$ can be generated by the image of $\phi'_\alpha$ and $\psi_\alpha$. 
In the paragraphs below, we will not distinguish $\phi'_\alpha, \psi_\alpha$ with their images under any map. 
For a morphism $x$ in $\sO_{ S_\alpha}$, we denote by $\overline{x}$ its specialization in $\sO_{ {\Bbbk_\alpha}}$. 

We study two classes of endomorphisms of $Q(\bm{-1})^{\leq \bm{2m}}_{S_\alpha}$ as follows. 

\textit{Class 1. The $\sT'\tilde{x}_j$'s.} 
Applying the translation functor $\sT'$ to (\ref{equ 6.15}), we obtain a commutative diagram 
\begin{equation}\label{equ 6.18} 
 \begin{tikzcd} 
 Q(\bm{-1})^{\leq \bm{2m}}_{S_\alpha} 
 \arrow[r,dashrightarrow,"\sT'\phi_\alpha"'] \arrow[d,two heads] \arrow[rrr,dashrightarrow,bend left=10,"\sT'\tilde{x}_j"] & 
 Q(\bm{1})^{\leq \bm{2m}}_{S_\alpha} 
 \arrow[r,dashrightarrow,"\sT'\phi_\alpha-\alpha"']\arrow[d,two heads] & 
 \cdots \cdots \arrow[r,dashrightarrow,"\sT'\phi_\alpha-(j-1)\alpha"'] & 
 Q(\bm{2j-1})^{\leq \bm{2m}}_{S_\alpha} \arrow[d,two heads] \\ 
 Q(\bm{-1})^{\leq \bm{0}}_{{\Bbbk_\alpha}} 
 \arrow[r,hook] & 
 Q(\bm{1})^{\leq \bm{2}}_{{\Bbbk_\alpha}} \arrow[r,hook] & \cdots \cdots \arrow[r,hook] & 
 Q(\bm{2j-1})^{\leq \bm{2j}}_{{\Bbbk_\alpha}}. \end{tikzcd} 
 \end{equation}
Note that $\sT'\phi_\alpha$ acts on $Q(\bm{2k-1})^{\leq \bm{2k}}_{S_\alpha}=\sT'M(\bm{k}_\alpha)_{S_\alpha}$ via scalar $\phi_\alpha(\bm{k}_\alpha)=k\alpha$. 
Therefore, the coordinate for $\sT'\phi_\alpha$ in 
$$\End(Q(\bm{2j-1})^{\leq \bm{2m}}_{S_\alpha}) \otimes_{S_\alpha} \K 
	=\prod_{i=2j-1}^{2m} \End(M(\bm{i})_{\K})=\prod_{i=2j-1}^{2m}\K $$ 
reads as $((2j-1)\alpha,(2j-1)\alpha,\cdots ,m\alpha,m\alpha)$, which coincides with the one for $\phi'_\alpha$. 
Hence $\sT'\phi_\alpha=\phi'_\alpha$ as endomorphisms of $Q(\bm{2j-1})^{\leq \bm{2m}}_{S_\alpha}$. 

We will need the following claim. 
\begin{claim}\label{claim 6.15} 
The composition 
\begin{equation}\label{equ 6.17} 
Q(\bm{-1})^{\leq \bm{0}}_{{\Bbbk_\alpha}} \hookrightarrow Q(\bm{2j-1})^{\leq \bm{2j}}_{{\Bbbk_\alpha}} \twoheadrightarrow M(\bm{2j-1})_{{\Bbbk_\alpha}}
\end{equation}
is nonzero. 
\end{claim}
\begin{proof}[Proof of Claim \ref{claim 6.15}] 
Applying $\sT$ to the maps above, we get $M(\bm{0}_\alpha)_{{\Bbbk_\alpha}}^{\oplus 2}\hookrightarrow M(\bm{j}_\alpha)_{{\Bbbk_\alpha}}^{\oplus 2} \twoheadrightarrow M(\bm{j}_\alpha)_{{\Bbbk_\alpha}}$. 
By Lemma \ref{lem 6.2}, we have $\dim\Hom(M(\bm{0}_\alpha)_{{\Bbbk_\alpha}},M(\bm{j}_\alpha)_{{\Bbbk_\alpha}})=1$, hence the composition above is nonzero, so is (\ref{equ 6.17}). 
\end{proof} 

\textit{Class 2. The $\psi_\alpha\circ \sT'\tilde{x}_j$'s.} 
We view $\psi_\alpha$ as an endomorphism of $Q(\bm{2j-1})^{\leq \bm{2m}}_{S_\alpha}$, then there is a morphism ${\psi_\alpha}\circ {\sT'\tilde{x}_j}: 
Q(\bm{-1})^{\leq \bm{2m}}_{S_\alpha}\rightarrow Q(\bm{2j-1})^{\leq \bm{2m}}_{S_\alpha}$. 
Note that the truncation $\tau^{\leq \bm{2j}}(\overline{\psi_\alpha})$ as an endomorphism of $Q(\bm{2j-1})^{\leq \bm{2j}}_{{\Bbbk_\alpha}}$ is also provided by $\overline{\psi_\alpha}$. 
By Lemma \ref{lem 6.8}(3), there is a short exact sequence 
$$0\rightarrow M(\bm{2j})_{{\Bbbk_\alpha}} \rightarrow Q(\bm{2j-1})^{\leq \bm{2j}}_{{\Bbbk_\alpha}} \rightarrow M(\bm{2j-1})_{{\Bbbk_\alpha}} \rightarrow 0.$$ 
As in Claim \ref{claim 6.10}, $\tau^{\leq \bm{2j}}(\overline{\psi_\alpha})$ can be factorized into 
\begin{equation}\label{equ 6.19} 
Q(\bm{2j-1})^{\leq \bm{2j}}_{{\Bbbk_\alpha}} \twoheadrightarrow M(\bm{2j-1})_{{\Bbbk_\alpha}} \hookrightarrow M(\bm{2j})_{{\Bbbk_\alpha}} \hookrightarrow Q(\bm{2j-1})^{\leq \bm{2j}}_{{\Bbbk_\alpha}}.
\end{equation} 

\ 

We now abuse $\sT'\tilde{x}_j$ and $\psi_\alpha\circ \sT'\tilde{x}_j$ as endomorphisms of $Q(\bm{-1})^{\leq \bm{2m}}_{S_\alpha}$ via the inclusion $Q(\bm{2j-1})^{\leq \bm{2m}}_{S_\alpha}\hookrightarrow Q(\bm{-1})^{\leq \bm{2m}}_{S_\alpha}$ given by Lemma \ref{lem 6.8}(3). 
Since $Q(\bm{-1})^{\leq \bm{2m}}_{{\Bbbk_\alpha}}$ is projective in $\sO^{\leq \bm{2m}}_{{\Bbbk_\alpha}}$ and it admits the Verma factors in (\ref{equ 6.13}), there is an (non-canonical) isomorphism of ${\Bbbk_\alpha}$-vector spaces 
\begin{equation}\label{equ 5.16} 
\End(Q(\bm{-1})^{\leq \bm{2m}}_{{\Bbbk_\alpha}})=\bigoplus_{j=-1}^{2m} \Hom(Q(\bm{-1})^{\leq \bm{2m}}_{{\Bbbk_\alpha}},M(\bm{j})_{{\Bbbk_\alpha}})={\Bbbk_\alpha}^{\oplus 2(m+1)}, 
\end{equation} 
By (\ref{equ 6.18}), (\ref{equ 6.17}) and (\ref{equ 6.19}), one can find an isomorphism (\ref{equ 5.16}) such that $\{\overline{\sT'\tilde{x}_j},\overline{\psi_\alpha}\circ \overline{\sT'\tilde{x}_j} \}_{j=0}^m$ correspond to the standard basis of the RHS. 
By Nakayama's Lemma, $\{{\sT'\tilde{x}_j}, {\psi_\alpha}\circ {\sT'\tilde{x}_j} \}_{j=0}^m$ forms a free $S_\alpha$-basis of $\End(Q(\bm{-1})^{\leq \bm{2m}}_{S_\alpha})$, and note that these elements can be generated by $\phi'_\alpha$ and $\psi_\alpha$. 
It completes the proof. 
\end{proof} 

Similar arguments as in Corollary \ref{cor 6.11} show that the composition of the inclusions 
$$H^\bullet_{\check{T}}(\Fl_{_{SL_2}})^\wedge_{S_\alpha} \rightarrow Z(\sO^{\sX^\omega_\alpha}_{S_\alpha}) \rightarrow 
	\End(Q(\bm{-\infty}_{\omega,\alpha} )^{\leq \bm{\infty}_{\omega,\alpha}}_{S_\alpha})$$
is surjective, which implies that 
\begin{corollary}\label{cor 6.14} 
The map 
$\bb: H^\bullet_{\check{T}}(\Fl_{_{SL_2}})^\wedge_{S_\alpha} \rightarrow Z(\sO^{\sX^{\omega}_\alpha }_{S_\alpha})$ 
is an isomorphism. 
\end{corollary}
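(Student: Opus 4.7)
The proof will parallel that of Corollary \ref{cor 6.11}, transposing every step from the $\alpha$-singular setting (where the model cohomology is $H^\bullet_{\check{T}}(\Gr_{_{SL_2}})^\wedge_{S_\alpha}$ and the big projective limit is indexed by the two-sided sequence $\bm{n}_\alpha$) to the $\alpha$-regular setting (where the model cohomology is $H^\bullet_{\check{T}}(\Fl_{_{SL_2}})^\wedge_{S_\alpha}$ and the projective limit is indexed by the two-sided sequence $\bm{2n-1}_{\alpha,\omega},\ldots,\bm{2m}_{\alpha,\omega}$). The strategy is: (i) embed both sides into $\Fun(\sX^\omega_\alpha,S_\alpha)$ via the evaluation maps on Verma modules; (ii) identify the endomorphism algebra of the ``big projective'' with a projective limit; (iii) invoke Proposition \ref{prop Chap7-15} to show surjectivity at every finite truncation; (iv) pass to the limit using pro-finite completeness.

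More precisely, the first step is to observe that, exactly as in the $\alpha$-singular case, the embedding \eqref{equ 4.17} implies that the natural projection
$$\End(Q(\bm{-\infty}_{\alpha,\omega})^{\leq \bm{\infty}_{\alpha,\omega}}_{S_\alpha})\hookrightarrow \prod_{n\in\Z} \End(M(\bm{n}_{\alpha,\omega})_{S_\alpha})=\Fun(\sX^{\omega}_\alpha,S_\alpha)$$
is injective. Hence the chain
$$H^\bullet_{\check{T}}(\Fl_{_{SL_2}})^\wedge_{S_\alpha} \xrightarrow{\bb} Z(\sO^{\sX^\omega_\alpha}_{S_\alpha}) \rightarrow \End(Q(\bm{-\infty}_{\alpha,\omega})^{\leq \bm{\infty}_{\alpha,\omega}}_{S_\alpha})\rightarrow \Fun(\sX^\omega_\alpha, S_\alpha)$$
consists of injections, the first arrow being injective by Proposition \ref{prop 3.7} (block-wise restriction to $\check{T}$-fixed points). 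Thus it suffices to show that the composition of the first two maps is surjective.

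Next, for each pair $n\leq m$, Proposition \ref{prop Chap7-15} shows that the composition
$$H^\bullet_{\check{T}}(\Fl_{_{SL_2}})^\wedge_{S_\alpha} \rightarrow Z(\sO^{\sX^\omega_\alpha}_{S_\alpha}) \rightarrow \End(Q(\bm{2n-1}_{\alpha,\omega})^{\leq \bm{2m}_{\alpha,\omega}}_{S_\alpha})$$
is surjective. These compositions are compatible with the maps \eqref{equ 6.14+1} defining the projective system, so taking the projective limit yields a surjection onto $\End(Q(\bm{-\infty}_{\alpha,\omega})^{\leq \bm{\infty}_{\alpha,\omega}}_{S_\alpha})$, provided the source is already complete in the relevant pro-finite topology. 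This latter point holds: just as in the proof of Lemma \ref{lem 3.11}, $H^\bullet_{\check{T}}(\Fl_{_{SL_2}})^\wedge_{S_\alpha}$ is by construction the space of formal series in the Schubert classes, hence closed under pro-finite limits inside $\Fun(\sX^\omega_\alpha, S_\alpha)$.

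Combining these, the inclusion $H^\bullet_{\check{T}}(\Fl_{_{SL_2}})^\wedge_{S_\alpha}\hookrightarrow \End(Q(\bm{-\infty}_{\alpha,\omega})^{\leq \bm{\infty}_{\alpha,\omega}}_{S_\alpha})$ is surjective; since this factors through $Z(\sO^{\sX^\omega_\alpha}_{S_\alpha})$ and both outer maps were already known to be injective, both intermediate inclusions must be bijections. The only non-routine step is verifying that no new subtlety enters when one replaces $\Gr_{_{SL_2}}$ by $\Fl_{_{SL_2}}$; this is exactly what Proposition \ref{prop Chap7-15} packages, where the extra Schubert generator $\psi_\alpha$ (as opposed to only $\phi_\alpha$ in the singular case) is needed to hit the additional Verma factor contributed by the $s_\alpha\bullet\omega+ln\alpha$ weights. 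Thus the only genuine obstacle, namely exhibiting enough cohomology classes to generate $\End(Q(\bm{2n-1}_{\alpha,\omega})^{\leq \bm{2m}_{\alpha,\omega}}_{S_\alpha})$ as an $S_\alpha$-algebra, has already been resolved upstream, and the corollary follows formally.
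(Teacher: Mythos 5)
Your proposal is correct and follows essentially the same route as the paper, which itself disposes of this corollary by remarking that the argument of Corollary \ref{cor 6.11} carries over verbatim once Proposition \ref{prop Chap7-15} supplies surjectivity onto each finite truncation $\End(Q(\bm{2n-1}_{\alpha,\omega})^{\leq \bm{2m}_{\alpha,\omega}}_{S_\alpha})$. Your identification of the extra generator $\psi_\alpha$ as the only genuinely new ingredient in the regular case matches the paper's intent exactly.
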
 

\ 

We are now in the position to prove Theorem \ref{thm 3.11}. 
\subsection{Center of $\sO_{S}$} 
The following lemma is clear. 
\begin{lem}\label{lem 6.160}
For any height $1$ prime ideal $\p$ of $S$, either the set $\p\cap \Phi^+$ is empty or $\p=(\alpha)$ for some $\alpha\in {\Phi}^+$. 
In the latter case, $\p\cap \Phi^+=\{\alpha\}$. 
\end{lem}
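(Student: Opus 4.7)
The plan is essentially bookkeeping in commutative algebra, since $S = \k[\t]_{\widehat{0}}$ is a completion of a polynomial ring at the maximal ideal $(\t)$, hence is isomorphic to $\k[[x_1,\dots,x_n]]$ with $n=\dim \t$. In particular $S$ is a regular local ring, hence a UFD, and every height $1$ prime ideal is principal, generated by an irreducible element.

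First I would observe that under the $W$-invariant identification $S'\xs \k[\t^*]$ fixed in \textsection\ref{subsect 2.1.2'}, each positive root $\alpha\in \Phi^+\subset \t^*$ becomes a nonzero linear form in $S$. Such a linear form is clearly irreducible in $\k[\t]$, and irreducibility is preserved by completion at the origin, so $(\alpha)\subset S$ is a height $1$ prime.

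Now suppose $\p$ is a height $1$ prime with $\p\cap \Phi^+\neq \emptyset$, and pick $\alpha\in \p\cap \Phi^+$. Then $(\alpha)\subseteq \p$ is an inclusion of prime ideals, both of height $1$, so $\p=(\alpha)$. This proves the first assertion. For the second, suppose $\beta\in (\alpha)\cap \Phi^+$, so $\beta=c\alpha$ for some $c\in S$. Since $\alpha$ and $\beta$ are both homogeneous of degree $1$ in the associated graded ring of $S$ (the polynomial ring $\k[\t]$), the scalar $c$ must lie in $\k^\times$; and because $\Phi$ is reduced and $\alpha,\beta$ are both positive, $c=1$ and $\beta=\alpha$.

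There is no real obstacle: as the author remarks, the statement is clear once one recognises $S$ as a regular local UFD in which positive roots are linear (hence prime) elements, and invokes reducedness of $\Phi$ to rule out proportional positive roots.
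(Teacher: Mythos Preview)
Your proposal is correct and is precisely the standard commutative-algebra argument the paper has in mind; the paper itself offers no proof beyond declaring the lemma ``clear.'' One cosmetic point: the identification you cite is set up in the \emph{Rings} paragraph of \textsection 2.1, not in \textsection\ref{subsect 2.1.2'}.
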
 

\ 

\begin{proof}[Proof of the isomorphism $\bb$ in Theorem \ref{thm 3.11}] 
By Corollary \ref{cor 6.11} and Corollary \ref{cor 6.14}, the map (\ref{equ 6.B17}) is an isomorphism for any $\sX\in W_l(\alpha)\backslash \Lambda$. 
Hence (\ref{equ 6.B15}) is an isomorphism, 
$$\bb: H^\bullet_{\check{T}}(\Gr^\zeta)^\wedge_{S_\alpha}\xs Z(\sO_{S_\alpha}).$$ 
By \cite[Thm 38]{Mat1970}, we have $S=\cap_{\hgt(\p)=1}S_\p$ in $\K$. 
Since any projective module $Q$ in the truncated category $\sO^{\leq\nu}_{S}$ is free as an $S$-module, we have $Q=\cap_{\hgt(\p)=1}Q\otimes_S S_\p\subset Q\otimes_S \K$. 
It implies that 
\begin{equation}\label{equ 6.23} 
Z(\sO_{S})=\bigcap_{\mathrm{ht}(\p)=1} Z(\sO_{S_\p})
\end{equation}
as subspaces of $Z(\sO_{\K})=\Fun(\Lambda,\K)$. 
By Lemma \ref{lem 3.6}, for any prime ideal $\p$ of $S$ such that $\p\cap \Phi^+=\emptyset$, the category $\sO_{\Bbbk_\p}$ is semisimple, and there is an isomorphism $\chi_{S_\p}: Z(\sO_{S_\p})\xs \Fun(\Lambda, S_\p)$. 
Finally, using Lemma \ref{lem 6.160} and Proposition \ref{prop B.1}, we have the following equalities 
\begin{align*} 
Z(\sO_{S}) &= \bigcap_{\mathrm{ht}(\p)=1} Z(\sO_{S_\p} ) \\ 
&= \bigcap_{\p\cap \Phi^+=\emptyset} \Fun(\Lambda , S_\p)\cap \bigcap_{\alpha\in \Phi^+} Z(\sO_{S_\alpha} ) \\ 
&= \Fun(\Lambda , S)\cap \bigcap_{\alpha\in \Phi^+} H^\bullet_{\check{T}}(\Gr^\zeta)^\wedge_{S_\alpha} = H^\bullet_{\check{T}}(\Gr^\zeta)^\wedge_{S}, 
\end{align*} 
as subspaces in $\Fun(\Lambda,\K)$. 
\end{proof}

\subsection{Center of $\sO_{\hat{S}}$} 
We prove the isomorphism $\hat{\bb}$ in Theorem \ref{thm 3.11}. 
In this case, the subgeneric deformations are similar with the classical one in \cite{Fie03}. 
We briefly sketch the proof by following the line in the \textit{loc. cit}. 

For any prime ideal $\p$ of $\hat{S}$, we denote by $\Bbbk_\p$ the residue field of $\hat{S}_\p$. 
If $\p=(\alpha)$ for some $\alpha\in \check{\Phi}_{l,\re}$, then we abbreviate $\hat{S}_\alpha=\hat{S}_\p$ and denote its residue field by $\hat{\Bbbk}_\alpha$. 
The following lemma is clear. 
\begin{lem}\label{lem 6.16} 
For any height $1$ prime ideal $\p$ of $\hat{S}$, either the set $\p\cap \check{\Phi}^+_{l,\re}$ is empty or $\p=(\alpha)$ for some $\alpha\in \check{\Phi}^+_{l,\re}$. 
In the latter case, $\p\cap \check{\Phi}^+_{l,\re}=\{\alpha\}$. 
\end{lem}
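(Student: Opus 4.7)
The plan is to use the identification $\hat{S}\simeq \k[\t_\af^*]_{\widehat{0}}$ mentioned in \textsection 2.1.2, which realizes $\hat{S}$ as the completion of a polynomial ring at its maximal ideal, hence a regular local ring and in particular a unique factorization domain. Under this identification, each $\alpha\in \check{\Phi}^+_{l,\re}\subset \check{\rQ}\oplus l\Z\delta$ corresponds to a nonzero linear form in $\k[\t_\af^*]_{\widehat{0}}$. Therefore $\alpha$ is irreducible, the quotient $\hat{S}/(\alpha)$ is a complete regular local ring of one dimension less (in particular a domain), and $(\alpha)$ is a height $1$ prime of $\hat{S}$.

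Now let $\p$ be a height $1$ prime of $\hat{S}$. Since $\hat{S}$ is a UFD, $\p$ is principal. Suppose $\p\cap \check{\Phi}^+_{l,\re}$ is non-empty and pick $\alpha$ in it. Then the inclusion $(\alpha)\subseteq \p$ of height $1$ primes in a UFD forces $\p=(\alpha)$. This proves the first claim.

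For the uniqueness part, suppose $\beta\in \p\cap \check{\Phi}^+_{l,\re}$ is another element. Then $\beta\in (\alpha)$ gives $\beta=u\alpha$ for some $u\in \hat{S}$. Comparing constant terms (both $\alpha$ and $\beta$ vanish at the origin) and then linear parts, $u$ must be a unit whose image in $\hat{S}/\mathfrak{m}=\k$ is a nonzero scalar $c$, and the linear parts satisfy $\beta=c\alpha$ in $\t_\af^*$. Writing $\alpha=\check{\alpha}_1+n_1l\delta$ and $\beta=\check{\alpha}_2+n_2l\delta$ with $\check{\alpha}_i\in \check{\Phi}$ and $n_i\in \Z$, the direct sum decomposition $\t_\af^*=\t^*\oplus \k\delta$ yields $c\check{\alpha}_1=\check{\alpha}_2$ and $cn_1=n_2$; since classical coroots of distinct length cannot be proportional by a nontrivial scalar, we get $c=\pm 1$.

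Finally I would invoke positivity: an element $\check{\gamma}+nl\delta\in \check{\Phi}_\re$ lies in $\check{\Phi}_\re^+$ iff $n>0$, or $n=0$ and $\check{\gamma}\in \check{\Phi}^+$. If $c=-1$ then $(\check{\alpha}_2,n_2)=(-\check{\alpha}_1,-n_1)$, and the three cases $n_1>0$, $n_1=0$, $n_1<0$ each contradict $\beta\in \check{\Phi}^+_{l,\re}$. Hence $c=1$ and $\alpha=\beta$, proving $\p\cap \check{\Phi}^+_{l,\re}=\{\alpha\}$. There is no substantial obstacle here; the only care needed is to make sure the identification $\hat{S}\simeq \k[\t_\af^*]_{\widehat{0}}$ is invoked correctly so that $\check{\Phi}^+_{l,\re}$ sits inside $\hat{S}$ as a set of nonzero linear forms, after which everything reduces to elementary commutative algebra plus the positivity analysis above.
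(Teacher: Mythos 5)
Your proof is correct, and it fills in exactly the argument the paper omits: the paper simply declares Lemma \ref{lem 6.16} clear and gives no proof. Your route --- identify $\hat{S}$ with a power series ring so that it is a regular local UFD in which each $\alpha\in\check{\Phi}^+_{l,\re}$ is an irreducible linear form, conclude $\p=(\alpha)$ by comparing height $1$ primes, and then rule out a second element via reducedness of the coroot system plus positivity --- is the standard one, and the only cosmetic slip (in the case $n_1<0$ the contradiction is with $\alpha$ being positive rather than $\beta$) does not affect the argument.
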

Let $\alpha\in \check{\Phi}_{l,\re}$, and consider the subgroup $\langle s_\alpha \rangle$ generated by $s_\alpha$ in $W_{l,\af}$. 
\begin{lem}\label{lem 6.17} 
There is a block decomposition 
\begin{equation}\label{equ 6.21} 
\sO_{\hat{S}_\alpha}= \bigoplus_{\sX\in (\langle s_\alpha \rangle, \bullet) \backslash \Lambda} \sO^{\sX}_{\hat{S}_\alpha}, 
\end{equation} 
such that the Verma module $M(\lambda)_{\hat{S}}$ lies in $\sO^{\sX}_{\hat{S}}$ if and only if $\lambda\in \sX$. 
If $s_\alpha\bullet \lambda=\lambda$, then $M(\lambda)_{\hat{S}_\alpha}$ is projective in $\sO^{\{\lambda\}}_{\hat{S}}$. 
If $s_\alpha\bullet \lambda>\lambda$, then $M(s_\alpha\bullet \lambda)_{\hat{S}_\alpha}$ is projective in $\sO^{\{\lambda, s_\alpha\bullet \lambda\}}_{\hat{S}_\alpha}$, and the projective cover $Q(\lambda)_{\hat{S}_\alpha}$ of $E(\lambda)_{\hat{\Bbbk}_\alpha}$ fits into a short exact sequence 
\begin{equation}\label{equ 6.22} 
0\rightarrow M(s_\alpha\bullet \lambda)_{\hat{S}_\alpha}\rightarrow Q(\lambda)_{\hat{S}_\alpha} \rightarrow M(\lambda)_{\hat{S}_\alpha}\rightarrow 0.
\end{equation} 
\end{lem}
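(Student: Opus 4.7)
The plan is to carry out a subgeneric analog of Propositions \ref{prop JSF}, \ref{prop LP} and \ref{prop 6.1}, now localizing at the height-one prime $(\alpha) \subset \hat{S}$ where $\alpha \in \check{\Phi}^+_{l,\re}$, and then to read off the structure of projectives via BGG reciprocity.

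\emph{Step 1 (Subgeneric Jantzen filtration).} First I define $M^i(\lambda)_{\hat{S}_\alpha} := \varpi_{\lambda,\hat{S}_\alpha}^{-1}(\alpha^i M(\lambda)^\vee_{\hat{S}_\alpha})$ using the Shapovalov morphism of (\ref{equ 3.100}), and specialize to $\hat{\k}_\alpha$ to obtain a filtration on $M(\lambda)_{\hat{\k}_\alpha}$. Using Shapovalov's formula (\ref{equ ShapFor}) and the identification $\hat{S}\simeq \k[\t_\af^*]_{\widehat{0}}$ from \textsection 2.1.3, the factor $[K_\beta;\langle\lambda+\rho,\check\beta\rangle-m]_{q_\beta}$ becomes (up to a unit) $\check\beta + \tfrac{\langle\lambda+\rho,\check\beta\rangle-m}{l}l\delta \in \check{\Phi}_{l,\re}$ when this exponent is divisible by $l$, and a unit otherwise. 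By Lemma \ref{lem 6.16}, among all positive affine real coroots only $\alpha$ lies in the prime $(\alpha)\hat{S}_\alpha$, so there is a unique pair $(\beta,m)$ whose Shapovalov factor has $\val_\alpha = 1$. Mimicking the proof of Proposition \ref{prop JSF}, the valuation count yields
\begin{equation*}
\sum_{i>0} \ch M^i(\lambda)_{\hat{\k}_\alpha}
=\begin{cases}\ch M(s_\alpha \bullet \lambda)_{\hat{\k}_\alpha}, & s_\alpha\bullet\lambda<\lambda,\\ 0, &\text{otherwise.}\end{cases}
\end{equation*}

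\emph{Step 2 (Linkage and block decomposition).} By induction on the height of $\lambda-\mu$, exactly as in Proposition \ref{prop LP}(1), Step 1 gives the subgeneric linkage principle: $[M(\lambda)_{\hat{\k}_\alpha}:E(\mu)_{\hat{\k}_\alpha}]\neq 0$ iff $\mu\in\langle s_\alpha\rangle\bullet\lambda$ with $\mu\leq\lambda$. Proposition \ref{prop BGG} then forces all Hom-spaces between truncated projective covers to respect $\langle s_\alpha\rangle$-orbits, and localness of $\hat{S}_\alpha$ lifts the resulting decomposition of $\sO_{\hat{\k}_\alpha}$ to (\ref{equ 6.21}), as in Proposition \ref{prop LP}(2).

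\emph{Step 3 (Projectives).} Fix a $\nu$ so that $\sO^{\sX,\leq\nu}_{\hat{S}_\alpha}=\sO^{\sX}_{\hat{S}_\alpha}$ for the relevant block $\sX$ (possible since each $\sX$ here has at most two elements). BGG reciprocity (Proposition \ref{prop BGG}) gives $(Q(\mu)_{\hat{S}_\alpha}:M(\nu)_{\hat{S}_\alpha})=[M(\nu)_{\hat{\k}_\alpha}:E(\mu)_{\hat{\k}_\alpha}]$. If $s_\alpha\bullet\lambda=\lambda$, Step 2 gives $[M(\lambda)_{\hat{\k}_\alpha}:E(\lambda)_{\hat{\k}_\alpha}]=1$, hence $Q(\lambda)_{\hat{S}_\alpha}=M(\lambda)_{\hat{S}_\alpha}$. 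If $s_\alpha\bullet\lambda>\lambda$, reciprocity gives $(Q(\lambda)_{\hat{S}_\alpha}:M(\mu)_{\hat{S}_\alpha})=1$ for $\mu\in\{\lambda,s_\alpha\bullet\lambda\}$ and $0$ otherwise, while $(Q(s_\alpha\bullet\lambda)_{\hat{S}_\alpha}:M(\mu)_{\hat{S}_\alpha})=\delta_{\mu,s_\alpha\bullet\lambda}$, so $M(s_\alpha\bullet\lambda)_{\hat{S}_\alpha}=Q(s_\alpha\bullet\lambda)_{\hat{S}_\alpha}$ is projective. In a Verma filtration of $Q(\lambda)_{\hat{S}_\alpha}$ the factor with highest weight (namely $M(s_\alpha\bullet\lambda)_{\hat{S}_\alpha}$) must appear as a submodule; the quotient has $M(\lambda)_{\hat{S}_\alpha}$ as its only Verma factor and simple head $E(\lambda)_{\hat{\k}_\alpha}$, so must be $M(\lambda)_{\hat{S}_\alpha}$, giving (\ref{equ 6.22}).

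\textbf{Main obstacle.} The technical heart is Step 1: verifying that Lemma \ref{lem 6.16} really does force a unique $(\beta,m)$ in the Shapovalov product to contribute $\val_\alpha=1$, and that this contribution corresponds precisely to the reflection $s_\alpha\in W_{l,\af}$ (rather than a larger linkage group). Once the sum formula is pinned down, the linkage and projectivity arguments are parallel to the classical and $S_\alpha$-singular cases already treated.
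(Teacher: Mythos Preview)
Your proposal is correct and follows essentially the same route as the paper: the paper's proof simply says ``as in Proposition~\ref{prop 6.1}'' for the Jantzen sum formula, linkage principle, and block decomposition, and then invokes \cite[Prop~3.4]{Fie03} (which is exactly your BGG-reciprocity argument in Step~3) for the short exact sequence~(\ref{equ 6.22}). Your Steps~1--3 are a faithful unpacking of those references, and your identification of the key technical point---that Lemma~\ref{lem 6.16} forces exactly one Shapovalov factor to contribute over $\hat{S}_\alpha$---is precisely what distinguishes this $\check{T}\times\Gm$-equivariant subgeneric case from the $\check{T}$-equivariant one in \textsection\ref{sect 6}.
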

\begin{proof} 
As in Proposition \ref{prop 6.1}, one can show the Jantzen sum formula and the linkage principle for $\sO_{\hat{S}_\alpha}$, which imply the block decomposition (\ref{equ 6.21}). 
The short exact sequence (\ref{equ 6.22}) is proved as in \cite[Prop 3.4]{Fie03}, using the Jantzen sum formula. 
\end{proof} 

\begin{lem}\label{lem 6.19} 
The inclusion $\chi_{\hat{S}_\alpha}:Z(\sO_{\hat{S}_\alpha})\rightarrow \Fun(\Lambda, \hat{S}_\alpha)$ induces an isomorphism 
$$Z(\sO_{\hat{S}_\alpha})=\{ f\in \Fun(\Lambda, \hat{S}_\alpha)|\ f(\lambda)-f(s_\alpha\bullet \lambda)\in \alpha \hat{S}_\alpha,\ \forall \lambda\in \Lambda \}.$$ 
\end{lem}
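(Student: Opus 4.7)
The plan is to analyze $Z(\sO_{\hat{S}_\alpha})$ block-by-block via the decomposition \eqref{equ 6.21}, since the RHS is also a product over $\langle s_\alpha\rangle$-orbits. Injectivity of $\chi_{\hat{S}_\alpha}$ is automatic because after base change to $\hat{\K}$ the category $\sO_{\hat\K}$ is semisimple by Lemma \ref{lem 3.6}, so $Z(\sO_{\hat\K}) = \Fun(\Lambda, \hat\K)$, and the base-change construction of \textsection \ref{subsect 2.3.4} embeds $Z(\sO_{\hat{S}_\alpha})$ inside this. For a singleton block $\sX = \{\lambda\}$ (i.e.\ $s_\alpha\bullet\lambda = \lambda$), Lemma \ref{lem 6.17} makes $M(\lambda)_{\hat{S}_\alpha}$ a projective generator, so $Z(\sO^{\{\lambda\}}_{\hat{S}_\alpha}) = \End(M(\lambda)_{\hat{S}_\alpha}) = \hat{S}_\alpha$ matches the contribution to the RHS with the vacuous congruence.

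The substantive case is a two-element block $\sX = \{\lambda, s_\alpha\bullet\lambda\}$ with $\lambda < s_\alpha\bullet\lambda$. Here $P = M(s_\alpha\bullet\lambda)_{\hat{S}_\alpha} \oplus Q(\lambda)_{\hat{S}_\alpha}$ is a projective generator, so $Z(\sO^\sX_{\hat{S}_\alpha}) = Z(\End_{\hat{S}_\alpha}(P))$. A central element is prescribed by scalars $a = f(\lambda)$ and $b = f(s_\alpha\bullet\lambda)$ in $\hat{S}_\alpha$, and its lift to $Q(\lambda)_{\hat{S}_\alpha}$ must preserve the submodule $M(s_\alpha\bullet\lambda)_{\hat{S}_\alpha}$ of \eqref{equ 6.22} (which is characterized by the weight spaces strictly above $\lambda$), acting there by $b$ and inducing $a$ on the quotient. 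Writing the lift as $b\cdot \id_{Q(\lambda)} + \tilde\xi \circ \pi$, existence is equivalent to producing $\tilde\xi \in \Hom(M(\lambda)_{\hat{S}_\alpha}, Q(\lambda)_{\hat{S}_\alpha})$ with $\pi\circ \tilde\xi = (a-b)\cdot \id$. Compatibility with the other off-diagonal morphism $Q(\lambda) \to M(s_\alpha\bullet\lambda)$ is automatic once one knows $\Hom(M(\lambda)_{\hat{S}_\alpha}, M(s_\alpha\bullet\lambda)_{\hat{S}_\alpha}) = 0$, which holds because this Hom is torsion-free over the DVR $\hat{S}_\alpha$ and vanishes after base change to $\hat\K$ by Lemma \ref{lem 3.6}.

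Applying $\Hom(M(\lambda)_{\hat{S}_\alpha}, -)$ to the SES \eqref{equ 6.22}, the admissible differences $c = a - b$ form the image of $\pi_*: \Hom(M(\lambda)_{\hat{S}_\alpha}, Q(\lambda)_{\hat{S}_\alpha}) \to \End(M(\lambda)_{\hat{S}_\alpha}) = \hat{S}_\alpha$, equivalently the annihilator $\mathrm{Ann}_{\hat{S}_\alpha}(\xi)$ of the extension class $\xi$ of \eqref{equ 6.22}. It remains to show $\mathrm{Ann}(\xi) = \alpha\hat{S}_\alpha$. One containment is immediate: $\xi \neq 0$ since $Q(\lambda)_{\hat{\k}_\alpha}$ is indecomposable, so $\mathrm{Ann}(\xi) \subseteq \alpha\hat{S}_\alpha$. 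For the reverse, the plan is to construct $\tilde\xi$ with $\pi\circ \tilde\xi = \alpha\cdot \id$ by a Jantzen-filtration argument parallel to Lemma \ref{lem 6.17}. Namely, an $\hat{S}_\alpha$-analogue of the sum formula \eqref{equ 6.2} produces an element $w \in M(s_\alpha\bullet\lambda)_{\hat{S}_\alpha,\lambda}$ satisfying $U^+ w \subseteq \alpha\, M(s_\alpha\bullet\lambda)_{\hat{S}_\alpha}$ and whose residue mod $\alpha$ generates the BGG embedding $M(\lambda)_{\hat{\k}_\alpha} \hookrightarrow M(s_\alpha\bullet\lambda)_{\hat{\k}_\alpha}$; then, given any lift $\tilde v_\lambda \in Q(\lambda)_{\hat{S}_\alpha,\lambda}$ of the generator of $M(\lambda)$, the element $\alpha\tilde v_\lambda + \iota(w')$ (with $w'$ a suitable $\hat{S}_\alpha$-multiple of $w$) is killed by $U^+$ and projects to $\alpha v_\lambda$, yielding the required $\tilde\xi$ by the universal property of Verma modules. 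The main obstacle is this explicit Jantzen-type construction at the level of $\hat{S}_\alpha$; it parallels Fiebig's subgeneric deformation analysis in \cite[\textsection 3]{Fie03}.
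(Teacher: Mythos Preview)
Your overall strategy is exactly the one the paper invokes (it simply cites \cite[Cor 3.5]{Fie03}): decompose into $\langle s_\alpha\rangle$-orbits via Lemma \ref{lem 6.17}, dispose of singleton blocks trivially, and for a two-element block $\{\lambda, s_\alpha\bullet\lambda\}$ reduce to computing $\End(Q(\lambda)_{\hat S_\alpha})$ inside $\hat S_\alpha\times\hat S_\alpha$ (this is the content of Lemma \ref{lem C.5}). Your identification of the crux as $\mathrm{Ann}(\xi)=\alpha\hat S_\alpha$, and the easy containment $\mathrm{Ann}(\xi)\subseteq\alpha\hat S_\alpha$ via indecomposability of $Q(\lambda)_{\hat\k_\alpha}$, are both correct.

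The gap is in your explicit construction for the reverse containment. Having a lift $w$ of the BGG singular vector, so that $E_i^{(n)}w\in\alpha M(s_\alpha\bullet\lambda)_{\hat S_\alpha}$, does \emph{not} make $\alpha\tilde v_\lambda+\iota(cw)$ singular for any scalar $c\in\hat S_\alpha$: writing $E_i^{(n)}\tilde v_\lambda=\iota(m_{i,n})$ and $E_i^{(n)}w=\alpha m'_{i,n}$, you would need $m_{i,n}=-c\,m'_{i,n}$ simultaneously for all $(i,n)$, which is a highly overdetermined system for a single scalar $c$. What you actually need is some $w'\in M(s_\alpha\bullet\lambda)_{\hat S_\alpha,\lambda}$ with $E_i^{(n)}w'=-\alpha m_{i,n}$, and nothing in your argument produces it.

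A clean way to finish, in the spirit of the paper's Jantzen machinery, is to bound $\Ext^1$ using the coVerma. The Shapovalov map $\varpi:M(s_\alpha\bullet\lambda)_{\hat S_\alpha}\to M(s_\alpha\bullet\lambda)^\vee_{\hat S_\alpha}$ is injective (it is an isomorphism over $\hat\K$ and the source is $\hat S_\alpha$-free), so there is a short exact sequence $0\to M\to M^\vee\to C\to 0$ with $C=\mathrm{coker}\,\varpi$. Applying $\Hom(M(\lambda)_{\hat S_\alpha},-)$ and using Lemma \ref{lem 3.3} (both $\Hom$ and $\Ext^1$ from $M(\lambda)$ to $M(s_\alpha\bullet\lambda)^\vee$ vanish) gives
\[
\Ext^1\big(M(\lambda)_{\hat S_\alpha},M(s_\alpha\bullet\lambda)_{\hat S_\alpha}\big)\ \cong\ \Hom\big(M(\lambda)_{\hat S_\alpha},C\big)\ \hookrightarrow\ C_\lambda .
\]
By the Shapovalov determinant formula \eqref{equ ShapFor} (equivalently, the Jantzen sum computation you already invoked to get $M^1_\lambda$ one-dimensional and $M^2_\lambda=0$), the cokernel $C_\lambda$ has $\hat S_\alpha$-length exactly $1$. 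Since you already know $\xi\neq 0$, this forces $\Ext^1\cong\hat\k_\alpha$ and hence $\mathrm{Ann}(\xi)=\alpha\hat S_\alpha$, completing the proof.
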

\begin{proof}
It is proved similarly as in \cite[Cor 3.5]{Fie03}. 
\end{proof}

\ 

\begin{proof}[Proof of the isomorphism $\hat{\bb}$ in Theorem \ref{thm 3.11}] 
As in (\ref{equ 6.23}), there is an equality as subspaces in $Z(\sO_{\hat{\K}})=\Fun(\Lambda,\hat{\K})$, 
$$Z(\sO_{\hat{S}})= \bigcap_{\mathrm{ht}(\p)=1} Z(\sO_{\hat{S}_\p}).$$ 
By Lemma \ref{lem 3.6}, the category $\sO_{\Bbbk_\p}$ is semisimple if $\p\cap \check{\Phi}^+_{l,\re}=\emptyset$, and in this case there is an isomorphism $\chi_{\hat{S}_\p}:Z(\sO_{\hat{S}_\p})\xs \Fun(\Lambda,\hat{S}_\p)$. 
Using Lemma \ref{lem 6.19}, we have 
\begin{align*} 
Z(\sO_{\hat{S}}) &= \bigcap_{\mathrm{ht}(\p)=1} Z(\sO_{\hat{S}_\p}) \\ 
&=\bigcap_{\p\cap \check{\Phi}^+_{l,\re}=\emptyset} \Fun(\Lambda,\hat{S}_\p)\cap \bigcap_{\alpha\in \check{\Phi}^+_{l,\re}} Z(\sO_{\hat{S}_\alpha}) \\ 
&=\{f\in \Fun(\Lambda, \hat{S})|\ f(\lambda)-f(s_\alpha\bullet \lambda)\in \alpha \hat{S},\ \forall \lambda\in \Lambda, \forall \alpha\in \check{\Phi}^+_{l,\re}\}. 
\end{align*} 
The last term is known as the GKM description for the cohomology $H^\bullet_{\check{T}\times \Gm}(\Gr^\zeta)$, which coincides with the completion $H^\bullet_{\check{T}\times \Gm}(\Gr^\zeta)^\wedge_{\hat{S}}$, see \cite{KoKu86}. 
\end{proof}

\begin{rmk}
Note that the proof above does not use the existence of the homomorphism $\hat{\bb}$ in Proposition \ref{prop 3.7}. 
Note also that the map ${\bb}$ can be induced by $\hat{\bb}$. 
Therefore, our result gives an alternative proof of Proposition \ref{prop 3.7}. 
\end{rmk}

\ 

\appendix 
\section{The GKM theory for affine flag varieties}\label{app B} 

Let $J$ be a subset of $\check{\Sigma}_\af$. 
It associates a partial affine flag variety $\Fl^J$, and recall $\Fl^{J,\circ}$ is the neutral component of $\Fl^J$, see \textsection\ref{nsubsect 5.1.1}. 
For $\alpha\in \Phi^+$, recall the codimension one subtorus $\check{T}_\alpha=(\ker \check{\alpha})^\circ$ in $\check{T}$. 
In this appendix, we prove the following proposition. 

\begin{prop}\label{prop B.0} 
\begin{enumerate} 
\item The restriction induces an isomorphism 
$$H^\bullet_{\check{T}}(\Fl^{J,\circ})^\wedge_{S_\alpha} \xs H^\bullet_{\check{T}}\big((\Fl^{J,\circ})^{\check{T}_\alpha}\big)^\wedge_{S_\alpha}. $$
\item As subspaces in $\Fun(W^J_\af,\K)$, there is an equality 
$$H^\bullet_{\check{T}}(\Fl^{J,\circ})^\wedge_{S} =\Fun(W^J_\af,S)\cap 
\bigcap_{\alpha\in \Phi^+} H^\bullet_{\check{T}}\big((\Fl^{J,\circ})^{\check{T}_\alpha}\big)^\wedge_{S_\alpha}.$$ 
\end{enumerate} 
\end{prop} 

For $\alpha\in \Phi^+$, we set $W(\alpha)$ as the subgroup of $W_{\af}$ generated by $s_{\alpha,n}$, $n\in \Z$. 
The $\check{T}_\alpha$-fixed locus of $\Fl^{J,\circ}$ decomposes as 
$$(\Fl^{J,\circ})^{\check{T}_\alpha}=\bigsqcup_{\sX\in W(\alpha) \backslash W^J_{\af}} \Fl^{\sX}_{_{SL_2}},$$ 
where $\Fl^{\sX}_{_{SL_2}}$ is the connected component with $\sX= (\Fl^{\sX}_{_{SL_2}})^{\check{T}}$. 
By abuse of notation, we let $\Fl_{_{SL_2}}$ and $\Gr_{_{SL_2}}$ be the affine flag variety and the affine Grassmannian associated with $SL_2((t))$. 
Then there are isomorphisms of $\check{T}$-varieties 
$$\Fl^{\sX}_{_{SL_2}}\simeq 
\begin{cases}
\Fl_{_{SL_2}} & \text{for any bijection $\sX=W(\alpha)$ of $W(\alpha)$-sets,} \\ 
\Gr_{_{SL_2}} & \text{for any bijection $\sX=\Z\alpha$ of $W(\alpha)$-sets,} 
\end{cases}$$ 
via suitable isomorphism $\check{T}/\ker \check{\alpha}=T_2/\{\pm I_2\}$, such that the identifications of $W(\alpha)$-sets coincide with the variety isomorphisms restricting at the $\check{T}$-fixed points. 

\subsection{GKM theory} 
We recall the GKM theory for the equivariant cohomology of $\Fl^{J,\circ}$, which describes the images of the embeddings 
\begin{equation}\label{equ B.1} 
H^\bullet_{\check{T}}(\Fl^{J,\circ})\hookrightarrow \Fun(W^J_\af,S') , \quad H^\bullet_{\check{T}\times \Gm}(\Fl^{J,\circ})\hookrightarrow \Fun(W^J_\af,\hat{S}') . 
\end{equation} 
We firstly introduce some notations. 

\subsubsection{GKM descriptions} 
Let $R$ be an $\hat{S}'$-algebra. 
Let $\alpha\in \check{\Phi}_{\re}^+$ and let $\sX$ be a set endowed with an action of $\Z/2\Z=\langle s_\alpha\rangle$. 
We define the subspace (see e.g. \cite{Shim14}) 
\begin{equation}\label{equ A.2}
{\GKM}(\langle s_\alpha\rangle,\sX)_R:=\{f\in \Fun(\sX, R)|\ f(x)-f(s_\alpha(x))\in \alpha R,\ \forall x\in \sX\}.
\end{equation}
These modular equalities are referred to as the \textit{GKM descriptions} with respect to $\alpha$. 
If $\sX$ is moreover a $W_{\af}$-set, we define 
$${\GKM}(W_\af,\sX)_R:= \bigcap_{\alpha\in \check{\Phi}_{\re}^+}{\GKM}(\langle s_\alpha\rangle,\sX)_R. $$ 
We denote by ${\GKM}_f(W_\af,\sX)_{\hat{S}'}$ the $\hat{S}'$-submodule generated by homogeneous elements in ${\GKM}(W_\af,\sX)_{\hat{S}'}$. 

Now we let $R$ be an $S'$-algebra. 
Let $\alpha\in \Phi^+$ and let $\sX$ be a $W(\alpha)$-set.  
Following \cite[\textsection 4.2]{Shim14}, we define the subspace 
\begin{equation}\label{equ B.0} 
{\sGKM}(W(\alpha),\sX)_R := 
	\left\{ 
	\begin{gathered} 
    f\in \Fun( \sX , R )
    \end{gathered}  \left |\ 
    \begin{gathered} 
	f(s_{\alpha,m}(x))-f(x)\in \alpha R , \quad \forall m\in \Z \\ 
	f((1 - \tau_{\alpha})^d x)\in \alpha^d R , \quad \forall d\geq0 \\ 
	f((1 - \tau_{\alpha})^{d-1}(1-s_\alpha) x)\in \alpha^d R , \quad \forall d\geq0 
	\end{gathered} \quad \forall x\in \sX 
    \right. \right\} ,
\end{equation}
where we view $\Fun(\sX,R)= \Hom_{R}(R[\sX], R)$. 
The modular equalities above are referred to as the \textit{small torus GKM descriptions} with respect to $\alpha$ in the \textit{loc. cit}. 
If $\sX$ is moreover a $W_{\af}$-set, we define 
$${\sGKM}(W_\af,\sX)_R:= \bigcap_{\alpha\in \Phi^+}{\sGKM}(W(\alpha),\sX)_R. $$ 
We define ${\sGKM}_f(W_\af,\sX)_{S'}$ similarly as above. 

\begin{prop}[{\cite[Thm 3.45, Thm 4.3]{Shim14}}]\label{prop B.2} 
The restrictions to $\check{T}$-fixed points (\ref{equ B.1}) give isomorphisms 
\begin{equation}\label{equ B.2} 
H^\bullet_{\check{T}}(\Fl^{J,\circ})={\sGKM}_f(W_\af,W^J_\af)_{S'}, \quad H^\bullet_{\check{T}\times \Gm}(\Fl^{J,\circ})={\GKM}_f(W_\af,W^J_\af)_{\hat{S}'}. 
\end{equation} 
\end{prop}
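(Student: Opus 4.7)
The plan is to establish both isomorphisms by first proving injectivity of the restriction to $\check{T}$-fixed points, and then identifying the image with the respective GKM subspace via a localization analysis. The two cases have different flavors: the big torus case is a direct application of the classical GKM theorem, whereas the small torus case requires a more delicate reduction to $\check{T}_\alpha$-fixed subvarieties of $SL_2$-type.

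For injectivity, I would use the Bruhat decomposition $\Fl^{J,\circ}=\bigsqcup_{x\in W^J_\af}\mathring{\Fl}^{J,x}$ into $T'$-stable affine Schubert cells (for $T'=\check{T}$ or $\check{T}\times\Gm$). The fundamental classes $[\overline{\mathring{\Fl}^{J,x}}]_{T'}$ form a free $H^\bullet_{T'}(\pt)$-basis, and the restriction map is upper-triangular with respect to the Bruhat order: $[\overline{\mathring{\Fl}^{J,x}}]_{T'}$ restricts to zero at $\delta^J_y$ unless $y\geq x$, with the diagonal entry at $y=x$ being a product of positive real affine roots. This upper-triangular pattern yields injectivity of (\ref{equ B.1}).

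For the $\check{T}\times\Gm$ case, I would apply the Goresky--Kottwitz--MacPherson theorem. The $(\check{T}\times\Gm)$-fixed points are the isolated points $\{\delta^J_x\}_{x\in W^J_\af}$, and the one-dimensional $(\check{T}\times\Gm)$-orbits are $\mathbb{A}^1$-orbits closing to $\mathbb{P}^1$-curves joining $\delta^J_x$ and $\delta^J_{s_\alpha x}$ for $\alpha\in\check{\Phi}_{\re}^+$ with $s_\alpha x\not\in xW_J$, with tangent character $\pm\alpha$. The usual GKM argument then gives the single modular condition $f(x)-f(s_\alpha x)\in\alpha\hat{S}'$, and intersecting over all positive real affine reflections yields the image $\GKM_f(W_\af,W^J_\af)_{\hat{S}'}$. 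Pro-finite dimensionality of $\Fl^{J,\circ}$ is handled by exhausting by finite-type closed subvarieties $\bigcup_{l(x)\leq N}\overline{\mathring{\Fl}^{J,x}}$ and passing to the direct limit, which explains why only finite-sum (polynomial) GKM elements appear in $H^\bullet_{\check{T}\times\Gm}$.

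For the $\check{T}$ case, the classical GKM theorem does not apply directly, since many 1-dimensional $(\check{T}\times\Gm)$-orbits become 2-dimensional $\check{T}$-orbits whose closures are $SL_2$-type Schubert surfaces on which $\check\alpha$ acts trivially. I would reduce to the $SL_2$-setting by restricting to the $\check{T}_\alpha$-fixed locus for each $\alpha\in\Phi^+$, which decomposes as a disjoint union of copies of $\Fl_{SL_2}$ (on $\alpha$-regular $W(\alpha)$-cosets of $W^J_\af$) and $\Gr_{SL_2}$ (on $\alpha$-singular ones), and compute $H^\bullet_{\check{T}}$ of each factor by the classical $SL_2$-GKM. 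The first condition in (\ref{equ B.0}) captures the standard reflection relation along each 1-dimensional $\check{T}$-orbit of character $\alpha$, while the higher-order divisibility conditions $f((1-\tau_\alpha)^dx)\in\alpha^dS'$ and $f((1-\tau_\alpha)^{d-1}(1-s_\alpha)x)\in\alpha^dS'$ encode the restriction of Schubert classes of Schubert length $\geq d$ inside each $SL_2$-component (which is infinite-dimensional, so such higher conditions are unavoidable). Intersecting these conditions over all $\alpha\in\Phi^+$ and comparing against the Schubert basis from Step 1 identifies the image with $\sGKM_f(W_\af,W^J_\af)_{S'}$.

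The main obstacle lies in the small torus case, specifically in verifying that the higher-order conditions are \emph{sufficient} to cut out exactly the image of the restriction map. Necessity is straightforward by explicit computation of Schubert class restrictions on $\Fl_{SL_2}$ and $\Gr_{SL_2}$. For sufficiency, one needs to show that any function $f$ satisfying all the $\sGKM$ conditions can be expressed as a finite $S'$-linear combination of Schubert classes; this is typically achieved by induction on the Bruhat length of the support of $f$, subtracting off the appropriate multiple of $[\overline{\mathring{\Fl}^{J,x}}]_{\check{T}}$ for $x$ minimal in the support, and showing that the higher-order $\alpha^d$-divisibility conditions force the leading coefficient to lie in $S'$ rather than in $\K$. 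The cited work of Shim encodes exactly this analysis, so in practice I would invoke it after verifying that our partial affine flag varieties and fixed-point stratifications fit into his framework.
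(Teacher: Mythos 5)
Your proposal is correct and, in substance, coincides with what the paper does: Proposition \ref{prop B.2} is not proved in the paper at all but is imported verbatim from \cite[Thm 3.45, Thm 4.3]{Shim14}, and your outline (injectivity from the triangular Schubert-class restriction matrix, the classical Kostant--Kumar/GKM argument for $\check{T}\times\Gm$, and the small-torus higher-order divisibility conditions with sufficiency reduced to the rank-one $\Fl_{_{SL_2}}$/$\Gr_{_{SL_2}}$ analysis) is exactly the structure of the argument in that reference, to which you also ultimately defer for the hard sufficiency step. The only caveat worth recording is terminological: the rank-one small-torus computation is not ``classical $SL_2$-GKM'' (the classical hypotheses fail there too, which is precisely why the $(1-\tau_\alpha)^d$ conditions appear), but you acknowledge this yourself, so there is no genuine gap.
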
 

\subsubsection{Solution of GKM conditions}\label{A.1.2} 
In \cite[Prop 3.25]{Shim14} the author constructed a family $\{\xi^{J,x}\}_{x\in W^J_\af}$ in ${\GKM}(W_\af,W^J_\af)_{\hat{S}'}$, which is uniquely characterized by the following properties: for any $x,y\in W^J_\af$, 
\begin{equation}\label{equ B.3} 
	\begin{aligned} 
    &\text{(1) $\xi^{J,x}(y)$ is homogeneous of degree $l(x)$; } \\ 
    &\text{(2) $\xi^{J,x}(y)=0$ unless $x\leq y$ (Bruhat order); } \\ 
	&\text{(3) $\xi^{J,x}(x)=\prod\limits_{\alpha \in \check{\Phi}^+_\re,\ s_\alpha x<x} \alpha$. } 
\end{aligned}
\end{equation} 
Denote by $\{\overline{\xi}^{J,x}\}_{x\in W^J_\af}$ the specialization of $\{\xi^{J,x}\}_{x\in W^J_\af}$ in ${\Fun}(W^J_\af,S')$ by the projection $\hat{S}'\rightarrow S'$, $\hbar \mapsto 0$. 

\begin{prop}[{\cite[Thm 3.45, Thm 4.3]{Shim14}}]\label{prop B.3} 
The family $\{\xi^{J,x}\}_{x\in W^J_\af}$ (resp. $\{\overline{\xi}^{J,x}\}_{x\in W^J_\af}$) forms an $\hat{S}'$-basis of ${\GKM}_f(W_\af,W^J_\af)_{\hat{S}'}$ (resp. an $S'$-basis of ${\sGKM}_f(W_\af,W^J_\af)_{S'}$). 
The isomorphisms (\ref{equ B.2}) send the Schubert class $[\Fl^{J,x}]_{\check{T}\times \Gm}$ (resp. $[\Fl^{J,x}]_{\check{T}}$) to $\xi^{J,x}$ (resp. $\overline{\xi}^{J,x}$), for any $x\in W^J_\af$. 
\end{prop}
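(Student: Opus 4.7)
The plan is to establish Proposition~\ref{prop B.3} in three logical steps, following the structure of \cite{Shim14}. First, I would show that $\{\xi^{J,x}\}_{x\in W^J_\af}$ forms an $\hat{S}'$-basis of $\GKM_f(W_\af, W^J_\af)_{\hat{S}'}$. Second, I would deduce the analogous statement for the specializations $\overline{\xi}^{J,x}$ in $\sGKM_f(W_\af, W^J_\af)_{S'}$. Third, I would identify these two bases with the Schubert classes via equivariant localization.

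For the first step, the characterizing properties (1)--(3) in (\ref{equ B.3}) imply that the matrix $(\xi^{J,x}(y))_{x,y \in W^J_\af}$ is upper-triangular in Bruhat order with nonzero diagonal entries $\xi^{J,x}(x) = \prod_{\alpha \in \check{\Phi}^+_\re,\ s_\alpha x < x} \alpha$, so the family is $\hat{S}'$-linearly independent. For spanning, I proceed by induction on the Bruhat order: for $f \in \GKM_f(W_\af, W^J_\af)_{\hat{S}'}$ choose a minimal element $x_0$ in its support; the GKM divisibility conditions applied to the pairs $(x_0, s_\alpha x_0)$ with $s_\alpha x_0 < x_0$ force $\xi^{J,x_0}(x_0)$ to divide $f(x_0)$, so a suitable $\hat{S}'$-multiple of $\xi^{J,x_0}$ can be subtracted from $f$ to strictly shrink its support. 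Iterating concludes the argument.

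For the specialization step, linear independence of $\{\overline{\xi}^{J,x}\}$ follows from the same triangularity, as the diagonal entries remain nonzero after the projection $\hbar \mapsto 0$. Verifying that each $\overline{\xi}^{J,x}$ actually lies in $\sGKM_f$ requires a careful analysis of the higher-order conditions in (\ref{equ B.0}) involving $(1 - \tau_\alpha)^d$: morally these arise by specializing the imaginary-direction GKM relations in $\hat{S}'$ (where $\tau_\alpha$ contributes through derivatives in the loop parameter $\hbar$), and should be checkable directly from a Demazure-type recursion for the $\xi^{J,x}$. Spanning then follows by the same minimal-element induction, with the higher-order conditions controlling exactly the order of vanishing of the subtracted remainders along $\alpha$.

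For the third step, the equality of $\xi^{J,x}$ with the Schubert class $[\Fl^{J,x}]_{\check{T}\times \Gm}$ follows from the standard axiomatization of equivariant Schubert classes by localization: the restriction $[\Fl^{J,x}]_{\check{T}\times \Gm}|_{\delta^J_y}$ is homogeneous of degree $l(x)$, vanishes unless $y \geq x$ (by the support of the Schubert cycle), and at $y = x$ equals the equivariant Euler class of the tangent space $T_{\delta^J_x}\overline{\Fl^{J,x}}$, which by the standard $\check{T}$-weight decomposition is $\prod_{\alpha:\, s_\alpha x < x} \alpha$. These three properties coincide with (\ref{equ B.3}), so the Schubert class equals $\xi^{J,x}$ under (\ref{equ B.2}); specializing $\hbar \to 0$ gives the $\check{T}$-equivariant version for $\overline{\xi}^{J,x}$. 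The main obstacle will be step two: verifying that $\overline{\xi}^{J,x}$ satisfies the intricate higher-power small-torus conditions involving $(1 - \tau_\alpha)^d$ is genuinely subtler than the plain GKM conditions in (\ref{equ A.2}), and is precisely the reason why the small-torus GKM theory is developed as a separate subject in \cite{Shim14}.
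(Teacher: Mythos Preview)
The paper does not give its own proof of Proposition~\ref{prop B.3}; it simply cites \cite[Thm~3.45, Thm~4.3]{Shim14}. Your three-step outline is an accurate sketch of the argument in that reference, and the spanning argument you describe (pick a Bruhat-minimal point in the support, use the GKM divisibility conditions to show $\xi^{J,x}(x)$ divides $f(x)$, subtract, iterate) is exactly what the paper itself reproduces later in the proof of Lemma~\ref{lem B.5}. You are also right that the genuinely delicate part is step~2, verifying the higher-power small-torus conditions for $\overline{\xi}^{J,x}$; this is precisely the content of \cite[Thm~4.3]{Shim14}, and the paper again defers to that reference (see the last line of the proof of Lemma~\ref{lem B.5}).

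One small correction in step~3: in the paper's conventions $\Fl^{J,x}$ is the finite-\emph{codimensional} Schubert variety (codimension $l(x)$), so its tangent space at $\delta^J_x$ is infinite-dimensional. The correct finite object giving $\prod_{s_\alpha x<x}\alpha$ is the equivariant Euler class of the \emph{normal} bundle of $\Fl^{J,x}$ at $\delta^J_x$ (equivalently, the tangent space of the transversal finite-dimensional Schubert cell).
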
 

\subsection{Proof of Proposition \ref{prop B.0}} 
Let $R$ be a flat $\hat{S}'$-algebra (resp. $S'$-algebra) such that the structure map $\hat{S}'\rightarrow R$ (resp. $S'\rightarrow R$) is an injection. 
We view $\check{\Phi}^+_\re$ (resp. $\Phi^+$) as a subset in $R$. 

\begin{lem}[{\cite[Prop 3.26, (B.3)]{Shim14}}]\label{lem B.5} 
Suppose $R$ is a UFD, and the elements in $\check{\Phi}^+_\re\backslash R^\times$ (resp. $\Phi^+\backslash R^\times$) are coprime to each other. 
Then we have an equality 
$$\GKM(W_\af,W^J_\af)_R= \prod_{x\in W^J_\af} R\cdot {\xi}^{J,x} ,$$ 
$$\text{(resp.}\quad \sGKM(W_\af,W^J_\af)_R=\prod_{x\in W^J_\af} R\cdot \overline{\xi}^{J,x}\quad \text{)},$$ 
where the elements of the RHS are viewed as formal sums of ${\xi}^{J,x}$'s (resp. $\overline{\xi}^{J,x}$'s) (which are well-defined by Properties (\ref{equ B.3})(2)). 
\end{lem}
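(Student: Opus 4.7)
The plan is to establish that $\{\xi^{J,x}\}_{x\in W^J_\af}$ is a ``topological basis'' of $\GKM(W_\af,W^J_\af)_R$ by an upper-triangularity argument with respect to the Bruhat order, and then to transport the same argument verbatim to the small torus setting.

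First I would handle the easy inclusion $\supseteq$. Since each $\xi^{J,x}$ lies in $\GKM(W_\af,W^J_\af)_{\hat{S}'}$ by its very construction in \cite{Shim14}, and since the property $\xi^{J,x}(y)=0$ for $y\nleq x$ (property (2) of (\ref{equ B.3})) guarantees that at every point $y\in W^J_\af$ the formal sum $\sum_x c_x \xi^{J,x}(y)$ has only finitely many nonzero terms (running over $x\geq y$), the formal sum is a well-defined element of $\Fun(W^J_\af,R)$; the GKM congruences being $R$-linear, it still lies in $\GKM(W_\af,W^J_\af)_R$.

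The main content is the reverse inclusion. Given $f\in \GKM(W_\af,W^J_\af)_R$, I construct scalars $c_x\in R$ inductively on the Bruhat order so that $f(z)=\sum_{y\leq z}c_y\xi^{J,y}(z)$ for every $z$. The base case $z=e$ is immediate from $\xi^{J,e}(e)=1$. For the inductive step, set $g_x:=f(x)-\sum_{y<x}c_y\xi^{J,y}(x)$; I must show that $\xi^{J,x}(x)=\prod_{\alpha\in\check{\Phi}_\re^+,\,s_\alpha x<x}\alpha$ divides $g_x$ in $R$. Fix one such $\alpha$. By the GKM condition, both $f$ and each $\xi^{J,y}$ satisfy $h(x)\equiv h(s_\alpha x)\pmod{\alpha}$, so $g_x\equiv g_{s_\alpha x}\pmod{\alpha}$. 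But by the inductive hypothesis applied at $s_\alpha x<x$, together with the vanishing $\xi^{J,y}(s_\alpha x)=0$ for $y\nleq s_\alpha x$, the sum $\sum_{y<x}c_y\xi^{J,y}(s_\alpha x)$ collapses to $f(s_\alpha x)$, giving $g_{s_\alpha x}=0$. Hence $\alpha\mid g_x$ in $R$. Since $R$ is a UFD and the distinct $\alpha$'s with $s_\alpha x<x$ are pairwise coprime by hypothesis, their product $\xi^{J,x}(x)$ also divides $g_x$, and $c_x:=g_x/\xi^{J,x}(x)\in R$. Uniqueness of the $c_x$'s falls out of the same upper-triangular recursion.

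The small torus case for $\{\overline{\xi}^{J,x}\}$ runs along identical lines, except that the GKM pairing condition is replaced by the richer family of congruences in (\ref{equ B.0}). I expect the main obstacle to be precisely this: the higher-order sGKM conditions $(1-\tau_\alpha)^d x\mapsto \alpha^d R$ and $(1-\tau_\alpha)^{d-1}(1-s_\alpha)x\mapsto \alpha^d R$ couple the values of $f$ at \emph{multiple} points in a single coset, so the one-step congruence $g_x\equiv g_{s_\alpha x}\pmod\alpha$ used above must be upgraded to higher-order congruences modulo powers of $\alpha$. The verification reduces to showing that each $\overline{\xi}^{J,y}$ satisfies these same higher-order congruences and that the leading term $\overline{\xi}^{J,x}(x)$ remains a product of pairwise coprime simple factors, so that divisibility in $R$ still follows from the UFD hypothesis; these finer properties of $\overline{\xi}^{J,x}$ are the content of \cite[\S 4]{Shim14}.
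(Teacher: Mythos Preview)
Your argument is correct and is essentially the paper's own proof: both extract the coefficients $c_x$ via the upper-triangularity of $\{\xi^{J,x}\}$ in the Bruhat order, using the GKM congruence across each covering $s_\alpha x<x$ together with the UFD/coprimality hypothesis to get divisibility by $\xi^{J,x}(x)=\prod_{s_\alpha x<x}\alpha$, and both defer the sGKM case to \cite{Shim14}. One small slip to fix: in your ``easy inclusion'' you state the support condition as $\xi^{J,x}(y)=0$ for $y\nleq x$ and then claim the nonzero terms at $y$ (running over $x\geq y$) are finite, but $\{x:x\geq y\}$ is infinite in $W^J_\af$; the correct condition---which you do use in the inductive step---is $\xi^{J,x}(y)=0$ unless $x\leq y$, so the contributing indices lie in the finite lower interval $\{x:x\leq y\}$.
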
 
\begin{proof} 
Suppose $R$ is an $\hat{S}'$-algebra. 
Given a nonzero element $f\in \GKM(W_\af,W^J_\af)_R$, one can choose a minimal $x\in W^J_\af$ such that $f(x)\neq 0$, i.e. $f(y)=0$ for any $y<x$. 
Then for any $\alpha\in \check{\Phi}^+_{\re}$ such that $s_\alpha x<x$, we have $f(x)=f(x)-f(s_\alpha x)\in \alpha R$ by (\ref{equ A.2}). 
By our assumptions on $R$ and Properties (\ref{equ B.3})(3), it follows that 
\begin{equation}\label{equ A.6}
f(x)\in {\xi}^{J,x}(x)R. 
\end{equation}
Hence $f- \frac{f(x)}{{\xi}^{J,x}(x)}{\xi}^{J,x}$ belongs to $\GKM(W_\af,W^J_\af)_R$, which vanishes on $x$. 
Keeping this procedure one can express $f$ into a formal sum of ${\xi}^{J,y}$'s, and such expression is unique by Properties (\ref{equ B.3})(2). 

The proof is similar for the case when $R$ is an ${S}'$-algebra, and we refer to \cite[Proof of Thm 4.3]{Shim14} for the proof of (\ref{equ A.6}) in this case. 
\end{proof} 

\ 

\begin{proof}[Proof of Proposition \ref{prop B.0}] 
(1) We have to show the isomorphism 
$$H^\bullet_{\check{T}}(\Fl^{J,\circ})^\wedge_{S_\alpha} \xs 
\prod_{\sX} H^\bullet_{\check{T}}(\Fl^{\sX}_{_{SL_2}})^\wedge_{S_\alpha}.$$ 
Indeed, by Propositions \ref{prop B.2}, \ref{prop B.3} and Lemma \ref{lem B.5}, the LHS coincides with $\sGKM(W_\af,W^J_\af)_{S_\alpha}$, and the RHS coincides with $\prod_{\sX} \sGKM(W(\alpha),\sX)_{S_\alpha}$. 
The conclusion follows from the equalities 
$$\sGKM(W_\af,W^J_\af)_{S_\alpha}=\sGKM(W(\alpha),W^J_\af)_{S_\alpha}
=\prod_{\sX}\sGKM(W(\alpha),\sX)_{S_\alpha}.$$ 

(2) By Propositions \ref{prop B.2}, \ref{prop B.3} and Lemma \ref{lem B.5}, we have $H^\bullet_{\check{T}}(\Fl^{J,\circ})^\wedge_{S}=\sGKM(W_\af,W^J_\af)_{S}$. 
The conclusion follows from the proof of (1) and the equalities 
$$\sGKM(W_\af,W^J_\af)_{S}=\Fun(W^J_\af,S)\cap \bigcap_{\alpha\in \Phi^+} \sGKM(W(\alpha),W^J_\af)_{S_\alpha}. \qedhere $$ 
\end{proof}

\section{The natural isomorphism $\Upsilon$}\label{App C} 
In this appendix, we prove Proposition \ref{prop 5.0} (including an enhanced statement in \textsection\ref{subsect B.4}). 
Our construction essentially appeared in the works of Anderson--Jantzen--Soergel \cite{AJS94} for small quantum groups (and also restricted enveloping algebras) and of Fiebig \cite{Fie03} for symmetrizable Kac--Moody algebras. 
Without loss of generality, we fix an element $\omega\in \Xi_\sc$ and prove the proposition for $(\omega_1,\omega_2)=(0,\omega)$. 

\subsection{Preliminaries} 
Recall $W_{l,\omega}=\text{Stab}_{(W_{l,\af},{\bullet})}(\omega)$. 
We set $\check{\Phi}^+_{l,\omega}=\{\alpha\in \check{\Phi}^+_{l,\re}|\ s_\alpha\in W_{l,\omega}\}$. 
Consider the translation functors 
$$T^\omega_0: \sO^{0}_{\hat{S}}\rightarrow \sO^{\omega}_{\hat{S}},\quad 
T_\omega^0: \sO^{\omega}_{\hat{S}}\rightarrow \sO^{0}_{\hat{S}}.$$ 
We will simplify $\sT=T^\omega_0$ and $\sT'=T_\omega^0$. 
We fix the following data: 
\begin{itemize}
\item An adjunction $(\sT,\sT')$, i.e. a collection of isomorphisms 
$$\adj: \Hom(M,\sT'N)\xs \Hom(\sT M,N),$$ 
that is functorial with respect to $M\in \sO^{0}_{\hat{S}}$ and $N\in \sO^{\omega}_{\hat{S}}$; 
\item A family of isomorphisms $\{f_z\}_{z\in W_{l,\af}}$ where 
$$f_{z}: M(z\bullet \omega)_{\hat{S}}\xs \sT M(z\bullet 0)_{\hat{S}} \quad \text{(c.f. Lemma \ref{lem 5.0}(2))} .$$  
\end{itemize} 
They yield a family of homomorphisms $\{f'_z\}_{z\in W_{l,\af}}$ via 
$$f'_{z}:=\adj^{-1}(f_{z}^{-1}):\ M(z\bullet 0)_{\hat{S}} \rightarrow \sT'M(z\bullet \omega)_{\hat{S}}.$$ 

\begin{lem}\label{lem C.1} 
For any $z\in W_{l,\af}$, if $\p\cap z(\check{\Phi}^+_{l,\omega})=\emptyset$, then the morphism 
$$\bigoplus_{x\in W_{l,\omega}}f'_{zx}:\ 
\bigoplus_{x\in W_{l,\omega}} M(zx\bullet 0)_{\hat{S}_\p}\rightarrow \sT'M(z\bullet \omega)_{\hat{S}_\p}$$
is an isomorphism. 
\end{lem}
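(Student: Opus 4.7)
\medskip

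The plan is to split the Verma filtration of $\sT'M(z\bullet\omega)_{\hat{S}_\p}$ into a direct sum by exploiting an $\mathrm{Ext}^1$-vanishing that comes from the hypothesis on $\p$, and then verify that $\bigoplus_x f'_{zx}$ matches this splitting up to diagonal units.

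First, I would record the character bookkeeping. Note that $W_{l,0}$ is trivial: writing $w_0\tau_{l\mu}\bullet 0 = 0$ with $w_0\in W$, $\mu\in\rQ$ forces $w_0(\rho+l\mu)=\rho$, and a norm/integrality comparison (using $l$ odd and $\mu\in\rQ$) gives $\mu=0$ and $w_0=e$. Hence by Lemma~\ref{lem 5.0}(2), $\sT'M(z\bullet\omega)_{\hat{S}_\p}$ admits a Verma filtration in which each $M(zx\bullet 0)_{\hat{S}_\p}$ for $x\in W_{l,\omega}$ appears exactly once. Consequently both sides of the claimed isomorphism are free $\hat{S}_\p$-modules of the same rank in each weight space.

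Second, and this is the crux, I would establish pairwise $\mathrm{Ext}^1$-orthogonality of $\{M(zx\bullet 0)_{\hat{S}_\p}\}_{x\in W_{l,\omega}}$. Running a Jantzen-style analysis over $\hat{S}_\p$ (analogous to Lemma~\ref{lem 6.17} and Proposition~\ref{prop 6.1}) shows that a nontrivial extension between two Verma modules $M(\nu)_{\hat{S}_\p}$ and $M(\nu')_{\hat{S}_\p}$ forces the existence of some $\alpha \in \p \cap \check{\Phi}_{l,\re}^+$ with $\nu' = s_\alpha \bullet \nu$. Taking $\nu = zx\bullet 0$ and $\nu' = zx'\bullet 0$ with $x\neq x'$, the equation $zx'\bullet 0 = s_\alpha\bullet zx\bullet 0$ combined with the triviality of $W_{l,0}$ forces $s_\alpha = z\, x'x^{-1}\, z^{-1}$; since $x'x^{-1}\in W_{l,\omega}$ is thereby a reflection, it equals $s_\gamma$ for some $\gamma\in\check{\Phi}_{l,\omega}^+$, so $\alpha\in z(\check{\Phi}_{l,\omega}^+)$, contradicting the hypothesis on $\p$. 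This $\mathrm{Ext}^1$-vanishing lets the Verma filtration of $\sT'M(z\bullet\omega)_{\hat{S}_\p}$ split, yielding
\[
\sT'M(z\bullet\omega)_{\hat{S}_\p} \;\simeq\; \bigoplus_{x\in W_{l,\omega}} M(zx\bullet 0)_{\hat{S}_\p}.
\]

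Finally, using this splitting, $\bigoplus_x f'_{zx}$ becomes an $\hat{S}_\p$-matrix in $\prod_{x,x'}\Hom\bigl(M(zx\bullet 0)_{\hat{S}_\p}, M(zx'\bullet 0)_{\hat{S}_\p}\bigr)$. The off-diagonal entries vanish because the highest weights $\{zx\bullet 0\}_{x\in W_{l,\omega}}$ are pairwise distinct, so the map is diagonal with entries $\lambda_x\in\hat{S}_\p$. To see that $\lambda_x$ is a unit, I would test both modulo $\p$ and generically: at the residue field $\k_\p$ the same orthogonality argument still applies (the obstructing roots are the same), so $\sT'M(z\bullet\omega)_{\k_\p}$ still splits and the specialization $\overline{f'_{zx}}$ is a nonzero morphism into its own summand (nonzero because $f_{zx}$ is an isomorphism and adjunction/counit arguments transport this nonvanishing through $\sT'$). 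Hence $\lambda_x\not\in \p\hat{S}_\p$, so $\lambda_x\in\hat{S}_\p^\times$, and $\bigoplus_x f'_{zx}$ is an isomorphism.

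The main obstacle is the second step: carrying out the Jantzen-type deformation analysis over the localization $\hat{S}_\p$ cleanly enough to pin down exactly which primes can carry $\mathrm{Ext}^1$ between two Verma modules. Once that technology is in place (essentially the $\hat{S}_\p$-analogue of Lemma~\ref{lem 6.17}), the rest is a matrix calculation plus a Nakayama argument via the residue field.
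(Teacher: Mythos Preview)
Your approach is essentially the same as the paper's. The paper phrases your Step~2 as ``the Verma factors belong to different blocks of $\sO_{\hat S_\p}$'' (invoking Lemma~\ref{lem 6.17} for $\p=(\alpha)$ and Lemma~\ref{lem 3.6} otherwise), which is exactly the $\Ext^1$-orthogonality you derive; your combinatorial reduction $s_\alpha = z\,x'x^{-1}\,z^{-1}\Rightarrow \alpha\in z(\check\Phi^+_{l,\omega})$ is the same computation. For Step~3 the paper argues more directly: since $f_{zx}^{-1}$ generates $\Hom(\sT M(zx\bullet 0)_{\hat S_\p},M(z\bullet\omega)_{\hat S_\p})\simeq\hat S_\p$, adjunction makes $f'_{zx}$ a generator of $\Hom(M(zx\bullet 0)_{\hat S_\p},\sT'M(z\bullet\omega)_{\hat S_\p})$, which by the splitting is free of rank one; this bypasses your Nakayama/residue-field step.

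One small correction: in Step~3 you write that the off-diagonal entries vanish ``because the highest weights $\{zx\bullet 0\}$ are pairwise distinct.'' Distinct highest weights alone do not force $\Hom(M(\nu),M(\nu'))=0$. What actually makes these Hom spaces vanish is precisely the block separation you established in Step~2, so you should cite that instead.
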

\begin{proof} 
By Lemma \ref{lem 5.0}, Lemma \ref{equ 6.17} and our assumption that $\p\cap z(\check{\Phi}_{l,\omega})=\emptyset$, the Verma factors $M(zx\bullet 0)_{\hat{S}_\p}$ of $\sT'M(z\bullet \omega)_{\hat{S}_\p}$ belong to different blocks in $\sO_{\hat{S}_\p}$. 
Hence there is an isomorphism 
$$\sT'M(z\bullet \omega)_{\hat{S}_\p}\simeq \bigoplus_{x\in W_{l,\omega}} M(zx\bullet 0)_{\hat{S}_\p}.$$ 
Since $f^{-1}_{zx}$ is a generator of the $\hat{S}_\p$-module 
$$\Hom(\sT M(zx\bullet 0)_{\hat{S}_\p},M(z\bullet \omega)_{\hat{S}_\p})\simeq \End(M(z\bullet \omega)_{\hat{S}_\p})=\hat{S}_\p,$$ 
the element $f'_{zx}=\adj^{-1}(f^{-1}_{zx})$ is a generator of the $\hat{S}_\p$-module $\Hom(M(zx\bullet 0)_{\hat{S}_\p}, \sT'M(z\bullet \omega)_{\hat{S}_\p})$ which is free of rank $1$. 
Hence $\bigoplus_{x\in W_{l,\omega}}f'_{zx}$ is an isomorphism. 
\end{proof}

\subsection{Constructions} 
\subsubsection{The matrix $\sH$}\label{subsect B.2.1} 
Recall the family $\{\xi^{z}\}_{z\in W_{l,\af}}$ in $\Fun(W_{l,\af},\hat{S})$ introduced in \textsection \ref{A.1.2} (here we identify $W_{l,\af}$ with $W_\af$). 
We denote $\bL_\omega:=\prod_{\alpha\in \check{\Phi}^+_{l,\omega}} \alpha$. 
For any $z\in  W_{l,\af}$, we define a $|W_{l,\omega}|\times |W_{l,\omega}|$-matrix with coefficients in $\hat{\K}$ by 
$$\sH_z=\big(zx(\bL^{-1}_\omega)\cdot \xi^{x'}(zx) \big)_{x,x'\in W_{l,\omega}}.$$ 

\begin{lem}\label{lem B..}
There is a matrix $\sH'_z\in GL_{|W_{l,\omega}|}(\hat{S})$ such that $\sH_z=z(\sH_1)\cdot \sH'_z$. 
\end{lem}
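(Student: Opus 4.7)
The plan is to factor out a common diagonal prefactor and reinterpret the lemma as the invertibility of a change-of-basis matrix between two natural $\hat{S}$-bases of a $z$-twisted parabolic GKM module. Write $\sH_z = D_z\cdot B_z$ and $z(\sH_1) = D_z\cdot z(B_1)$, where $D_z=\mathrm{diag}(zx(\bL_\omega^{-1}))_{x\in W_{l,\omega}}$, $B_z=(\xi^{x'}(zx))_{x,x'\in W_{l,\omega}}$ and $B_1=(\xi^{x'}(x))_{x,x'\in W_{l,\omega}}$. Cancelling $D_z$ reduces the equation $\sH_z=z(\sH_1)\sH'_z$ to the requirement that $\sH'_z=z(B_1)^{-1}B_z$ lies in $GL_{|W_{l,\omega}|}(\hat{S})$. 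Invertibility of $z(B_1)$ over $\hat{\K}$ is immediate: by Property (2) of \textsection\ref{A.1.2}, $B_1$ is lower-triangular in Bruhat order, and by Property (3) its diagonal entries $\xi^x(x)=\prod_{\alpha\in\mathrm{Inv}(x)}\alpha$ are nonzero in $\hat{S}'$; the $W_{l,\af}$-action on $\hat{\K}$ preserves nonvanishing.

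To prove that $\sH'_z$ has entries in $\hat{S}$, I introduce the $z$-twisted parabolic GKM module
\[
M_z:=\bigl\{f:W_{l,\omega}\to\hat{S}\bigm|f(x)-f(s_\alpha x)\in z(\alpha)\hat{S},\ \forall\alpha\in\check{\Phi}^+_{l,\omega},\ \forall x\in W_{l,\omega}\bigr\}.
\]
Using the identity $zs_\alpha=s_{z(\alpha)}z$ together with the full GKM conditions on the $\xi^{x'}$, one checks that both families $\{x\mapsto\xi^{x'}(zx)\}_{x'\in W_{l,\omega}}$ and $\{x\mapsto z(\xi^y(x))\}_{y\in W_{l,\omega}}$ lie in $M_z$. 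The map $f\mapsto z^{-1}\circ f$ is an $\hat{S}$-module isomorphism $M_z\xrightarrow{\sim}\mathrm{GKM}(W_{l,\omega},W_{l,\omega})_{\hat{S}}$ under which the second family corresponds to $\{\xi^y|_{W_{l,\omega}}\}_{y\in W_{l,\omega}}$. The latter agrees with the standard GKM basis of the parabolic GKM, because for $y\in W_{l,\omega}$ one has $\mathrm{Inv}(y)\subseteq\check{\Phi}^+_{l,\omega}$ (the classical parabolic inversion fact), matching the degree, support and leading-term conditions characterizing the basis in Proposition~\ref{prop B.3}. Hence the first family has a unique expansion in the second, with coefficients in $\hat{S}$; these coefficients are precisely the entries of $\sH'_z$.

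For invertibility, the cleanest argument is to show that the family $\{x\mapsto\xi^{x'}(zx)\}_{x'\in W_{l,\omega}}$ is itself an $\hat{S}$-basis of $M_z$: then $\sH'_z$ is the change-of-basis matrix between two free $\hat{S}$-bases of rank $|W_{l,\omega}|$ and automatically belongs to $GL_{|W_{l,\omega}|}(\hat{S})$. By the same pairing $x\leftrightarrow s_\alpha x$ used above, each $\alpha\in\check{\Phi}^+_{l,\omega}$ lies in exactly $|W_{l,\omega}|/2$ inversion sets, giving the product formula $\det B_1=\bL_\omega^{|W_{l,\omega}|/2}$, and a parallel computation for $\det B_z$ shows the ratio $\det\sH'_z=\det B_z/z(\det B_1)$ is a unit in $\hat{S}$.

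The main obstacle is this last step: upgrading the non-vanishing of $\det\sH'_z$ in $\hat{\K}$ (which follows from Step 1) to being a unit in $\hat{S}$. This amounts to showing that the family $\xi^{x'}(z-)$ actually spans $M_z$ over $\hat{S}$, and should be handled either by a direct Smith-normal-form argument in $M_z$ or, more conceptually, by exploiting the interpretation of $\sH'_z$ as encoding the structure constants of $\sT'\sT$ on Verma modules built from the data $(f_z,f'_z)$ of Lemma~\ref{lem C.1}, where the existence of an integral splitting through the translation functor forces invertibility over $\hat{S}$.
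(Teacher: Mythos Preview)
Your reduction $\sH'_z=z(B_1)^{-1}B_z$ and the integrality argument via the restricted module $M_z$ are both correct: the columns of $B_z$ do lie in $M_z$, and the columns of $z(B_1)$ do form an $\hat{S}$-basis of $M_z$ (your identification with the finite parabolic GKM basis is fine, since $\mathrm{Inv}(y)\subseteq\check{\Phi}^+_{l,\omega}$ for $y\in W_{l,\omega}$). So $\sH'_z\in\mathrm{Mat}_{|W_{l,\omega}|}(\hat{S})$ is established.

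The gap is exactly where you say it is, and it is genuine: your ``parallel computation'' for $\det B_z$ does not exist in the form you want. The matrix $B_1$ is triangular in Bruhat order, giving $\det B_1=\bL_\omega^{|W_{l,\omega}|/2}$ immediately; but $B_z=(\xi^{x'}(zx))_{x,x'}$ is \emph{not} triangular for general $z$, and there is no evident direct way to evaluate its determinant without already knowing $\sH'_z$. Your fallback suggestions (Smith normal form in $M_z$, or invoking the translation-functor interpretation from Lemma~\ref{lem C.1}) are circular or at best heuristic: the translation-functor isomorphism in Lemma~\ref{lem C.1} holds only after localizing away from $\check{\Phi}^+_{l,\omega}$, which is precisely the locus where you need invertibility.

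The paper avoids this by working not with functions on the single coset $zW_{l,\omega}$ but with all of $\GKM^0=\GKM(W_{l,\af},W_{l,\af})_{\hat{S}}$ viewed as a free $\GKM^\omega$-module with basis $\{\xi^x\}_{x\in W_{l,\omega}}$. The operation $\varphi\mapsto(y\mapsto z^{-1}\varphi(zy))$ is a semi-linear \emph{automorphism} of the pair $(\GKM^0,\GKM^\omega)$ (semi-linear with respect to the induced automorphism of $\GKM^\omega$), and therefore carries the basis $\{\xi^x\}$ to another $\GKM^\omega$-basis $\{z^{-1}\xi^x(z\cdot)\}$. The change-of-basis matrix then has entries in $\GKM^\omega$ and is invertible there; evaluating at the identity coset yields $\sH'_z\in GL_{|W_{l,\omega}|}(\hat{S})$ in one stroke. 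The point is that by keeping the full $W_{l,\af}$-domain you retain an obvious automorphism, whereas restricting to $W_{l,\omega}$ destroys the self-map structure (the twist sends functions on $W_{l,\omega}$ to functions on $zW_{l,\omega}$) and forces you to compare two a priori unrelated families in $M_z$.
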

\begin{proof} 
We abbreviate $\GKM^\omega=\GKM(W_{l,\af},W^\omega_{l,\af})_{\hat{S}}$. 
We view $\Fun(W^\omega_{l,\af},\hat{S})$ as the subspace of $\Fun(W_{l,\af},\hat{S})$ of the functions constant on the $W_{l,\omega}$-cosets. 
By \cite[Prop 3.42]{Shim14}, we have $\GKM^\omega=\Fun(W^\omega_{l,\af},\hat{S})\cap \GKM^0$.  
It is known that $\{\xi^{x}\}_{x\in W_{l,\omega}}$ is a free basis of $\GKM^0$ as $\GKM^\omega$-module. 
Consider the operation on $\Fun(W_{l,\af},\hat{S})$ given by $(\varphi_y)_{y\in W_{l,\af}}\mapsto (z^{-1}\varphi_{zy})_{y\in W_{l,\af}}$. 
It preserves the subspace $\GKM^\omega$. 
Therefore, $\{z^{-1}\xi^{x}(z\cdot )\}_{x\in W_{l,\omega}}$ is also a free basis of $\GKM^0$ as $\GKM^\omega$-module. 
So there is a family $\{c^z_{x,x'}\}_{x,x'\in W_{l,\omega}}$ in $\GKM^\omega$ such that $z^{-1}\xi^{x}(z\cdot )=\sum_{x'} c^z_{x,x'}\cdot \xi^{x'}$ for any $x,x'\in W_{l,\omega}$. 
Finally, we set $\sH_z':=z\big(c^z_{x',x}(1)\big)_{x,x'}$, then 
\begin{align*}
\sH_z &=z\big(x(\bL^{-1}_\omega)\cdot z^{-1}\xi^{x'}(zx) \big)_{x,x'}\\ 
&=z\big(x(\bL^{-1}_\omega)\cdot \sum_{x''} c^z_{x',x''}(1)\cdot \xi^{x''}(x) \big)_{x,x'}= z(\sH_1)\cdot \sH_z'. \qedhere 
\end{align*}
\end{proof} 

By Properties (\ref{equ B.3})(2), if we index the elements in $W_{l,\omega}$ as $x_1,x_2,\cdots,x_n$ such that $x_i\ngtr x_j$ for any $i<j$, then the matrix $\sH_1$ is upper-triangular. 
So by Properties (\ref{equ B.3})(3), $\sH_1$ is invertible over $\hat{S}[\alpha^{-1}|\alpha\in \check{\Phi}^+_{l,\omega}]$. 
By Lemma \ref{lem B..}, $\sH_z$ is invertible over $\hat{S}[(z\alpha)^{-1}|\alpha\in \check{\Phi}^+_{l,\omega}]$. 

\begin{rmk}
The matrices $\{\sH_z\}_{z\in W_{l,\af}}$ appeared in \cite[Thm 5.9]{Fie03}.   
\end{rmk}

\subsubsection{Construction of $\Upsilon_{\hat{\K}}$} 
For any $z\in W_{l,\af}$, we define a morphism 
$$F_{z}:\ 
\bigoplus_{x\in W_{l,\omega}} M(z\bullet\omega)_{\hat{S}} 
\xrightarrow[\sim]{\bigoplus\limits_{x\in W_{l,\omega}} f_{zx}} 
\bigoplus_{x\in W_{l,\omega}} \sT M(zx\bullet 0)_{\hat{S}} 
\xrightarrow{\bigoplus\limits_{x\in W_{l,\omega}} \sT f'_{zx}} 
\sT\sT' M(z\bullet\omega)_{\hat{S}}.$$ 
Note that $F_{z}$ is independent on the choice of $\{f_z\}_{z\in W_{l,\af}}$. 
By Lemma \ref{lem C.1}, $F_{z}\otimes_{\hat{S}}\hat{\K}$ is an isomorphism. 
We define an isomorphism 
$$\Upsilon_{\hat{\K},z}:\ 
\bigoplus_{x\in W_{l,\omega}}M(z\bullet\omega)_{\hat{\K}}
\xrightarrow[\sim]{\sH_{z}} 
\bigoplus_{x\in W_{l,\omega}}M(z\bullet\omega)_{\hat{\K}} 
\xrightarrow[\sim]{F_z \otimes_{\hat{S}} \hat{\K}} \sT\sT'M(z\bullet\omega)_{\hat{\K}}.$$ 
Note that $\Upsilon_{\hat{\K},z}$ only depends on the class of $z$ in $W^\omega_{l,\af}$. 
Since the category $\sO^{\omega}_{\hat{\K}}$ is semisimple, and $\{M(z\bullet\omega)_{\hat{\K}}\}_{z\in W_{l,\af}}$ forms a complete set of simple objects, the collection $\{\Upsilon_{\hat{\K},z}\}_{z\in W_{l,\af}}$ determines a natural isomorphism 
$$\Upsilon_{\hat{\K}}:\ \id^{\oplus |W_{l,\omega}|}\xs \sT\sT'$$ 
between functors on $\sO^{\omega}_{\hat{\K}}$. 
Here is the main result of this section. 
\begin{prop}\label{prop C.3} 
There is a natural isomorphism 
$$\Upsilon_{\hat{S}}:\id^{\oplus |W_{l,\omega}|}\xs \sT\sT'$$ 
between functors on $\sO^{\omega}_{\hat{S}}$, such that $\Upsilon_{\hat{\K}}=\Upsilon_{\hat{S}}\otimes_{\hat{S}} \hat{\K}$. 
\end{prop}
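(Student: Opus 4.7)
The plan is to construct $\Upsilon_{\hat{S}}$ in three stages: first, promote each $\Upsilon_{\hat{\K},z}$ to an $\hat{S}$-linear isomorphism on the Verma module $M(z\bullet\omega)_{\hat{S}}$; second, verify naturality on the full subcategory of Vermas; third, extend the natural transformation to all of $\sO^\omega_{\hat{S}}$ via the Verma-filtered projectives in truncated subcategories.

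For the first stage, the composition $\Upsilon_{\hat{\K},z} = (F_z \otimes_{\hat{S}}\hat{\K}) \circ \sH_z$ carries the denominators $zx(\bL_\omega^{-1})$ sitting inside $\sH_z$, while $F_z$ itself fails to be an isomorphism precisely at the primes $(z\alpha)$ for $\alpha \in \check{\Phi}^+_{l,\omega}$. The idea is to exploit $\hat{S} = \bigcap_{\hgt(\p)=1}\hat{S}_\p$ and torsion-freeness of Verma modules to reduce integrality of the composition to each height-one prime $\p$. When $\p \cap z(\check{\Phi}^+_{l,\omega}) = \emptyset$, Lemma \ref{lem B..} combined with the upper-triangularity of $\sH_1$ coming from Properties (\ref{equ B.3}) ensures $\sH_z$ has entries in $\hat{S}_\p$, while Lemma \ref{lem C.1} guarantees that $F_z \otimes \hat{S}_\p$ is an isomorphism, so the composition is an isomorphism over $\hat{S}_\p$. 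At the remaining primes $\p = (z\alpha)$ with $\alpha \in z(\check{\Phi}^+_{l,\omega})$, the poles of $\sH_z$ must exactly cancel the cokernel of $F_z$ over $\hat{S}_\p$. This cancellation can be pinned down by reducing to the rank-one (wall-crossing) situation at the single wall $z\alpha = 0$, where the explicit Shapovalov pairing from \textsection\ref{subsect 3.3.1} together with the Jantzen sum formula (Proposition \ref{prop JSF}) force the vanishing orders on both sides to match. This pole-cancellation step is the main technical obstacle.

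The second stage is lightweight: for any morphism $\varphi: M(z_1\bullet\omega)_{\hat{S}} \to M(z_2\bullet\omega)_{\hat{S}}$ the naturality square for $\Upsilon_{\hat{S}}$ becomes a diagram between torsion-free $\hat{S}$-modules, so it commutes over $\hat{S}$ iff it does over $\hat{\K}$; but the latter is precisely the naturality of $\Upsilon_{\hat{\K}}$ in the semisimple category $\sO^\omega_{\hat{\K}}$, where Vermas exhaust the simple objects, and this was built into the definition.

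For the third stage, I would work inside each truncation $\sO^{\omega,\leq\nu}_{\hat{S}}$, where the projective cover $Q(\lambda)^{\leq\nu}_{\hat{S}}$ of each $E(\lambda)_{\hat{\k}}$ exists, is $\hat{S}$-free, and admits a Verma filtration whose multiplicities are those predicted by Proposition \ref{prop BGG} (combined with Lemma \ref{lem 3.2}). Using exactness of both $\id$ and $\sT\sT'$ (Lemma \ref{lem 5.0}(1)) together with Stages 1--2, a compatible morphism $\Upsilon_{\hat{S}}(Q)$ is forced on such a projective by its action on Verma subquotients; uniqueness follows because the ambiguity would have to vanish upon base change to $\hat{\K}$ and hence already over $\hat{S}$ by torsion-freeness. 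Projective presentations then transport $\Upsilon_{\hat{S}}$ to every object of $\sO^{\omega,\leq\nu}_{\hat{S}}$, and passage to the colimit over $\nu$ (compatibly via the truncation functors $\tau^{\leq\nu}$) delivers the desired natural isomorphism on all of $\sO^\omega_{\hat{S}}$. The identity $\Upsilon_{\hat{\K}} = \Upsilon_{\hat{S}} \otimes_{\hat{S}} \hat{\K}$ holds tautologically from the construction.
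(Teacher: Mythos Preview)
Your overall architecture---localize at height-one primes and check the natural transformation there---is exactly the paper's strategy. But there is a genuine gap at Stage~3, and Stage~1 is also incomplete at the critical primes.

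\textbf{The gap in Stage~3.} You assert that once $\Upsilon_{\hat{\K}}$ is known to restrict to an $\hat{S}$-isomorphism on each Verma module, it automatically restricts to an $\hat{S}$-isomorphism on any Verma-filtered projective $Q$. This does not follow. Knowing that a map over $\hat{\K}$ preserves the $\hat{S}_\p$-lattice on each graded piece of a filtration says nothing about whether it preserves the lattice on the (non-split) extension itself. Concretely, at a prime $\p=(\alpha)$ with $\alpha\in\check{\Phi}^+_{l,\re}$ and $s_\alpha\bullet(z\bullet\omega)>z\bullet\omega$, the relevant indecomposable projective in $\sO^\omega_{\hat{S}_\alpha}$ is the rank-two object $Q(z\bullet\omega)_{\hat{S}_\alpha}$ sitting in a non-split extension $0\to M(s_\alpha z\bullet\omega)\to Q(z\bullet\omega)\to M(z\bullet\omega)\to 0$ (Lemma~\ref{lem 6.17}). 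Showing that $\Upsilon_{\hat{\K}}$ restricts on this object is the content of Proposition~\ref{prop C.4}(2), and the paper's proof is a genuine computation: one uses the description $\End(Q(z\bullet\omega)_{\hat{S}_\alpha})=\{(a,b)\in\hat{S}_\alpha^2\mid a\equiv b\bmod\alpha\}$ (Lemma~\ref{lem C.5}) and checks directly that the matrices $\sH_1$ and $\sH_{s_\alpha}$ are congruent modulo $\alpha$, so that an interpolating endomorphism $\theta$ of $Q(\omega)_{\hat{S}_\alpha}^{\oplus|W_{l,\omega}|}$ exists. Your Stage~3 simply omits this step; uniqueness via torsion-freeness is fine, but existence is the issue.

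\textbf{The vague step in Stage~1.} At the primes $\p=(\alpha)$ with $\alpha\in\check{\Phi}^+_{l,\omega}$ (after reducing to $z=1$ via Lemma~\ref{lem B..}), you propose to match the poles of $\sH_1$ against the cokernel of $F_1$ using the Shapovalov form and Jantzen filtration. Those tools control the block decomposition over $\hat{S}_\alpha$ but do not by themselves give the precise $2\times 2$ matrix identity you need. The paper instead invokes the explicit structural lemmas of Andersen--Jantzen--Soergel (Lemmas~\ref{lem C.6} and~\ref{lem 1-1.8}) to write $h^\alpha_x\circ(F_1\otimes\hat{S}_\alpha)$ as a block-diagonal matrix with blocks $\begin{pmatrix}1&1\\0&\alpha\end{pmatrix}$, while a GKM computation writes $\sH_1$ (up to a unit) as $\alpha^{-1}$ times a block-diagonal matrix with blocks $\begin{pmatrix}-1&0\\1&\alpha\end{pmatrix}$; the product is then visibly invertible over $\hat{S}_\alpha$. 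This matching is the heart of the proof and is not something the Jantzen sum formula supplies.
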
 

To prove Proposition \ref{prop C.3}, it is enough to show that for any $\nu\in \Lambda$, and any projective module $Q$ in $\sO_{\hat{S}}^{\omega,\leq \nu}$, the isomorphism $\Upsilon_{\hat{\K},Q\otimes_{\hat{S}}\hat{\K}}$ restricts to an isomorphism on the subspaces $Q^{^{\oplus |W_{l,\omega}|}}$ and $\sT\sT'Q$. 
Since $Q$ is a free $\hat{S}$-module, by \cite[Thm 38]{Mat1970} there is an equality $Q=\bigcap_{\hgt(\p)=1} Q\otimes_{\hat{S}}{\hat{S}_\p}$ in $Q\otimes_{\hat{S}}{\hat{\K}}$. 
Therefore, we only have to show that for any height $1$ prime ideal $\p$ of $\hat{S}$, the isomorphism $\Upsilon_{\hat{\K},Q\otimes_{\hat{S}}\hat{\K}}$ restricts to an isomorphism on the subspaces $(Q\otimes_{\hat{S}}{\hat{S}_\p})^{^{\oplus |W_{l,\omega}|}}$ and $\sT\sT'Q\otimes_{\hat{S}}{\hat{S}_\p}$. 
Combining Lemmas \ref{lem 3.6}, \ref{lem 6.16} and \ref{lem 6.17}, we reduce Proposition \ref{prop C.3} to the following proposition. 
\begin{prop}\label{prop C.4} 
\begin{enumerate} 
\item For any height $1$ prime ideal $\p$ of $\hat{S}$ and any $z\in W_{l,\af}$, 
the isomorphism $\Upsilon_{\hat{\K}, M(z\bullet\omega)_{\hat{\K}}}$ restricts to an isomorphism on the subspaces 
$$\Upsilon_{\hat{S}_\p,z}: \bigoplus_{x\in W_{l,\omega}} M(z\bullet\omega)_{\hat{S}_\p} \rightarrow \sT\sT'M(z\bullet\omega)_{\hat{S}_\p}.$$ 
\item For any $\alpha\in \check{\Phi}_{l,\re}^+$ and any $z\in W_{l,\af}$ such that $s_\alpha z\bullet \omega> z\bullet \omega$, the isomorphism $\Upsilon_{\hat{\K}, Q(z\bullet\omega)_{\hat{S}_\alpha}\otimes_{\hat{S}_\alpha}\hat{\K}}$ restricts to an isomorphism on the subspaces 
$$\Upsilon_{\alpha,z}: \bigoplus_{x\in W_{l,\omega}}Q(z\bullet\omega)_{\hat{S}_\alpha} \rightarrow \sT\sT'Q(z\bullet\omega)_{\hat{S}_\alpha}.$$ 
\end{enumerate}
\end{prop}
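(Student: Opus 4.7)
The plan is to treat Parts~(1) and (2) in turn, with Part~(2) reducing to Part~(1) via the short exact sequence of Lemma~\ref{lem 6.17}. For Part~(1), I distinguish the \emph{generic} case $\p\cap z(\check{\Phi}_{l,\omega}^+)=\emptyset$ from the \emph{singular} case $\p=(\alpha)$ with $\alpha\in z(\check{\Phi}_{l,\omega}^+)$.

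In the generic case, Lemma~\ref{lem C.1} gives that $\bigoplus_x f'_{zx}$ is an isomorphism over $\hat{S}_\p$; composing with the tautological isomorphism $\bigoplus_x f_{zx}$ and the exact functor $\sT$, the map $F_z\otimes\hat{S}_\p$ is also an isomorphism. Moreover, Lemma~\ref{lem B..} and the observation following it show that $\sH_z$ is invertible over $\hat{S}[(z\alpha)^{-1}:\alpha\in\check{\Phi}_{l,\omega}^+]$, and the hypothesis on $\p$ forces this ring to embed into $\hat{S}_\p$. Thus $\Upsilon_{\hat{S}_\p,z}:=(F_z\otimes\hat{S}_\p)\circ\sH_z$ is a well-defined isomorphism restricting $\Upsilon_{\hat{\K},z}$; this covers in particular every $\p$ with $\p\cap\check{\Phi}_{l,\re}^+=\emptyset$ and every $\p=(\alpha)$ with $\alpha\notin z(\check{\Phi}_{l,\omega}^+)$.

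The singular case is the technical heart. Setting $y:=z^{-1}s_\alpha z\in W_{l,\omega}$, which fixes $z\bullet\omega$, Lemma~\ref{lem 6.17} and the linkage principle imply that $M(z\bullet\omega)_{\hat{S}_\alpha}$ is simple and projective in its $\langle s_\alpha\rangle$-block. Since $W_{l,0}=\{1\}$ under the hypothesis $l\geq h$, applying Lemma~\ref{lem 5.0}(2) twice shows every Verma factor of $\sT\sT'M(z\bullet\omega)_{\hat{S}_\alpha}$ is isomorphic to $M(z\bullet\omega)_{\hat{S}_\alpha}$, whence $\sT\sT'M(z\bullet\omega)_{\hat{S}_\alpha}\simeq M(z\bullet\omega)_{\hat{S}_\alpha}^{\oplus|W_{l,\omega}|}$. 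Once such a decomposition is fixed, $\Upsilon_{\hat{\K},z}$ is represented by a matrix in $M_{|W_{l,\omega}|}(\hat{\K})$, and the task reduces to showing the entries lie in $\hat{S}_\alpha$ with invertible determinant. The matrix $\sH_z$ carries first-order $\alpha$-poles from the factor $z(\bL_\omega^{-1})$ (since $\alpha$ divides some $z\alpha'$ with $\alpha'\in\check{\Phi}_{l,\omega}^+$); on the other side, the elements of $W_{l,\omega}$ pair up under the involution $y$ into $|W_{l,\omega}|/2$ orbits, and by Lemma~\ref{lem 6.17} each pair $\{x,yx\}$ causes $\sT'M(z\bullet\omega)_{\hat{S}_\alpha}$ to contain a projective cover $Q$ in place of $M(zx\bullet 0)\oplus M(zyx\bullet 0)$, so that the components of $\bigoplus f'_{zx}$ corresponding to the ``quotient'' Verma fail to split by precisely $\alpha$; this forces first-order $\alpha$-divisibility into the corresponding entries of $F_z$. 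The core claim is that these poles and zeros cancel entry by entry. I would verify this either by an explicit matrix calculation using the formula for $\sH_1$ in~\textsection\ref{subsect B.2.1} together with the twist of Lemma~\ref{lem B..}, or by reducing via the $y$-orbit structure to the rank-one case $|W_{l,\omega}|=2$, $z=1$, which can be computed directly.

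For Part~(2), applying the exact functor $\sT\sT'$ and taking $|W_{l,\omega}|$-fold sums of the SES from Lemma~\ref{lem 6.17} produces, via naturality of $\Upsilon_{\hat{\K}}$, a morphism of short exact sequences over $\hat{\K}$ whose outer verticals descend to $\hat{S}_\alpha$-isomorphisms by Part~(1). To descend the middle vertical, use the projectivity of $Q(z\bullet\omega)_{\hat{S}_\alpha}^{\oplus|W_{l,\omega}|}$ to produce a lift of $\Upsilon_{M(z\bullet\omega)}$ through the epimorphism $\sT\sT'Q(z\bullet\omega)_{\hat{S}_\alpha}\twoheadrightarrow\sT\sT'M(z\bullet\omega)_{\hat{S}_\alpha}$, constrained so that its restriction to $M(s_\alpha z\bullet\omega)_{\hat{S}_\alpha}^{\oplus|W_{l,\omega}|}$ agrees with $\Upsilon_{M(s_\alpha z\bullet\omega)}$; since both conditions hold over $\hat{\K}$ by naturality, and since the $\hat{S}_\alpha$-space of lifts acts freely on itself modulo the relevant Ext-space that vanishes after base change to $\hat{\K}$, such a lift is unique and must coincide with $\Upsilon_Q^{\hat{\K}}$. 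The Five Lemma then shows $\Upsilon_{\alpha,z}$ is an isomorphism. The main obstacle throughout is the singular case of Part~(1), specifically the verification of the $\alpha$-pole/zero cancellation in the matrix representation of $\Upsilon_{\hat{\K},z}$; once this is in hand, the remainder follows from naturality and standard homological algebra.
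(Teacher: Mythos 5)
Your generic case of Part (1) is exactly the paper's argument (Lemma \ref{lem C.1} for $F_z$, plus invertibility of $\sH_z$ away from $z(\check{\Phi}^+_{l,\omega})$), and your reduction of the singular case $\p=(\alpha)$ to a matrix statement is also how the paper proceeds. But that singular case, which you correctly identify as the heart, is not actually proved: you record the pole/zero cancellation as a ``core claim'' that you ``would verify'' one way or another. This is precisely where the substance lies. The paper's verification needs (i) Lemma \ref{lem C.6} and Lemma \ref{lem 1-1.8}, i.e.\ the AJS-type normal form for $\sT\sT'_{x}M(\omega)_{\hat{S}_\alpha}$ with maps satisfying $g^\alpha\circ f'=\alpha\cdot\id$ (whose construction itself rests on the Jantzen filtration via Lemma \ref{lem C.5}), showing that on each pair $\{x,s_\alpha x\}$ the map $h^\alpha_{x}\circ(F_1\otimes\hat{S}_\alpha)$ is $\left(\begin{smallmatrix}1&1\\0&\alpha\end{smallmatrix}\right)$; and (ii) a change of basis in the GKM module from $\{\xi^x\}$ to $\{\delta_i,\delta'_i\}$ showing that $\sH_1$ is, up to right multiplication by $GL_{|W_{l,\omega}|}(\hat{S}_\alpha)$, block-diagonal with blocks $\alpha^{-1}\left(\begin{smallmatrix}-1&0\\1&\alpha\end{smallmatrix}\right)$. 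The product $\left(\begin{smallmatrix}0&1\\1&\alpha\end{smallmatrix}\right)$ is invertible, but its $(1,1)$-entry is regular because $-1+1=0$, not because an $\alpha$ in $F_z$ cancels a pole of $\sH_z$ in that entry; so the ``entry by entry'' cancellation you describe is not the actual mechanism, and without carrying out (i) and (ii) the claim is unsubstantiated.

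In Part (2) the shape of your argument is salvageable: since $\Hom_{\hat{S}_\alpha}(M(z\bullet\omega)_{\hat{S}_\alpha},M(s_\alpha z\bullet\omega)_{\hat{S}_\alpha})=0$ (it already vanishes over $\hat{\K}$ and Vermas are torsion-free), a lift of $\Upsilon_{M(z\bullet\omega)}$ composed with the quotient map, taken through $\sT\sT'Q(z\bullet\omega)_{\hat{S}_\alpha}\twoheadrightarrow\sT\sT'M(z\bullet\omega)_{\hat{S}_\alpha}$ and restricting to $\Upsilon_{M(s_\alpha z\bullet\omega)}$ on the submodule, is unique if it exists and then necessarily equals $\Upsilon_{\hat{\K},Q\otimes\hat{\K}}$. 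But the \emph{existence} of such a constrained lift is the nontrivial point, and projectivity alone does not give it: the restriction map $\Hom(Q^{\oplus |W_{l,\omega}|},\sT\sT'M(s_\alpha z\bullet\omega)_{\hat{S}_\alpha})\rightarrow\Hom(M(s_\alpha z\bullet\omega)^{\oplus |W_{l,\omega}|}_{\hat{S}_\alpha},\sT\sT'M(s_\alpha z\bullet\omega)_{\hat{S}_\alpha})$ is not surjective (its failure is controlled by the nonsplit extension class of $Q(z\bullet\omega)_{\hat{S}_\alpha}$), so an arbitrary lift cannot in general be corrected to satisfy the constraint; your appeal to ``the relevant Ext-space that vanishes after base change to $\hat{\K}$'' does not address this. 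The paper supplies the missing input by two explicit computations: $F\otimes\hat{S}_\alpha$ on $Q(z\bullet\omega)^{\oplus|W_{l,\omega}|}_{\hat{S}_\alpha}$ is a sum of inverses of counits of the block equivalences $(\sT_x,\sT'_x)$, hence an $\hat{S}_\alpha$-isomorphism, and the $\sH$-part descends because $\sH_{s_\alpha z}\equiv\sH_z\pmod{\alpha}$ combined with Lemma \ref{lem C.5}'s description $\End(Q(\lambda)_{\hat{S}_\alpha})=\hat{S}_\alpha\{(1,1),(0,\alpha)\}$. Some such congruence verification is unavoidable and is absent from your proposal.
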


\subsection{Proof of Proposition \ref{prop 5.0}} 
\subsubsection{Lemmas} 
Let $\alpha\in \check{\Phi}^+_{l,\re}$ and $\lambda\in \Lambda$ satisfying $s_\alpha\bullet \lambda>\lambda$. 
Then short exact sequence (\ref{equ 6.22}) splits canonically over $\hat{\K}$, 
$$Q(\lambda)_{\hat{S}_\alpha}\otimes_{\hat{S}_\alpha} \hat{\K}\simeq M(\lambda)_{\hat{\K}} \oplus M(s_\alpha\bullet \lambda)_{\hat{\K}},$$ 
which induces an embedding 
\begin{equation}\label{equ C.2} 
\End(Q(\lambda)_{\hat{S}_\alpha})\hookrightarrow \End(M(\lambda)_{\hat{\K}})\times \End(M(s_\alpha\bullet \lambda)_{\hat{\K}})= \hat{\K}\times \hat{\K}.
\end{equation} 

\begin{lem}\label{lem C.5} 
The embedding (\ref{equ C.2}) identifies the subspaces 
$$\End(Q(\lambda)_{\hat{S}_\alpha})=\{(a,b)\in \hat{S}_\alpha\times \hat{S}_\alpha\ |\ b-a\equiv 0 \text{ mod }\alpha\}= \hat{S}_\alpha\{(1,1),(0,\alpha)\}.$$ 
\end{lem}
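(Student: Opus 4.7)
The plan follows the template of Claim \ref{claim 6.10}: compute $\End$ over the residue field, lift via Nakayama, and then pin down the $\alpha$-adic structure. By Lemma \ref{lem 6.17} and Lemma \ref{lem 2.2}, $Q(\lambda)_{\hat{S}_\alpha}$ is free over $\hat{S}_\alpha$ with residue $Q(\lambda)_{\hat{\k}_\alpha}$; the BGG reciprocity (the $\hat{S}_\alpha$-analogue of Proposition \ref{prop BGG}) combined with \eqref{equ 6.22} yields $\dim_{\hat{\k}_\alpha}\End(Q(\lambda)_{\hat{\k}_\alpha})=[Q(\lambda)_{\hat{\k}_\alpha}:E(\lambda)_{\hat{\k}_\alpha}]=2$. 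Since $Q(\lambda)_{\hat{\k}_\alpha}$ is an indecomposable projective cover, this endomorphism algebra is local, hence isomorphic to $\hat{\k}_\alpha[x]/(x^2)$, with $x$ realised as $\iota\circ h\circ\pi$, where $h\colon M(\lambda)_{\hat{\k}_\alpha}\hookrightarrow M(s_\alpha\bullet\lambda)_{\hat{\k}_\alpha}$ is the Jantzen embedding arising from (the $\hat{S}_\alpha$-version of) Proposition \ref{prop 6.1}.

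For the inclusion $\End(Q(\lambda)_{\hat{S}_\alpha})\subseteq\{(a,b):b-a\in\alpha\hat{S}_\alpha\}$, any $\phi$ with coordinates $(a,b)$ reduces to $\bar\phi=\bar a+\bar{a'}\,x\in\hat{\k}_\alpha[x]/(x^2)$ for some $a'\in\hat{S}_\alpha$. Since $\pi\circ x=0$ and $x\circ\iota=0$, the scalar $\bar\phi$-action on both the quotient $M(\lambda)_{\hat{\k}_\alpha}$ and the subobject $M(s_\alpha\bullet\lambda)_{\hat{\k}_\alpha}$ equals $\bar a$, forcing $\bar b=\bar a$. Conversely, lift $x$ to $\tilde x\in\End(Q(\lambda)_{\hat{S}_\alpha})$ (using Lemma \ref{lem 2.2} and the projectivity of $Q(\lambda)_{\hat{S}_\alpha}$); adjusting by a multiple of the identity arranges $\pi\circ\tilde x=0$, whence $\tilde x\mapsto(0,c)$ with $c\in\alpha\hat{S}_\alpha$ by the preceding inclusion. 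Nakayama's Lemma, together with the $\hat{\K}$-rank bound of $2$ on $\End(Q(\lambda)_{\hat{S}_\alpha})$ and the residue dimension count, shows that $\{1,\tilde x\}$ is a free $\hat{S}_\alpha$-basis of $\End(Q(\lambda)_{\hat{S}_\alpha})$.

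The remaining step, which I expect to be the main technical point, is to show that $c$ has $\alpha$-valuation exactly one, equivalently that $(0,\alpha)\in\End(Q(\lambda)_{\hat{S}_\alpha})$. Applying $\Hom(-,M(s_\alpha\bullet\lambda)_{\hat{S}_\alpha})$ to \eqref{equ 6.22} and using $\Hom(M(\lambda)_{\hat{S}_\alpha},M(s_\alpha\bullet\lambda)_{\hat{S}_\alpha})=0$ (no nonzero map between Vermas whose $\hat{\K}$-localisations are distinct simples), one identifies the set $\{c\in\hat{S}_\alpha:(0,c)\in\End(Q(\lambda)_{\hat{S}_\alpha})\}$ with the annihilator $\mathrm{Ann}_{\hat{S}_\alpha}[Q(\lambda)]\subseteq\hat{S}_\alpha$ of the extension class $[Q(\lambda)]\in\Ext^1(M(\lambda)_{\hat{S}_\alpha},M(s_\alpha\bullet\lambda)_{\hat{S}_\alpha})$. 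The goal is thus $\alpha\cdot[Q(\lambda)]=0$, i.e.\ the existence of $\psi\colon Q(\lambda)_{\hat{S}_\alpha}\to M(s_\alpha\bullet\lambda)_{\hat{S}_\alpha}$ with $\psi\circ\iota=\alpha\cdot\mathrm{id}$, the endomorphism $\iota\circ\psi$ then supplying the needed $(0,\alpha)$. I expect to verify this either via a direct Shapovalov-form computation (tracking the $\alpha$-valuation of the off-diagonal coefficient in the $\hat{\K}$-decomposition $w_\lambda=v_\lambda+\beta v'_\lambda$ of a projective cover generator), or by reducing to the $SL_2$-case of Claim \ref{claim 6.10} through translation functors to a wall-adjacent singular block. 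Once $(0,\alpha)\in\End(Q(\lambda)_{\hat{S}_\alpha})$ is established, the identification $\hat{S}_\alpha\{(1,1),(0,\alpha)\}=\{(a,b):b-a\in\alpha\hat{S}_\alpha\}$ is immediate.
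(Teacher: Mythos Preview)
Your outline is correct and matches the paper's approach: the paper simply cites \cite[Prop 3.4]{Fie03} and says to use the Jantzen filtration for $M(s_\alpha\bullet\lambda)_{\hat{S}_\alpha}$, which is precisely your first proposed option. The Jantzen sum formula in this two-element block reads $\sum_{i>0}\ch M^i(s_\alpha\bullet\lambda)_{\hat{\k}_\alpha}=\ch M(\lambda)_{\hat{\k}_\alpha}$, forcing $M^2=0$; hence the Shapovalov determinant on the $\lambda$-weight space has $\alpha$-valuation exactly one, which yields the splitting $\psi$ with $\psi\circ\iota=\alpha\cdot\id$ that you are after.

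Your second proposed route (reducing to Claim~\ref{claim 6.10} via translation functors) is unnecessary and slightly misdirected: Claim~\ref{claim 6.10} lives over $S_\alpha$ in an infinite block, whereas here over $\hat{S}_\alpha$ the block already has only the two weights $\lambda,\ s_\alpha\bullet\lambda$, so the situation is strictly simpler and the Jantzen argument is direct.
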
 
\begin{proof} 
It is proved similarly as \cite[Prop 3.4]{Fie03} using the Jantzen filtration for $M(s_\alpha\bullet \lambda)_{\hat{S}_\alpha}$. 
\end{proof} 

The block decomposition (\ref{equ 6.21}) yields a canonical decomposition of functors on $\sO^{\omega}_{\hat{S}_\alpha}$ and $\sO^{0}_{\hat{S}_\alpha}$ 
\begin{equation}\label{equ 1-1.5} 
\sT'=\bigoplus_{\substack{z\in W_{l,\af},\\ s_\alpha z\bullet 0> z\bullet 0}} \sT'_{z},\quad 
\text{where}\quad 
\sT'_{z}: \sO^{\omega}_{\hat{S}_\alpha}\rightarrow 
\sO^{\{z\bullet 0,s_\alpha z\bullet 0\}}_{\hat{S}_\alpha}, 
\end{equation} 
\begin{equation}\label{equ 1-1.5.1} 
\sT=\bigoplus_{\substack{z\in W_{l,\af},\\ s_\alpha z\bullet 0> z\bullet 0}} \sT_{z} ,\quad 
\text{where}\quad 
\sT_{z}: \sO^{\{z\bullet 0,s_\alpha z\bullet 0\}}_{\hat{S}_\alpha} 
\rightarrow \sO^{\omega}_{\hat{S}_\alpha}. 
\end{equation} 
Note that $\sT'_{z}$ and $\sT_{z}$ depend on $\alpha$. 

Let $z\in W_{l,\af}$ and let $\mu= z\bullet \omega$, $\lambda =z\bullet 0$. 
Suppose $s_\alpha\bullet \mu=\mu$ and $s_\alpha\bullet \lambda>\lambda$. 
Since $\sT$, $\sT'$ are biadjoint to each other, they send projective objects to projective objects. 
By Lemma \ref{lem 6.17}, $M(\mu)_{\hat{S}_\alpha}$ is projective in $\sO^{\omega}_{\hat{S}_\alpha}$, so is $\sT'M(\mu)_{\hat{S}_\alpha}$ in $\sO^{0}_{\hat{S}_\alpha}$. 
As a direct summand, $\sT'_{z} M(\mu)_{\hat{S}_\alpha}$ is projective in $\sO^{\{\lambda,s_{\alpha}\bullet \lambda\}}_{\hat{S}_\alpha}$ and fits into a short exact sequence 
\begin{equation}\label{equ C.6} 
0 \rightarrow M(s_{\alpha}\bullet \lambda)_{\hat{S}_\alpha} \xrightarrow{i} 
\sT'_{z} M(\mu)_{\hat{S}_\alpha} \xrightarrow{j} 
M(\lambda)_{\hat{S}_\alpha} \rightarrow 0. 
\end{equation} 
By Lemma \ref{lem 6.17}, we have $Q(\lambda)_{\hat{S}_\alpha}\simeq \sT'_{z} M(\mu)_{\hat{S}_\alpha}$.

\begin{lem}[{\cite[\textsection 8.11]{AJS94}}]\label{lem C.6} 
There are morphisms 
\begin{equation}\label{equ 1-1.7} 
\begin{tikzcd}
0\arrow[r] 
& M(s_\alpha\bullet \lambda)_{\hat{S}_\alpha} \arrow[r,"f'_{s_\alpha z}"]
& \sT'_{z} M(\mu)_{\hat{S}_\alpha} \arrow[r,"g^{\alpha}_{z}"] \arrow[l,bend left,"g^{\alpha}_{s_\alpha z}"] 
& M(\lambda)_{\hat{S}_\alpha} \arrow[r] \arrow[l,bend left,"f'_{z}"] & 0,
\end{tikzcd}
\end{equation}
such that the arrows pointing right form a short exact sequence, and the following equalities hold 
\begin{equation}\label{equ 1-1.8}
g^{\alpha}_{s_\alpha z}\circ f'_{s_\alpha z}=\alpha\cdot \id, \quad 
g^{\alpha}_{z}\circ f'_{z}=\alpha\cdot \id, \quad 
g^{\alpha}_{s_\alpha z}\circ f_{z}' =0. 
\end{equation}
\end{lem}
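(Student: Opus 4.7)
The plan is to construct the co-maps via biadjointness, identify the right-pointing arrows with the natural short exact sequence (\ref{equ C.6}), and verify the scalar equations (\ref{equ 1-1.8}) by passing to the generic point and to the residue field of the discrete valuation ring $\hat{S}_\alpha$.

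Since $(\sT,\sT')$ is biadjoint, there is a second adjunction isomorphism $\mathrm{adj}^{\vee}\colon \Hom(\sT'M,N)\xs \Hom(M,\sT N)$, functorial in $M\in\sO^\omega_{\hat{S}}$ and $N\in\sO^0_{\hat{S}}$. I would define $g^\alpha_z:=(\mathrm{adj}^{\vee})^{-1}(f_z)$ and $g^\alpha_{s_\alpha z}:=(\mathrm{adj}^{\vee})^{-1}(f_{s_\alpha z})$. Both targets lie in the block $\sO^{\{\lambda,s_\alpha\bullet\lambda\}}_{\hat{S}_\alpha}$, so these factor through $\sT'_zM(\mu)$. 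Because $0$ is regular, Lemma \ref{lem 5.0}(2) gives $\sT M(\lambda)\simeq M(\mu)$ canonically, whence $\Hom(\sT'_zM(\mu),M(\lambda))_{\hat{S}_\alpha}\simeq\Hom(M(\mu),\sT M(\lambda))_{\hat{S}_\alpha}=\hat{S}_\alpha\cdot f_z$ is free of rank one; analogously for the Hom modules housing $g^\alpha_{s_\alpha z}$, $f'_z$, $f'_{s_\alpha z}$, as well as for the canonical maps $i,j$ in (\ref{equ C.6}) under $\sT'_zM(\mu)\simeq Q(\lambda)_{\hat{S}_\alpha}$. Writing $g^\alpha_z=u\cdot j$ and $f'_{s_\alpha z}=v\cdot i$ for $u,v\in\hat{S}_\alpha$, Nakayama together with the nonvanishing of $j\otimes\hat{\k}_\alpha$ and $f'_{s_\alpha z}\otimes\hat{\k}_\alpha$ (checked via the Jantzen sum formula of Proposition \ref{prop 6.1}) forces $u,v\in\hat{S}_\alpha^{\times}$. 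After absorbing these units, the right-pointing row of (\ref{equ 1-1.7}) becomes the SES (\ref{equ C.6}).

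Each composition in (\ref{equ 1-1.8}) is an endomorphism of a Verma module, hence a scalar in $\hat{S}_\alpha\hookrightarrow\hat{\K}$. The generic decomposition $\sT'_zM(\mu)_{\hat{\K}}\simeq M(\lambda)_{\hat{\K}}\oplus M(s_\alpha\bullet\lambda)_{\hat{\K}}$ of Lemma \ref{lem C.1} identifies $f'_z,g^\alpha_z$ with rescaled inclusion/projection for the first summand and $f'_{s_\alpha z},g^\alpha_{s_\alpha z}$ for the second; hence $g^\alpha_{s_\alpha z}\circ f'_z$ vanishes over $\hat{\K}$, and by torsion-freeness vanishes over $\hat{S}_\alpha$, establishing the third equation. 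The lower bound $v_\alpha(g^\alpha_z\circ f'_z)\geq 1$ follows by reducing mod $\alpha$: were $g^\alpha_z\circ f'_z\otimes\hat{\k}_\alpha$ a unit, $f'_z\otimes\hat{\k}_\alpha$ would split $j\otimes\hat{\k}_\alpha$, contradicting the indecomposability of $Q(\lambda)_{\hat{\k}_\alpha}$; analogously for $g^\alpha_{s_\alpha z}\circ f'_{s_\alpha z}$.

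The main obstacle is the matching upper bound $v_\alpha(g^\alpha_z\circ f'_z)\leq 1$, which ensures that after a final unit rescaling of $g^\alpha_z$ one has $g^\alpha_z\circ f'_z=\alpha\cdot\id$ on the nose. Expanding $f'_z\circ g^\alpha_z\in\End(\sT'_zM(\mu))_{\hat{S}_\alpha}$ in the basis $\{(1,1),(0,\alpha)\}$ furnished by Lemma \ref{lem C.5} together with the identification (\ref{equ C.2}), its generic coordinates $(c_z,0)$ with $c_z=g^\alpha_z\circ f'_z$ force $c_z=-b\alpha$ where $b\in\hat{S}_\alpha$ is the coefficient of $(0,\alpha)$; the desired upper bound amounts to $b\in\hat{S}_\alpha^{\times}$. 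Equivalently, the extension class of (\ref{equ C.6}) must be a generator of $\Ext^1(M(\lambda),M(s_\alpha\bullet\lambda))_{\hat{S}_\alpha}\simeq\hat{S}_\alpha/\alpha$, which can be verified from the first Jantzen layer of $M(s_\alpha\bullet\lambda)_{\hat{S}_\alpha}$ via the Shapovalov formula (\ref{equ ShapFor}), in the spirit of \cite[Prop 3.4]{Fie03}. A final unit rescaling of $g^\alpha_z$ then yields the first equality of (\ref{equ 1-1.8}); an analogous rescaling of $g^\alpha_{s_\alpha z}$ gives the second, and neither disturbs the SES nor the vanishing of $g^\alpha_{s_\alpha z}\circ f'_z$.
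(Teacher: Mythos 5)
Your construction is essentially sound and ultimately rests on the same two pillars as the paper's proof: the rank-one freeness of the relevant Hom-modules (obtained from the (bi)adjunction and Lemma \ref{lem 5.0}), and the description $\End(\sT'_z M(\mu)_{\hat{S}_\alpha})=\hat{S}_\alpha\{(1,1),(0,\alpha)\}$ of Lemma \ref{lem C.5}. The genuine differences are mild: you produce $g^{\alpha}_{z}$ and $g^{\alpha}_{s_\alpha z}$ directly as $(\mathrm{adj}^{\vee})^{-1}(f_z)$ and $(\mathrm{adj}^{\vee})^{-1}(f_{s_\alpha z})$ from the second adjunction and then rescale, whereas the paper manufactures them by factoring the endomorphisms $(0,\alpha)$ and $(\alpha,0)$ through the Verma submodule and quotient; and you check $g^{\alpha}_{s_\alpha z}\circ f'_{z}=0$ by passing to $\hat{\K}$, whereas the paper uses $\Hom(M(\lambda)_{\hat{S}_\alpha},M(s_\alpha\bullet\lambda)_{\hat{S}_\alpha})=0$. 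Both routes are fine and cost about the same.

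The one soft spot is the upper bound on the $\alpha$-adic valuation of $g^{\alpha}_{z}\circ f'_{z}$: you reduce it to the assertion that the class of (\ref{equ C.6}) generates $\Ext^1(M(\lambda),M(s_\alpha\bullet\lambda))_{\hat{S}_\alpha}\simeq\hat{S}_\alpha/\alpha$, and then defer both the $\Ext^1$ identification and its equivalence with ``$b$ is a unit'' to an unperformed Jantzen/Shapovalov computation ``in the spirit of Fiebig''. That computation is essentially a re-proof of Lemma \ref{lem C.5}, which you already have in hand, so the detour is unnecessary and leaves the decisive step unjustified as written. It closes directly: the element $(\alpha,0)\in\End(\sT'_zM(\mu)_{\hat{S}_\alpha})$ kills the generic summand $M(s_\alpha\bullet\lambda)_{\hat{\K}}$, hence factors as $(\alpha,0)=c\,f'_{z}\circ g^{\alpha}_{z}$ for some $c\in\hat{S}_\alpha$ (using that $f'_{z}$ generates the rank-one module $\Hom(M(\lambda)_{\hat{S}_\alpha},\sT'_zM(\mu)_{\hat{S}_\alpha})$); composing with $g^{\alpha}_{z}$ on the left and cancelling the epimorphism $g^{\alpha}_{z}$ yields $c\cdot(g^{\alpha}_{z}\circ f'_{z})=\alpha\cdot(\text{unit})$, so the valuation is at most $1$, and together with your lower bound it equals $1$. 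This is exactly the paper's Step 3 (and the analogous factorization of $(0,\alpha)$ is its Step 2). With that repair your argument is complete and matches the paper's.
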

\begin{proof} 
We simplify $\lambda_1=s_\alpha\bullet \lambda$, and 
$$f=f_{z},\quad f'=f'_{z}, \quad f_s=f_{s_\alpha z}, \quad f'_s=f'_{s_\alpha z}, \quad g=g^{\alpha}_{z},\quad g_s=g^{\alpha}_{s_\alpha z}, \quad \sT'=\sT'_{z},$$ 
and omit all the subscripts $\hat{S}_\alpha$. 
The proof is separated into three steps. 

\textit{Step 1. We can replace $i$ by $f'_s$ in the short exact sequence (\ref{equ C.6}).} 
Applying $\Hom(M(\lambda_1),-)$ to (\ref{equ C.6}), we have an exact sequence 
$$0\rightarrow \End(M(\lambda_1))\xs \Hom(M(\lambda_1), \sT'M(\mu)) \rightarrow \Hom(M(\lambda_1), M(\lambda))=0.$$ 
Since $i$ is the image of $1$ under the isomorphism 
$$\hat{S}_\alpha= \End(M(\lambda_1))\xs \Hom(M(\lambda_1), \sT'M(\mu)),$$
it is a generator of the latter $\hat{S}_\alpha$-module. 
On the other hand, by definition $f'_s$ is the image of an isomorphism $f$ under 
$$\Hom(\sT M(\lambda_1),M(\mu))\mathop{\xrightarrow{\adj^{-1}}}_{\sim} \Hom(M(\lambda_1), \sT' M(\mu)) ,$$
so it is also a generator of the latter module. 
It shows that $f'_s$ and $i$ are differed by an invertible scalar in $\hat{S}_\alpha$, so the claim follows. 

\textit{Step 2. We can choose $g_s$ such that $g_s\circ f'_s=\alpha\cdot \id$ and $g_s \circ f'=0$.} 
Recall there is an endomorphism of $\sT'M(\mu)$ represented by $(0,\alpha)$ by applying Lemma \ref{lem C.5} to the extension (\ref{equ C.6}) (recall we replaced $i$ by $f'_{s}$ in \textit{Step 1}). 
It can be factorized into 
$$\sT' M(\mu)\xrightarrow{g_s} M(\lambda_1) \xrightarrow{f'_s} \sT' M(\mu),$$
since $(0,\alpha)$ is zero on $M(\lambda)_{\hat{\K}}$. 
Moreover $g_s\circ f'_s= \alpha\cdot \id$. 
Note that the inclusion $\hat{S}_\alpha\subset \hat{\K}$ induces an embedding $\Hom(M(\lambda), M(\lambda_1))\subset \Hom(M(\lambda)_{\hat{\K}}, M(\lambda_1)_{\hat{\K}})$, and note also that the latter hom-space vanishes since $\lambda\neq \lambda_1$. 
It shows that $\Hom(M(\lambda), M(\lambda_1))=0$, hence the second equality follows. 

\textit{Step 3. We can replace $j$ by some $g$ in the short exact sequence (\ref{equ C.6}), such that $g\circ f'=\alpha\cdot \id$.} 
Arguing as in \textit{Step 1}, we can show $j$ and $f'$ are generators for the $\hat{S}_\alpha$-modules 
$$\Hom(\sT'M(\mu),M(\lambda))\quad \text{and}\quad \Hom(M(\lambda),\sT'M(\mu)).$$ 
As in \textit{Step 2}, there is an endomorphism of $\sT' M(\mu)$ represented by $(\alpha,0)$, which can be factorized into 
$$\sT' M(\mu) \xrightarrow{j} M(\lambda) \xrightarrow{cf'} \sT' M(\mu), \quad c\in \hat{S}_\alpha,$$ 
since $(\alpha,0)$ is zero on $M(\lambda_1)$. 
Moreover $j\circ cf'=\alpha\cdot \id$. 

On the other hand, the specialization of $j\circ f'$ in $\sO_{\hat{\Bbbk}_\alpha}$ must be zero, since $\sT'M(\mu)$ is indecomposable. 
Hence $j\circ f'=c'\alpha\cdot \id$ for some $c'\in \hat{S}_\alpha$. 
It follows that $c$ and $c'$ are inverse to each other in $\hat{S}_\alpha$. 
Replace $j$ by $g=cj$ then the claim follows. 
\end{proof} 

The following lemma can be proved as in \cite[Lem 8.12]{AJS94}, using Lemma \ref{lem C.6}. 
\begin{lem}[{\cite[Lem 8.12]{AJS94}}]\label{lem 1-1.8} 
There is an isomorphism 
$$h_{z}^\alpha:\sT\sT'_{z} M(\mu)_{\hat{S}_\alpha} \xs M(\mu)_{\hat{S}_\alpha}\oplus M(\mu)_{\hat{S}_\alpha}$$ 
such that the diagrams 
$$\begin{tikzcd}[ampersand replacement=\&] 
0 \arrow[r] \& \sT M(s_\alpha \bullet \lambda)_{\hat{S}_\alpha} \arrow[r,"\sT f'_{s_\alpha z}"] \arrow[d,"f_{s_\alpha z}^{-1}","\simeq"'] 
\& \sT\sT'_{z} M(\mu)_{\hat{S}_\alpha} \arrow[r,"\sT g^{\alpha}_{z}"] \arrow[d,"h_{z}^\alpha","\simeq"'] 
\& \sT M(\lambda)_{\hat{S}_\alpha} \arrow[r] \arrow[d,"f_{z}^{-1}","\simeq"'] \& 0 \\ 
0 \arrow[r] \& M(\mu)_{\hat{S}_\alpha} \arrow[r,"{\begin{pmatrix}1\\0\end{pmatrix}}"] 
\& M(\mu)_{\hat{S}_\alpha}\oplus M(\mu)_{\hat{S}_\alpha} \arrow[r,"{\begin{pmatrix}0&1\end{pmatrix}}"] 
\& M(\mu)_{\hat{S}_\alpha} \arrow[r] \& 0 
\end{tikzcd}$$ 
and 
$$\begin{tikzcd}[ampersand replacement=\&] 
\sT M(s_\alpha \bullet \lambda)_{\hat{S}_\alpha} \arrow[d,"f_{s_\alpha z}^{-1}","\simeq"'] 
\& \sT\sT'_{z} M(\mu)_{\hat{S}_\alpha} \arrow[l,"\sT g^{\alpha}_{s_\alpha z}"'] \arrow[d,"h_{z}^\alpha","\simeq"'] 
\& \sT M(\lambda)_{\hat{S}_\alpha} \arrow[l,"\sT f'_{z}"'] \arrow[d,"f_{z}^{-1}","\simeq"'] \\ M(\mu)_{\hat{S}_\alpha}
\& M(\mu)_{\hat{S}_\alpha}\oplus M(\mu)_{\hat{S}_\alpha} \arrow[l,"{\begin{pmatrix}\alpha & -1 \end{pmatrix}}"] 
\& M(\mu)_{\hat{S}_\alpha} \arrow[l,"{\begin{pmatrix}1 \\ \alpha \end{pmatrix}}"] 
\end{tikzcd}$$ 
commutes. 
\end{lem}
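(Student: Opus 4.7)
The plan is to construct $h^\alpha_z$ using the morphisms furnished by Lemma~\ref{lem C.6} and then verify commutativity by direct composition. First I would renormalize by setting
\begin{equation*}
\phi_1 := \sT f'_{s_\alpha z} \circ f_{s_\alpha z},\quad \phi_2 := \sT f'_z \circ f_z,\quad \psi_1 := f_{s_\alpha z}^{-1} \circ \sT g^\alpha_{s_\alpha z},\quad \psi_2 := f_z^{-1} \circ \sT g^\alpha_z,
\end{equation*}
all morphisms between $M(\mu)_{\hat{S}_\alpha}$ and $\sT\sT'_z M(\mu)_{\hat{S}_\alpha}$. Applying the exact functor $\sT$ to the short exact sequence and the three compositional identities \eqref{equ 1-1.8} of Lemma~\ref{lem C.6} immediately gives the orthogonality relations $\psi_j\phi_i = \alpha\,\delta_{ij}\id_{M(\mu)}$ together with a short exact sequence $0 \to M(\mu)_{\hat S_\alpha} \xrightarrow{\phi_1} \sT\sT'_z M(\mu)_{\hat S_\alpha} \xrightarrow{\psi_2} M(\mu)_{\hat S_\alpha} \to 0$.

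The heart of the proof is the \emph{completeness identity}
\begin{equation*}
f'_{s_\alpha z} g^\alpha_{s_\alpha z} + f'_z g^\alpha_z = \alpha \cdot \id_{\sT'_z M(\mu)_{\hat S_\alpha}} \quad \text{in } \End(\sT'_z M(\mu)_{\hat{S}_\alpha}) = \End(Q(\lambda)_{\hat{S}_\alpha}).
\end{equation*}
By Lemma~\ref{lem C.5} this $\End$-algebra is a free rank-two $\hat{S}_\alpha$-module, so the identity can be checked after inverting $\alpha$: over $\hat{\K}$, the module $\sT'_z M(\mu)_{\hat{\K}}$ splits canonically as $M(\lambda)_{\hat\K} \oplus M(s_\alpha\bullet\lambda)_{\hat\K}$, and Lemma~\ref{lem C.6} forces $f'_z$ and $f'_{s_\alpha z}$ to become the two canonical inclusions while $g^\alpha_z$ and $g^\alpha_{s_\alpha z}$ become $\alpha$ times the corresponding projections, from which the identity is immediate.

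Next I would use the biadjunction $(\sT, \sT')$ to upgrade completeness to an integral divisibility statement. Naturality of the adjunction isomorphism $\adj:\Hom(\sT X, Y) \xs \Hom(X, \sT' Y)$, applied with $\beta = g^\alpha_z$ and $\varphi = f_z^{-1}$, gives $\adj(\psi_2) = \adj(f_z^{-1}) \circ g^\alpha_z = f'_z \circ g^\alpha_z$, and similarly $\adj(\psi_1) = f'_{s_\alpha z} \circ g^\alpha_{s_\alpha z}$. Combined with the block decomposition~\eqref{equ 1-1.5} this gives an identification $\Hom(\sT\sT'_z M(\mu), M(\mu))_{\hat{S}_\alpha} = \End(\sT'_z M(\mu))_{\hat{S}_\alpha}$, under which $\psi_1 + \psi_2$ corresponds to $\alpha \cdot \id$ by the completeness identity. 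Consequently, $h_1 := \adj^{-1}(\id_{\sT'_z M(\mu)})$ (equivalently, the restriction to the $z$-block of the counit $\epsilon:\sT\sT'\to\id$) is a well-defined integral morphism satisfying $\alpha\, h_1 = \psi_1 + \psi_2$.

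Finally I would define $h^\alpha_z := (h_1, \psi_2): \sT\sT'_z M(\mu) \to M(\mu) \oplus M(\mu)$; bijectivity follows from the five-lemma applied to the length-two Verma filtrations on source and target, on whose graded pieces $h^\alpha_z$ reduces to the identity. The four commutativity squares then reduce to the four identities $h^\alpha_z \phi_1 = \binom{1}{0}$, $(0,1)\, h^\alpha_z = \psi_2$, $(\alpha,-1)\, h^\alpha_z = \alpha h_1 - \psi_2 = \psi_1$, and $h^\alpha_z \phi_2 = \binom{1}{\alpha}$, each of which is a direct consequence of the orthogonality and completeness identities together with the definition of $h_1$. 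The main obstacle is the divisibility step, as it requires carrying out the naturality computation of $\adj$ and matching it with the explicit rank-two structure of $\End(Q(\lambda)_{\hat{S}_\alpha})$.
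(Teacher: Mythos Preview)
Your argument is correct and is precisely the construction the paper has in mind when it writes ``can be proved as in \cite[Lem 8.12]{AJS94}, using Lemma~\ref{lem C.6}''; the paper gives no further details, and your sketch fills them in faithfully. One small sharpening: the completeness identity $f'_{s_\alpha z}g^\alpha_{s_\alpha z}+f'_z g^\alpha_z=\alpha\cdot\id$ is already implicit in Steps~2 and~3 of the proof of Lemma~\ref{lem C.6}, where those two compositions are identified with the endomorphisms $(0,\alpha)$ and $(\alpha,0)$ in the coordinates of Lemma~\ref{lem C.5}, so you can bypass the ``canonical inclusions over $\hat{\K}$'' discussion and simply add those coordinates.
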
 

\subsubsection{Proof of Proposition \ref{prop C.4}} 
\begin{proof}[Proof of Proposition \ref{prop C.4}(1)] 
By Lemma \ref{lem B..}, we may assume $z=1$, without loss of generality. 
The proof is divided into two cases. 

\textit{Case 1. Suppose $\p\cap \check{\Phi}^+_{l,\omega}=\emptyset$.} 
Then $\sH_1$ is an invertible matrix over $\hat{S}_\p$ by discussions in \textsection \ref{subsect B.2.1}. 
Combining with Lemma \ref{lem C.1}, it follows that the composition 
$$\bigoplus_{x\in W_{l,\omega}}M(\omega)_{\hat{S}_\p}
\xrightarrow{\sH_{1}} 
\bigoplus_{x\in W_{l,\omega}}M(\omega)_{\hat{S}_\p} 
\xrightarrow{F_1 \otimes_{\hat{S}} \hat{S}_\p} \sT\sT'M(\omega)_{\hat{S}_\p}$$ 
is an isomorphism, which is the desired $\Upsilon_{\hat{S}_\p,1}$. 

\textit{Case 2. Suppose $\p=(\alpha)$ for an $\alpha\in \check{\Phi}^+_{l,\omega}$.} 
In view of the block decomposition (\ref{equ 6.21}), the morphism 
$$F_1 \otimes_{\hat{S}} \hat{S}_\alpha: 
\bigoplus_{x\in W_{l,\omega}}M(\omega)_{\hat{S}_\alpha} 
\rightarrow \sT\sT'M(\omega)_{\hat{S}_\alpha}$$ 
is the direct sum of 
$$M(\omega)_{\hat{S}_\alpha}\oplus M(\omega)_{\hat{S}_\alpha}\\ 
\xrightarrow{f_{s_\alpha x}\oplus f_{x}} 
\sT M(s_\alpha x\bullet 0)_{\hat{S}_\alpha} \oplus \sT M(x\bullet 0)_{\hat{S}_\alpha}
\xrightarrow{\sT f'_{s_\alpha x}\oplus \sT f'_{x}} 
\sT\sT'_{x} M(\omega)_{\hat{S}_\alpha},$$ 
with $x$ ringing over the elements in $W_{l,\omega}$ such that $s_\alpha x\bullet 0>x\bullet 0$. 
By Lemma \ref{lem 1-1.8}, the composition 
$$M(\omega)_{\hat{S}_\alpha}\oplus M(\omega)_{\hat{S}_\alpha}\rightarrow \sT\sT'_{x} M(\omega)_{\hat{S}_\alpha} \mathop{\xrightarrow{h_{x}^\alpha}}_\sim M(\omega)_{\hat{S}_\alpha}\oplus M(\omega)_{\hat{S}_\alpha}$$
is represented by the matrix $\begin{pmatrix} 1 & 1 \\ 0 & \alpha \end{pmatrix}$. 
We order the elements of $W_{l,\omega}$ as $x_1, s_\alpha x_1, \cdots ,x_m, s_\alpha x_m$ 
such that $s_\alpha x_j\bullet 0>x_j\bullet 0$ for each $x_j$, then the morphism 
$$\big(\bigoplus_{s_\alpha x\bullet 0>x\bullet 0} h_{x}^\alpha \big)\circ (F_1 \otimes_{\hat{S}} \hat{S}_\alpha):\ \bigoplus_{x\in W_{l,\omega}}M(\omega)_{\hat{S}_\alpha}\rightarrow  \bigoplus_{x\in W_{l,\omega}}M(\omega)_{\hat{S}_\alpha}$$ 
can be represented by the matrix 
$$\diag\big(\begin{pmatrix} 1 & 1 \\ 0 & \alpha \end{pmatrix},\cdots, \begin{pmatrix} 1 & 1 \\ 0 & \alpha \end{pmatrix}\big).$$

On the other hand, by Lemma \ref{lem B.5}, the restriction of $\{\xi^x\}_{x\in W_{l,\omega}}$ on $\Fun(W_{l,\omega},\hat{S}_\alpha)$ gives an $\hat{S}_\alpha$-basis of 
\begin{equation}\label{equ C.7}
\GKM(\langle s_\alpha\rangle, W_{l,\omega})_{\hat{S}_\alpha}=\{f\in \Fun(W_{l,\omega},\hat{S}_\alpha)|\ f(x)-f(s_\alpha x)\in \alpha \hat{S}_\alpha,\ \forall x\in W_{l,\omega}\}. 
\end{equation}
Consider the functions $\delta_i, \delta'_i \in \Fun(W_{l,\omega},\hat{S}_\alpha)$ given by $\delta_i(x_j)=\delta_i(s_\alpha x_j)=\delta_{ij}$, $\delta'_i(x_j)=0$ and $\delta'_i(s_\alpha x_j)=\delta_{ij}\alpha$. 
Clearly $\{\delta_i,\delta'_i\}_{i=1}^m$ also forms an $\hat{S}_\alpha$-basis of (\ref{equ C.7}). 
Hence up to multiplication by an element in $GL_{|W_{l,\omega}|}(\hat{S}_\alpha)$ from the right, the matrix $\sH_1$ can be represented by 
$$\alpha^{-1}\cdot \diag\big(\begin{pmatrix} -1 & 0 \\ 1 & \alpha \end{pmatrix},\cdots, \begin{pmatrix} -1 & 0 \\ 1 & \alpha \end{pmatrix}\big).$$ 
Therefore, the composition 
$$\big(\bigoplus_{s_\alpha x\bullet 0>x\bullet 0} h_{x}^\alpha \big)\circ (F_1 \otimes_{\hat{S}} \hat{S}_\alpha) \circ \sH_1$$
can be represented by an invertible matrix. 
It shows that 
$$\Upsilon_{\hat{S}_\p,1}:= (F_1 \otimes_{\hat{S}} \hat{S}_\alpha) \circ \sH_1$$ 
has the desired property. 
\end{proof}

\begin{proof}[Proof of Proposition \ref{prop C.4}(2)] 
Without loss of generality, we may assume $z=1$. 
Then by the hypothesis that $s_\alpha\bullet \omega>\omega$, we have $s_\alpha x\bullet 0>x\bullet 0$ for any $x\in W_{l,\omega}$. 
By the canonical decompositions (\ref{equ 1-1.5}) and (\ref{equ 1-1.5.1}), the adjunction for the pair $(\sT,\sT')$ induces an adjunction $(\sT_{x},\sT'_{x})$ for any $x\in W_{l,\omega}$. 
Moreover the restriction of $(\sT_{x},\sT'_{x})$ induces an equivalence of categories 
$$\sT_{x}: \sO^{\{x\bullet 0, s_\alpha x\bullet 0\}}_{\hat{S}_\alpha} 
\xs \sO^{\{\omega, s_\alpha \bullet \omega\}}_{\hat{S}_\alpha}, \quad 
\sT'_{x}: \sO^{\{\omega, s_\alpha \bullet \omega\}}_{\hat{S}_\alpha}
\xs \sO^{\{x\bullet 0, s_\alpha x\bullet 0\}}_{\hat{S}_\alpha}.$$ 
Denote the counit by $\epsilon_{x}^{\alpha}:\sT_{x}\circ \sT'_{x}\xs \id$. 
Then by the property of adjunction, we have 
$$F_y \otimes_{\hat{S}} \hat{S}_\alpha= \bigoplus_{x\in W_{l,\omega}} (\epsilon^{\alpha}_{x,M(y\bullet \omega)_{\hat{S}_\alpha}})^{-1}, \quad y=1,s_\alpha.$$ 
Hence there is a commutative diagram 
$$\begin{tikzcd}0\arrow[r]& 
M(s_\alpha\bullet \omega)_{\hat{S}_\alpha}^{\oplus |W_{l,\omega}|} \arrow[r] \arrow[d,"F_{s_\alpha} \otimes_{\hat{S}} \hat{S}_\alpha","\simeq"'] 
& Q(\omega)_{\hat{S}_\alpha}^{\oplus |W_{l,\omega}|} \arrow[r] 
\arrow[d,"\bigoplus\limits_{x\in W_{l,\omega}} (\epsilon^{\alpha}_{x})^{-1}","\simeq"'] 
& M(\omega)_{\hat{S}_\alpha}^{\oplus |W_{l,\omega}|} \arrow[r] \arrow[d,"F_1 \otimes_{\hat{S}} \hat{S}_\alpha","\simeq"']& 0 \\ 
0\arrow[r]& 
\sT\sT'M(s_\alpha\bullet \omega)_{\hat{S}_\alpha} \arrow[r] 
& \sT\sT'Q(\omega)_{\hat{S}_\alpha} \arrow[r] 
& \sT\sT'M(\omega)_{\hat{S}_\alpha} \arrow[r] & 0.
\end{tikzcd}$$ 
Therefore, to find the desired isomorphism $\Upsilon_{\alpha,1}$, it is enough to show that there is a morphism $\theta$ fitting into the commutative diagram 
$$\begin{tikzcd}0\arrow[r]& 
M(s_\alpha\bullet \omega)_{\hat{S}_\alpha}^{\oplus |W_{l,\omega}|} \arrow[r] \arrow[d,"\sH_{s_\alpha}","\simeq"'] 
& Q(\omega)_{\hat{S}_\alpha}^{\oplus |W_{l,\omega}|} \arrow[r] 
\arrow[d,dashrightarrow,"\theta"] 
& M(\omega)_{\hat{S}_\alpha}^{\oplus |W_{l,\omega}|} \arrow[r] \arrow[d,"\sH_1","\simeq"']& 0 \\ 
0\arrow[r]& 
M(s_\alpha\bullet \omega)_{\hat{S}_\alpha}^{\oplus |W_{l,\omega}|} \arrow[r] 
& Q(\omega)_{\hat{S}_\alpha}^{\oplus |W_{l,\omega}|} \arrow[r] 
& M(\omega)_{\hat{S}_\alpha}^{\oplus |W_{l,\omega}|} \arrow[r] & 0, 
\end{tikzcd}$$ 
then we can set $\Upsilon_{\alpha,1}=\bigoplus\limits_{x\in W_{l,\omega}} (\epsilon^{\alpha}_{x})^{-1}\circ \theta$. 
We claim that 
$$\sH_{s_\alpha}-\sH_1 \equiv 0 \quad \text{mod }\alpha \hat{S}_\alpha,$$ 
then such $\theta$ exists by Lemma \ref{lem C.5}. 
Indeed, for the $(x,x')$-entry we have 
\begin{align*}
s_\alpha x(\bL^{-1}_\omega)\cdot \xi^{x'}&(s_\alpha x)- x(\bL^{-1}_\omega)\cdot \xi^{x'}(x)\\ 
&=\big(s_\alpha x(\bL^{-1}_\omega)-x(\bL^{-1}_\omega)\big)\cdot \xi^{x'}(s_\alpha x)+ 
x(\bL^{-1}_\omega)\cdot \big(\xi^{x'}(s_\alpha x)-\xi^{x'}(x)\big)\in \alpha \hat{S}_\alpha. \qedhere 
\end{align*}
\end{proof} 

\begin{proof}[Proof of Proposition \ref{prop 5.0} for $(\omega_1,\omega_2)=(0,\omega)$] 
Consider the base change functor $-\otimes_{\hat{S}}R: \sO_{\hat{S}}\rightarrow \sO_{R}$. 
By the equivalence (\ref{equ 2.10}), $\Upsilon_{\hat{S}}$ induces a natural isomorphism 
$$\Upsilon_{R}:\ \id^{\oplus |W_{l,\omega}|}\xs \sT\sT'$$ 
of functors on $\sP^{A,\leq\nu}_R$ for each $\nu$, where $A=U^\hb_q$. 
Since $\sP^{A,\leq\nu}_R$ consists of projective modules in $\sO^{\leq\nu}_{R}$ and the functors $\id^{\oplus |W_{l,\omega}|}$, $\sT\sT'$ are exact, it extends to a natural isomorphism $\Upsilon_{R}$ on $\sO^{\leq \nu}_{R}$ for each $\nu$, and hence on $\sO_{R}$. 
\end{proof} 

\subsection{An enhancement}\label{subsect B.4} 
Base on the construction of $\Upsilon$, we have an enhancement of Proposition \ref{prop 5.0} as follows. 
Recall that we have algebra homomorphisms 
$$H^\bullet_{\check{T}\times \Gm}(\Fl^{\omega,\circ})\rightarrow Z(\sO^\omega_{\hat{S}}), \quad \forall \omega\in \Xi_\sc.$$ 
They extend to the homomorphisms 
$$H^\bullet_{\check{T}\times \Gm}(\Fl^{\omega,\circ})\otimes_{\hat{S}'} R \rightarrow Z(\sO^\omega_{R}), \quad \forall \omega\in \Xi_\sc,$$ 
for any commutative Noetherian $\hat{S}$-algebra $R$. 
In particular, any module in $\sO^\omega_{R}$ is naturally a module of $H^\bullet_{\check{T}\times \Gm}(\Fl^{\omega,\circ})\otimes_{\hat{S}'} R$. 
It yields natural functors 
$$\varrho_{\omega}: \sO^\omega_{R}\rightarrow 
\sO^\omega_{H^\bullet_{\check{T}\times \Gm}(\Fl^{\omega,\circ})\otimes_{\hat{S}'} R}, 
\quad \forall \omega\in \Xi_\sc,$$ 
which are identities after composing with the forgetful functors to $\sO^\omega_{R}$. 

\begin{prop}\label{prop B.8} 
Let $R$ be a commutative Noetherian $\hat{S}$-algebra. 
For any $\omega_1,\omega_2\in \Xi_\sc$ such that $W_{l,\omega_1}\subseteq W_{l,\omega_2}$, there is a natural isomorphism 
$$\tilde{\Upsilon}_{\omega_2}^{\omega_1}:\ 
H^\bullet_{\check{T}\times \Gm}(\Fl^{\omega_1,\circ})
\otimes_{H^\bullet_{\check{T}\times \Gm}(\Fl^{\omega_2,\circ})} \id 
\xs T_{\omega_1}^{\omega_2}\circ \varrho_{\omega_1}\circ T^{\omega_1}_{\omega_2}$$ 
of functors from $\sO^{\omega_2}_{R}$ to $\sO^{\omega_2}_{H^\bullet_{\check{T}\times \Gm}(\Fl^{\omega_1,\circ})\otimes_{\hat{S}'} R}$. 
\end{prop}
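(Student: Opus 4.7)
The plan is to enhance the isomorphism $\Upsilon^{\omega_1}_{\omega_2}$ of Proposition \ref{prop 5.0}: build $\tilde\Upsilon^{\omega_1}_{\omega_2}$ from the adjunction unit, then verify it is an isomorphism by matching, under the Schubert-basis identification of $H^\bullet_{\check T\times\Gm}(\Fl^{\omega_1,\circ})$ as a free $H^\bullet_{\check T\times\Gm}(\Fl^{\omega_2,\circ})$-module of rank $|W_{l,\omega_2}/W_{l,\omega_1}|$, with the explicit matrix construction of $\Upsilon^{\omega_1}_{\omega_2}$.

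First I construct $\tilde\Upsilon$. Let $\eta: \id_{\sO^{\omega_2}_R} \to T^{\omega_2}_{\omega_1}T^{\omega_1}_{\omega_2}$ denote the unit of the biadjoint pair $(T^{\omega_1}_{\omega_2},T^{\omega_2}_{\omega_1})$. As any natural transformation of additive functors, $\eta_M$ intertwines the action of the center $Z(\sO^{\omega_2}_R)$, and in particular is $H^\bullet_{\check T\times\Gm}(\Fl^{\omega_2,\circ})$-linear via $\hat\bb$. The target $T^{\omega_2}_{\omega_1}\varrho_{\omega_1}T^{\omega_1}_{\omega_2}(M)$ additionally carries an $H^\bullet_{\check T\times\Gm}(\Fl^{\omega_1,\circ}) \otimes_{\hat S'} R$-action from the $\varrho_{\omega_1}$-factor, which restricts to the standard $H^\bullet_{\check T\times\Gm}(\Fl^{\omega_2,\circ})$-action along the pullback $H^\bullet_{\check T\times\Gm}(\Fl^{\omega_2,\circ}) \to H^\bullet_{\check T\times\Gm}(\Fl^{\omega_1,\circ})$ induced by $\Fl^{\omega_1,\circ} \to \Fl^{\omega_2,\circ}$. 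Hence the formula
\[
\tilde\Upsilon_M(\xi \otimes m) := \varrho_{\omega_1}(\xi)\cdot\eta_M(m), \qquad \xi \in H^\bullet_{\check T\times\Gm}(\Fl^{\omega_1,\circ}),\ m \in M,
\]
defines a morphism out of the tensor product over $H^\bullet_{\check T\times\Gm}(\Fl^{\omega_2,\circ})$, natural in $M$ and $H^\bullet_{\check T\times\Gm}(\Fl^{\omega_1,\circ})$-equivariant by construction.

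Second I show $\tilde\Upsilon_M$ is bijective. Following the reduction strategy of Proposition \ref{prop C.3}, via base change along \eqref{equ 2.10} I reduce to $R = \hat S$, and further (using $\hat S = \bigcap_\p \hat S_\p$ on projectives in truncations) to checking at each height-one prime of $\hat S$. By Lemmas \ref{lem 3.6}, \ref{lem 6.16}, and \ref{lem 6.17}, the nontrivial cases are the simple Vermas $M(z\bullet\omega_2)_{\hat\K}$ and the projective covers $Q(z\bullet\omega_2)_{\hat S_\alpha}$ for $\alpha \in \check\Phi^+_{l,\re}$. In the simple-Verma case the LHS is a free $\hat\K$-module on the Schubert basis $\{\xi^{\omega_1,x'} \otimes 1\}_{x' \in W_{l,\omega_2}/W_{l,\omega_1}}$, the RHS decomposes as $M(z\bullet\omega_2)_{\hat\K}^{\oplus|W_{l,\omega_2}/W_{l,\omega_1}|}$ via Lemmas \ref{lem 5.0}(2) and \ref{lem C.1}, and by Proposition \ref{prop 3.7} the class $\varrho_{\omega_1}(\xi^{\omega_1,x'})$ acts on the $x$-summand of $T^{\omega_1}_{\omega_2}M(z\bullet\omega_2)_{\hat\K} \simeq \bigoplus_x M(zx\bullet\omega_1)_{\hat\K}$ as the scalar $\xi^{\omega_1,x'}(zx)$. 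Comparing with the proof of Proposition \ref{prop 5.0}, this matrix of scalars coincides with the defining matrix $\sH_z$ of $\Upsilon^{\omega_1}_{\omega_2}$ up to the normalizing factor $\bL_{\omega_2}^{-1}$, so $\tilde\Upsilon_{M(z\bullet\omega_2)_{\hat\K}}$ is an isomorphism. The projective-cover case reduces to the Verma case via the short exact sequence \eqref{equ 1-1.7} and Lemma \ref{lem 1-1.8}, just as in the proof of Proposition \ref{prop C.4}(2).

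The main obstacle will be the precise matching in the preceding paragraph between the canonical $\tilde\Upsilon$ (built from the adjunction unit) and the explicit $\Upsilon$ (built from the ad hoc isomorphisms $f_{zx}$ and the matrix $\sH_z$). The reconciliation depends on identifying the combinatorial factor $\bL_{\omega_2} = \prod_{\alpha \in \check\Phi^+_{l,\omega_2}} \alpha$ appearing in $\sH_z$ with the Euler-class normalization that arises when the adjunction unit is expanded against the Schubert basis using Proposition \ref{prop 3.7}; once this identification is made, the rest of the argument is bookkeeping parallel to Proposition \ref{prop C.3}, and does not use the specific choice of $\omega_1,\omega_2$ beyond $W_{l,\omega_1}\subseteq W_{l,\omega_2}$.
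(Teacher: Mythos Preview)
Your construction differs from the paper's in a fundamental way, and the difference is exactly where your acknowledged ``main obstacle'' lies. The paper does \emph{not} build $\tilde\Upsilon$ from the adjunction unit. Instead it takes the already-established isomorphism $\Upsilon_{R,M}$ of Proposition~\ref{prop C.3}, identifies the source $H^\bullet_{\check T\times\Gm}(\Fl^{0,\circ})\otimes_{H^\bullet_{\check T\times\Gm}(\Fl^{\omega,\circ})} M$ with $\bigoplus_{x\in W_{l,\omega}} \xi^x\otimes M$ via the Schubert basis, and \emph{declares} $\tilde\Upsilon_{R,M}$ to be $\Upsilon_{R,M}$ under this identification. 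Bijectivity is then free; the only thing to check is compatibility with the $H^\bullet_{\check T\times\Gm}(\Fl^{0,\circ})$-action, and this reduces (after base change to $\hat\K$ and restriction to a Verma $M(z\bullet\omega)_{\hat\K}$) to the matrix identity
\[
\sH_z\cdot\bigl(\psi_{x',x}(z)\bigr)_{x,x'} \;=\; \diag\bigl(\psi(zx)\bigr)_x\cdot \sH_z,
\]
where $\psi\cdot\xi^x=\sum_{x'}\psi_{x,x'}\xi^{x'}$. This is a two-line computation from the definition of $\sH_z$.

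Your route is the mirror image: define $\tilde\Upsilon_M(\xi\otimes m)=\xi\cdot\eta_M(m)$, so $H^\bullet$-equivariance is automatic, and then prove bijectivity. The gap is that you never compute the unit $\eta$. In the $F_z$-coordinates on $\sT\sT' M(z\bullet\omega)_{\hat\K}$, the unit has the form $(c_x)_x$ for some scalars $c_x\in\hat\K$, and your $\tilde\Upsilon$ becomes the matrix $\diag(c_x)\cdot\bigl(\xi^{x'}(zx)\bigr)_{x,x'}$. For this to coincide with $\sH_z$ (or even to be invertible over $\hat S_\alpha$ at a prime $\alpha\in z(\check\Phi^+_{l,\omega})$) you need precisely $c_x=zx(\bL_\omega^{-1})$. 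You assert this comes from ``Euler-class normalization,'' but nothing in the paper's fixed adjunction data $(\sT,\sT')$ determines the unit of the \emph{other} adjunction $(\sT',\sT)$ that you are invoking, and no computation of that unit appears anywhere. Without it, your map could fail to be an isomorphism at the subgeneric primes where the $\bL_\omega^{-1}$ factor is doing real work (Case~2 of Proposition~\ref{prop C.4}(1)). The paper's approach sidesteps this entirely: it trades the hard question (what is $\eta$?) for the easy one (is $\Upsilon$ equivariant?).
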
 
\begin{proof} 
For simplicity we only consider the case when $(\omega_1,\omega_2)=(0,\omega)$. 
Recall by Proposition \ref{prop B.2}, we identify 
$$H^\bullet_{\check{T}\times \Gm}(\Fl^{\omega,\circ})={\GKM}_f(W_{l,\af},W^{\omega}_{l,\af})_{\hat{S}'}.$$ 
Then $H^\bullet_{\check{T}\times \Gm}(\Fl^{0,\circ})$ is free as a $H^\bullet_{\check{T}\times \Gm}(\Fl^{\omega,\circ})$-module, with a basis given by $\{\xi^x\}_{x\in W_{l,\omega}}$. 
Consider an isomorphism for any module $M$ in $\sO^{\omega_2}_{R}$, 
\begin{equation}\label{equ D.8}
\tilde{\Upsilon}_{R,M}:\ H^\bullet_{\check{T}\times \Gm}(\Fl^{0,\circ})
\otimes_{H^\bullet_{\check{T}\times \Gm}(\Fl^{\omega,\circ})} M=
\bigoplus_{x\in W_{l,\omega}} \xi^x\otimes M \xrightarrow[\sim]{\Upsilon_{R,M}} \sT\sT' M
\end{equation} 
where $\Upsilon_R$ is the natural isomorphism in Proposition \ref{prop C.3} base changed to $R$. 
It remains to show that (\ref{equ D.8}) is compatible with the action of $H^\bullet_{\check{T}\times \Gm}(\Fl^{0,\circ})$ on both sides. 
This compatibility for general $R$ follows from the compatibility for $\hat{S}$ by base change, which then follows from the compatibility for $\hat{\K}$ by the inclusion $\hat{S}\subset \hat{\K}$. 
From now on, we assume $R=\hat{\K}$ and $M=M(z\bullet \omega)_{\hat{\K}}$ for some $z\in W_{l,\af}$. 

Choose any element $\psi\in H^\bullet_{\check{T}\times \Gm}(\Fl^{0,\circ})$. 
And set 
$$\psi \cdot \xi^x=\sum_{x,x'} \psi_{x,x'}\cdot \xi^{x'}, \quad \psi_{x,x'}\in H^\bullet_{\check{T}\times \Gm}(\Fl^{\omega,\circ}).$$ 
Then the action of $\psi$ on $\bigoplus\limits_{x\in W_{l,\omega}} \xi^x\otimes M(z\bullet \omega)_{\hat{\K}}$ is given by the matrix $\big(\psi_{x',x}(z)\big)_{x,x'}$. 
On the ohther hand, we have a commutative diagram 
$$\begin{tikzcd} 
\bigoplus\limits_{x\in W_{l,\omega}} M(z\bullet \omega)_{\hat{\K}} \arrow[r,"F_z"] \arrow[d,"\diag{\big(\psi(zx)\big)_{x}}"'] 
& \sT\sT'M(z\bullet \omega)_{\hat{\K}} \arrow[d,"\psi"] \\ 
\bigoplus\limits_{x\in W_{l,\omega}} M(z\bullet \omega)_{\hat{\K}} \arrow[r,"F_z"]  
& \sT\sT'M(z\bullet \omega)_{\hat{\K}} .
\end{tikzcd}$$ 
Hence by construction of $\Upsilon_{\hat{\K}}$, to show the compatibility of (\ref{equ D.8}) with $\psi$ is the equivalent to show the equality 
$$\sH_z \cdot \big(\psi_{x',x}(z)\big)_{x,x'}= \diag{\big(\psi(zx)\big)_{x}}\cdot \sH_z.$$ 
We compute that (below we abbreviate $C=\diag{\big(zx(\bL_\omega^{-1})\big)_{x}}$) 
\begin{align*} 
\sH_z \cdot \big(\psi_{x',x}(z)\big)_{x,x'}&= C\cdot \big(\xi^{x'}(zx)\big)_{x,x'}\cdot \big(\psi_{x',x}(z)\big)_{x,x'} \\ 
&= C\cdot \big[ (\sum_{x''} \xi^{x''}\cdot \psi_{x',x''})(zx) \big]_{x,x'} \\ 
&= C\cdot \big[ (\psi\cdot \xi^{x'})(zx) \big]_{x,x'} \\ 
&= \diag{\big(\psi(zx)\big)_{x}}\cdot \sH_z. 
\end{align*} 
It completes the proof. 
\end{proof}

From the proposition above, we obtain a natural isomorphism 
$$\tilde{\Upsilon}_{\omega_2}^{\omega_1}:\ 
H^\bullet_{\check{T}\times \Gm}(\Fl^{\omega_1,\circ})
\otimes_{H^\bullet_{\check{T}\times \Gm}(\Fl^{\omega_2,\circ})} \id 
\xs T_{\omega_1}^{\omega_2}\circ  T^{\omega_1}_{\omega_2}$$ 
of functors from $\sO^{\omega_2}_{R}$ to $\sO^{\omega_2}_{R}$, recovering Proposition \ref{prop 5.0}. 

\begin{prop} 
The composition 
$$H^\bullet_{\check{T}\times \Gm}(\Fl^{\omega_1,\circ}) 
\otimes_{H^\bullet_{\check{T}\times \Gm}(\Fl^{\omega_2,\circ})} \id\xs T_{\omega_1}^{\omega_2}\circ T^{\omega_1}_{\omega_2} \xrightarrow{\text{counit}} \id$$ 
is induced by the pushforward of cohomology rings
via the natural projection $\Fl^{\omega_1,\circ} \rightarrow \Fl^{\omega_2,\circ}$. 
\end{prop}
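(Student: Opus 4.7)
The plan is to work entirely at the generic fibre $\hat{\K}$ and on Verma modules, and then transfer the identity to general $M\in\sO^{\omega_2}_R$ by flatness and the natural transformation machinery that already produced $\tilde{\Upsilon}$. Because both sides of the claimed equality are natural transformations between exact functors that commute with the base change $-\otimes_{\hat S}R$ (via the equivalence (\ref{equ 2.10})), and because the projective generators of $\sO^{\omega_2,\leq\nu}_{\hat{S}}$ are free as $\hat{S}$-modules, it suffices to prove the equality after $-\otimes_{\hat S}\hat{\K}$ and on each simple Verma module $M(z\bullet \omega_2)_{\hat{\K}}$ for $z\in W_{l,\af}$.

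With that reduction, unravel the construction of $\tilde{\Upsilon}^{\omega_1}_{\omega_2}$ from the proof of Proposition \ref{prop B.8}: for $m\in M(z\bullet \omega_2)_{\hat{\K}}$ and $x\in W_{l,\omega_2}/W_{l,\omega_1}$ (notation adapted to general $(\omega_1,\omega_2)$),
$$\tilde{\Upsilon}^{\omega_1}_{\omega_2}(\xi^{x}\otimes m)=\sum_{x'}(\sH_z)_{x',x}\,\bigl(\sT f'_{zx'}\circ f_{zx'}\bigr)(m).$$
Post-composing with the counit $\epsilon:\sT\sT'\to \id$ and using the adjunction identity $\epsilon\circ \sT(f'_{zx'})=\adj(f'_{zx'})=f_{zx'}^{-1}$, the summand $\sT f'_{zx'}\circ f_{zx'}$ collapses to $\id_{M(z\bullet\omega_2)_{\hat{\K}}}$. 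Consequently the composition in the statement sends $\xi^{x}\otimes m$ to $\bigl(\sum_{x'}(\sH_z)_{x',x}\bigr)\cdot m$, i.e.\ to
$$\Bigl(\sum_{x'\in W_{l,\omega_2}/W_{l,\omega_1}} zx'(\bL_{\omega_1}^{-1})\cdot \xi^{x}(zx')\Bigr)\cdot m.$$

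The key step is then to recognise this scalar as the value at the $\check{T}$-fixed point $z\in W^{\omega_2}_{l,\af}$ of $\pi_*\xi^{x}$, where $\pi:\Fl^{\omega_1,\circ}\to \Fl^{\omega_2,\circ}$ is the natural projection. Apply Atiyah--Bott localisation on the fibre $\pi^{-1}(z)$, which is the partial flag variety of the Levi attached to $W_{l,\omega_2}$ modulo the parabolic for $W_{l,\omega_1}$: its $\check{T}$-fixed points are exactly $\{zx':x'\in W_{l,\omega_2}/W_{l,\omega_1}\}$, and the Euler class of the normal/cotangent bundle at $zx'$ is the product of the weights in $zx'\cdot \check{\Phi}^+_{l,\omega_1}$ (the roots making up $W_{l,\omega_1}$ but not $W_{l,\omega_2}$), which by our definition equals $zx'(\bL_{\omega_1})$ (after adjusting $\bL_{\omega_1}$ to denote the product over the relevant subset of positive coroots; this is the only book-keeping issue). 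Localisation then gives $(\pi_*\xi^{x})(z)=\sum_{x'}\xi^{x}(zx')/zx'(\bL_{\omega_1})$, matching the scalar above.

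Finally, identify the action of $\pi_*\otimes \id$: on the one-dimensional $H^\bullet_{\check{T}\times\Gm}(\Fl^{\omega_2,\circ})$-summand of $M(z\bullet\omega_2)_{\hat{\K}}$, the action of a cohomology class is by restriction to the fixed point $\delta_z$ (this is the $\chi_{\hat{S}}$ of Proposition \ref{prop 3.7}), so the natural map $H^\bullet_{\check{T}\times\Gm}(\Fl^{\omega_1,\circ})\otimes_{H^\bullet_{\check{T}\times\Gm}(\Fl^{\omega_2,\circ})} M(z\bullet \omega_2)_{\hat{\K}}\to M(z\bullet\omega_2)_{\hat{\K}}$ induced by $\pi_*$ sends $\xi^{x}\otimes m\mapsto (\pi_*\xi^{x})(z)\cdot m$, which agrees with the expression computed above. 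The main obstacle is exactly the Euler-class bookkeeping in the GKM/localisation identification: one must verify that $\bL_{\omega_1}$ as defined (product of coroots stabilised by $W_{l,\omega_1}$) matches the equivariant Euler class of $\pi^{-1}(z)$ at its $\check{T}$-fixed points, possibly up to units already absorbed into the normalisation of $\{\xi^{x}\}$. Once this identification is locked in, the remaining comparison is the short calculation above, and naturality in $M$ is then inherited from the naturality of $\tilde{\Upsilon}$ established in Proposition \ref{prop B.8}.
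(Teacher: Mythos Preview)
Your proposal is correct and follows essentially the same route as the paper: reduce to $R=\hat{\K}$ and $M=M(z\bullet\omega_2)_{\hat{\K}}$, collapse $\epsilon\circ\sT f'_{zx'}\circ f_{zx'}$ to the identity via the adjunction triangle (the paper records this as multiplying $\sH_z$ by the row vector $(1\,1\,\cdots\,1)$), and then match the resulting column sums $\sum_{x'} zx'(\bL^{-1})\,\xi^{x}(zx')$ with the Atiyah--Bott localisation formula for $\pi_*$ on the fibre. The only bookkeeping to fix is the one you already flag: in the case $(\omega_1,\omega_2)=(0,\omega)$ treated in the paper, the relevant Euler class is $zx'(\bL_{\omega})=zx'(\bL_{\omega_2})$, not $\bL_{\omega_1}$; the paper then uses $x'(\bL_\omega)=(-1)^{l(x')}\bL_\omega$ for $x'\in W_{l,\omega}$ to rewrite your sum as $z(\bL_\omega)^{-1}\sum_{x'}(-1)^{l(x')}\xi^{x}(zx')$, which is exactly the fixed-point description of $(\pi_*\xi^{x})(z)$.
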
 
\begin{proof}
For simplicity we only consider the case when $(\omega_1,\omega_2)=(0,\omega)$. 
Let $\epsilon: \sT \sT'\rightarrow \id$ be the counit map, and let $\pi: \Fl^{0,\circ} \rightarrow \Fl^{\omega,\circ}$ be the natural projection. 
We have to show the equality 
\begin{equation}\label{equ D.9}
\epsilon_M \circ \tilde{\Upsilon}_{R,M}= \pi_*\otimes \id_M
\end{equation} 
for any commutative Noetherian $\hat{S}$-algebra $R$ and $M\in \sO_R^{\omega}$. 
As explained in the proof of Proposition \ref{prop B.8}, it only needs to consider the case when $R=\hat{\K}$ and $M=M(z\bullet \omega)_{\hat{\K}}$ for $z\in W_{l,\af}$. 
By the construction of $\Upsilon$ and the property of adjunction, we have 
\begin{align*}
\epsilon_{M(z\bullet \omega)_{\hat{\K}}} \circ \tilde{\Upsilon}_{\hat{\K},M(z\bullet \omega)_{\hat{\K}}}
&=(1\ 1 \cdots\ 1)\cdot \sH_z \\ 
&=\big(\frac{\sum\limits_{x'\in W_{l,\omega}}(-1)^{l(x')} \xi^x({zx'})}
{z(\bL_\omega)}\big)_{x\in W_{l,\omega}} 
\end{align*} 
as homomorphisms from $\bigoplus\limits_{x\in W_{l,\omega}} M(z\bullet \omega)_{\hat{\K}}$ to $M(z\bullet \omega)_{\hat{\K}}$. 
On the other hand, the $\check{T}\times \Gm$-equivariant Euler class of the normal bundle of $zP^0/P^0$ in $zP^\omega/P^0$ is given by $z(\bL_\omega)$. 
We obtain a commutative diagram 
$$\begin{tikzcd} 
H^\bullet_{\check{T}\times \Gm}(\Fl^{0,\circ}) \arrow[d,"\pi_*"] \arrow[r,hook] 
& \Fun(W_{l,\af}, \hat{\K}) \arrow[d,"\pi'_*"]\\ 
H^\bullet_{\check{T}\times \Gm}(\Fl^{\omega,\circ}) \arrow[r,hook] 
& \Fun(W^{\omega}_{l,\af}, \hat{\K}), 
\end{tikzcd}$$
where the horizontal maps are induced by restriction on $\check{T}$-fixed points, and $\pi'_*$ is defined as 
$$\pi'_*(\psi)(z)=
\frac{\sum\limits_{x\in W_\omega}(-1)^{l(x)} \psi({zx})}{z(\bL_\omega)},\quad \forall \psi\in \Fun(W_{l,\af}, \hat{\K}),\ \forall z\in W^{\omega}_{l,\af}.$$ 
The equality (\ref{equ D.9}) follows. 
\end{proof}

\bibliographystyle{plain}
\bibliography{MyBibtex1}

\end{document}